  \numberwithin{equation}{subsection}
  \newtheorem{thm}{Theorem}[subsection]
\newtheorem{lemma}[thm]{Lemma}
\newtheorem{prop}[thm]{Proposition}
\newtheorem{cor}[thm]{Corollary}
\newtheorem{thmi}{Theorem}     
\newtheorem{conji}{Conjecture} 
\theoremstyle{definition} 
\newtheorem{defin}[thm]{Definition}
\newtheorem{remark}[thm]{Remark}
\newtheorem{variant}[thm]{Variant}
\newtheorem{example}[thm]{Example}
  \renewcommand\subsubsection{\@startsection{subsubsection}{2}%
         \z@{.5\linespacing\@plus.7\linespacing}{-.5em}%
    {\normalfont\scshape}}
\newcommand{\cD}{\mathscr D}
\newcommand{\cE}{\mathscr E}
\newcommand{\cF}{\mathscr F}
\newcommand{\cG}{\mathscr G}
\newcommand{\cH}{\mathscr H}
\newcommand{\cL}{\mathscr L}
\newcommand{\cM}{\mathscr M}
\newcommand{\cO}{\mathscr O}
\newcommand{\cR}{\mathscr R}
\newcommand{\cX}{\mathscr X}
\newcommand{\cY}{\mathscr Y}
\newcommand{\bb}[1]{\mathbf{#1}} 
\newcommand{\FF}{\bb{F}}
\newcommand{\GG}{\bb{G}}
\newcommand{\PP}{\bb{P}}
\newcommand{\QQ}{\bb{Q}}
\newcommand{\RR}{\bb{R}}
\newcommand{\ZZ}{\bb{Z}}
\renewcommand{\phi}{\varphi}
\newcommand{\isom}{\simeq} 
\newcommand{\piet}{\pi^{\acute{e}t}_{1}}
\newcommand{\et}{{\rm\acute{e}t}}
\newcommand{\cat}[1]{{\normalfont\textbf{#1}}}  
\newcommand{\ra}{\longrightarrow}    
\newcommand{\isomto}{\xrightarrow{\,\smash{\raisebox{-0.65ex}{\ensuremath{\scriptstyle\sim}}}\,}}
\renewcommand{\bigwedge}{\mbox{\large $\wedge$}}
\DeclareMathOperator{\Alb}{Alb}
\DeclareMathOperator{\Art}{\mathbf{Art}} 
\DeclareMathOperator{\Aut}{Aut}
\DeclareMathOperator{\Def}{Def}
\DeclareMathOperator{\Ext}{Ext}
\DeclareMathOperator{\Hom}{Hom}
\DeclareMathOperator{\Pic}{Pic}
\DeclareMathOperator{\Spec}{Spec}
\newcommand{\cExt}{{\mathscr E}\kern -.5pt xt}
\newcommand{\cHom}{\mathscr{H}\kern -.5pt om}
\newcommand{\cEnd}{{\mathscr E}nd}
\newcommand{\stacksproj}[1]{{\cite[Tag~\href{http://stacks.math.columbia.edu/tag/#1}{#1}]{stacks-project}}}
\newcommand{\wt}[1]{{\mathchoice%
  {\raisebox{1.5ex}{\resizebox{1.7ex}{!}{{}\hphantom{i}\ensuremath{{\sim}}}} \hspace{-1.7ex}{#1}}%
  {\smash{\raisebox{1.5ex}{\resizebox{1.7ex}{!}{{}\hphantom{i}\ensuremath{{\sim}}}}\hspace{-1.7ex}{#1 }}\vphantom{\tilde I}}%
  {\raisebox{1.1ex}{\resizebox{1.3ex}{!}{{}\hphantom{i}\ensuremath{{\sim}}}}\hspace{-1.3ex}{#1}}%
  {\raisebox{0.8ex}{\resizebox{1ex}{!}{{}\hphantom{i}\ensuremath{{\sim}}}}\hspace{-1ex}{#1}}%
}}
\newcommand{\swt}[1]{{\mathchoice%
  {\raisebox{0.9ex}{\resizebox{1.2ex}{!}{\ensuremath{{\sim}}}}\hspace{-1.4ex}{#1}}%
  {\smash{\raisebox{0.9ex}{\resizebox{1.2ex}{!}{\ensuremath{{\sim}}}}\hspace{-1.4ex}{#1 }}\vphantom{I}}%
  {\raisebox{0.7ex}{\resizebox{0.8ex}{!}{\ensuremath{{\sim}}}}\hspace{-0.9ex}{#1}}%
  {\raisebox{0.5ex}{\resizebox{1ex}{!}{{}\hphantom{i}\ensuremath{{\sim}}}}\hspace{-1ex}{#1}}%
}}
\newcommand{\wtcR}{\smash{\raisebox{1.5ex}{\hspace{0.7ex}\resizebox{1.2ex}{!}{\ensuremath{{\sim}}}}\hspace{-2.1ex}{\cR}}\vphantom{I}}
  \author[P.\ Achinger]{Piotr Achinger}
  \address{Institute of Mathematics, Polish Academy of Sciences
    \newline\indent ul. Śniadeckich 8, 00-656 Warsaw, Poland
  }
  \email{pachinger@impan.pl}
  \author[J.\ Witaszek]{Jakub Witaszek}
  \address{Department of Mathematics,University of Michigan, Ann Arbor, MI 48109} 
  \email{jakubw@umich.edu}
  \author[M.\ Zdanowicz]{Maciej Zdanowicz}
  \address{\'Ecole Polytechnique F\'ed\'erale de Lausanne, Chair of Algebraic Geometry \newline 
    \indent MA C3 585 (Bâtiment MA), Station 8, CH-1015 Lausanne}
  \email{maciej.zdanowicz@epfl.ch}
  \title[Global Frobenius Liftability I]{Global Frobenius Liftability I}
  \date{\today}
  \subjclass[2010]{Primary 14G17, Secondary 14M17, 14M25, 14J45} 
  \keywords{
    Frobenius lifting,
    toric variety,
    abelian variety,
    trivial log tangent bundle}
\begin{document}

\begin{abstract}
  We formulate a conjecture characterizing smooth projective varieties in positive characteristic whose Frobenius morphism can be lifted modulo $p^2$ --- we expect that such varieties, after a finite \'etale cover, admit a toric fibration over an ordinary abelian variety. We prove that this assertion implies a conjecture of Occhetta and Wiśniewski, which states that in characteristic zero a smooth image of a projective toric variety is a toric variety. To this end we analyse the behaviour of toric varieties in families showing some generization and specialization results. Furthermore, we prove a positive characteristic analogue of Winkelmann's theorem on varieties with trivial logarithmic tangent bundle (generalising a result of Mehta--Srinivas), and thus obtaining an important special case of our conjecture. Finally, using deformations of rational curves we verify our conjecture for homogeneous spaces, solving a problem posed by Buch--Thomsen--Lauritzen--Mehta.
\end{abstract}

\maketitle

\section{Introduction}
\label{s:intro}

\subsection{Liftings of Frobenius}
\label{ss:intro-flift}

One of the salient features of algebraic geometry in positive characteristic is the existence of the Frobenius morphism $F_X\colon X\to X$ for every $\FF_p$-scheme $X$, defined as the $p$-th power map $f\mapsto f^p$ on $\cO_X$. At a philosophical level, this paper together with its sequel \cite{PartII} argues that this type of structure becomes extremely rare as we move towards characteristic zero. This idea is not new: for example, in Borger's point of view on $\FF_1$-geometry \cite{Borger}, liftings of the Frobenius (compatible for all $p$) are seen as `descent data to $\FF_1$.' In more classical algebraic geometry, it is expected \cite{amerik97,fakhruddin} that projective varieties in characteristic zero admitting a~polarized endomorphism are very scarce. 

More precisely, we will be mostly concerned with the following question:

\medskip
\begin{center} 
  \emph{Which smooth projective varieties in characteristic $p$ lift modulo $p^2$ together with Frobenius?}
\end{center}
\medskip

For brevity, we will call such varieties \emph{$F$-liftable}. This question also has a long history: in their very influential paper \cite{DeligneIllusie}, Deligne and Illusie gave an algebraic proof of Kodaira--Akizuki--Nakano vanishing 
\[
  H^j(X, \Omega^i_X\otimes L) = 0\quad (L\text{ ample}, \quad i+j>\dim X)
\]
for smooth complex varieties by reducing the setting to characteristic $p>0$. The key ingredient was the analysis of \emph{local} liftings of Frobenius modulo $p^2$ and their relation to the de~Rham complex, namely that such a lifting $\wt F$ induces an injective homomorphism
\begin{equation} \label{eqn:xi-intro}
  \xi = \frac{d\wt F}{p} \colon F_X^* \Omega^1_{X} \ra \Omega^1_{X}
\end{equation}
whose adjoint $\Omega^1_{X}\to F_{X*} \Omega^1_{X}$ induces the inverse of the Cartier operator. In \cite{BTLM}, the authors study smooth projective varieties in characteristic $p>0$ admitting a \emph{global} lifting modulo $p^2$ together with Frobenius, and show in particular that such varieties satisfy a much stronger form of Kodaira vanishing called Bott vanishing
\begin{equation} \label{eqn:bott-intro}
  H^j(X, \Omega^i_X\otimes L) = 0\quad (L\text{ ample}, \quad j>0).
\end{equation}
This type of vanishing is extremely restrictive, and hence so is $F$-liftability.  In fact, all known examples of $F$-liftable varieties are in some sense built from toric varieties (where the lifting of Frobenius extends the multiplication by $p$ map on the torus) and ordinary abelian varieties (by the Serre--Tate theory). Note that the existence of an injective map $\xi$ as above implies that $(1-p)K_X$ is effective, so in particular $X$ has non-positive Kodaira dimension; in fact, the section $\det(\xi)\in H^0(X, \smash{\omega_X^{1-p}})$ corresponds to a Frobenius splitting of $X$.

The main goal of this paper is to provide evidence for the following conjectural answer to the above question, and to relate it to some open problems in characteristic zero.

\begin{conji} \label{conj:froblift}
  Let $X$ be a smooth projective variety over an algebraically closed field $k$ of characteristic $p>0$. If $X$ is $F$-liftable, then there exists a finite \'etale Galois cover $f\colon Y\to X$ such that the Albanese morphism of $Y$
  \[ 
    a_Y\colon  Y\ra {\rm Alb}(Y)
  \] 
  is a toric fibration. In particular, if $X$ is simply connected (for example, if $X$ is separably rationally connected), then $X$ is a toric variety.
\end{conji}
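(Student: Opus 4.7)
The plan is to reduce the general case to the two known sources of $F$-liftable varieties --- ordinary abelian varieties and toric varieties --- by using the Albanese morphism as the geometric bridge. First I would establish that $F$-liftability descends to the Albanese variety $A = \Alb(X)$: a compatible lifting of $(X, F_X)$ mod $p^2$ should functorially produce a lifting of $(A, F_A)$ mod $p^2$, and by Serre--Tate theory an $F$-liftable abelian variety is ordinary. This identifies the target of the Albanese with an ordinary abelian variety, matching the expected structure.

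Next I would analyze the Albanese morphism $a_X\colon X\to A$ using the injective map $\xi\colon F_X^*\Omega^1_X\to\Omega^1_X$ of \eqref{eqn:xi-intro}. Because the Frobenius lifting on $X$ is compatible with the one on $A$, the subsheaf $a_X^*\Omega^1_A \subset \Omega^1_X$ is preserved, so $\xi$ restricts to an isomorphism on it and induces an analogous map on the relative cotangent sheaf $\Omega^1_{X/A}$. After passing to a finite \'etale Galois cover $f\colon Y\to X$ (chosen to split the Albanese and to kill the monodromy of $R^0 a_{X*}\cO_X$ and $\det \xi$), one expects the Albanese of $Y$ to become a smooth surjective fibration whose fibers inherit an $F$-liftable structure coming from the restriction of the Frobenius lifting.

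To show that the fibers are toric, I would invoke the logarithmic variant of the Mehta--Srinivas criterion advertised in the abstract. The Frobenius splitting $\det(\xi) \in H^0(X, \omega_X^{1-p})$ should, after passing to the cover, cut out a simple-normal-crossings boundary divisor $D\subset Y$ transverse to the fibers of $a_Y$, such that the relative log tangent sheaf $T_{Y/A}(-\log D)$ is trivial along each fiber. The logarithmic Mehta--Srinivas theorem would then identify each fiber with a toric variety whose toric boundary is cut out by $D$. Zariski-local triviality of the fibration should follow from the rigidity of smooth toric pairs together with a descent argument: a flat family of smooth toric pairs over a reduced base with trivial relative log tangent bundle and compatible Frobenius lifting has no infinitesimal deformations, so it is locally a product.

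The principal obstacle is the passage from the abstract Frobenius lifting to the concrete geometry of $a_X$. Showing that $a_X$ is a smooth surjection, and more subtly that its image equals $\Alb(X)$ after passing to a cover, requires a delicate positivity analysis of $\xi$ and of the induced map $F_X^* T_{X/A} \to T_{X/A}$; one must rule out, for instance, Albanese images of smaller dimension or fibers that fail to be equidimensional. Even more delicate is the construction of the boundary divisor $D$ in a Frobenius-equivariant way: the \'etale cover $f\colon Y\to X$ must be chosen simultaneously to make $\det(\xi)$ reduced along a strict normal crossing divisor, to trivialize the $\pi_1(A)$-action on the combinatorial fan of a fiber, and to split $a_X$ at the level of $\ell$-adic Tate modules --- and producing such a cover uniformly in all examples appears to be the deepest step of the conjecture.
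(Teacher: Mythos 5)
You are attempting to prove what is, in the paper, Conjecture~\ref{conj:froblift}: the paper does not prove this statement, and your proposal does not close the gap either. The paper only establishes special cases --- $\dim X\leq 2$ (Theorem~\ref{thm:c1-surfaces}), Fano threefolds from the Mori--Mukai list (Theorem~\ref{thm:c1-fano-3folds}), homogeneous spaces (Theorem~\ref{thm:homogeneous-spaces}), and, crucially for comparison with your plan, the case where $\omega_X(D)$ is numerically trivial (Theorem~\ref{thm:frobenius_cy}, the logarithmic Mehta--Srinivas theorem). Your proposal is a heuristic program whose central steps are precisely the open part of the problem, and you acknowledge as much in your last paragraph; as written it is not a proof.

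Concretely, the plan breaks at the construction of the boundary. You want $\det(\xi)\in H^0(X,\omega_X^{1-p})$ to cut out, after a finite \'etale cover, a reduced normal crossings divisor $D$ with which the Frobenius lifting is compatible, so that the fibers of the Albanese become $F$-liftable nc pairs with numerically trivial $\omega(D)$ and Theorem~\ref{thm:frobenius_cy} applies. For an arbitrary Frobenius lifting none of this is available: the divisor $\Delta_{\wt F_X}=\frac{1}{p-1}\operatorname{div}(\det\xi)$ need not be reduced, need not have normal crossings, and $\wt F_X$ need not be compatible with any lifting of its support. The paper emphasizes that non-standard liftings (e.g.\ $\wt F(x_0:\ldots:x_n)=(x_0^p+pf_0:\ldots:x_n^p+pf_n)$ on $\PP^n$, Example~\ref{ex:toric}) are ``one of the main difficulties'' of the conjecture, and it explicitly remarks, after Proposition~\ref{prop:f-lifts_on_rational_surfaces}, that even on $\PP^2$ the pair $(X,\lfloor\Delta_{\wt F_X}\rfloor)$ fails to be sub-toric for some liftings; no finite \'etale cover repairs this, since $\PP^2$ is simply connected. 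Note also that in the simply connected case the Albanese is a point, so your fibration strategy degenerates entirely to ``produce a compatible nc anticanonical boundary on $X$ itself,'' which is exactly the unsolved step; this is why the paper handles homogeneous spaces by a different route (rational curves, the sheaf of $\xi$-invariant forms, and Mori's characterization of $\PP^n$) and Fano threefolds by a case-by-case analysis of the classification rather than by any log Calabi--Yau criterion. The parts of your plan that do work --- ordinarity of $\Alb(X)$ from $F$-splitting (Lemma~\ref{lem:f-split_is_ordinary}(c)) and the use of Theorem~\ref{thm:frobenius_cy} when a genuine compatible nc boundary with $\omega_X(D)\equiv 0$ exists --- are already in the paper; the bridge between them is the conjecture.
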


See \S\ref{ss:toric-var} for the definition of a toric fibration. Conjecture~\ref{conj:froblift} \emph{almost} characterizes $F$-liftable smooth projective varieties, see Remark~\ref{rmk:partial-converse} for a discussion of the converse.

\subsection{Special cases of Conjecture~\ref{conj:froblift}}
\label{ss:conj1-special-cases}

A few important special cases were already known for some time. In the paper \cite{MehtaSrinivas}, to which our work owes a great deal, Mehta and Srinivas prove (among other things) that if $X$ is $F$-liftable and the canonical bundle $\omega_{X}$ is numerically trivial, then $X$ admits a finite \'etale Galois cover by an ordinary abelian variety. In a different direction, the case of homogeneous spaces was considered in the aforementioned paper \cite{BTLM}, where it was shown that many rational homogeneous spaces are not \mbox{$F$-liftable}, and conjectured that the only $F$-liftable ones are products of projective spaces. Recently, the case of minimal surfaces was considered in \cite{xin16}.

Our first contribution confirms the aforementioned expectation of \cite{BTLM}.

\begin{thmi}[see Theorem~\ref{thm:homogeneous-spaces}] \label{thm:homog}
  Conjecture~\ref{conj:froblift} is true if $X$ is a homogeneous space. More precisely, if $X$ is a smooth projective $F$-liftable variety whose automorphism group acts transitively, then $X$ is isomorphic to a product of projective spaces and an ordinary abelian variety.
\end{thmi}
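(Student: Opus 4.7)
My plan is to reduce to the two extremes --- rational projective homogeneous and abelian --- via a structural decomposition, and then to handle each case with its own set of tools.

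\textbf{Step 1 (Decomposition).} I would first invoke a Borel--Remmert-type theorem in positive characteristic (due to Sancho de Salas) applied to the connected component of the automorphism group: since $\Aut(X)^0$ acts transitively on the projective variety $X$, we may write canonically
\[
  X \;\cong\; G/P \,\times\, A,
\]
where $G$ is a connected semisimple group, $P \subseteq G$ is a parabolic subgroup, and $A = \Alb(X)$ is an abelian variety arising from the Chevalley decomposition of $\Aut(X)^0$. The factor $A$ is the Albanese of $X$, and the factor $G/P$ is the fiber of the Albanese morphism, so the decomposition is intrinsic.

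\textbf{Step 2 (Descent of $F$-liftability).} I would next argue that an $F$-lifting of the product descends to $F$-liftings of each factor. Concretely, the obstruction class to lifting $F_X$ modulo $p^2$ lives in $\Ext^1(F_X^*\Omega^1_X, \cO_X)$, and under the K\"unneth decomposition $\Omega^1_X = \mathrm{pr}_1^*\Omega^1_{G/P} \oplus \mathrm{pr}_2^*\Omega^1_A$ it splits as the sum of the obstructions for the two factors; using the canonical splittings in Step~1 and the fact that $H^0(A,\cO_A) = k$, the existence of a global lifting of $F_X$ implies the vanishing of each summand, so both $G/P$ and $A$ are $F$-liftable.

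\textbf{Step 3 (The two factors).} For the abelian factor, the Serre--Tate theorem identifies $F$-liftable abelian varieties with ordinary ones, yielding that $A$ is ordinary. For the rational homogeneous factor $Y = G/P$, I need to show $Y \cong \prod_i \PP^{n_i}$. I would decompose $Y$ into its Picard rank one constituents by contracting extremal rays (equivalently, by enlarging $P$ one simple root at a time). Each constituent is again $F$-liftable --- a fact that should follow from the same K\"unneth/splitting argument applied to the canonical fibration structure --- and has Picard rank one. The claim then reduces to showing that a rational projective homogeneous variety of Picard rank one which is $F$-liftable must be $\PP^n$. Here I would exploit the sheaf map $\xi\colon F_Y^*\Omega^1_Y \to \Omega^1_Y$ coming from the lifting, the resulting Frobenius splitting $\det \xi \in H^0(Y, \omega_Y^{1-p})$, and the Bott vanishing forced by $F$-liftability, combined with a Borel--Weil--Bott/Kempf computation of the $G$-module structure of relevant cohomology groups on $G/P$ to produce an obstruction whenever $G/P$ is not a projective space.

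\textbf{Main obstacle.} The heart of the difficulty is this last reduction to $\PP^n$ in the Picard rank one case. Bott vanishing on $G/P$ is known to fail for essentially every generalized flag variety other than projective space, but a clean \emph{uniform} argument is delicate: one must rule out Grassmannians $\Gr(k,n)$ with $1 < k < n-1$, smooth quadrics of dimension $\ge 3$, Lagrangian and orthogonal Grassmannians, and the homogeneous varieties of exceptional type. Partial results are already in BTLM; a complete solution likely requires either an orbit-by-orbit root system analysis, or a more structural argument using that the image of $\xi$ must be a $P$-subbundle of $\Omega^1_{G/P}$ of rank $\dim Y$, which together with the Frobenius-splitting compatibility is rigid enough to force $P$ to be a maximal parabolic in a type $A_n$ factor corresponding to an end node.
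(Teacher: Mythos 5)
Your Steps 1--2 (Borel--Remmert decomposition plus descent of the Frobenius lifting to the factors) match the paper's argument, but the proof has a genuine gap exactly where you flag the "main obstacle": the Picard rank one case is not proved, and the route you propose for it --- Bott vanishing combined with a Borel--Weil--Bott/Kempf computation of cohomology on $G/P$ --- is essentially the strategy of \cite{BTLM}, which the paper deliberately avoids because it requires delicate case-by-case computations (Snow's results are characteristic-zero statements, and Bott's theorem itself fails in characteristic $p$, so "producing an obstruction whenever $G/P$ is not a projective space" this way is not a uniform argument and was never completed in the literature). The paper's actual proof of the rank one case (Proposition~\ref{prop:homog-rank-one}) is classification-free and uses none of this: starting from a Fano $X$ with $T_X$ nef, $\rho(X)=1$ and a Frobenius lifting, it studies the \'etale sheaf $(\Omega^1_X)^{\xi}$ of $\xi$-invariant forms, shows via Proposition~\ref{prop:invariance_of_r} that the number of trivial summands in $\phi^*T_X$ is constant on the family of minimal free rational curves meeting the locus where $\xi$ is an isomorphism, and then uses Zariski--Nagata purity and simple connectedness to manufacture a nonzero global one-form if some minimal curve fails to be very free, contradicting $H^0(X,\Omega^1_X)=0$ (Lemma~\ref{lem:no_global_forms}); Mori's characterization of $\PP^n$ (Theorem~\ref{thm:mori}) then concludes. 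Without an argument of this kind, your proof does not close.

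Two further points. First, your reduction "the claim then reduces to the Picard rank one case" is not valid as stated: knowing that every maximal-parabolic quotient $G/Q$ is a projective space does not force $G/P$ to be a product of projective spaces --- the incidence variety $F_{1,n}$ of point-hyperplane flags has both of its rank one contractions isomorphic to $\PP^n$. The paper needs the classification statement Lemma~\ref{lemma:pr-homogeneous-max} (only $\PP^n$ and $F_{1,n}$ survive) together with a separate proof that $F_{1,n}$ is not $F$-liftable (Lemma~\ref{lemma:incidence-variety}). Second, in positive characteristic the stabilizers $P_i$ in the Borel--Remmert decomposition may be non-reduced subgroup schemes; your Step~1 silently assumes they are reduced, whereas the paper disposes of this using the Lauritzen--Mehta theorem (Theorem~\ref{thm:reduced-stab}), which applies because an $F$-liftable variety is Frobenius split. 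Also note that the descent of $F$-liftability along $G/P\to G/Q$ is not a K\"unneth argument but an application of Theorem~\ref{thm:descending-frob-lift}(b) using $H^i(Q/P,\cO_{Q/P})=0$ for $i>0$; this is fixable, but as written it is not justified.
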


Our method of proof is based on the study of rational curves on $X$. In the crucial special case of rational homogeneous spaces of Picard rank one, we analyze the geometry of an \'etale covering of $X$ induced by the map \eqref{eqn:xi-intro}, and its restrictions to rational curves belonging to a carefully chosen covering family. In the final step, we apply Mori's characterization of the projective space. In fact, this method shows that Conjecture~\ref{conj:froblift} holds if $T_X$ is nef and $X$ is a~Fano variety of Picard rank one (see Proposition~\ref{prop:homog-rank-one}).

In the sequel \cite{PartII}, we also verify Conjecture~\ref{conj:froblift} in low dimensions.

\begin{thmi} \label{thm:specialcases}
  Conjecture~\ref{conj:froblift} is true in the following cases.
  \begin{enumerate}[(a)]
    \item If $\dim X\leq 2$ \cite[Section 3]{PartII}.
    \item If $X$ is a Fano threefold from the Mori--Mukai classification \cite[Section 4]{PartII}.
  \end{enumerate}
\end{thmi}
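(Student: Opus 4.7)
The plan is to reduce Theorem~\ref{thm:specialcases} to already-established cases (Mehta--Srinivas, Theorem~\ref{thm:homog}, Proposition~\ref{prop:homog-rank-one}) by running through the Kodaira--Enriques classification in dimension two and the Mori--Mukai classification in dimension three, using Bott vanishing \eqref{eqn:bott-intro} and the effectivity of $(1-p)K_X$ as the principal obstructions to $F$-liftability.

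For part (a), dimension one is immediate: effectivity of $(1-p)K_X$ forces $g(X)\le 1$, so either $X\isom\PP^1$ (toric) or $g=1$ and Mehta--Srinivas identifies $X$ with an ordinary elliptic curve, the Albanese map being the identity. In dimension two, the same effectivity yields $\kappa(X)\le 0$, and one proceeds case by case. When $\kappa(X)=0$, a $(-1)$-curve $E$ would satisfy $(1-p)K_X\cdot E<0$, so $X$ is minimal; then $\omega_X$ is numerically trivial and Mehta--Srinivas produces the required ordinary abelian \'etale cover. When $\kappa(X)=-\infty$ and $X$ is irrational, $X$ is a $\PP^1$-bundle over a smooth curve $C$ of positive genus (blow-ups are again excluded by testing effectivity against exceptional curves), and the Albanese factors through $C$; applying the curve case to $C$ and pulling back, it remains to show that an $F$-liftable $\PP^1$-bundle over an ordinary elliptic curve is a toric fibration, which one reads off from the induced map $\xi$ of \eqref{eqn:xi-intro} on the fibers. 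When $X$ is rational, the target is that $X$ be a smooth toric surface, i.e.\ $\PP^2$, some $\FF_n$, or an iterated blow-up of one of these at torus-fixed points; the key input is that blowing up a non-torus-fixed point destroys Bott vanishing for some ample line bundle, after which a suitable minimal model program on $X$ extracts the toric structure.

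For part (b), one has $H^1(X,\cO_X)=0$ on any Fano variety, so the Albanese is trivial and Conjecture~\ref{conj:froblift} reduces to showing that $X$ itself is toric. The plan is to go through the Mori--Mukai list family by family, dispatching each entry by the strongest applicable tool: the homogeneous threefolds (such as $\PP^3$, the smooth quadric, $\PP^1\times\PP^2$, $\PP^1\times\PP^1\times\PP^1$, and the flag variety) fall under Theorem~\ref{thm:homog}; Picard rank one Fano threefolds with nef tangent bundle fall under Proposition~\ref{prop:homog-rank-one}; the remaining Picard rank one threefolds from the Iskovskikh list are ruled out by choosing a covering family of rational curves on which the restriction of $\xi$ contradicts the known shape of $\Omega^1_X$; and in higher Picard rank one uses Bott vanishing together with extremal-ray contractions to peel off blow-ups, reducing the problem to identifying $X$ with one of the $18$ smooth toric Fano threefolds on Batyrev's list.

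The main obstacle lies in the rational-surface case of (a) and the non-homogeneous Picard-rank-one case of (b): in both, neither Mehta--Srinivas nor the homogeneous-space argument of Theorem~\ref{thm:homog} applies directly, and one must show that Bott vanishing, together with the injection $\xi\colon F_X^*\Omega^1_X\hookrightarrow\Omega^1_X$ and its determinantal Frobenius splitting $\det(\xi)\in H^0(X,\omega_X^{1-p})$, forces a toric structure. In dimension two this amounts to showing that every non-toric smooth rational surface carries an ample $L$ with $H^j(X,\Omega^i_X\otimes L)\ne 0$ for some $i,j>0$, and then tracing the birational contractions using $\xi$; in dimension three one must combine deformation theory of rational curves with Mori's characterization of $\PP^n$, much as in the proof of Theorem~\ref{thm:homog}, before appealing to the explicit classification to conclude.
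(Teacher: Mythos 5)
Your outline matches the paper's broad philosophy (reduce via $\kappa(X)\le 0$ and classification, use Bott vanishing and rigidity), but several of the steps you lean on in the genuinely hard cases are either false or missing the key mechanism. In dimension two: for a $(-1)$-curve $E$ one has $(1-p)K_X\cdot E=p-1>0$, so effectivity of $(1-p)K_X$ gives no contradiction; minimality in the $\kappa=0$ case instead follows because $K_X$ is then torsion, and --- more seriously --- effectivity does \emph{not} exclude blow-ups of ruled surfaces over an elliptic curve (e.g.\ $-(p-1)K$ of a blow-up of $C\times\PP^1$ is still effective), which the paper rules out by a compatibly $F$-split point argument on the elliptic base (Lemma~\ref{lem:negative_divisors}, Variant~\ref{var:log_variant_xi}, \cite[Lemma 1.1.8(ii)]{BrionKumar}). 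Your ``key input'' for rational surfaces --- that every non-toric smooth rational surface violates Bott vanishing --- is false: the quintic del Pezzo surface is rigid and satisfies Bott vanishing, so no combination of Bott vanishing and rigidity can isolate the toric ones. The paper's actual mechanism is the anticanonical $\QQ$-divisor $\Delta_{\wt F_X}$ attached to the Frobenius lifting: its shape on Hirzebruch surfaces (Lemma~\ref{lemma:frobenius_on_hirzebruch}) plus the fact that every blown-up point must be a singular point of $\lfloor\Delta\rfloor$ (Lemma~\ref{lemma:frobenius_blow-ups}) makes the pair sub-toric by induction (Proposition~\ref{prop:f-lifts_on_rational_surfaces}). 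Finally, your claim that an $F$-liftable $\PP^1$-bundle over an ordinary elliptic curve ``is a toric fibration, read off from $\xi$ on the fibers'' is not correct as stated ($\Sym^2(C)$ is $F$-liftable but only becomes a toric fibration after an \'etale cover), and it is not an argument; the delicate point in the paper is Proposition~\ref{prop:ruled_surface}, whose hard half (non-liftability of $\PP_C(E)$ for $E$ the nonsplit self-extension of $\cO_C$) uses the $\FF_p$-quotient description and the nonexistence of an $\FF_p$-invariant splitting of $\PP^1$ (Lemmas~\ref{lem:p1}, \ref{lem:flat_ruled_surface}).

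For Fano threefolds the same problem recurs: ``peeling off blow-ups by extremal contractions'' only \emph{descends} $F$-liftability to the base (Theorem~\ref{thm:descending-frob-lift}(b)), whereas the hard cases are rigid blow-ups whose bases are $F$-liftable, so nothing is gained. The paper excludes these with the compatibility machinery you never invoke: a Frobenius lifting of $\Bl_Z X$ forces a lifting of $X$ compatible with $Z$ (Lemmas~\ref{lem:negative_divisors}, \ref{lem:blow-ups}), which rules out M--M 2.12 and 2.28 (non-liftable or non-rigid centers), 2.30 and 3.22 (a lifting of $\PP^2$ compatible with a conic is impossible, Lemma~\ref{lem:conics_not_frobenius_compatible}), and 4.6 etc.\ (a compatible subvariety of a product must be a product, Proposition~\ref{prop:compatible_with_frobenius_on_products_preliminary}); the blow-up of $\PP^3$ along a twisted cubic (2.27) needs the $\Delta$-divisor calculus for the induced $\PP^1$-bundle over $\PP^2$ (Lemma~\ref{lem:twisted-cubic}); and the rank-one cases 1.15, 1.16 and the flag threefold 2.32 require explicit cohomology computations (Lemma~\ref{lemma:grassmann}) and the cone argument of Lemma~\ref{lemma:incidence-variety}, not a covering-family-of-rational-curves argument, which you assert but do not substantiate. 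The bulk of the list is indeed killed by rigidity, but via the Riemann--Roch computation $\chi(X,T_X)=\frac12(-K_X)^3-18+\rho(X)-\frac12 b_3(X)\ge 0$, a step your sketch does not contain. (Also note that $H^1(X,\cO_X)=0$ in characteristic $p$ is not free for Fano varieties; here it comes from Bott vanishing, and one also needs simple connectedness of $F$-liftable Fanos, Lemma~\ref{lem:no_global_forms}, to reduce the conjecture to $X$ itself being toric.) So while the skeleton is reasonable, the proposal omits precisely the new tools --- compatibility of liftings with centers and the $\QQ$-divisor $\Delta_{\wt F}$ --- on which the paper's proof of both parts rests.
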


\subsection{Relation to other problems}
\label{ss:intro-other-problems}

Let us mention three problems (in arbitrary characteristic) to which Conjecture~\ref{conj:froblift} is related.

\subsubsection*{Images of toric varieties} Consider a problem of the following type: given a projective variety $Z$, determine all \emph{smooth} projective varieties $X$ for which there exists a surjective morphism
\[ 
  \phi\colon Z\ra X.
\]
One of the first applications of Mori theory was Lazarsfeld's solution \cite{Lazarsfeld} to a problem of Remmert and van de Ven \cite{RemmertVDV}: if $Z\isom \PP^n$, then $X\isom \PP^n$ (or $X$ is a point). Subsequently, more general problems of this kind were considered:  for example, if $Z$ is an abelian variety, then $X$ admits a finite \'etale cover by a product of an abelian variety and projective spaces \cite{Debarre,HwangMok01,DemaillyHwangPeternell}. In \cite{occhetta_wisniewski}, Occhetta and Wi\'sniewski proved that if $Z$ is a~toric variety and $X$ has Picard rank one, then $X\isom \PP^n$ and were led to pose the following conjecture:

\begin{conji} \label{conj:jarekw}

A smooth complex projective variety $X$ admitting a surjective map $\phi\colon Z\to X$ from a complete toric variety $Z$ is a toric variety.
\end{conji}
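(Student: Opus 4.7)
The approach is to deduce the conjecture from Conjecture~\ref{conj:froblift} by specializing to positive characteristic. Spread out $X$, $Z$, and $\phi$ to models $X_R$, $Z_R$, $\phi_R$ over a finitely generated $\ZZ$-subalgebra $R \subset \bb{C}$. For a dense set of closed points $s \in \Spec R$ of sufficiently large residue characteristic $p$, the fiber $X_s$ over $\overline{\FF}_p$ is smooth projective, $Z_s$ is a complete toric variety, and $\phi_s$ remains surjective; after applying an equivariant toric resolution to $Z_s$, we may additionally assume $Z_s$ is smooth and toric.

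The central step is to show that $X_s$ is itself $F$-liftable. Smooth complete toric varieties are $F$-liftable: their defining fans lift tautologically to $W_2(\overline{\FF}_p)$, and the multiplication-by-$p$ map on the torus extends to a lift of Frobenius on $Z_s$. What is required is a descent result for $F$-liftability: if $\phi\colon Z \to X$ is a surjection from a smooth $F$-liftable $Z$ onto a smooth projective $X$, then $X$ is $F$-liftable. I expect this to be the main technical obstacle, since $\phi$ is generally neither flat nor smooth. The strategy would be to exploit the torus-equivariant nature of the Frobenius lift on $Z_s$ and descend it through the coherent algebra $\phi_{s*}\cO_{Z_s}$, showing that the obstructions to lifting both $X_s$ and $F_{X_s}$ are annihilated upon pullback along $\phi_s$.

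Once $F$-liftability of $X_s$ is established, the remainder is essentially formal. Since $Z$ is rationally connected (being toric) and $\phi$ is surjective, $X$ is rationally chain connected, and by smoothness $X_s$ is rationally connected, hence simply connected. Applying Conjecture~\ref{conj:froblift} to the simply connected $F$-liftable variety $X_s$ forces its Albanese to be trivial and therefore identifies $X_s$ with a toric variety for a dense set of closed $s$.

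Finally, to transfer the conclusion back to $\bb{C}$ one notes that toricity of $X_s$ gives a faithful torus action on $X_s$ of dimension $n = \dim X$; such torus actions are rigid in families and extend to the generic fiber by semicontinuity of $\dim \Aut^0(X_R/R)$, combined with the absence of moduli for tori of a fixed dimension. A standard argument then shows that $X$ also carries a faithful $n$-dimensional torus action with a dense orbit, i.e.\ $X$ is toric.
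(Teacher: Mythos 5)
Your overall skeleton is the same as the paper's (this statement is only proved conditionally, as Theorem~\ref{thm:c1impliesc2}: reduce mod $p$, show the reduction $X_{\bar s}$ is $F$-liftable by descending a Frobenius lifting along $\phi_{\bar s}$, apply the simply connected case of Conjecture~\ref{conj:froblift}, then transfer toricity back to characteristic zero), but the two steps that carry all the weight are left unproved, and as sketched they do not work. First, the descent of $F$-liftability: you acknowledge it as ``the main technical obstacle'' and propose to show that the obstructions die after pullback along $\phi_s$. Vanishing of the pulled-back obstruction is not enough; one needs the pullback map on the relevant $\Ext$-groups to be (split) injective, i.e.\ a splitting of $\cO_X \to R\phi_*\cO_Z$ in the derived category, and one also needs a lifting of $\phi_s$ modulo $p^2$ to even compare the obstruction classes. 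The paper obtains all of this by first reducing, as in Occhetta--Wi\'sniewski, to the case where $\phi$ is \emph{finite}, hence flat by miracle flatness ($X$ smooth, $Z$ Cohen--Macaulay), of degree $d$ prime to $p$ for all large $p$; then $\tfrac1d\cdot\mathrm{tr}$ splits $\cO_X\to\phi_*\cO_Z$ and Theorem~\ref{thm:descending-frob-lift}(a.ii), together with a $W_2(k)$-point of the (smooth over $\ZZ$) parameter scheme providing the lift of $\phi_{\bar s}$, gives $F$-liftability of $X_{\bar s}$. Without this reduction your ``descend through $\phi_{s*}\cO_{Z_s}$'' has no mechanism. (A smaller issue: simple connectedness of $X_{\bar s}$ should be deduced by specializing $\pi_1$ from the rationally connected complex fiber, as in the paper; arguing directly in characteristic $p$ would require \emph{separable} rational connectedness, which you have not established.)

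Second, the transfer back to $\bb{C}$ is wrong as stated. Upper semicontinuity goes the other way: $h^0(X_s,T_{X_s})$ can only jump \emph{up} under specialization, so knowing that a dense set of closed (positive-characteristic) fibers carries an $n$-dimensional torus action gives no lower bound on $\dim\Aut^0$ of the generic fiber, and ``rigidity of torus actions in families'' does not extend an action from special fibers to the generic one. This is precisely why the paper proves Corollary~\ref{cor:toric-generalization}: it uses the K\k{e}dzierski--Wi\'sniewski strengthening of Jaczewski's characterization (Theorem~\ref{thm:kedz-wisn}) — toricity of the closed fibers forces the potential sheaf $R_{X/S}$ to split on those fibers, the splitting locus is constructible (Proposition~\ref{prop:split}, which itself needs Lemma~\ref{lem_isom_on_Pic}), hence the potential sheaf splits on the geometric generic fiber, which in characteristic zero, together with $H^1(X,\cO_X)=H^2(X,\cO_X)=0$, implies $X_{\bar\eta}$ is toric. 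You need either this argument or a genuine substitute for it; the semicontinuity claim cannot serve as one.
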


This project gained momentum upon our discovery of a link between Conjecture~\ref{conj:froblift} and Conjecture~\ref{conj:jarekw}.

\begin{thmi}[see Theorem~\ref{thm:c1impliesc2}] \label{thm:relate-conjectures} 
  Conjecture~\ref{conj:froblift} for simply connected varieties implies Conjecture~\ref{conj:jarekw} in characteristic zero. 
\end{thmi}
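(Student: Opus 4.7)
The strategy is to spread $\phi\colon Z\to X$ out to mixed characteristic, reduce modulo a sufficiently large prime to obtain an $F$-liftable simply connected variety, apply the assumed form of Conjecture~\ref{conj:froblift}, and finally descend the resulting toric structure back to characteristic zero. After replacing $Z$ by a smooth toric resolution, we may assume $Z$ itself is smooth and toric. Rational connectedness of $Z$ propagates along the surjection $\phi$, so $X$ is rationally connected and in particular simply connected. I would then spread the datum $(Z,X,\phi)$ to a smooth projective family $\mathcal Z\to\mathcal X$ over a finitely generated $\mathbb{Z}$-subalgebra $R\subset\mathbb{C}$, with $\mathcal Z$ fibrewise smooth toric and $\mathcal X$ fibrewise separably rationally connected (hence simply connected) over a dense open of $\Spec R$.

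\smallskip

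For a closed point $s\in\Spec R$ with residue field $k=\overline{k(s)}$ of sufficiently large characteristic $p$, the toric variety $Z_s$ carries the canonical Frobenius lifting $(\tilde Z_s,\tilde F_{Z_s})$ arising from the same fan taken over $W_2(k)$ together with the multiplication-by-$p$ map on the torus. The family $\mathcal X$ furnishes a $W_2(k)$-lift $\tilde X_s$ of $X_s$ and a lift $\tilde\phi_s\colon\tilde Z_s\to\tilde X_s$ of $\phi_s$ (for $p$ large the relevant obstruction classes vanish after shrinking $\Spec R$). After replacing $Z_s$ by its Stein factorization over $X_s$, so that $\phi_{s*}\cO_{Z_s}=\cO_{X_s}$ while retaining an $F$-liftable structure on the modified $Z_s$, the composition $\tilde\phi_s\circ\tilde F_{Z_s}$ contracts the fibres of $\tilde\phi_s$ (a condition detectable already modulo $p$) and hence factors uniquely through a morphism $\tilde F_{X_s}\colon\tilde X_s\to\tilde X_s$, which is the sought Frobenius lift of $X_s$. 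By the hypothesis of Conjecture~\ref{conj:froblift}, the simply connected $F$-liftable variety $X_s$ is then toric for a Zariski-dense set of closed points $s\in\Spec R$.

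\smallskip

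To ascend the toric structure back to characteristic zero, I would use that a smooth projective variety is toric precisely when its log tangent bundle for a suitable snc boundary is trivial (equivalently, when it admits an effective action of a split algebraic torus of maximal dimension) --- a characterization closely related to the logarithmic Mehta--Srinivas variant developed elsewhere in this paper. This condition descends well in families: an effective maximal-dimensional split torus action on a Zariski-dense set of geometric fibres of $\mathcal X\to\Spec R$ propagates via the relative automorphism scheme to the generic fibre and thence to $X$ itself. \emph{The main obstacle} is the transfer of $F$-liftability from $Z_s$ to $X_s$: one must simultaneously control the existence of the lift $\tilde\phi_s$, preserve a Frobenius lift on the Stein factorization of $Z_s$ over $X_s$, and verify the descent of $\tilde F_{Z_s}$ along $\tilde\phi_s$, all uniformly for $p\gg 0$; handling the possibly non-toric Stein factor is where the genuine technical work lies.
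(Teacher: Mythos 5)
Your overall skeleton (spread out over a finitely generated ring, reduce modulo $p$, deduce $F$-liftability of the closed fibre, apply the assumed case of Conjecture~\ref{conj:froblift}, ascend toricity back to characteristic zero) is the same as the paper's, but the two load-bearing steps are exactly where your argument has gaps. First, the transfer of $F$-liftability from $Z_s$ to $X_s$. Your route --- replace $Z_s$ by its Stein factorization so that $\phi_{s*}\cO_{Z_s}=\cO_{X_s}$ and then factor $\tilde\phi_s\circ\tilde F_{Z_s}$ through the contraction --- does not work as stated: the finite Stein factor over $X_s$ is in general neither toric nor visibly $F$-liftable (you acknowledge this yourself), and even with connected fibres you would need $R^1\phi_{s*}\cO_{Z_s}=0$ plus a rigidity/factorization statement over $W_2(k)$ to push a Frobenius lift forward; this is precisely hypothesis (b.i) of Theorem~\ref{thm:descending-frob-lift}, which is not available here. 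The paper sidesteps all of this: following Occhetta--Wi\'sniewski one first reduces to $\phi$ \emph{finite}, hence flat by miracle flatness, and spreads out so that $\deg\phi$ is invertible on the base; the lift $\phi_{\tilde s}\colon Z_{\tilde s}\to X_{\tilde s}$ then comes for free by base change along a $W_2(k)$-point of the smooth base (no obstruction-vanishing argument is needed for it), and Theorem~\ref{thm:descending-frob-lift}(a.ii) --- functoriality of the obstruction class $o^F$ combined with the trace splitting of $\cO_{X_s}\to\phi_{s*}\cO_{Z_s}$, degree prime to $p$ --- kills the obstruction on $X_{\tilde s}$. No commuting Frobenius lifts, no Stein factorization, and no factoring of morphisms over $W_2(k)$ are required.

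Second, the ascent of toricity to the generic fibre is asserted rather than proved. Knowing that $X_{\bar s}$ is toric for a Zariski-dense set of closed points does not by itself produce a maximal-dimensional torus action, or an snc boundary with trivial log tangent bundle, on $X_{\bar\eta}$: there is no boundary divisor defined over the generic point to begin with, and the fibres of the relative automorphism scheme can jump, so ``propagation via the relative automorphism scheme'' is exactly the missing argument, not a proof. The paper handles this step with Corollary~\ref{cor:toric-generalization}: on every toric fibre the Jaczewski/K\k{e}dzierski--Wi\'sniewski potential sheaf $R_{X/S}$ splits into line bundles, the splitting of a vector bundle into line bundles is a constructible condition in the family (Proposition~\ref{prop:split}, which itself needs control of $\Pic_{X/S}$ as in Lemma~\ref{lem_isom_on_Pic}), and the characteristic-zero converse of Theorem~\ref{thm:kedz-wisn} then shows $X_{\bar\eta}$ is toric. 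To close your argument you would need either this potential-sheaf mechanism or a genuinely new semicontinuity statement for torus actions; neither is supplied in your proposal.
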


The key step in the proof uses the functoriality of obstruction classes to lifting Frobenius, an idea due to the third author \cite{zdanowicz}: after reducing a given surjective map $\phi\colon Z\to X$ modulo $p^2$, one can relate the obstruction classes to lifting Frobenius on $X$ and $Z$. Since $Z$ is a toric variety, its reduction modulo $p$ is $F$-liftable, and so is the reduction of $X$. By the assumed case of Conjecture~\ref{conj:froblift}, the reduction of $X$ is a toric variety. Finally, one uses a strengthening of Jaczewski's characterization of toric varieties \cite{Jaczewski} from \cite{KedzierskiWisniewski} to conclude that $X$ must be a toric variety. We obtain the following result as a by-product.

\begin{thmi}
  Let $\phi\colon Z\to X$ be a surjective morphism from a complete toric variety $Z$ to a smooth projective variety $X$ defined over a field $k$ of characteristic zero. Then $X$ satisfies Bott vanishing \eqref{eqn:bott-intro}.
\end{thmi}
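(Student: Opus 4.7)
The strategy is to reduce the problem to positive characteristic, where Bott vanishing follows from the Buch--Thomsen--Lauritzen--Mehta theorem on $F$-liftable varieties \cite{BTLM}, and then to transfer the vanishing back to characteristic zero by semi-continuity. The essential ingredient is the functoriality of obstructions to lifting Frobenius under surjective morphisms, which is the technical heart of Theorem~\ref{thm:relate-conjectures}.

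Fix an ample line bundle $L$ on $X$ and indices $i\geq 0$, $j>0$. Spread out the data: choose a finitely generated $\ZZ$-subalgebra $R\subset k$ (which we may assume to be a domain) and construct proper flat models $\mathscr{Z}, \cX, \cL$ over $S = \Spec R$ together with a surjective $S$-morphism $\Phi\colon \mathscr{Z}\to \cX$ whose base change to $k$ recovers the original data. After shrinking $S$, we may assume that $\cX\to S$ is smooth and projective, $\mathscr{Z}\to S$ is a family of complete toric varieties, $\cL$ is relatively ample, and $\Phi_s$ is surjective on every geometric fiber.

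For each closed point $s\in S$ with residue field $\kappa(s)$ of characteristic $p>0$, the toric fiber $\mathscr{Z}_s$ is $F$-liftable, as the $p$-th power map on its dense torus extends canonically to a lifting of Frobenius on the entire equivariant toric compactification over $W_2(\kappa(s))$. By the functoriality of the obstruction class in $\Ext^1(F^*\Omega^1,\cO)$ under surjective morphisms established in \cite{zdanowicz} --- precisely the argument used in the proof of Theorem~\ref{thm:relate-conjectures} --- the surjectivity of $\Phi_s$ combined with smoothness of $\cX_s$ forces the obstruction class of $\cX_s$ to vanish. Hence $\cX_s$ is $F$-liftable, and applying \cite{BTLM} yields
\[
  H^j(\cX_s,\, \Omega^i_{\cX_s}\otimes \cL_s) = 0.
\]

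Since $R$ is a finitely generated $\ZZ$-domain, the set of closed points of $S$ of positive residue characteristic is Zariski dense. Upper semi-continuity of cohomology implies that the locus $\{s\in S : h^j(\cX_s, \Omega^i_{\cX_s}\otimes \cL_s)=0\}$ is open, and by the above it is non-empty, so it contains the generic point of $S$. Flat base change to $k$ finally gives $H^j(X, \Omega^i_X\otimes L) = 0$, establishing Bott vanishing for $X$. The only potentially subtle step is the functoriality of the Frobenius-lifting obstruction in the second paragraph; this is however already developed in \cite{zdanowicz} and applied in the proof of Theorem~\ref{thm:relate-conjectures}, so it may be invoked as a black box.
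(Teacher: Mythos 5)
Your overall architecture (spread out over a finitely generated $\ZZ$-algebra, lift a closed point to $W_2$, use $F$-liftability of the toric fiber, conclude Bott vanishing in characteristic $p$ via \cite{BTLM}, and transfer back by semicontinuity) is exactly the paper's route, and the semicontinuity/flat base change step at the end is fine. The gap is in the middle: you assert that ``the surjectivity of $\Phi_s$ combined with smoothness of $\cX_s$ forces the obstruction class of $\cX_s$ to vanish,'' invoking the functoriality of the Frobenius-lifting obstruction as a black box. Functoriality (Lemma~\ref{lem:frob_functoriality_lifting}(a), i.e.\ the right-hand square in \eqref{eqn:functoriality-dgms-1}) only says that $o^F_{\cX_{\swt s}}$ composed with $\cO_{\cX_s}[1]\to R\Phi_{s*}\cO_{\cZ_s}[1]$ vanishes when $o^F_{\cZ_{\swt s}}=0$; to conclude $o^F_{\cX_{\swt s}}=0$ one needs $\cO_{\cX_s}\to R\Phi_{s*}\cO_{\cZ_s}$ to be a \emph{split} monomorphism, which is hypothesis (a.i) of Theorem~\ref{thm:descending-frob-lift}. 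Surjectivity alone does not provide this in characteristic $p$: the absolute Frobenius $F_X\colon X\to X$ is finite, flat and surjective onto a smooth target, yet $\cO_X\to F_{X*}\cO_X$ splits only when $X$ is $F$-split, so ``$F$-liftable source surjecting onto smooth target'' does not by itself kill the obstruction.

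The paper supplies the missing splitting by first reducing, as in \cite{occhetta_wisniewski}, to the case where $\phi$ is \emph{finite}; then $\phi$ is flat by Miracle Flatness (smooth target, Cohen--Macaulay toric source), and after inverting $d=\deg\phi$ on the base every closed fiber $\Phi_s$ is finite flat of degree prime to $p$, so $\tfrac1d\cdot\mathrm{trace}$ splits $\cO_{\cX_s}\to\Phi_{s*}\cO_{\cZ_s}$ and Theorem~\ref{thm:descending-frob-lift}(a.ii) applies. Your writeup omits all three ingredients (reduction to a finite map, flatness, and avoiding the primes dividing the degree), and without them the descent of $F$-liftability — the heart of the argument — is unjustified. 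A smaller point, easily repaired in your setup: applying the obstruction functoriality also requires a lifting of $\Phi_s$ modulo $p^2$, which you should extract by arranging $S$ smooth over $\ZZ$ (or otherwise lifting the chosen closed point to $W_2(\kappa(s))$) and taking the induced $W_2$-models of $\cZ$, $\cX$ and $\Phi$.
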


Given the theorem it is natural to ask whether all smooth rationally connected varieties satisfying Bott vanishing are in fact toric.  After this article was made available as a preprint, Burt Totaro  answered this question negatively \cite{totaro_bott_vanishing} in a beautiful manner, by proving that the non-toric surface obtained by blowing up $\mathbf{P}^2$ in four points in general position satisfies Bott vanishing.

\subsubsection*{Varieties with trivial log tangent bundle}

Let $X$ be a smooth projective variety over an algebraically closed field $k$, and let $D\subseteq X$ be a divisor with normal crossings. Suppose that the sheaf $\Omega^1_X(\log D)$ of differentials with log poles along $D$ is free. If $k$ has characteristic zero, then by a result of Winkelmann \cite{Winkelmann}, $X$ admits an action of a semi-abelian variety (an extension of an abelian variety by a torus) which is transitive on $X\setminus D$; in particular, if $D=0$, then $X$ is an abelian variety. However, in positive characteristic $X$ might not be an abelian variety itself  when $D=0$, but  then the main result of \cite{MehtaSrinivas} states that if $X$ is ordinary, then it admits an abelian variety as a finite \'etale Galois cover. It is not known if the ordinarity assumption is necessary.

The link with Frobenius liftability was first observed in \cite{MehtaSrinivas} in relation to the above result: if $X$ has a trivial tangent bundle and is ordinary, then it is $F$-liftable. Conversely, if $\omega_X$ is numerically trivial and $X$ is $F$-liftable, then the map \eqref{eqn:xi-intro} is an isomorphism, and it follows that $\Omega^1_X$ becomes trivial on a finite \'etale cover of $X$. 

We shall say that the pair $(X, D)$ is \emph{$F$-liftable} if there exists a lifting $(\wt X, \wt D)$ of $(X, D)$ modulo $p^2$ such that $\wt D$ has relative normal crossings, and a lifting $\wt F$ of Frobenius to $\wt{X}$ which is compatible with $\wt D$ in the sense that
\[ 
  \wt F{}^* \wt D = p\wt D.
\]

\begin{thmi}[see Theorem~\ref{thm:frobenius_cy}] \label{thm:log-mehta-srinivas} 
  Let $(X, D)$ be a projective nc pair over an algebraically closed field $k$ of positive characteristic. The following conditions are equivalent.
  \begin{enumerate}[(i)]
    \item $(X, D)$ is $F$-liftable and $\omega_X(D)$ is numerically trivial,
    \item $X$ is $F$-split and $\Omega^1_X(\log D)$ becomes trivial on a finite \'etale cover of $X$,
    \item $X$ admits a finite \'etale cover $f \colon Y \to X$ whose Albanese map $a\colon Y\to {\rm Alb}(Y)$ is a~toric fibration over an ordinary abelian variety with toric boundary $f^{-1}(D)$.
  \end{enumerate}
\end{thmi}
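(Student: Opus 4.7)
The natural strategy is to prove the chain $(\textrm{iii}) \Rightarrow (\textrm{i}) \Rightarrow (\textrm{ii}) \Rightarrow (\textrm{iii})$. For $(\textrm{iii}) \Rightarrow (\textrm{i})$, the toric fibration $a\colon Y\to \Alb(Y)$ with toric boundary $f^{-1}(D)$ has $\omega_Y(f^{-1}D)\simeq \cO_Y$ (the log canonical is trivial on each toric fiber and the base has trivial canonical), so $\omega_X(D)$ is numerically trivial. To produce an $F$-lift of $(X,D)$, first lift $(Y, f^{-1}D)$ together with Frobenius by combining the canonical Serre--Tate lift of $\Alb(Y)$ with the canonical toric $F$-lift of each fiber; Zariski-local triviality of the fibration together with the functoriality of both constructions glues these to a $\mathrm{Gal}(Y/X)$-equivariant lift, which descends to the required lift of $(X,D)$.

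For $(\textrm{i}) \Rightarrow (\textrm{ii})$, an $F$-lift of $(X, D)$ produces (by the log analogue of \eqref{eqn:xi-intro}) a morphism $\xi \colon F_X^* \Omega^1_X(\log D) \to \Omega^1_X(\log D)$ whose determinant is a global section of $\omega_X(D)^{1-p}$. Numerical triviality of $\omega_X(D)$ forces any nonzero such section to be nowhere vanishing, and the existence of the $F$-lift guarantees $\det\xi \neq 0$; so $\xi$ is an isomorphism and $X$ is $F$-split. The isomorphism endows $\Omega^1_X(\log D)$ with an $F$-divided sheaf structure, which by a Gieseker--Lange--Stuhler-type correspondence amounts to a continuous representation of $\piet(X)$. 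Since $\det\Omega^1_X(\log D) \simeq \omega_X(D)$ is numerically trivial and $\xi$ is an isomorphism, the bundle is strongly semistable of degree zero, hence numerically flat; numerically flat $F$-divided sheaves have finite monodromy and thus trivialize on some finite \'etale cover.

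The main obstacle is $(\textrm{ii}) \Rightarrow (\textrm{iii})$, the logarithmic Mehta--Srinivas statement and the genuinely new contribution. After replacing $X$ by the \'etale cover trivializing $\Omega^1_X(\log D)$ (which preserves $F$-splitness), we may assume $T_X(-\log D)$ is free. The plan is to integrate the commuting trivializing log derivations into an action on $(X,D)$ of a connected smooth group scheme $G$ of dimension $\dim X$ which is transitive on $X\setminus D$. In characteristic zero this is Winkelmann's theorem and gives $G$ semi-abelian; in characteristic $p$ the delicate step is controlling the $p$-th power operation on these derivations, where the $F$-splitting hypothesis takes the place of the classical formal integrability argument and simultaneously forces $\Alb(X)$ to be ordinary. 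The Albanese morphism then presents itself as the quotient of $G$ by its maximal linear subgroup $T$, which must be a torus---a unipotent factor would either obstruct the descent of $F$-splitting to the base or contradict the triviality of $\omega_X(D)$. The most delicate final step is to identify $D$ itself, not merely its support, with the relative toric boundary of the resulting fibration; we plan to exploit the normal crossings hypothesis together with the $T$-equivariance of the $F$-splitting to match components of $D$ with the $T$-invariant prime divisors in each fiber.
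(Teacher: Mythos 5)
Your directions (i)$\Rightarrow$(ii) and (iii)$\Rightarrow$(i) are close in spirit to the paper's argument: for (i)$\Rightarrow$(ii) the paper also takes the log version of $\xi$, observes $\det\xi$ is a nowhere-vanishing section of $\omega_X(D)^{1-p}$, and quotes Lange--Stuhler, so your detour through numerical flatness is harmless. For (iii)$\Rightarrow$(i), however, your descent step is under-justified: the local toric Frobenius lifts on a Zariski-locally trivial fibration are far from unique, so ``functoriality glues these to a $\mathrm{Gal}(Y/X)$-equivariant lift'' does not follow; the paper instead builds one global lift over the Serre--Tate lift via Lemma~\ref{lemma:flift-torsors} (using the unique lifts $\wt L$ with $\wt F^*\wt L\isom\wt L^{\otimes p}$), then passes to a further cover on which $\Omega^1(\log)$ is actually trivial, where the splitting of the log Cartier sequence is \emph{unique} (since $H^0(B^1)=0$), hence automatically Galois-invariant and descends. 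Some such uniqueness/equivariance input is needed in your sketch.

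The genuine gap is (ii)$\Rightarrow$(iii). Your plan is to integrate the commuting log derivations trivializing $T_X(-\log D)$ into an action of a smooth connected group scheme of dimension $\dim X$, transitive on $X\setminus D$ --- i.e.\ a characteristic-$p$ Winkelmann theorem with $F$-splitting as a crutch. There is no mechanism for this: in characteristic $p$ global vector fields do not integrate to actions of smooth connected groups (a $p$-closed Lie algebra only yields a height-one group scheme), and no amount of $F$-splitting substitutes for the analytic/formal integration used by Winkelmann. Worse, the statement you aim for is false or at least cannot be expected: Remark~\ref{rmk:false-theorems} (``Theorem''~B) exhibits the failure of Winkelmann's conclusion in positive characteristic, and the correct conclusion (iii) only produces a toric fibration structure after a finite \'etale cover of $X$ --- a toric fibration over an abelian variety generally carries no semi-abelian action with dense orbit, so even the target of your construction is wrong. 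The paper's actual route (Theorem~\ref{thm:ordinary_trivial_cotangent}) is entirely different: one lifts $(X,D)$ together with Frobenius over $W(k)$ (possible since $\Ext^1(\Omega^1_X(\log D),B^1_X)=0$, Lemma~\ref{lem:log_trivial_lifting_frobenius}); if some cover kills $H^1(\cO)$ one applies Winkelmann (Theorem~\ref{thm:winkelmann}) to the generic fiber in characteristic zero and specializes toric pairs back (Proposition~\ref{prop:toric-specialization}); otherwise one spreads out to a finite field, where a power of the lifted Frobenius is a polarized endomorphism, invokes Theorem~\ref{thm:greb_kebekus_peternell} to get virtually abelian $\piet$, and runs the Mehta--Srinivas induction on covers maximizing $\dim\Alb$, with a Leray spectral sequence argument and the global rigidity of toric pairs closing the loop. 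None of these ingredients (nor replacements for them) appear in your outline, so the hard implication remains unproved.
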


\noindent This characterization of varieties with trivial logarithmic tangent bundle is a positive characteristic analogue of Winkelmann's theorem \cite{Winkelmann} and a generalisation of the result of Mehta-Srinivas \cite{MehtaSrinivas}. Moreover, Theorem~\ref{thm:log-mehta-srinivas} shows that Conjecture~\ref{conj:froblift} is true when one can find a Frobenius compatible nc divisor $D$ such that $\omega_X(D)$ is numerically trivial.

The proof is not less complicated than the one in \cite{MehtaSrinivas}. We closely follow their strategy with a few differences. The case $H^1(X, \cO_X) = 0$ is handled by lifting to characteristic zero, applying the aforementioned theorem of Winkelmann \cite{Winkelmann}, and reducing the setting back to positive characteristic. Interestingly, our results show that a direct generalisation of Winkelmann's theorem to characteristic $p$ is false, while Winkelmann's theorem indicates that a natural analogue of Theorem~\ref{s:log-mehta-srinivas}(iii)$\Rightarrow$(ii) is false in characteristic zero (see Remark~\ref{rmk:false-theorems}). But the most important difference comes from the fact that \cite{MehtaSrinivas} used Yau's work on the Calabi conjecture for a~lifting of $X$ to characteristic zero in one of the key steps. In the log setting, such results are unfortunately unavailable, and instead we reduce to the case of a finite ground field, then lift to characteristic zero together with Frobenius, and use results of \cite{nakayama-zhang} and \cite{greb_kebekus_peternell} on varieties with a polarized endomorphism. This brings us to the next array of open problems.

\subsubsection*{Polarized endomorphisms} 

A polarized endomorphism of a projective variety $X$ is by definition one which extends to an endomorphism of an ambient projective space. It seems that there are many similarities between Frobenius liftings and polarized endomorphisms. Over a finite field, a power of the Frobenius morphism is a polarized endomorphism. In characteristic zero, the toric Frobenius is an example of such, as is the Serre--Tate canonical lifting of Frobenius on an ordinary abelian variety. As in the case of Frobenius liftability, varieties admitting a~polarized endomorphism satisfy $\kappa(X)\leq 0$ (see \cite{nakayama-zhang}).

If $\rho(X)=1$, then every endomorphism is polarized. In this case, a conjecture by Amerik \cite{amerik97} states that if a smooth rationally connected $X$ with $\rho(X)=1$ admits a polarized endomorphism, then $X \isom \PP^n$. This has been proved in certain cases, for example, when $\dim X \leq 3$ \cite{ARV}, $X$ is a hypersurface \cite{beauville}, or $X$ is a rational homogeneous space \cite{paranjape-Srnivas}. Moreover, Zhang showed that a smooth Fano threefold with a polarized endomorphism is rational \cite{zhang10}. In general, it has been asked if every smooth rationally connected variety with a polarized endomorphism is toric (see, for instance, \cite{fakhruddin}).

\subsection{Methods and further results} 

The proofs of the results mentioned above rely on a variety of tools: deformation theory, toric fibrations and the study of Frobenius liftings and their consequences.  Let us mention here some of our observations which might be of independent interest. Some of these results are valid for arbitrary (non-necessarily smooth) $k$-schemes.  

Two of the most important tools related to Frobenius liftability are the technique of descent (see Theorem~\ref{thm:descending-frob-lift}) and the sheaf of invariants differentials (see \S\ref{ss:def-thy-frob}).  The descent result states roughly that if $Y$ is $F$-liftable and $\pi \colon Y\to X$ is a morphism, then under certain assumptions on $\pi$, one can deduce the $F$-liftability of $X$.  We provide a more comprehensive treatment of the subject in the sequel \cite{PartII}.

We also provide a few results on toric varieties in families.  First, if a~general fiber of a~smooth projective family is toric, then the generic fiber must be toric as well (Corollary~\ref{cor:toric-generalization}).  To this end, we need an auxiliary result (Proposition~\ref{prop:split}) on the constructibility of the locus where a given vector bundle splits. Second, given a smooth projective family together with a relative normal crossings divisor over a connected base, if a \emph{single} geometric fiber is a toric pair, then the entire family has to be a toric fibration (Proposition~\ref{prop:global-rigidity}). In other words, using the terminology of Hwang and Mok, toric pairs are globally rigid.

\subsection{Outline of the paper}
\label{ss:outline}

In Section~\ref{s:prelim}, we gather some preliminary results and notions: toric varieties over an arbitrary base (i.e., toric fibrations) in \S\ref{ss:toric-var}, deformation theory including the crucial technique of descending deformations in \S\ref{ss:def-thy}, some basic facts about normal crossing pairs in \S\ref{ss:nc-pairs}, a review of the Cartier operator in \S\ref{ss:log_cartier_isom}, and Frobenius splittings in \S\ref{ss:f-split}. Section~\ref{s:froblift} deals with Frobenius liftability, mainly of smooth schemes and from the point of view of deformation theory (see \cite[\S 2]{PartII} for more general results on Frobenius liftability). In Section~\ref{s:toric-reductions}, we deal with the abstract problem of showing that the generic fiber of a family is toric if a~general one is (\S\ref{ss:generalization}) and with the opposite problem of proving that if the generic fiber of a~degeneration is a toric pair, then so is the special one (\S\ref{ss:specialization}). These results are then applied to show a general `global rigidity' result for toric pairs (\S\ref{ss:global-rigidity}) and Theorem~\ref{thm:relate-conjectures} (\S\ref{ss:images}). Sections~\ref{s:log-mehta-srinivas} and \ref{s:homogeneous-spaces} deal with Theorems~\ref{thm:log-mehta-srinivas} and \ref{thm:homog}, respectively. 

\subsection{Notation and conventions}
\label{ss:notation-conventions}

Let $p$ be a prime number. If $X$ is an $\FF_p$-scheme, we denote by $F_X\colon X\to X$ its absolute Frobenius morphism (i.e., the identity on the underlying topological space and the $p$-th power map on the structure sheaf). If $f \colon X \to S$ is a morphism of $\FF_p$-schemes, the relative Frobenius $F_{X/S}\colon X\to X'$ is the unique morphism making the following diagram commute
\begin{equation} \label{eqn:frobdgm} 
  \xymatrix{
    X \ar@/^1em/[drr]^{F_X} \ar@/_1em/[ddr]_f  \ar@{.>}[dr]|{F_{X/S}} \\
    & X' \ar[d] \ar[r]^{W_{X/S}} \ar@{}[dr]|\square & X\ar[d]^f \\
    & S \ar[r]_{F_S} & S.
  }
\end{equation}
The $S$-scheme $X'$ is called the \emph{Frobenius twist} of $X$ relative to $S$. Note that if $S=\Spec k$ for a~perfect field $k$, then $W_{X/S}$ is an isomorphism of schemes.

Most of the time we shall be working with a fixed perfect field $k$ of characteristic ${p>0}$. We denote by $W_n(k)$ its ring of Witt vectors of length $n$, that is, the unique (up to a~unique isomorphism) flat $\ZZ/p^n\ZZ$-algebra with an isomorphism $W_n(k)/pW_n(k)\isom k$, and by $W(k) = \smash{\varprojlim_n} W_n(k)$ the full ring of Witt vectors. The unique endomorphism restricting to the Frobenius $F_k$ modulo $p$ is denoted by $\sigma\colon W_n(k)\to W_n(k)$ or $\sigma\colon W(k)\to W(k)$. 

We often set $S=\Spec k$ and $\wt S = \Spec W_2(k)$. If $X$ is an $S$-scheme, a \emph{lifting of $X$ (modulo~$p^2$)} is a~flat $\wt S$-scheme $\wt X$ with an isomorphism $\smash{\wt X\times_{\wt S} S} \isom X$. A \emph{lifting of Frobenius} on $X$ to $\wt X$ is a morphism $\wt F_X \colon \wt X\to \wt X$ restricting to $F_X$ modulo $p$. We shall give a thorough discussion of these in~\S\ref{s:froblift}.

\subsection*{Acknowledgements}
We would like to thank Renee Bell, Paolo Cascini, Nathan Ilten, Adrian Langer, Arthur Ogus, Vasudevan Srinivas, and Jarosław Wiśniewski for helpful suggestions and comments. 

Part of this work was conducted during the miniPAGES Simons Semester at the Banach Center in Spring 2016.  The authors would like to thank the Banach Center for hospitality. The first author was supported by NCN OPUS grant number UMO-2015/17/B/ST1/02634. The second author was supported by the Engineering and Physical Sciences Research Council [EP/L015234/1].  The third author was supported by NCN PRELUDIUM grant number UMO-2014/13/N/ST1/02673.  This work was partially supported by the grant 346300 for IMPAN from the Simons Foundation and the matching 2015--2019 Polish MNiSW fund. 


\section{Preliminaries}
\label{s:prelim}

\subsection{Toric varieties and toric fibrations}
\label{ss:toric-var}

A toric variety (see e.g.\ \cite{Fulton}) is by definition a normal algebraic variety $X$ over an algebraically closed field $k$ together with an effective action of a torus $T\isom \GG_m^n$ with a dense orbit. Such varieties admit a completely combinatorial description in terms of \emph{rational polyhedral fans} $\Sigma$ in $N_{\RR} = \Hom(\GG_m, T)\otimes_{\ZZ} \RR$. This description is independent of the field $k$, and in particular every toric variety has a natural model $X(\Sigma)$ over $\ZZ$. We denote by $D(\Sigma)\subseteq X(\Sigma)$ the \emph{toric boundary} of $X(\Sigma)$, that is, the complement of the open orbit. Sometimes we shall abuse the terminology and say that a~variety $X$ \emph{is} a toric variety meaning that it admits the structure of a~toric variety. 

In what follows, we will have to deal with families of toric varieties over more general bases (such as the rings $W_2(k)$ and $W(k)$ or the Albanese variety in the statement of Conjecture~\ref{conj:froblift}). There is more than one sensible definition of a `toric variety over a base scheme $S$,' and we decided to settle on the following. 

\begin{defin} 
  Let $S$ be a scheme. 
  \begin{enumerate}[(a)]
    \item A \emph{torus} over $S$ is an $S$-group scheme $T$ which is \'etale-locally isomorphic to $\GG_{m,S}^n$ for some $n\geq 0$.
    \item A \emph{toric fibration} over $S$ is a flat $S$-scheme $X$ together with an action of a torus $T$ over $S$ such that \'etale-locally on $S$ there exist isomorphisms $T\isom \GG_{m,S}^n$ and $X\isom X(\Sigma)_S$ for some rational polyhedral fan $\Sigma\subseteq \RR^n$.
  \end{enumerate}
\end{defin}

In particular, we do not require the torus $T$ to be \emph{split}, i.e.\ that $T\isom \GG^n_{m, S}$. In general, $\Aut_S(\GG^n_{m,S})\isom GL(n, \ZZ)_S$, and hence tori of dimension $n$ over $S$ are parametrized by the \'etale non-abelian cohomology $H^1(S, GL(n, \ZZ))$; if $S$ is connected and normal, this is $\Hom(\pi_1(S, \bar s), GL(n, \ZZ))$. Since $\pi_1(S, \bar s)$ is pro-finite and $GL(n, \ZZ)$ is discrete, every such homomorphism has finite image, and we see that every torus over a normal $S$ becomes split on a finite \'etale cover of $S$. If $T$ is split, then we will say that $X$ is a \emph{split} toric fibration. If $X\to S$ is a toric fibration for a torus $T$, the (relative) dense open orbit $U \subseteq X$, i.e.\ the biggest open on which $T$ acts freely, is a $T$-torsor over $S$. \'Etale-locally on $S$, we have $T\isom \GG_{m,S}^n$ and $X\isom X(\Sigma)_S$ as in (b) above, and then $U \isom (X(\Sigma)\setminus D(\Sigma))_S$. If this torsor is trivial (i.e., $U\to S$ admits a section), we shall say that $X$ is a \emph{trivial} toric fibration. The usual description of toric varieties over algebraically closed fields generalizes as follows:

\begin{lemma}
  Suppose that $S$ is connected and let $X\to S$ be a split toric fibration under a torus $T$. There exists a fan $\Sigma$ in $N_\RR$ and a $T$-torsor $U\to S$ such that
  \[ 
  X \isom U \times^T X(\Sigma)_S
  \]
  as $S$-schemes with a $T$-action. In particular, if $X$ is moreover trivial, then $X\isom X(\Sigma)_S$. 
\end{lemma}

\begin{proof}
Suppose first that $X$ is trivial. In this case, for a fixed fan $\Sigma$, the $T$-equivariant isomorphisms $X\isom X(\Sigma)_S$ form a $T$-torsor over $S$ which is easily seen to be trivial. A section of this torsor gives the required isomorphism. 

The general case can be reduced to the trivial case by pulling back $X\to S$ along $U\to S$: we obtain an isomorphism $X_U \isom X(\Sigma)_U$ and take quotients by $T$ on both sides to obtain $X = X_U/T \isom X(\Sigma)_U/T = U\times^T X(\Sigma)_S$.
\end{proof}

For a general toric fibration $X\to S$, we can apply the above lemma \'etale locally on $S$. The toric boundaries $D(\Sigma)_S$ glue together to give a global \emph{toric boundary} $D$, which is a closed subscheme supported on $X\setminus U$.

If $X$ is smooth over $S$, then $D$ has normal crossings relative to $S$, and $\smash{\Omega^1_{X/S}(\log D)}$ is trivial if moreover $T$ is split. (Indeed, a local section $v\in {\rm Lie}(T) = e^* T_{T/S}$ of the Lie algebra defines an $S$-map $v\colon S[\varepsilon] = \Spec \cO_S[\varepsilon]/(\varepsilon^2) \to T$, and the composition of $v\times {\rm id} \colon S[\varepsilon]\times_S X\to T\times_S X$ with the $T$-action gives a section of the trivial thickening $X\to X[\varepsilon] = S[\varepsilon]\times_S X$ over $S$, i.e.\ a section of $T_{X/S}$. This lies in the dual of $\Omega^1_{X/S}(\log D)$, and we obtain an isomorphism $\pi^* {\rm Lie}(T)^\vee \isomto \Omega^1_{X/S}(\log D)$.)  We call a pair $(X, D)$ of a smooth $S$-scheme $X$ and a divisor with relative normal crossings $D\subseteq X$ a \emph{toric pair} if at arises via the above construction. In \S\ref{ss:global-rigidity}, we will show that the torus $T$ and its action on $X$ are (essentially) uniquely defined by the pair $(X,D)$, at least if $X$ is projective over $S$. 

\begin{example} 
\begin{enumerate}[(1)]
  \item The projectivization $\PP_S(E)\to S$ of a vector bundle $E$ on $S$ has toric fibers, but it admits the structure of a split toric fibration only when $E$ is a~direct sum of line bundles.
  \item Let $S= \bb{A}^1_k$ and $X=\bb{A}^2_k \setminus \{(0,0)\}$, treated as an $S$-scheme via the first projection. Then $X\to S$ has a natural action of $\GG_{m, S}$, its fibers are toric varieties, but it is not a toric fibration.
  \item Let $S=\Spec \QQ$ and let $T=\{x^2 + y^2 = 1\}\subseteq \bb{A}^2_\QQ$ be the circle group. Let $X= \{x^2+y^2  = z^2\}\subseteq \PP^2_\QQ$ be the projective closure of $T$. The action of $T$ on itself extends to $X$, and $D=X\setminus T$ is a point of degree two. The map $X\to S$ is a toric fibration under $T$ which becomes split and trivial after base change to $\QQ(\sqrt{-1})$.
\end{enumerate}
\end{example}

\subsection{Deformation theory}
\label{ss:def-thy}

We use freely the standard results of deformation theory, for which we refer to \cite{hartshorne_deformation,illusie_cotangent}. If $k$ is a perfect field of characteristic $p>0$, we denote by $\Art_{W(k)}(k)$ the category of Artinian local $W(k)$-algebras with residue field $k$. If $X$ is a~$k$-scheme, we denote by
\[ 
  \Def_X \colon \Art_{W(k)}(k) \ra \cat{Set}, \quad
  A \mapsto \genfrac{\{}{\}}{0pt}{} {\text{isom.\ classes of flat }\wt X /\Spec A} {\text{with an identification }\wt X\otimes_A k \isom X}
\]
its deformation functor. Similarly, if $Z\subseteq X$ is a closed subscheme, we denote by $\Def_{X,Z}$ the functor of deformations $\wt X$ of $X$ together with an embedded deformation $\wt Z\subseteq \wt X$ of $Z$.  The following lemma provides a basic tool for descending liftings along fibrations.

\begin{lemma}[{\cite[Proposition 2.1]{liedtke_satriano}}] \label{lem:liedtke_satriano}
  Let $\pi \colon Y \to X$ be a morphism of $k$-schemes such that $\cO_X\isomto \pi_*\cO_Y$ and $R^1\pi_*\cO_Y = 0$.  Then there exists a natural transformation of deformation functors 
  \[ 
    \pi_* \colon \Def_{Y} \ra \Def_{X}, 
  \]
  associating to every lifting $\wt Y \in \Def_{Y}(A)$ a lifting $\wt X = \pi_*(\wt Y) \in \Def_{X}(A)$ together with an $A$-morphism $\swt \pi \colon \wt Y \to \wt X$ lifting $\pi$.  More precisely, the structure sheaf of $\wt X$ is defined by the formula $\smash{\cO_{\wt X} = \pi_*\cO_{\wt Y}}$, and consequently the pair $(\wt X, \swt\pi)$ is unique up to a unique isomorphism inducing the identity on $X$ and $\wt Y$. 
\end{lemma}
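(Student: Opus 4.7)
The plan is to proceed by induction on the length of the Artinian $W(k)$-algebra $A$, using the standard reduction to small extensions $0 \to I \to A' \to A \to 0$ where $I$ is a finite-dimensional $k$-vector space annihilated by $\mathfrak{m}_{A'}$. In the base case $A = k$ we take $\wt X = X$ and $\swt \pi = \pi$; functoriality then lets us reduce to the problem of lifting the construction along a single small extension.

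For the inductive step, given $\wt Y' \in \Def_Y(A')$ I set $\wt Y = \wt Y' \otimes_{A'} A$ and $\wt X = \pi_*(\wt Y) \in \Def_X(A)$ as provided by induction, and I define $\wt X'$ to be the ringed space $(|X|, \pi_*\cO_{\wt Y'})$. The flatness of $\wt Y'$ over $A'$ yields a short exact sequence of sheaves
\[
  0 \to I \otimes_k \cO_Y \to \cO_{\wt Y'} \to \cO_{\wt Y} \to 0
\]
on $|Y|$. Pushing forward by $\pi$ and using the hypotheses $\pi_*\cO_Y \isomto \cO_X$ and $R^1\pi_*\cO_Y = 0$ (together with the fact that $I$ is a finite direct sum of copies of $k$, so $R^1\pi_*$ commutes with $I\otimes_k -$), I obtain a short exact sequence
\[
  0 \to I \otimes_k \cO_X \to \pi_*\cO_{\wt Y'} \to \cO_{\wt X} \to 0.
\]
Since $I \otimes_k \cO_X$ is a square-zero ideal in $\pi_*\cO_{\wt Y'}$, the ringed space $\wt X'$ is a~square-zero thickening of the scheme $\wt X$ by a quasi-coherent ideal, and therefore is a scheme; the sequence also identifies $\wt X' \otimes_{A'} k$ with $X$ and shows that $\wt X'$ is flat over $A'$ via the standard local flatness criterion for small extensions.

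The morphism $\swt\pi'\colon \wt Y' \to \wt X'$ is defined by the continuous map underlying $\pi$ together with the adjunction unit $\pi_*\cO_{\wt Y'} \to \pi_*\cO_{\wt Y'}$, i.e., the identity: this is a morphism of locally ringed spaces by construction and reduces to $\pi$ modulo the maximal ideal. Uniqueness is built into the definition, because any $A'$-morphism $\swt\pi'$ extending $\pi$ and landing in a flat lifting $\wt X'$ of $X$ factors $\cO_{\wt X'} \hookrightarrow \pi_*\cO_{\wt Y'}$ through an isomorphism by comparison of the defining short exact sequences. Finally, naturality in $A'\to B'$ of the construction is immediate from the fact that forming $\pi_*$ commutes with the tensor product by flat $A'$-algebras in the square-zero setting, which again uses $R^1\pi_*\cO_Y = 0$.

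The main technical obstacle is the verification that $\pi_*\cO_{\wt Y'}$ genuinely defines a \emph{scheme} rather than merely a locally ringed space, and that the resulting $\wt X'$ is flat over $A'$; both are handled cleanly by the square-zero extension argument above, so the role of the hypotheses $\pi_*\cO_Y = \cO_X$ and $R^1\pi_*\cO_Y = 0$ is precisely to make the pushforward of the deformation sequence remain short exact at each inductive step.
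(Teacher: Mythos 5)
The paper does not actually prove this lemma --- it is quoted verbatim from Liedtke--Satriano, so there is no in-paper argument to compare against; judged on its own, your construction is exactly the standard one underlying the cited Proposition 2.1: define $\cO_{\wt X}=\pi_*\cO_{\wt Y}$, and verify scheme-ness and flatness by d\'evissage along small extensions, pushing forward $0\to I\otimes_k\cO_Y\to\cO_{\wt Y'}\to\cO_{\wt Y}\to 0$ and using $\pi_*\cO_Y\isom\cO_X$, $R^1\pi_*\cO_Y=0$, the quasi-coherent square-zero thickening criterion, and the local flatness criterion. This main step is correct (one implicit point worth spelling out is that the kernel $I\otimes_k\cO_X$ really equals $I\cdot\pi_*\cO_{\wt Y'}$, which follows since sections of $\cO_X$ lift locally along the displayed surjection; this is what identifies $\wt X'\otimes_{A'}A$ with $\wt X$ before applying the flatness criterion). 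The only step I would not accept as written is naturality: a morphism $B\to A$ in $\Art_{W(k)}(k)$ is in general neither flat nor a square-zero (or even surjective) extension, so "$\pi_*$ commutes with tensoring by flat algebras in the square-zero setting" does not literally apply. The correct repair is the same d\'evissage you already use: filter $A$ as a $B$-module by submodules with successive quotients $k$, use $B$-flatness of $\cO_{\wt Y_B}$ together with $R^1\pi_*\cO_Y=0$ and the already-established $B$-flatness of $\pi_*\cO_{\wt Y_B}$ to show that the natural comparison map $\pi_*(\cO_{\wt Y_B})\otimes_B A\to\pi_*(\cO_{\wt Y_B}\otimes_B A)$ is an isomorphism. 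With that adjustment the argument is complete and agrees with the cited source's proof.
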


\subsection{Normal crossing pairs}
\label{ss:nc-pairs}

 We recall some basics on normal crossings (nc) pairs over general base schemes.

\begin{defin} \label{def:nc-pair}
  Let $S$ be a scheme. An \emph{nc pair} over $S$ is a pair $(X, D)$ of a smooth scheme $X$ over $S$ and a divisor $D\subseteq X$ with normal crossings relative to $S$ (see \cite[Exp.\ XIII \S 2.1]{SGA1}, \cite[\S 4]{katz_nilpotent}), that is, such that \'etale-locally on $X$ there exists an \'etale map $h\colon X\to\bb{A}^n_S$ with $D = h^*(\{x_1 \cdot \ldots \cdot x_n = 0\})$.
\end{defin}

\begin{remark} \label{rmk:nc-pair-vs-log-scheme}
It is often more natural and convenient to work in the framework of log geometry than with nc pairs. We decided not to do so, as we will need only a small subset of the theory which is easily handled by the classical results on nc pairs. A reader familiar with the language of log geometry will notice that everything we do with nc pairs can easily be phrased using log schemes instead, and that many of our results have natural logarithmic analogues. More precisely, the dictionary is as follows.

An \emph{nc log scheme} over a scheme $S$ is an fs log scheme $(X, \cM_X)$ over $(S, \cO_S^*)$ which is log smooth and such that the underlying map of schemes $X\to S$ is smooth. Equivalently, an fs log scheme $(X, \cM_X)$ over $(S, \cO_S^*)$ is nc if and only if \'etale locally on $X$, there exists a strict $h \colon (X, \cM_X)\to \bb{A}^n_S = \Spec(\bb{N}^n \to \cO_S[x_1, \ldots, x_n])$. Thus, there is a natural equivalence of groupoids, compatible with base change
  \[ 
    \left(\text{nc log schemes}/(S, \cO_S^*)\right) \isomto \left(\text{nc pairs}/S\right).
  \]
\end{remark} 

\medskip

Given an nc pair $(X, D)$ over $S$, one defines the module of differentials $\smash{\Omega^1_{X/S}(\log D)}$ (see \cite[\S 4]{katz_nilpotent}; equivalently, this is the dual of the submodule of $T_{X/S}$ consisting of derivations preserving the ideal of $D$). It is locally free and its determinant is isomorphic to $\omega_{X/S}(D)$. Classical deformation theory for smooth schemes has a natural analogue for nc pairs, and in particular is controlled by the groups $\Ext^i(\Omega^1_{X/S}(\log D), \cO_X)$ for $i=0,1,2$, see \cite{fumihiru_kato_log_smooth}. Moreover, if $D$ is the union of smooth divisors $D_1, \ldots, D_r$, there is a short exact sequence
\[ 
  0\ra \Omega^1_{X/S} \ra \Omega^1_{X/S}(\log D)\ra \bigoplus_{i=1}^r \cO_{D_i}\ra 0.
\]

\begin{lemma}\label{lemma:nc-pair-over-s-and-over-y}
  Let $(X, D)$ be an nc pair over a scheme $S$, let $Y$ be a smooth scheme over $S$, and let $f \colon X \to Y$ be a morphism over $S$. Then $(X, D)$ is an nc pair over $Y$ if and only if the morphism 
  \[ 
    f^* \Omega^1_{Y/S}\ra \Omega^1_{X/S}\ra \Omega^1_{X/S}(\log D)
  \]
  is injective and its cokernel is locally free. Further, in this case this cokernel equals $\Omega^1_{X/Y}(\log D)$.
\end{lemma}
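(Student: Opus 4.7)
The plan is to handle both implications étale-locally, reducing the reverse direction to a fibrewise linear algebra computation. Recall that for a homomorphism between locally free $\cO_X$-modules of constant finite rank, being injective with locally free cokernel is equivalent to being injective on every geometric fibre.

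For $(\Rightarrow)$, if $(X,D)$ is nc over $Y$, then étale-locally on $X$ we can choose coordinates $x_1, \ldots, x_n$ on $X/Y$ with $D = V(x_1\cdots x_m)$ and coordinates $y_1, \ldots, y_r$ on $Y/S$. These yield a local basis
\[
d(f^*y_1), \ldots, d(f^*y_r),\ dx_1/x_1, \ldots, dx_m/x_m,\ dx_{m+1}, \ldots, dx_n
\]
of $\Omega^1_{X/S}(\log D)$, exhibiting $f^*\Omega^1_{Y/S}$ as a free local direct summand, with quotient locally free and naturally identified with $\Omega^1_{X/Y}(\log D)$.

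For $(\Leftarrow)$, first observe that the map $f^*\Omega^1_{Y/S}\to\Omega^1_{X/S}(\log D)$ factors through the subsheaf $\Omega^1_{X/S}$, so the fibrewise injectivity hypothesis also holds for $f^*\Omega^1_{Y/S}\to\Omega^1_{X/S}$; by the Jacobian criterion applied to the smooth $S$-schemes $X$ and $Y$, this forces $f\colon X\to Y$ to be smooth. Now fix $x \in X$ and let $f_1, \ldots, f_s \in \cO_{X,x}$ be local equations of the branches of $D$ through $x$. Completing them to an étale coordinate system $f_1, \ldots, f_s, g_1, \ldots$ of $X/S$ (which exists because $(X,D)$ is nc over $S$), one computes directly that the kernel of $\Omega^1_{X/S}(x) \to \Omega^1_{X/S}(\log D)(x)$ is exactly $\mathrm{span}(df_1, \ldots, df_s)$: indeed $df_i = f_i\cdot (df_i/f_i)$ vanishes at $x$, while the $dg_j$ map to independent vectors in a complementary subspace. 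The fibrewise injectivity of $f^*\Omega^1_{Y/S}(x) \to \Omega^1_{X/S}(\log D)(x)$ thus translates to
\[
f^*\Omega^1_{Y/S}(x)\cap \mathrm{span}(df_1, \ldots, df_s) = 0 \quad\text{inside } \Omega^1_{X/S}(x),
\]
equivalently, the images of $df_1, \ldots, df_s$ are linearly independent in $\Omega^1_{X/Y}(x)$. Combining this with the smoothness of $f$, standard local structure theory lets us complete $f_1, \ldots, f_s$ to an étale coordinate system $s_1, \ldots, s_n$ on $X/Y$ near $x$, so $D = V(s_1\cdots s_s)$ étale-locally and $(X,D)$ is nc over $Y$.

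The delicate point is the fibrewise computation in the reverse direction: the global inclusion $\Omega^1_{X/S}\subseteq\Omega^1_{X/S}(\log D)$ fails to be fibrewise injective at points of $D$, and pinning down its fibrewise kernel as $\mathrm{span}(df_1, \ldots, df_s)$ via the nc structure over $S$ is precisely what converts the sheaf-theoretic hypothesis into the transversality condition over $Y$. Everything else is then a routine translation.
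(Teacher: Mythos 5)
Your proof is correct, but it follows a genuinely different route from the paper's. For the nontrivial implication the paper stays inside log geometry: it identifies the cokernel $\Omega^1_{X/Y}(\log D)$ with the sheaf of relative log differentials of $(X,D)\to Y$, applies Kato's criterion \cite[Proposition 3.12]{kazuya_kato_logarithmic} to conclude that $(X,D)\to Y$ is log smooth, and then invokes the dictionary between nc log schemes and nc pairs of Remark~\ref{rmk:nc-pair-vs-log-scheme}, together with the smoothness of the underlying morphism $X\to Y$, which is asserted in a single sentence. You instead argue by hand: local splitness gives fibrewise injectivity of $f^*\Omega^1_{Y/S}\to\Omega^1_{X/S}$, hence smoothness of $f$ by the Jacobian criterion; the explicit identification of the fibrewise kernel of $\Omega^1_{X/S}(x)\to\Omega^1_{X/S}(\log D)(x)$ as the span of the $df_i$ converts the hypothesis into linear independence of the $df_i$ in $\Omega^1_{X/Y}(x)$; and the local structure theory of smooth morphisms then produces exactly the relative étale charts required by Definition~\ref{def:nc-pair}. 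The paper's argument is shorter given its standing log-geometric dictionary and Kato's theorem, and fits the way the rest of the paper treats nc pairs; yours is self-contained and elementary, works directly with the chart definition, and in particular makes explicit why $f$ is smooth --- a point the published proof passes over silently. One small repair: the individual branches of $D$ through $x$ and their equations $f_1,\dots,f_s$ need not exist in the Zariski local ring $\cO_{X,x}$ (only after étale localization), so the choice of the $f_i$ should be made on the same étale neighbourhood on which you take your coordinate system; since every step of your argument is étale-local, this changes nothing of substance.
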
  

\begin{proof}
If $(X,D)$ is an nc pair over $Y$, then the assertions are clearly satisfied.  For the proof in the other direction, we use the language of log geometry.  Accordingly, we treat $(X,D) \to Y$ as a~morphism of log schemes.  The sheaf $\smash{\Omega^1_{X/Y}(\log D)}$ is isomorphic to the sheaf of relative log differentials of $(X,D) \to Y$ and therefore, since it is locally free, we may apply \cite[Proposition 3.12]{kazuya_kato_logarithmic} to see that the morphism $(X,D) \to Y$ is log smooth.  The scheme $X$ is smooth over $Y$ and hence $(X,D) \to Y$ is an nc log scheme over $Y$.  We conclude using the equivalence between nc log schemes and nc pairs (Remark~\ref{rmk:nc-pair-vs-log-scheme}).
\end{proof}

\subsection{The Cartier isomorphism} 
\label{ss:log_cartier_isom}

Here, we present basic properties of the Cartier isomorphism and its logarithmic variant.  Let $X \to S$ be a~smooth morphism of schemes over $k$, and let $\smash{\Omega^\bullet_{X/S}}$ be its de Rham complex.  Moreover, let $\smash{B^i_{X/S}}$ (resp.\ $\smash{Z^i_{X/S}}$) be the $i^{\rm th}$ coboundaries (resp.\ cocycles) in the $\cO_{X'}$-linear complex $\smash{F_{X/S*}\Omega^\bullet_{X/S}}$, where $X'$ is the Frobenius twist of $X$ relative to $S$. 

More generally, if $(X,D)$ in an nc pair over $S$, then we denote by $\smash{B^i_{X/S}(\log D)}$ (resp.\ $\smash{Z^i_{X/S}(\log D)}$) the $i^{\rm th}$ coboundaries (resp.\ cocycles) in the $\cO_{X^{'}}$-linear log de Rham complex $\smash{F_{X/S*}\Omega^\bullet_{X/S}(\log D)}$.  In analogy with the holomorphic Poincar\'e lemma, the following result describes the cohomology of the de Rham complex in characteristic $p$. 

\begin{thm}[{\cite[Theorem 7.2]{katz_nilpotent}}]\label{lemma:log_cartier}
  Let $X \to S$ be a smooth morphism of schemes over $k$.  Then there exists a unique system of isomorphisms of $\cO_{X'}$-modules
  \[
    C^{-1}_{X/S} \colon \Omega^j_{X'/S} \isomto \cH^j(F_{X/S*}\Omega^\bullet_{X/S})
  \]
  satisfying the conditions
  \begin{enumerate}[(i)]
    \item $C^{-1}_{X/S}(1) = 1$,
    \item $C^{-1}_{X/S}(\omega) \wedge C^{-1}_{X/S}(\eta) = C^{-1}_{X/S}(\omega \wedge \eta)$ for local sections $\omega \in \Omega^i_{X'/S}$ and $\eta \in \Omega^j_{X'/S}$,
    \item $C^{-1}_{X/S} \colon \Omega^1_{X'/S} \to \cH^1(F_{X/S*}\Omega^\bullet_{X/S})$ is defined by $d(g \otimes 1) \mapsto [g^{p-1}dg]$.
  \end{enumerate}
  The inverse isomorphisms give rise to short exact sequences
  \[
    0 \ra B^j_{X/S} \ra Z^j_{X/S} \ra \Omega^j_{X'/S} \ra 0,
  \]
  inducing \emph{Cartier morphisms} $C_{X/S} \colon Z^j_{X/S} \to \Omega^j_{X'/S}$.
\end{thm}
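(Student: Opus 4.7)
The plan is to construct $C^{-1}_{X/S}$ first in degree one by descending the assignment $g \mapsto g^{p-1}dg$, then extend multiplicatively to higher degrees, and finally check the isomorphism claim by reducing to a local model. Uniqueness is essentially formal: since $\Omega^\bullet_{X'/S}$ is generated as a graded $\cO_{X'}$-algebra by elements of the form $d(g\otimes 1)$, conditions (i) and (iii) pin $C^{-1}_{X/S}$ down on these generators, and (ii) propagates the definition through the exterior algebra.

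For existence in degree one, I define $\psi\colon \cO_X \to \cH^1(F_{X/S*}\Omega^\bullet_{X/S})$ by $g \mapsto [g^{p-1}dg]$; this lands in cocycles because $d(g^{p-1}dg) = (p-1)\,g^{p-2}\,dg \wedge dg = 0$. The main obstacle is to verify that $\psi$ descends to an $S$-derivation $\cO_{X'} \to \cH^1(F_{X/S*}\Omega^\bullet_{X/S})$, where $\cO_{X'}$ acts on the target via $F_{X/S}$. Concretely, one must show that $(g+h)^{p-1}d(g+h) - g^{p-1}dg - h^{p-1}dh$ is exact and that the Leibniz-type identity $\psi(gh) \equiv g^p\psi(h) + h^p\psi(g)$ holds modulo coboundaries. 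The additivity is the delicate part; I would either perform a direct computation using the congruences $\binom{p-1}{k} \equiv (-1)^k \pmod p$ followed by telescoping, or give a more conceptual argument by choosing a local flat lift $\wt X$ of $X$ modulo $p^2$ and observing that $g^{p-1}dg$ is the mod $p$ reduction of $\tfrac{1}{p}d(\tilde g^p)$, which is manifestly additive in $\tilde g$. Once $\psi$ is a derivation, the universal property of K\"ahler differentials produces a unique $\cO_{X'}$-linear morphism $C^{-1}_{X/S}\colon \Omega^1_{X'/S} \to \cH^1(F_{X/S*}\Omega^\bullet_{X/S})$ satisfying (iii), and I extend it to higher degrees by declaring it multiplicative, which is legitimate because $F_{X/S*}\Omega^\bullet_{X/S}$ is a commutative differential graded algebra and $\Omega^\bullet_{X'/S}$ is the free graded-commutative $\cO_{X'}$-algebra generated by $\Omega^1_{X'/S}$.

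To verify that $C^{-1}_{X/S}$ is an isomorphism in every degree, I exploit that all the sheaves involved commute with \'etale base change on $X$, so the problem is \'etale-local. Since $X/S$ is smooth, I may assume $X = \mathbb{A}^n_S$ with coordinates $x_1, \ldots, x_n$, and a K\"unneth-type decomposition of the de Rham complex reduces further to the one-variable case. There, a direct calculation with the monomial basis $\{x^a\,dx\}_{a \geq 0}$ shows that $x^a\,dx$ is a coboundary precisely when $a \not\equiv p-1 \pmod p$ (since then $x^a\,dx = (a+1)^{-1}d(x^{a+1})$), so $\cH^1$ is the free $\cO_{X'}$-module generated by $[x^{p-1}\,dx]$, which matches the image of $d(x \otimes 1)$ under $C^{-1}_{X/S}$. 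The short exact sequences $0 \to B^j_{X/S} \to Z^j_{X/S} \to \Omega^j_{X'/S} \to 0$ then follow immediately from the definitions of $B^j$ and $Z^j$ combined with the fact that the inverse Cartier map is now an isomorphism. The hardest step remains the descent of $\psi$ to a derivation: this is where the characteristic $p$ arithmetic really enters, while the rest of the argument is either formal or reduces to a tractable monomial computation on $\mathbb{A}^1_S$.
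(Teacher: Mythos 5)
The paper offers no proof of this statement, only the citation to Katz, and your argument reproduces exactly the standard proof given in that reference (and in Deligne--Illusie): descend $g \mapsto [g^{p-1}dg]$ to an $\cO_S$-derivation on $\cO_{X'}$ using either the integrality of $\bigl((X+Y)^p - X^p - Y^p\bigr)/p$ or a local flat lift modulo $p^2$, extend multiplicatively through the exterior algebra, and verify the isomorphism \'etale-locally by reducing to $\mathbb{A}^n_S$, applying K\"unneth, and doing the monomial computation on $\mathbb{A}^1_S$, with uniqueness forced by (i)--(iii) on the generators $d(g\otimes 1)$. Your proposal is correct and takes essentially the same route as the cited proof.
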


\begin{variant}
In the logarithmic setting, there exists a system of isomorphisms
\[
\Omega^j_{X'/S}(\log D') \isomto \cH^j(F_{X/S*}\Omega^\bullet_{X/S}(\log D)),
\] 
where $D'$ is the preimage of $D$ in $X'$.  Moreover, we have short exact sequences
  \[
    0 \ra B^j_{X/S}(\log D) \ra Z^j_{X/S}(\log D) \ra \Omega^j_{X'/S}(\log D') \ra 0,
  \]
inducing \emph{logarithmic Cartier morphisms} $C_{X/S} \colon Z^j_{X/S}(\log D) \to \Omega^j_{X'/S}(\log D')$.
\end{variant}

\subsection{Frobenius splittings}
\label{ss:f-split}

The standard reference for general facts about Frobenius splittings is \cite[Chapter I]{BrionKumar}. Let $k$ be a perfect field of characteristic $p>0$.

\begin{defin}
Let $X$ be a $k$-scheme.  A \emph{Frobenius splitting} on $X$ is an $\cO_X$-linear splitting $\sigma\colon F_{X*}\cO_X \to \cO_X$ of the map $F_X^*\colon \cO_X\to F_{X*} \cO_X$.  We say that a scheme is $F$-split if it admits a~Frobenius splitting.
\end{defin}

\noindent We shall need the following lemma describing basic properties of $F$-split schemes. 

\begin{lemma}\label{lem:f-split_is_ordinary}
  Let $X$ be a proper $F$-split scheme over an algebraically closed field $k$.  Then the following hold.
  \begin{enumerate}[(a)]
     \item The map $F^*_X \colon H^i(X,\cO_X) \to H^i(X,\cO_X)$ is bijective for all $i\geq 0$.
     \item The cohomology groups $H^i(X,B^1_X)$ vanish for all $i\geq 0$, where $B^1_X = F_{X*}\cO_X / \cO_X$.
     \item If $X$ is smooth, then the Albanese variety $\Alb X$ is ordinary.
     \item Every \'etale scheme $Y$ over $X$ is $F$-split.
   \end{enumerate} 
\end{lemma}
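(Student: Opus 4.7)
My plan starts with (a). Applying $H^i$ to the splitting $\cO_X \xrightarrow{F_X^*} F_{X*}\cO_X \xrightarrow{\sigma} \cO_X$, and using that $F_X$ is affine (so $H^i(X, F_{X*}\cO_X) = H^i(X, \cO_X)$), the induced map $F_X^*$ on $H^i(X, \cO_X)$ becomes split-injective as a map of abelian groups. Since $X$ is proper over the algebraically closed (hence perfect) field $k$, each $H^i(X, \cO_X)$ is a finite-dimensional $k$-vector space, and the elementary observation that an injective Frobenius-semilinear endomorphism of a finite-dimensional vector space over a perfect field is automatically bijective (the $p$-twist preserves $k$-dimension because $k = k^p$) finishes (a). Part (b) is then essentially free: the splitting $\sigma$ already splits the sheaf-theoretic sequence $0 \to \cO_X \to F_{X*}\cO_X \to B^1_X \to 0$, producing $F_{X*}\cO_X \isom \cO_X \oplus B^1_X$ as $\cO_X$-modules. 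The resulting direct sum decomposition on $H^i$, combined with the affineness identification $H^i(X, F_{X*}\cO_X) = H^i(X, \cO_X)$ and the bijectivity from (a), forces $H^i(X, B^1_X) = 0$.

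For (c), I would invoke the classical description $\Alb X \isom ((\Pic^0_X)_{\rm red})^{\vee}$, which identifies $H^1(\Alb X, \cO_{\Alb X}) \isom \operatorname{Lie}((\Pic^0_X)_{\rm red})$ as a subspace of $\operatorname{Lie}(\Pic^0_X) = H^1(X, \cO_X)$, realized by the Albanese pullback $a^*$ (which is injective and commutes with absolute Frobenius by functoriality). By (a), Frobenius is bijective on $H^1(X, \cO_X)$, hence injective on the subspace $a^* H^1(\Alb X, \cO_{\Alb X})$; the same semilinear algebra argument as in (a) upgrades this injectivity to bijectivity. That Frobenius acts bijectively on $H^1(A, \cO_A)$ is precisely the Hasse--Witt criterion for ordinariness of an abelian variety $A$, so $\Alb X$ is ordinary.

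Finally, for (d), I would use the classical fact that for an étale morphism $g\colon Y \to X$ the relative Frobenius $F_{Y/X}$ is an isomorphism; equivalently, the absolute Frobenius square for $g$ is Cartesian, and flat base change yields a canonical isomorphism $g^* F_{X*}\cO_X \isom F_{Y*}\cO_Y$ of $\cO_Y$-modules. The pullback $g^*\sigma$ is then an $\cO_Y$-linear retraction of $F_Y^*$ by naturality of the base-change map. The one step demanding more than bookkeeping is (c): identifying $H^1(\Alb X, \cO_{\Alb X})$ as a Frobenius-stable subspace of $H^1(X, \cO_X)$ via the Albanese pullback relies on the classical but in positive characteristic somewhat subtle description of $\Alb X$ through $(\Pic^0_X)_{\rm red}$. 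All remaining parts reduce to short manipulations with the Frobenius splitting together with a single semilinear algebra dimension count over the perfect field $k$.
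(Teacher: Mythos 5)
Your proposal is correct and follows essentially the same route as the paper: split-injectivity of Frobenius on $H^i(X,\cO_X)$ plus the semilinear-algebra fact over a perfect field for (a)–(b), injectivity of $H^1(\Alb X,\cO_{\Alb X})\hookrightarrow H^1(X,\cO_X)$ combined with (a) for (c), and the Cartesian Frobenius square with flat base change for (d). The only cosmetic difference is in (c), where the paper simply cites \cite[Lemma 1.3]{MehtaSrinivas} for the injectivity of the Albanese pullback, while you rederive it from the description of $\Alb X$ as the dual of $(\Pic^0_X)_{\rm red}$; both are fine.
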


\begin{proof}
For (a) and (b), we use the long exact sequence of cohomology associated with
\[
  0 \ra \cO_X \ra F_{X*}\cO_X \ra B^1_X \ra 0,
\] 
and the fact that an injective $p^{-1}$-linear endomorphism of a finite dimensional vector space over a~perfect field is bijective. 

To prove (c), we reason as follows. By \cite[Lemma 1.3]{MehtaSrinivas} we see that $H^1(\Alb X,\cO_{\Alb X})$ injects into $H^1(X,\cO_X)$.  By (a) the Frobenius action on the latter group is bijective, and hence it is also bijective on the former.  This implies that $\Alb X$ is ordinary. 

For (d), let $f\colon Y\to X$ be an \'etale morphism. Then the square
\[
  \xymatrix{
    Y \ar[r]^{F_{Y}}\ar[d]_{f} & Y \ar[d]^{f} \\
    X \ar[r]_{F_X} & X
  }
\]
is cartesian by \cite[XIV=XV \S{}1 $n^\circ$2, Pr. 2(c)]{SGA5}, and hence $f^* F_{X*}\cO_X \isom F_{Y*} \cO_Y$ by flat base change. Thus given a splitting $\sigma \colon F_{X*}\cO_X \to \cO_X$, applying $f^*$ we obtain a morphism 
\[
  f^*(\sigma) \colon F_{Y*}\cO_{Y} \isom f^* F_{X*}\cO_X \ra \cO_{Y}
\]
which is a Frobenius splitting on $Y$.
\end{proof}


\section{Frobenius liftability}\label{s:frob-lift}
\label{s:froblift}

Throughout this section we fix a perfect field $k$ of characteristic $p>0$ (sometimes assumed algebraically closed).  First, set $S = \Spec k$ and $\wt S = \Spec W_2(k)$ (see \S\ref{ss:notation-conventions}). If $X$ is a~scheme over $S$ and $\wt X$ is a lifting of $X$ to $\wt S$, a \emph{lifting of Frobenius} on $X$ to $\wt X$ is a~morphism ${\wt F_X \colon  \wt X\to \wt X}$ restricting to $F_X$ on $X$. It automatically commutes with the map $\sigma \colon \wt S\to \wt S$ induced by the Witt vector Frobenius on $W_2(k)$ (see {\cite[Lemma 6.5.13 i)]{GabberRamero}}), and hence it fits into the commutative diagram
\[ 
    \xymatrix@R=1.5em@C=1.5em{
          & X\ar[rr] \ar[dd]|\hole & & \wt X \ar[dd] \\
        X \ar[ru]^{F_{X}}\ar[rr]\ar[dd] & & \wt X \ar[dd] \ar@{.>}[ru]^{\wt F_{X}} & \\
          & S \ar[rr]|(0.5)\hole & & \wt S \\
        S \ar[ru]^{F_S} \ar[rr] & & \wt S \ar[ru]_{\sigma}.
    }
\]
We call a pair $(\wt X, \wt F_X)$, where $\wt X$ and $\wt F_X$ are as above, a \emph{Frobenius lifting of $X$}. If such a~pair exists, we say that $X$ is \emph{$F$-liftable}. In this situation, we can form a diagram lifting the relative Frobenius diagram \eqref{eqn:frobdgm}:
\begin{equation} \label{eqn:lifted-frobdgm}
\xymatrix{
    \wt X \ar@/^1em/@{.>}[drr]^{\wt F_X} \ar@/_1em/[ddr]  \ar@{.>}[dr]|{\wt F_{X/S}} \\
    & \wt X{}' \ar[r] \ar[d] \ar@{}[dr]|\square & \wt X\ar[d] \\
    & \wt S \ar[r]_{\wt F_S=\sigma} & \wt S.
  }
\end{equation}
Given a lifting $\wt X$, the existence of $\wt F_X$ is thus equivalent to the existence of a lifting $\wt F_{X/S}\colon \wt X \to \wt X{}'$ of the relative Frobenius $F_{X/S} \colon X\to X'$. 

More generally, the above definitions work in the relative setting, that is when $S$ is an arbitrary $k$-scheme endowed with a Frobenius lifting $(\wt S, \wt F_S)$. If $\pi\colon X\to S$ is a scheme over $S$ and if $\swt\pi\colon \wt X \to \wt S$ is a lifting of $\pi$,  then providing a lifting $\wt F_X$ of $F_X$ to $\wt X$ such that $\wt F_S\circ \swt \pi = \swt\pi\circ\wt F_X$ is equivalent to providing a morphism $\wt F_{X/S}\colon \wt X\to \wt X{}'$ lifting the relative Frobenius $F_{X/S}$, where $\wt X{}'$ is the base change of $\wt X$ along $\wt F_S$.

If $(X, D)$ is an nc pair over $S$, a \emph{Frobenius lifting} of $(X, D)$ is a triple $(\wt X, \wt D, \wt F_X)$ where $(\wt X, \wt D)$ is an nc pair lifting $(X, D)$ and $\wt F_X$ is a lifting of $F_X$ to $X$ satisfying $\wt F{}_X^* (\wt D) = p\wt D$. 
We shall study this notion in more detail in \cite[Section 2.4]{PartII}. 
In view of Remark~\ref{rmk:nc-pair-vs-log-scheme}, Frobenius liftings of $(X, D)$ correspond to liftings of the associated log scheme $(X, \cM_X)$.   

\subsection{\texorpdfstring{Examples of $F$-liftable schemes}{Examples of F-liftable schemes}}
\label{ss:froblift-examples}

We shall be mostly concerned with smooth schemes in this paper. Examples of $F$-liftable singularities were studied in \cite{zdanowicz}.

\begin{example}[Smooth affines]
Every smooth affine $k$-scheme is $F$-liftable. Indeed, the obstruction class to lifting $X$ together with $F_X$ lies in $\Ext^1(\Omega^1_X,B^1_X) = H^1(X,T_X \otimes B^1_X)$ (see \cite[Appendix]{MehtaSrinivas}, cf.\ Proposition~\ref{prop:deformation_theory_frobenius}(b)), which is zero. 
\end{example}

\begin{example}[Toric varieties] \label{ex:toric}
Every toric variety $X=X(\Sigma)_k$ over $k$ is $F$-liftable \cite{BTLM}.  More precisely, let $\Sigma$ be a fan in $N_\RR$ for a lattice $N$ and let $X(\Sigma)$ be the associated toric variety over $\Spec\ZZ$. The multiplication by $p$ map $N\to N$ preserves $\Sigma$ and hence it induces a morphism $\wt F\colon X(\Sigma)\to X(\Sigma)$. Its restriction to $X(\Sigma)_{\FF_p}$ is the absolute Frobenius. 

Of course not every Frobenius lifting on a toric variety has to be of this type, e.g.\ any collection of homogeneous polynomials $f_0, \ldots, f_n\in k[x_0, \ldots, x_n]$ of degree $p$ defines a lifting of Frobenius on $\PP^n_{W_2(k)}$ by 
\[ 
  \wt F(x_0:\ldots : x_n) = (x_0^p + pf_0(x_0,\ldots, x_n): \ldots : x_n^p + pf_n(x_0,\ldots, x_n)).
\]
The existence of such non-standard liftings is one of the main difficulties in Conjecture~\ref{conj:froblift}, and provides a contrast between its two extreme (toric and abelian) cases.
\end{example}

\begin{example}[Ordinary abelian varieties] \label{ex:ordinaryav}
An abelian variety $A$ over $k$ is $F$-liftable if and only if it is ordinary, in which case there exists a unique Frobenius lifting $(\wt A, \wt F_A)$, called the Serre--Tate canonical lifting of $A$ (see \cite[Appendix]{MehtaSrinivas}). It has the remarkable property that for every line bundle $L$ on $A$, there exists a unique up to isomorphism line bundle $\wt L$ on $\wt A$ such that $\wt F{}_A^* \wt L \isom \wt L^{\otimes p}$.
\end{example}

\begin{example}[\'Etale quotients of ordinary abelian varieties] \label{ex:ordinaryav-etale}
Let $A$ be an ordinary abelian variety and let $f\colon A\to Y$ be a finite \'etale surjective morphism.  Replacing $f$ by its Galois closure, we can assume that it is Galois.  Then $Y$ is $F$-liftable by \cite[Theorem~2]{MehtaSrinivas}, but it need not be an abelian variety.
\end{example}

\begin{example}[Toric fibrations over ordinary abelian varieties] \label{ex:toric-fib-ordinaryav} 
In order to combine Examples~\ref{ex:toric} and \ref{ex:ordinaryav}, let $A$ be an abelian variety, and let $f \colon X \to A$ be a toric fibration under a torus $T$ (see \S\ref{ss:toric-var}). Then $X$ is $F$-liftable if and only if $A$ is ordinary (see Theorem~\ref{thm:frobenius_cy}).
\end{example}

\begin{remark}
In fact, in each of the above examples, the Frobenius liftings exist over $W(k)$ and not just over $W_2(k)$.  We do not know an example of a smooth $k$-scheme admitting a~Frobenius lifting only over $W_2(k)$.  However, it is important to work over $W_2(k)$ as this allows for descending Frobenius liftability under appropriate finite morphisms (see Theorem~\ref{thm:descending-frob-lift}).
\end{remark}

\begin{remark}[Partial converse to Conjecture~\ref{conj:froblift}] \label{rmk:partial-converse}
Consider the situation of the assertion of Conjecture~\ref{conj:froblift}: let $A$ be an ordinary abelian variety, let $Y\to A$ be a smooth toric fibration with toric boundary $D\subseteq Y$, and let $Y\to X$ be a finite \'etale map. Then $(Y, D)$ is $F$-liftable by Example~\ref{ex:toric-fib-ordinaryav}, and $X$ is $F$-liftable if one of the following conditions holds.
\begin{enumerate}[(1)]
  \item If $X = Y/G$ where $G$ is of order prime to $p$ (see Remark~\ref{remark:descend_reductive}). Note that since $\pi_1(Y) \isomto \pi_1(A)$, passing to the Galois closure we can always assume that $X=Y/G$ for a free action of a finite group $G$ on $Y$.
  \item If the toric boundary $D\subseteq Y$ is a pull-back from $X$ (Theorem~\ref{thm:frobenius_cy}(iii)$\Rightarrow$(i)), for example if $Y=A$ (Example~\ref{ex:ordinaryav-etale}).
\end{enumerate}
However, $X$ is not $F$-liftable in general: if $Y=\PP^1\times C$ where $C$ is an ordinary elliptic curve, and $G=\ZZ/p\ZZ$ acts on $\PP^1$ by $(x:y)\mapsto (x+ay:y)$ and on $C$ by translation, then the diagonal action of $G$ on $Y$ is free, and $X=Y/G$ is not $F$-liftable (see \cite[Proposition 3.6]{PartII} and \cite[Lemma 3.7]{PartII}). It would be interesting to find an `if and only if' criterion for the $F$-liftability of quotients $X=Y/G$ as above with $G$ an arbitrary finite group, or even an abelian $p$-group.
\end{remark}

\subsection{\texorpdfstring{Consequences of $F$-liftability}{Consequences of F-liftability}}
\label{ss:consequence_f_liftability}

As mentioned in the introduction, the existence of a Frobenius lifting has strong consequences for \emph{smooth} schemes.  First, let us recall the construction of the map $\xi$ (see \eqref{eqn:xi-intro}, cf. \cite[Proof of Th\'eor\`eme 2.1, (b)]{DeligneIllusie}). 

\subsubsection*{\texorpdfstring{Construction of the map $\xi$}{Construction of the map $\xi$}} 

Let $(\wt S, \wt F_S)$ be a Frobenius lifting of a $k$-scheme $S$, let $(\wt X, \wt F_{X})$ be a Frobenius lifting of a $k$-scheme $X$, and let $\swt \pi \colon \wt X \to \wt S$ be a smooth morphism of $W_2(k)$-schemes such that $\wt F_S \circ \swt \pi = \swt \pi \circ \wt F_X$. Let $\wt F_{X/S}$ be the induced lifting of the relative Frobenius.  By flatness of $\wt X \to \wt S$, the differential $d\wt F_{X/S}$ fits into the following diagram with exact rows 
\[
  \xymatrix{
    0 \ar[r] & F_{X/S}^*\Omega^1_{X'/S} \ar[r]\ar[d]^{dF_{X/S} = 0} & \wt F{}_{X/S}^*\Omega^1_{\wt X{}'/\wt S}\ar[r]\ar[d]^{d\wt F_{X/S}} & F_{X/S}^*\Omega^1_{X'/S} \ar[r]\ar[d]^{dF_{X/S} = 0} & 0 \\
    0 \ar[r] & \Omega^1_{X/S} \ar[r] & \Omega^1_{\wt X/\wt S} \ar[r] & \Omega^1_{X/S} \ar[r] & 0 
  }
\]
Using the snake lemma, we obtain a mapping 
\[
 \xi = \frac{d\wt F_{X/S}}{p} \colon F_{X/S}^*\Omega^1_{X'/S} \ra \Omega^1_{X/S}.
\]
In simple terms, $\xi(\omega) = \smash{\frac{1}{p} d\wt F_X(\swt \omega)}$, where $\swt \omega$ is any lifting of a local form ${\omega \in F_{X/S}^*\Omega^1_{X'/S}}$.  In particular, if $S = \Spec k$, then $\xi(df) = f^{p-1}df + dg$ where $\smash{\wt F{}_X^{*}(\wt{f})} = \smash{\wt f{}^p} + pg$.

Since $F_{X/S}^*\Omega^1_{X'/S} \isom F_X^*\Omega^1_{X/S}$, we may interpret $\xi$ as an $\cO_X$-linear morphism $F^*_X\Omega^1_{X/S} \to \Omega^1_{X/S}$.

\begin{prop}[{\cite[Proof of Th\'eor\`eme 2.1 and \S 4.1]{DeligneIllusie}, \cite[Theorem 2]{BTLM}}] \label{prop:frobenius_cotangent_morphism}
  The mapping $\xi \colon F_{X/S}^*\Omega^1_{X^{'}/S} \to \Omega^1_{X/S}$ satisfies the following properties:
  \begin{enumerate}[(a)]
    \item The adjoint morphism $\xi^{\rm ad}\colon \Omega^1_{X'/S} \to F_{X/S*}\Omega^1_{X/S}$ has image in the subsheaf $Z^1_{X/S}$ of closed forms and provides a splitting of the short exact sequence
    \[ 
      0\ra B^1_{X/S} \ra Z^1_{X/S}\overset{C_{X/S}}\ra \Omega^1_{X'/S} \ra 0.
    \]  
    \item By taking exterior powers, the morphism $\xi^{\rm ad}$ induces splittings of the short exact sequences
    \[ 
      0\ra B^i_{X/S} \ra Z^i_{X/S}\overset{C_{X/S}}\ra \Omega^i_{X'/S} \ra 0,
    \]  
    as well as a quasi-isomorphism of differential graded algebras
    \[ 
      \bigwedge^\bullet\, \xi^{\rm ad}\colon \bigoplus_{i\geq 0} \Omega^i_{X'/S}[-i]
      \isomto 
      (F_{X/S})_*\Omega^\bullet_{X/S}
    \] 
    where the maps $\bigwedge^i\, \xi^{\rm ad}\colon\Omega^i_{X'/S}\to (F_{X/S})_*\Omega^i_{X/S}$ are split injections. On the level of cohomology, this map induces the Cartier isomorphism.
    \item The determinant
    \[ 
      \det(\xi) \colon F_{X/S}^* \omega^1_{X'/S} \ra \omega^1_{X/S}
    \]
    corresponds to a Frobenius splitting $\sigma$ of $X$ relative to $S$ (see \S\ref{ss:f-split}). In particular, the homomorphism $\xi$ is injective.  
  \end{enumerate}
\end{prop}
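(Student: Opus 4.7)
The plan is to handle the three parts separately, exploiting the explicit local formula for $\xi$ together with the characterization of the Cartier isomorphism in Theorem~\ref{lemma:log_cartier}.

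For (a), I would proceed by a direct local computation. Since $\wt X$ is $\wt S$-smooth, locally on $X$ every section $g$ of $\cO_X$ admits a lift $\wt g$, and $\wt F{}_X^*(\wt g) = \wt g^{\,p} + p\wt h$ for some $\wt h$. This yields $\xi(d(g\otimes 1)) = g^{p-1}dg + dh$. Both summands are closed: $d(g^{p-1}dg) = (p-1)g^{p-2}\,dg\wedge dg = 0$ and $d^2h = 0$. Hence $\xi^{\rm ad}$ factors through $Z^1_{X/S}$. Furthermore, the normalization (iii) of Theorem~\ref{lemma:log_cartier} asserts $C^{-1}_{X/S}(d(g\otimes 1)) = [g^{p-1}dg]$, so $C_{X/S}\circ \xi^{\rm ad} = \mathrm{id}_{\Omega^1_{X'/S}}$, exhibiting $\xi^{\rm ad}$ as a splitting of the Cartier sequence.

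For (b), the key point is that closed forms are stable under wedge product and that $C^{-1}_{X/S}$ is multiplicative (Theorem~\ref{lemma:log_cartier}(ii)). For local sections $\omega_1, \ldots, \omega_i$ of $\Omega^1_{X'/S}$, the wedge $\xi^{\rm ad}(\omega_1)\wedge\cdots\wedge\xi^{\rm ad}(\omega_i)$ is therefore a closed section of $F_{X/S*}\Omega^i_{X/S}$, and its class modulo $B^i_{X/S}$ equals $C^{-1}_{X/S}(\omega_1\wedge\cdots\wedge\omega_i)$ by multiplicativity. This shows that $\bigwedge^i \xi^{\rm ad}$ lands in $Z^i_{X/S}$ and splits the $i$-th Cartier sequence. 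Assembling these into a morphism of complexes $\bigoplus_i \Omega^i_{X'/S}[-i]\to (F_{X/S})_*\Omega^\bullet_{X/S}$ (the left side carries zero differential), the induced map on cohomology is by construction the Cartier isomorphism, so it is an iso in each degree; hence the assembled morphism is a quasi-isomorphism.

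For (c), I would specialize (b) to the top degree $i = r = \dim(X/S)$. Then $Z^r_{X/S} = F_{X/S*}\omega_{X/S}$, the projection $Z^r_{X/S}\to \omega_{X'/S}$ equals $\mathrm{Tr}_{X/S}$ by Remark~\ref{rem:grothendieck_duality}, and $\bigwedge^r \xi^{\rm ad}$ yields a splitting of $\mathrm{Tr}_{X/S}$. Invoking Proposition~\ref{prop:fsplit-and-tracefsplit} converts this into a relative Frobenius splitting $\sigma$ of $X$. An adjunction computation then identifies the morphism corresponding to this splitting with $\det(\xi) \colon F^*_{X/S}\omega_{X'/S}\to \omega_{X/S}$. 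Finally, $\xi$ is a morphism between locally free sheaves of the same rank on a smooth scheme, so injectivity on each connected component reduces to $\det(\xi)$ being a nonzero morphism of line bundles; this is immediate because $\det(\xi)$ underlies an $F$-splitting and is in particular nonzero. The main obstacle I foresee is the adjunction bookkeeping in (c) needed to match $\det(\xi)$ with the splitting of $\mathrm{Tr}_{X/S}$ obtained in (b); the rest is essentially local and driven by the formula $\xi(df) = f^{p-1}df + dg$ combined with the normalization and multiplicativity of the Cartier isomorphism.
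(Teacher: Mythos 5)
Your overall route is the standard Deligne--Illusie/BTLM argument, which is exactly what the paper relies on (it gives no proof of its own, citing those sources). Parts (a) and (c) are fine as you present them: the local formula $\xi(d(g\otimes 1))=g^{p-1}dg+dh$ together with the normalization of $C^{-1}_{X/S}$ gives $C_{X/S}\circ\xi^{\rm ad}=\mathrm{id}$; in top degree $Z^n_{X/S}=F_{X/S*}\omega_{X/S}$, so $\bigwedge^n\xi^{\rm ad}=(\det\xi)^{\rm ad}$ genuinely splits $\mathrm{Tr}_{X/S}$, Proposition~\ref{prop:fsplit-and-tracefsplit} converts this into a relative Frobenius splitting, and the non-vanishing of $\det\xi$ (hence injectivity of $\xi$) follows non-circularly from $\mathrm{Tr}_{X/S}\circ(\det\xi)^{\rm ad}=\mathrm{id}$.

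There is, however, a gap in (b): the statement asserts that each $\bigwedge^i\xi^{\rm ad}\colon\Omega^i_{X'/S}\to F_{X/S*}\Omega^i_{X/S}$ is a split injection into the \emph{full} pushforward, whereas your argument only produces a retraction defined on $Z^i_{X/S}$ (namely $C_{X/S}$). These are not the same thing for $0\leq i<n$: the quotient $F_{X/S*}\Omega^i_{X/S}/Z^i_{X/S}\isom B^{i+1}_{X/S}$, and nothing in your argument splits that extension; already for $i=0$ the asserted retraction is precisely a relative Frobenius splitting, which you only construct later via duality in (c). Since the split-injection claim is exactly what the paper uses for Bott vanishing (Theorem~\ref{thm:bott-vanishing}), it cannot be waved away. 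The standard repair, and the way BTLM argue, is to prove the top-degree case first and then define, for $0\leq i\leq n=\dim(X/S)$, a retraction $r_i\colon F_{X/S*}\Omega^i_{X/S}\to\Omega^i_{X'/S}$ by the requirement that $r_i(\alpha)\wedge\beta=\mathrm{Tr}_{X/S}\bigl(\alpha\wedge\bigwedge^{n-i}\xi^{\rm ad}(\beta)\bigr)$ for all local sections $\beta$ of $\Omega^{n-i}_{X'/S}$; this is legitimate because the wedge pairing $\Omega^i_{X'/S}\otimes\Omega^{n-i}_{X'/S}\to\omega_{X'/S}$ is perfect, and then multiplicativity of $\xi^{\rm ad}$ together with $\mathrm{Tr}_{X/S}\circ\bigwedge^n\xi^{\rm ad}=\mathrm{id}$ gives $r_i\circ\bigwedge^i\xi^{\rm ad}=\mathrm{id}$. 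With this insertion (and the resulting reordering, since the intermediate retractions use the top-degree splitting of the trace), your proof is complete; the rest matches the cited proofs.
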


\begin{variant}[Logarithmic variant of Proposition~\ref{prop:frobenius_cotangent_morphism}]\label{var:log_variant_xi}
If $(X, D)$ is an nc pair over $S$, and if $(\wt X, \wt D,\wt F_X)$ is a Frobenius lifting of $(X, D)$, we get a morphism 
\[ 
  \xi \colon F_{X}^*\Omega^1_{X/S}(\log D) \to \Omega^1_{X/S}(\log D)
\]
and the assertions of Proposition~\ref{prop:frobenius_cotangent_morphism} hold in this case (cf.\ \cite[\S 4.2]{DeligneIllusie}).
\end{variant}

\begin{cor}
  Let $X$ be an $F$-liftable smooth and proper scheme over $k$.  Then $X$ is ordinary in the sense of Bloch and Kato, i.e., $H^j(X, B^i_X) = 0$ for all $i, j\geq 0$. 
\end{cor}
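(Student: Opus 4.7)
My plan is to leverage Proposition~\ref{prop:frobenius_cotangent_morphism}(b), which provides split injections $\xi^{\rm ad}\colon \Omega^i_{X'}\hookrightarrow F_{X*}\Omega^i_X$ of $\cO_{X'}$-modules whose image lies in $Z^i_X$ and splits the Cartier sequence $0\ra B^i_X\ra Z^i_X\ra \Omega^i_{X'}\ra 0$. The idea is to extract cohomological information from these splittings by a dimension count over the perfect field $k$, producing auxiliary sheaves with vanishing cohomology, and then to induct on $i$ to reach the desired conclusion for $B^i_X$.

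Concretely, the first step is to write $F_{X*}\Omega^i_X\isom \Omega^i_{X'}\oplus Q^i$ as $\cO_{X'}$-modules, using that $\xi^{\rm ad}$ is a split injection. The filtration $\xi^{\rm ad}(\Omega^i_{X'})\subseteq Z^i_X\subseteq F_{X*}\Omega^i_X$ has successive quotients $B^i_X$ (from the splitting of the Cartier sequence) and $B^{i+1}_X$ (by definition of the coboundary sheaf), so $Q^i$ fits into a short exact sequence
\[
  0\ra B^i_X\ra Q^i\ra B^{i+1}_X\ra 0.
\]
Next, since $F_X$ is finite (hence affine), $H^j(X,F_{X*}\Omega^i_X)\isom H^j(X,\Omega^i_X)$ as abelian groups, and since $X'\to X$ is the base change along the bijective Frobenius of the perfect field $k$, one has $\dim_k H^j(X,\Omega^i_{X'})=\dim_k H^j(X,\Omega^i_X)$. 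Properness of $X$ makes these dimensions finite. Taking sheaf cohomology of the splitting and comparing $k$-dimensions then forces $H^j(X,Q^i)=0$ for all $i,j\ge 0$.

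Finally, I would induct on $i$: the base case $B^0_X=0$ is immediate, and the long exact sequence attached to the displayed short exact sequence combined with the vanishing of $H^*(X,Q^i)$ yields a dimension-shift isomorphism $H^j(X,B^{i+1}_X)\isom H^{j+1}(X,B^i_X)$, which propagates vanishing from $B^i_X$ to $B^{i+1}_X$. I expect the only mild technical nuisance to be arranging the splitting of $\xi^{\rm ad}$ compatibly with the inclusion $Z^i_X\hookrightarrow F_{X*}\Omega^i_X$ so that $Q^i$ genuinely carries the two-step filtration with graded pieces $B^i_X$ and $B^{i+1}_X$; this should follow directly from the explicit form of $\xi^{\rm ad}$ as a splitting of the Cartier sequence in Proposition~\ref{prop:frobenius_cotangent_morphism}.
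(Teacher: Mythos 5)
Your argument is correct and is essentially the route the paper takes: the paper's proof is a one-line citation to the proof of \cite[Lemma 1.1]{MehtaSrinivas}, which likewise combines the splittings coming from Proposition~\ref{prop:frobenius_cotangent_morphism}(b) with a dimension count over the perfect field $k$ and an induction on $i$ starting from $B^0_X=0$. The compatibility worry you flag at the end is automatic: any complement $Q^i$ of $\xi^{\rm ad}(\Omega^i_{X'})$ in $F_{X*}\Omega^i_X$ is isomorphic to $F_{X*}\Omega^i_X/\xi^{\rm ad}(\Omega^i_{X'})$, which carries the induced two-step filtration with graded pieces $Z^i_X/\xi^{\rm ad}(\Omega^i_{X'})\isom B^i_X$ and $F_{X*}\Omega^i_X/Z^i_X\isom B^{i+1}_X$.
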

\begin{proof}

This follows from the proof of \cite[Lemma 1.1]{MehtaSrinivas}.
\end{proof}

\subsubsection*{Bott vanishing}

By Proposition~\ref{prop:frobenius_cotangent_morphism}(b), we have $H^j(X, \Omega^i_X \otimes \cL) \subseteq H^j(X, \Omega^i_X \otimes \cL^p)$ for every line bundle $\cL$. This explains the following result. 

\begin{thm}[{Bott vanishing, \cite[Theorem 3]{BTLM}}] \label{thm:bott-vanishing}
  Let $X$ be a smooth projective $F$-liftable scheme over $k$, and let $\cL$ be an ample line bundle on $X$.  Then 
  \[ 
    H^j(X,\Omega^i_{X} \otimes \cL) = 0 \quad \text{for }j>0\text{, }i \geq 0.
  \]
  Moreover, if $(X, D)$ is an $F$-liftable nc pair, then \[ 
    H^j(X,\Omega^i_{X}(\log D) \otimes \cL) = 0 \quad \text{for }j>0\text{, }i \geq 0.
  \]
\end{thm}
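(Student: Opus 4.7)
The plan is to bootstrap from an ample twist $\cL$ to its $p$-th power using the split injection produced by a Frobenius lifting, and then invoke Serre vanishing. Set $S = \Spec k$ and fix a Frobenius lifting $(\wt X, \wt F_X)$ of $X$, so that Proposition~\ref{prop:frobenius_cotangent_morphism}(b) produces an $\cO_{X'}$-linear split injection
\[
  \xi^{\mathrm{ad}}\colon \Omega^i_{X'/k} \hookrightarrow F_{X/k *}\Omega^i_{X/k}
\]
for every $i \geq 0$. Since $k$ is perfect, the Frobenius twist $W_{X/k}\colon X' \isomto X$ is an isomorphism of schemes, and for every line bundle $\cL$ on $X$ we may view $W_{X/k}^{*}\cL$ as a line bundle on $X'$; under the identification $F_{X/k}$ with the absolute Frobenius $F_X$ one has $F_{X/k}^{*} W_{X/k}^{*}\cL \cong F_X^{*}\cL \cong \cL^{\otimes p}$.

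Tensoring $\xi^{\mathrm{ad}}$ with $W_{X/k}^{*}\cL$ and applying the projection formula for the finite flat map $F_{X/k}$ yields a split injection of $\cO_{X'}$-modules
\[
  \Omega^i_{X'/k}\otimes W_{X/k}^{*}\cL \hookrightarrow F_{X/k *}\bigl(\Omega^i_{X/k}\otimes \cL^{\otimes p}\bigr).
\]
Passing to cohomology (and using that $F_{X/k}$ and $W_{X/k}$ are affine, so cohomology is preserved) produces an injection
\[
  H^j(X,\Omega^i_{X}\otimes \cL) \hookrightarrow H^j(X,\Omega^i_{X}\otimes \cL^{\otimes p}).
\]
Iterating $n$ times gives an injection $H^j(X,\Omega^i_X\otimes\cL) \hookrightarrow H^j(X,\Omega^i_X\otimes\cL^{\otimes p^n})$. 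For $\cL$ ample and $j>0$ the right-hand side vanishes once $n$ is large enough by Serre vanishing, so the left-hand side vanishes as well.

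For the logarithmic statement, the identical argument applies with Variant~\ref{var:log_variant_xi} in place of Proposition~\ref{prop:frobenius_cotangent_morphism}(b): the split injection $\Omega^i_{X'/k}(\log D') \hookrightarrow F_{X/k *}\Omega^i_{X/k}(\log D)$ produces the same bootstrap $H^j(X,\Omega^i_X(\log D)\otimes\cL) \hookrightarrow H^j(X,\Omega^i_X(\log D)\otimes\cL^{\otimes p})$, and Serre vanishing concludes. The only point requiring any care is bookkeeping on the Frobenius twist and the projection formula; once the split injection of Proposition~\ref{prop:frobenius_cotangent_morphism}(b)/Variant~\ref{var:log_variant_xi} is in hand, the argument is purely formal.
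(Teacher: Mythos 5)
Your proof is correct and is essentially the paper's own argument: the paper deduces the inclusion $H^j(X,\Omega^i_X\otimes \cL)\subseteq H^j(X,\Omega^i_X\otimes \cL^{\otimes p})$ from the split injection of Proposition~\ref{prop:frobenius_cotangent_morphism}(b) (resp.\ Variant~\ref{var:log_variant_xi} in the log case) and then concludes by iterating and applying Serre vanishing, exactly as you do, citing \cite{BTLM} for the details. Your bookkeeping with the Frobenius twist and the projection formula is the standard way to make that inclusion precise, so nothing further is needed.
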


\begin{cor} \label{cor:fliftable-fano-rigid}
  Let $X$ be a smooth projective $F$-liftable scheme over $k$. Suppose that $X$ is Fano (i.e., $\omega_{X}^{-1}$ is ample). Then $H^i(X, T_X) = 0$ for $i>0$. In particular, $X$ is rigid and has unobstructed deformations.
\end{cor}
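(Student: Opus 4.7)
The plan is to express $T_X$ in a form to which Bott vanishing (Theorem~\ref{thm:bott-vanishing}) directly applies. Let $n = \dim X$. The perfect wedge pairing $\Omega^1_X \otimes \Omega^{n-1}_X \to \omega_X$ identifies $T_X = (\Omega^1_X)^\vee$ with $\Omega^{n-1}_X \otimes \omega_X^{-1}$. Hence, for every $i \geq 0$,
\[
H^i(X, T_X) \;\isom\; H^i(X, \Omega^{n-1}_X \otimes \omega_X^{-1}).
\]

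Since $X$ is Fano, the line bundle $\cL = \omega_X^{-1}$ is ample. Because $X$ is $F$-liftable, Bott vanishing applies to $\cL$, giving $H^j(X, \Omega^{n-1}_X \otimes \omega_X^{-1}) = 0$ for all $j > 0$. Combined with the isomorphism above, this yields the desired vanishing $H^i(X, T_X) = 0$ for $i > 0$.

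For the final sentence, one invokes standard deformation theory: infinitesimal deformations of $X$ are classified by $H^1(X, T_X)$, while obstructions lie in $H^2(X, T_X)$. Since both groups vanish, $X$ has no nontrivial infinitesimal deformations (so it is rigid) and its deformation functor is smooth (so it is unobstructed). There is no real obstacle here — the whole argument is an immediate consequence of Bott vanishing combined with Serre duality-type identification of $T_X$; the only point to be slightly careful about is ensuring that the duality $T_X \isom \Omega^{n-1}_X \otimes \omega_X^{-1}$ is correctly set up, which is routine for smooth schemes.
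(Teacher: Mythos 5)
Your argument is exactly the paper's: identify $T_X \isom \Omega^{n-1}_X \otimes \omega_X^{-1}$ and apply Bott vanishing (Theorem~\ref{thm:bott-vanishing}) with the ample line bundle $\omega_X^{-1}$, the deformation-theoretic conclusion being standard. The proposal is correct and matches the paper's proof.
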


\begin{proof}
Since $T_X \isom \Omega^{n-1}_{X} \otimes \omega_{X}^{\vee}$, Bott vanishing implies that
\[
  H^i(X, T_X) = H^i(X,\Omega^{n-1}_{X} \otimes \omega_{X}^{\vee}) = 0. \qedhere
\]
\end{proof}

The above results suggest that $F$-liftability is a rare property.  For instance, we obtain the following examples of non-liftable varieties.

\begin{example} \label{ex:hypersurfaces}
  A smooth hypersurface $X \subseteq \PP^n$ of degree $d>1$, where $n>1$, is not $F$-liftable as long as $n+d>5$. Indeed, we may reason as follows.
  \begin{enumerate}[(1)] 
    \item If $d>n+1$, then $X$ has positive Kodaira dimension, which contradicts Proposition~\ref{prop:frobenius_cotangent_morphism}(c).
    \item If $d=n+1$, then $\omega_X$ is trivial and $X$ is simply connected, contradicting \cite[Theorem 2]{MehtaSrinivas}.
    \item If $2\,{<}\,d\,{<}\,n+1$, then $X$ is Fano but not rigid, contradicting Corollary~\ref{cor:fliftable-fano-rigid}.
    \item If $d=2$ and $n>3$, then $X$ is not $F$-liftable because $H^1(X, \Omega^{n-2}_X(n-3)) \neq 0$, contradicting Bott vanishing (see \cite[\S 4.1]{BTLM}). See also \cite[Theorem~4.15]{zdanowicz}.
  \end{enumerate}
\end{example}

\subsubsection*{\texorpdfstring{The sheaf of $\xi$-invariant forms}{The sheaf of xi-invariant forms}}

Let $(\wt X, \wt F_X)$ be a Frobenius lifting of a smooth \mbox{$k$-scheme} $X$. We can regard the induced map 
\[
  \xi = \frac{1}{p}d\wt F_X \colon F_X^*\Omega^1_X\to \Omega^1_X
\]
as a Frobenius-linear endomorphism of $\Omega^1_X$, that is
\[
  \xi(f\cdot \omega) = f^p \cdot \xi(\omega) \ \text { for } \ f\in \cO_X,\ \omega\in \Omega^1_X.
\] 
As observed in \cite{MehtaSrinivas}, if $\omega_X$ is numerically trivial, then $\xi$ is an isomorphism, and therefore $\Omega^1_X$ becomes trivial on a finite \'etale cover $Y$ of $X$ (see \cite[Satz 1.4]{Lange_Stuhler}).

A related geometric idea, which we put to good use in \S\ref{s:homogeneous-spaces}, is to look at the (\'etale) subsheaf $(\Omega^1_X)^{\xi}$ of $\xi$-invariant forms in $\Omega^1_X$. By the very definition, $(\Omega^1_X)^{\xi}$ is the \'etale sheaf of sections of the fixed point locus $(T^* X)^{\xi}$ inside the cotangent bundle.

\begin{lemma} 
  The subscheme $(T^* X)^{\xi}\subseteq T^* X$ is \'etale over $X$ and the \'etale sheaf $(\Omega^1_X)^{\xi}$ is a~constructible sheaf of $\FF_p$-vector spaces on $X$. Over the dense open $U\subseteq X$ where $\xi$ is an isomorphism, $(T^* X)^{\xi}$ is finite over $X$ of degree $p^{\dim X}$ and $(\Omega^1_X)^{\xi}$ is an $\FF_p$-locally constant sheaf of rank $p^{\dim X}$.
\end{lemma}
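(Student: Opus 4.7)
My plan is to prove all three assertions by a direct local computation. Work on a Zariski open $V\subseteq X$ on which $\Omega^1_X$ admits a basis $\omega_1,\ldots,\omega_n$ (where $n=\dim X$). Then $T^*X|_V\cong\Spec\,\cO_V[a_1,\ldots,a_n]$, and $\xi$ is encoded by a matrix $(b_{ij})$ with entries in $\cO_V$ via $\xi(\omega_j)=\sum_i b_{ij}\omega_i$. Using the Frobenius-linearity of $\xi$, a local section $s=\sum_j a_j\omega_j$ satisfies $\xi(s)=s$ if and only if $a_i=\sum_j b_{ij}a_j^p$ for every $i$. Hence $(T^*X)^\xi|_V\subseteq T^*X|_V$ is cut out by the $n$ equations $f_i:=a_i-\sum_j b_{ij}a_j^p=0$.

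For étaleness over $X$, I compute the relative Jacobian in the fiber variables: its $(i,k)$-entry equals $\delta_{ik}-p\cdot b_{ik}a_k^{p-1}=\delta_{ik}$, since $p=0$ in $\cO_X$. The Jacobian is therefore the identity matrix, and the Jacobian criterion for étale morphisms yields immediately that $(T^*X)^\xi\to X$ is étale. Next, since $c^p=c$ for every $c\in\FF_p$, the Frobenius-linear $\xi$ is automatically $\FF_p$-linear on $\Omega^1_X$ viewed as a sheaf of abelian groups; consequently $(T^*X)^\xi$ is an $\FF_p$-vector subgroup scheme of $T^*X$, and $(\Omega^1_X)^\xi$ is a sheaf of $\FF_p$-vector spaces. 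Constructibility follows because the sheaf of sections of a quasi-finite étale morphism of finite type is constructible: by Zariski's main theorem such a morphism étale-locally factors as an open subscheme of a finite étale cover.

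Finally, the locus $U\subseteq X$ where $\xi$ is an isomorphism is the complement of the vanishing locus of $\det(b_{ij})$; it is dense open because $\xi$ is injective between locally free sheaves of the same rank (Proposition~\ref{prop:frobenius_cotangent_morphism}(c)). Over $U$, setting $C:=B^{-1}=(c_{ji})$, the defining equations rearrange to $a_j^p=\sum_i c_{ji}a_i$, and the corresponding $\cO_U$-algebra
\[
  \cO_U[a_1,\ldots,a_n]\big/\bigl(a_j^p-\sum\nolimits_i c_{ji}a_i\bigr)_{j=1,\ldots,n}
\]
is free of rank $p^n$ with basis $\{a_1^{e_1}\cdots a_n^{e_n}:0\le e_i<p\}$ (each occurrence of $a_j^p$ is traded for a linear combination of the $a_i$, which strictly decreases total degree; flatness is guaranteed by the étaleness established above). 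Combined with étaleness, this shows $(T^*X)^\xi|_U\to U$ is finite étale of degree $p^{\dim X}$, whence $(\Omega^1_X)^\xi|_U$ is étale-locally constant of rank $p^{\dim X}$. The key observation driving everything is the pleasant one that Frobenius-linearity collapses the relative Jacobian of the defining equations to the identity in characteristic $p$; with this in hand, the rest is bookkeeping with the explicit equations.
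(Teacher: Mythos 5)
Your argument is essentially the paper's: trivialize $\Omega^1_X$ locally, write the fixed locus inside the (trivialized) cotangent bundle by the $n$ equations coming from Frobenius-linearity of $\xi$, and observe that the relative Jacobian is the identity because the derivative of a $p$-th power vanishes in characteristic $p$; the paper does exactly this and then deduces the remaining assertions from the classical fact that a Frobenius-linear isomorphism of an $r$-dimensional vector space over an algebraically closed field has fixed-point set isomorphic to $\FF_p^r$.

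The one step you have not fully justified is the exact degree over $U$. Your rewriting argument shows that the monomials $a_1^{e_1}\cdots a_n^{e_n}$ with $0\le e_i<p$ generate the quotient algebra as an $\cO_U$-module (hence finiteness), and étaleness gives local freeness --- but together these only bound the rank above by $p^n$; freeness \emph{with that basis} is asserted, not proved. To get the rank equal to $p^{\dim X}$ you need a lower bound as well: either check the monomials are linearly independent (for instance, the leading terms $a_j^p$ of the relations $a_j^p-\sum_i c_{ji}a_i$ are pairwise coprime, so they form a Gr\"obner basis and the standard monomials are an honest basis), or argue fiberwise as the paper does: a bijective $p$-linear endomorphism of an $n$-dimensional vector space over an algebraically closed field has exactly $p^n$ fixed vectors, and étaleness makes the geometric fibers reduced, so the degree is $p^n$. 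This is a small, easily repaired omission; the rest of the proof is correct.
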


\begin{proof}
The assertions are local on $X$, so we may assume that there exists an isomorphism $\cO_X^{\dim X} \isom \Omega^1_X$. Since $\xi$ as an endomorphism of $\Omega^1_X$ is Frobenius-linear, its fixed point locus is described by the following system of equations in variables $[f_1, \ldots, f_n]$:
\[
[f_1, \ldots, f_n] - [f_1^p, \ldots, f_n^p] \cdot A=0 ,
\]
for some $A \in M_{n\times n}(\Gamma(X, \cO_X))$. We immediately see that the Jacobian of the above system of equations is the identity matrix, and thus $(T^* X)^{\xi}$ is \'etale over $X$. The other assertions are clear in view of the fact that for a vector space $V$ of dimension $r$ endowed with a Frobenius-linear isomorphism $\xi \colon F^*V \to V$, the locus of fixed points $V^{\xi}$ is isomorphic to $\FF_p^r$. 
\end{proof}

\subsection{Deformation theory of the Frobenius morphism}
\label{ss:def-thy-frob}

In this subsection, we first discuss obstruction classes to lifting Frobenius, as originally defined by Nori and Srinivas.  Then we apply them, together with other tools, to show in Theorem~\ref{thm:descending-frob-lift} that under certain conditions on a morphism $\pi\colon Y\to X$, if $Y$ is $F$-liftable, then so is $X$.  We also discuss $F$-liftability of products, toric fibrations, and divisorial rings. 

\subsubsection*{Obstruction classes and their functoriality}

Since we are dealing with deformations of schemes and their Frobenii along the first order thickening $S\hookrightarrow \wt S$, it is natural to seek an obstruction theory for lifting the Frobenius. For smooth schemes over a perfect field $k$, such a theory was developed by Nori and Srinivas in the appendix to \cite{MehtaSrinivas}. The usual deformation theory of morphisms already tells us that
\begin{itemize}
  \item given a morphism $f\colon X\to Y$ and a lifting $\wt Y$, the obstruction to the existence of $\tilde f \colon \wt X \to \wt Y$ is a class in $\Ext^2(L_{X/Y}, \cO_X)$ {($\isom H^{2}(X, T_{X/Y})$ if $f$ is smooth)},
  \item if we also fix $\wt X$ a priori, then we have an obstruction lying in $\Ext^1(Lf^* L_{Y}, \cO_X)$ ($\isom H^1(X, f^* T_{Y})$ if $Y$ is smooth) to lifting $f$.
\end{itemize}
The first obstruction theory does not apply in our context of lifting $f=\smash{F_{X/k}} \colon X\to X'$ as the two liftings $\wt X$ and $\smash{\wt X{}'}$ of the source and the target are not independent, they are related by the pullback square in \eqref{eqn:lifted-frobdgm}. The second one does, and produces an obstruction to lifting the Frobenius to a~\emph{given} lifting $\wt X$. 

If we do not fix $\wt X$ a priori, then the classical theory is of no use, and in fact we cannot even write a suitable functor of Artin rings (one needs a lifting of Frobenius on the base). Nori and Srinivas bypass the difficulty by considering the problem of lifting $(X, F_X)$ only to $W_n(k)$, in which case the lifting $\sigma$ of Frobenius on $W_n(k)$ is unique. As in their situation, in \S\ref{s:log-mehta-srinivas} we shall actually need to lift not only to $W_2(k)$ but all the way to $W(k)$ to apply characteristic zero methods. 

Their results are as follows. 

\begin{prop}[{\cite[Appendix]{MehtaSrinivas}}]\label{prop:deformation_theory_frobenius}
  Let $X$ be a smooth scheme over $k$, and let $(X_n, F_{X_n})$ be a~Frobenius lifting of $X$ over $W_n(k)$. Then the following hold.
  \begin{enumerate}[(a)]
    \item For every lifting $X_{n+1}$ of $X_n$ over $W_{n+1}(k)$ there exists an obstruction class 
    \[
    o^F_{X_{n+1}} \in \Ext^1(\Omega^1_{X},F_{X*}\cO_X)
    \]
    whose vanishing is sufficient and necessary for the existence of a lifting $F_{X_{n+1}}$ of $F_{X_n}$ to $X_{n+1}$. If the obstruction vanishes, then the space of such liftings is a torsor under $\Hom(\Omega^1_{X},F_{X*}\cO_X)$. 
    \item There exists an obstruction class 
    \[
      o_{X_n} \in \Ext^1(\Omega^1_{X}, B^1_X)
    \]
    whose vanishing is sufficient and necessary for the existence of a lifting $(X_{n+1}, F_{X_{n+1}})$ of $(X_n, F_{X_n})$ over $W_{n+1}(k)$. If the obstruction vanishes, then the space of such liftings is a~torsor under $\Hom(\Omega^1_{X},B^1_X)$.
    \item The obstruction class $o_X \in \Ext^1(\Omega^1_{X}, B^1_X)$ to lifting $X$ over $W_2(k)$ compatibly with the Frobenius morphism equals the class of the extension
    \[
      0 \ra B^1_{X} \ra Z^1_{X} \xrightarrow{C_{X}} \Omega^1_{X} \ra 0.
    \]
    \item Let $(X_{n+1}, F_{X_{n+1}})$ be a lifting of $(X_n, F_{X_n})$ over $W_{n+1}(k)$ and suppose that the Frobenius action on $H^i(X,\cO_X)$ is bijective for $i = 1,2$. Then for every $\cL_n \in \Pic X_n$ such that $F_{X_n}^*\cL_n = \cL_n^{\otimes p}$, there exists a unique $\cL_{n+1} \in \Pic X_{n+1}$ such that $\cL_{n+1 | X_n} \isom \cL_n$ and $F_{X_{n+1}}^*\cL_{n+1} = \cL_{n+1}^{\otimes p}$.
  \end{enumerate} 
\end{prop}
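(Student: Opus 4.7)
The plan is to follow the Nori--Srinivas approach, combining standard morphism-deformation theory with the adjunction $F_X^* \dashv F_{X*}$ (valid because $F_X$ is affine). This adjunction explains why the obstruction and torsor groups in (a) are naturally expressed as $\Ext^i(\Omega^1_X, F_{X*}\cO_X)$ rather than $\Ext^i(F_X^*\Omega^1_X, \cO_X)$.

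For (a), the task of lifting $F_{X_n}\colon X_n \to X_n$ along the square-zero thickening $X_n \hookrightarrow X_{n+1}$, with both source and target prescribed, is a standard problem in morphism deformation; its obstruction lies in $\Ext^1(F_X^*\Omega^1_X, \cO_X)$ and the torsor of liftings is under $\Hom(F_X^*\Omega^1_X, \cO_X)$. Converting via the adjunction yields the stated groups. For (b), we apply the long exact sequence associated to $0 \to \cO_X \to F_{X*}\cO_X \to B^1_X \to 0$. Because smooth affines are $F$-liftable, one can choose local Frobenius liftings $(\wt U_i, \wt F_i)$ over an affine cover $\{U_i\}$ of $X$. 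The failure of these to glue defines a \v{C}ech $1$-cocycle valued in $\cHom(\Omega^1_X, F_{X*}\cO_X)$ whose image modulo $\cO_X$ is a well-defined class $o_{X_n} \in \Ext^1(\Omega^1_X, B^1_X)$; its image under the connecting morphism $\Ext^1(\Omega^1_X, B^1_X)\to \Ext^2(\Omega^1_X, \cO_X)$ is exactly the usual obstruction to lifting $X_n$ alone, so vanishing of $o_{X_n}$ permits the simultaneous choice of $X_{n+1}$ and a compatible Frobenius lift. The torsor claim is proved by analogous cocycle bookkeeping.

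For (c), we identify $o_X$ with the extension class of the Cartier sequence as follows. Each local Frobenius lift $\wt F_i$ on $U_i$ defines a splitting $\xi^{\mathrm{ad}}_i$ of $C_X$ over $U_i$ through the construction of $\xi$ recalled in \eqref{eqn:xi-intro}. On overlaps $U_{ij}$, the differences $\xi^{\mathrm{ad}}_j - \xi^{\mathrm{ad}}_i$ land in $\cHom(\Omega^1_X, B^1_X)$ and produce a \v{C}ech $1$-cocycle representing the class of the extension. A direct computation (essentially the one implicit in Deligne--Illusie) shows this cocycle coincides with the one built from $\tfrac{1}{p}(\wt F_j - \wt F_i)$ that represents $o_X$, which yields the identification.

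For (d), we use the obstruction-and-torsor framework for line bundles: extensions of $\cL_n$ to $X_{n+1}$ are obstructed in $H^2(X, \cO_X)$ and, when unobstructed, form a torsor under $H^1(X, \cO_X)$. Functoriality of the obstruction under $F^*$ together with $F_{X_n}^* \cL_n \isom \cL_n^{\otimes p}$ and $p = 0$ on $H^2(X, \cO_X)$ shows that $F^*$ annihilates the obstruction class; bijectivity of $F^*$ on $H^2$ then forces it to vanish. Once a lift exists, the condition $F_{X_{n+1}}^* \cL_{n+1} \isom \cL_{n+1}^{\otimes p}$ becomes a linear equation on the torsor parameter in $H^1(X, \cO_X)$, uniquely solvable because $F^*$ acts bijectively on $H^1$. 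The main obstacle is the cocycle identification in (c); the remaining parts are essentially exercises in standard deformation theory once the correct coefficient sheaves are identified.
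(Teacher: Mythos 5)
Your proposal is correct and follows essentially the same route as the paper, which cites the Nori--Srinivas appendix of \cite{MehtaSrinivas} and, in Variant~\ref{var:log_deformation_theory_frobenius}, sketches exactly this argument: local Frobenius liftings on an affine cover, the \v{C}ech cocycle $\tau_{ij}$ of differences pushed into $\cHom(\Omega^1_X,B^1_X)$ for (b), the differences of the induced splittings $\xi^{\rm ad}$ identifying $o_X$ with the Cartier extension class for (c), and the obstruction/torsor argument with Frobenius bijectivity on $H^1$ and $H^2$ for (d). No gaps worth flagging beyond the routine bookkeeping (e.g.\ working with the relative Frobenius to the $\sigma$-twist), which you implicitly handle.
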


\begin{variant}[Logarithmic variant of Proposition~\ref{prop:deformation_theory_frobenius}]\label{var:log_deformation_theory_frobenius}
The above proposition can be repeated word for word for nc pairs $(X, D)$. In this case, the sheaves of K\"ahler differentials are replaced by logarithmic differentials $\Omega^1_{X}(\log D)$.  The sheaf $B^1_{X}$ is left without change, since the \mbox{$1$-boundaries} $B^1_{X}(\log D)$ of the logarithmic de Rham complex coincide with the \mbox{$1$-boundaries} $B^1_{X}$ of the standard de Rham complex.  We do not give the full proof, but remark about the main ingredients of the logarithmic version of (b) and (c). 
\begin{enumerate}
  \item As in standard deformation theory, for an nc pair $(X_n,D_n)$ over $W_n(k)$ we notice that any two liftings over $W_{n+1}(k)$ (as nc pairs) are Zariski-locally isomorphic and the infinitesimal automorphisms are parametrized by sections of $\cHom(\Omega^1_X(\log D),\cO_X)$ (see \cite[Proposition 8.22]{esnault_viehweg}).
  \item The logarithmic analogue of Proposition~\ref{prop:deformation_theory_frobenius}(a) holds, and in particular for every Frobenius lifting $F_{X_n} \colon (X_n,D_n) \to (X_n,D_n)$ local liftings of the Frobenius morphism over $W_{n+1}(k)$ exist and are torsors under sections of $\cHom(\Omega^1_X(\log D),F_{X*}\cO_X)$ (see \cite[Proposition 9.3]{esnault_viehweg}).
\end{enumerate}
Now, to construct the obstruction class for lifting $(X_n,D_n,F_{X_n})$ over $W_{n+1}(k)$ we reason as follows.  We take an open affine covering $\{(U^i_n,D^i_n)\}$ of $(X_n,D_n)$, and then lift $(U^i_n,D^i_n)$ to some nc pairs $\smash{(U^i_{n+1},D^i_{n+1})}$ over $W_{n+1}(k)$.  By (2) the nc pairs $\smash{(U^i_{n+1},D^i_{n+1})}$ admit Frobenius liftings $\smash{F_{U^i_{n+1}} \colon (U^i_{n+1},D^i_{n+1}) \to (U^i_{n+1},D^i_{n+1})}$ extending $F_{X_n}|_{U^i_n}$.  We set $U^{ij}_{n+1} = U^i_{n+1} \cap U^j_{n+1}$.  By (1) we may fix isomorphisms 
\[
  \phi_{ij} \colon (U^j_{n+1},D^j_{n+1})_{|U^{ij}} \isomto (U^i_{n+1}, D^i_{n+1})_{|U^{ij}} \quad \text{satisfying} \quad \phi_{ji} = \phi_{ij}^{-1}.
\]  

The morphisms $F_{U^i_{n+1}}$ and $\phi_{ij} \circ F_{U^j_{n+1}} \circ \phi_{ji}$ are two liftings of $F_{X_n}|_{U^{ij}_n}$ and hence they give rise to local section $\tau_{ij}$ of the sheaf $\cHom(\Omega^1_X(\log D),F_{X*}\cO_X)$.  The images of $\tau_{ij}$ under the homomorphism $\cHom(\Omega^1_X(\log D),F_{X*}\cO_X) \to \cHom(\Omega^1_X(\log D),B^1_X)$ does not depend on the choices made, and give a cocycle whose cohomology class we claim is the required obstruction.  For the rest of the proof, we may repeat the reasoning of \cite[Appendix, Proposition 1 (vii)]{MehtaSrinivas} word for word.  For the proof of (c), we observe that the difference of the morphisms $\xi$ induced by the liftings $F_{\wt U{}^i}$ and $\phi_{ij} \circ F_{\wt U{}^j} \circ \phi_{ji}$ divided by $p$ is a map $h_{ij}\colon \Omega^1_{U^{ij}}(\log D) \to B^1_{U^{ij}}$, and that $\{h_{ij}\}$ is a cocycle representing the log Cartier sequence.
\end{variant}

The obstruction classes satisfy the following functoriality properties.

\begin{lemma} \label{lem:frob_functoriality_lifting}
  Let $\pi \colon Y \to X$ be a morphism of smooth schemes over $k$.
  \begin{enumerate}[(a)]
    \item Let $\swt \pi \colon \wt Y \to \wt X$ be a lifting of $\pi$ over $W_2(k)$. Then the obstruction classes $o^F_{\wt X}$ and $o^F_{\wt Y}$, treated as morphisms in the appropriate derived categories, fit into the following commutative diagrams
    \begin{equation}\label{eqn:functoriality-dgms-1}
      \xymatrix{
        \pi^*F_{X}^*\Omega^1_{X} \ar[r]^{\pi^*o^F_{\wt{X}}}\ar[d]_{F_{Y}^*d\pi} & \pi^*\cO_X[1] \ar@{=}[d] \ar@{}[drr]|-{\displaystyle \text{and}}  &  & F_{X}^*\Omega^1_{X} \ar[r]^{o^F_{\wt{X}}} \ar[d]_{F_Y^*d\pi} & \cO_X[1] \ar[d]^{\pi^{*}[1]} \\
        F_{Y}^*\Omega^1_{Y} \ar[r]_-{o^F_{\wt{Y}}} & \cO_Y[1]        & & R\pi_*F_{Y}^*\Omega^1_{Y} \ar[r]_-{R\pi_*o^F_{\wt{Y}}} & R\pi_*\cO_Y[1].
      }
    \end{equation}
    \item Suppose that $\pi$ is smooth. Then the obstruction classes $o_X$ and $o_Y$ fit into the following commutative diagram
    \[ 
      \xymatrix{
        \pi^* \Omega^1_{X} \ar[r]^{\pi^* o_X} \ar[d]_{d\pi} & \pi^* B^1_X[1] \ar[d]^{d\pi}\\
        \Omega^1_{Y} \ar[r]_{o_Y} & B^1_Y[1].
      }
    \]
  \end{enumerate}
\end{lemma}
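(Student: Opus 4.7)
My plan is to reduce both parts to explicit cocycle-level computations. For part~(a), I will represent the two obstruction classes as \v{C}ech $1$-cocycles on a shared affine cover of $\wt X$ and its preimage in $\wt Y$, and exhibit a natural $0$-cochain, measuring the failure of (arbitrarily chosen) local Frobenius liftings to commute with $\swt \pi$, whose coboundary matches the required identity. For part~(b), I will use Proposition~\ref{prop:deformation_theory_frobenius}(c), which identifies $o_X$ and $o_Y$ with the extension classes of the Cartier sequences, and produce an explicit morphism between these extensions from the functoriality of pullback of differential forms.

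For part~(a), choose an affine open cover $\{\wt U_i\}$ of $\wt X$ and set $\wt V_i := \swt\pi^{-1}(\wt U_i)$. By smoothness, Frobenius admits local liftings $\wt F_{X,i}$ on $\wt U_i$ and $\wt F_{Y,i}$ on $\wt V_i$, not assumed compatible with $\swt\pi$. As in the proof of Proposition~\ref{prop:deformation_theory_frobenius}(a), the normalized differences
\[
\alpha_{ij} = \frac{\wt F_{X,j} - \wt F_{X,i}}{p},\qquad \beta_{ij} = \frac{\wt F_{Y,j} - \wt F_{Y,i}}{p}
\]
are \v{C}ech $1$-cocycles in $\cHom(F_X^*\Omega^1_X, \cO_X)$ over $U_{ij}$ and $\cHom(F_Y^*\Omega^1_Y, \cO_Y)$ over $V_{ij}$ representing $o^F_{\wt X}$ and $o^F_{\wt Y}$ respectively. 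On each $\wt V_i$ the two morphisms $\swt\pi\circ \wt F_{Y,i}$ and $\wt F_{X,i}\circ\swt\pi\colon \wt V_i \to \wt U_i$ lift the common morphism $\pi\circ F_Y = F_X\circ\pi$, so they differ by $p\cdot \gamma_i$ for a unique $\gamma_i\colon F_Y^*\pi^*\Omega^1_X|_{V_i}\to \cO_Y|_{V_i}$ (by the torsor structure on liftings of morphisms of smooth affines). Computing $\swt\pi\circ \wt F_{Y,j} - \wt F_{X,j}\circ\swt\pi$ on $\wt V_{ij}$ in two ways and plugging in the above cocycles yields
\[
\pi^*\alpha_{ij} - \beta_{ij}\circ F_Y^*d\pi = \gamma_j - \gamma_i
\]
in $\cHom(F_Y^*\pi^*\Omega^1_X, \cO_Y)|_{V_{ij}}$, exhibiting the difference of the pulled-back cocycles as a \v{C}ech coboundary. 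Hence $\pi^* o^F_{\wt X} = o^F_{\wt Y}\circ F_Y^*d\pi$ in $\Ext^1(F_Y^*\pi^*\Omega^1_X, \cO_Y)$, which is the first diagram. The second diagram follows by applying $R\pi_*$ and composing with the unit of the adjunction $\pi^*\dashv R\pi_*$.

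For part~(b), Proposition~\ref{prop:deformation_theory_frobenius}(c) identifies $o_X$ and $o_Y$ with the classes of the Cartier extensions $0\to B^1_X\to Z^1_X\to \Omega^1_X\to 0$ and $0\to B^1_Y\to Z^1_Y\to \Omega^1_Y\to 0$. Since $\pi$ is smooth and hence flat, $\pi^*$ is exact. The commutative square $F_X\circ\pi = \pi\circ F_Y$ furnishes a base change morphism $\pi^*F_{X*}\Omega^\bullet_X\to F_{Y*}\pi^*\Omega^\bullet_X$ which, post-composed with $F_{Y*}$ applied to the pullback of forms $\pi^*\Omega^\bullet_X\to \Omega^\bullet_Y$, produces a morphism of complexes $\pi^*F_{X*}\Omega^\bullet_X\to F_{Y*}\Omega^\bullet_Y$. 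This morphism is compatible with the de~Rham differential and therefore restricts to cocycles and coboundaries, yielding a commutative diagram of short exact sequences
\[
\xymatrix{
0 \ar[r] & \pi^*B^1_X \ar[r]\ar[d] & \pi^*Z^1_X \ar[r]\ar[d] & \pi^*\Omega^1_X \ar[r]\ar[d]^{d\pi} & 0 \\
0 \ar[r] & B^1_Y \ar[r] & Z^1_Y \ar[r] & \Omega^1_Y \ar[r] & 0,
}
\]
in which the right-hand vertical is the usual pullback of forms and the left- and middle-hand verticals are the maps denoted $d\pi$ in the statement. Reading off extension classes gives the square of part~(b).

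The main technical obstacle is the bookkeeping in part~(a): one must keep track of several torsor structures under different $\cHom$-sheaves, transport them between $X$ and $Y$, and translate equalities of $\wt S$-scheme morphisms into equalities of $\cO$-linear sheaf maps on $Y$. Once the cocycle identity is established both assertions follow formally.
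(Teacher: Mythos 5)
Your proposal is correct and follows essentially the same route as the paper: for part (b) the paper's proof is exactly your argument (Proposition~\ref{prop:deformation_theory_frobenius}(c) together with the pullback ladder between the two Cartier extensions, using flatness of the smooth $\pi$), and for part (a) the paper simply cites \cite[Lemma 4.1]{zdanowicz}, whose content is precisely the \v{C}ech-level comparison you carry out — local Frobenius liftings, the $0$-cochain $\gamma_i$ measuring the failure of $\swt\pi$ to intertwine them, and the coboundary identity, with the second square then obtained from the first by the adjunction $\pi^*\dashv R\pi_*$. In short, nothing is missing; your part (a) just makes explicit the computation the paper delegates to the citation.
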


\begin{proof}
For part (a), see \cite[Lemma 4.1]{zdanowicz}. For part (b), use Proposition~\ref{prop:deformation_theory_frobenius}(c) and the commutativity of the diagram 
\[
  \begin{gathered}[b] 
    \xymatrix{
      0 \ar[r] & \pi^* B^1_{X} \ar[r]\ar[d]_{d\pi} & \pi^* Z^1_{X} \ar[r]\ar[d]_{d\pi} & \pi^* \Omega^1_{X} \ar[r]\ar[d]_{d\pi} & 0 \\ 
      0 \ar[r] & B^1_{Y} \ar[r] & Z^1_{Y} \ar[r] & \Omega^1_{Y} \ar[r] & 0. 
    }\\[-\dp\strutbox] 
  \end{gathered}
  \qedhere
\]
\qedhere 
\end{proof}

\begin{remark} \label{remark:cotangent_complex}
In the case of singular schemes, the cotangent bundle $\Omega^1_{X}$ can be substituted with its derived variant $L_{X/k} \in D^-_{\rm Coh}(\cO_X)$ (see \cite{illusie_cotangent}).  The assertions of Proposition~\ref{prop:deformation_theory_frobenius}(a) and Lemma~\ref{lem:frob_functoriality_lifting}(a) remain valid with the standard differential replaced by the derived one.
\end{remark}

\subsubsection*{Descending and lifting Frobenius liftings}

With the abstract deformation theory at hand, we can now easily relate $F$-liftability of two schemes $X$ and $Y$ in the presence of a suitable morphism $\pi \colon Y \to X$.  In the subsequent sections, we shall frequently use the following results.

\begin{lemma} \label{lemma:ascending-frob-lift}
  Let $\pi \colon Y\to X$ be an \'etale morphism of $k$-schemes. For every Frobenius lifting $(\wt X, \wt F_X)$ of $X$, there exists a unique Frobenius lifting $(\wt Y, \wt F_Y)$ of $Y$ and a lifting $\swt \pi\colon \wt Y\to \wt X$ of $\pi$ such that $\wt F_X\circ \swt \pi = \swt \pi\circ \wt F_Y$.
\end{lemma}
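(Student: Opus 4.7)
The plan is to use two standard properties of étale morphisms in sequence: first to construct $\swt\pi\colon \wt Y\to \wt X$ as a lift of $\pi$, and then to construct $\wt F_Y$ as a unique lift fitting into the Frobenius square.

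Since $\pi\colon Y\to X$ is étale and $X\hookrightarrow \wt X$ is a square-zero closed immersion, the topological invariance of the small étale site produces a unique (up to unique isomorphism) étale morphism $\swt\pi\colon \wt Y\to \wt X$ together with an identification $\wt Y\times_{\wt X} X \isom Y$. In particular, $\wt Y$ is flat over $W_2(k)$ and is a lifting of $Y$, and $\swt\pi$ lifts $\pi$. This settles the first half of the existence and uniqueness statement.

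To construct $\wt F_Y$, I would apply the infinitesimal lifting property of the étale morphism $\swt\pi$ to the outer commutative square
\[
\xymatrix{
Y \ar[r]^{F_Y} \ar@{^{(}->}[d] & \wt Y \ar[d]^{\swt\pi} \\
\wt Y \ar[r]_-{\wt F_X\circ\swt\pi} \ar@{.>}[ur]^{\wt F_Y} & \wt X,
}
\]
whose commutativity modulo $p$ is precisely the naturality of the absolute Frobenius, $\pi\circ F_Y = F_X\circ\pi$. Since $\swt\pi$ is étale and the left vertical is a square-zero thickening, there exists a unique dotted arrow $\wt F_Y$ making both triangles commute. The upper triangle expresses that $\wt F_Y$ lifts $F_Y$, and the lower triangle is the required compatibility $\swt\pi\circ\wt F_Y = \wt F_X\circ\swt\pi$. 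Uniqueness of $\wt F_Y$ given $(\wt Y,\swt\pi)$ is part of the infinitesimal lifting property, and together with the uniqueness of $(\wt Y,\swt\pi)$ this gives the full uniqueness statement.

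The main (and essentially only) obstacle is bookkeeping: one must verify the commutativity of the outer square on $Y$ and ensure that restricting modulo $p$ recovers $F_Y$ and the original compatibility. No obstruction class computation is needed, since the étaleness of $\swt\pi$ trivializes the relevant cotangent complex in the deformation problem.
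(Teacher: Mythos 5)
Your proposal is correct and is essentially the paper's argument: the paper simply cites the equivalence of categories between étale schemes over $X$ and over $\wt X$, and your two steps (topological invariance to get $(\wt Y,\swt\pi)$, then the unique infinitesimal lifting property of the étale $\swt\pi$ applied to the square coming from $\pi\circ F_Y=F_X\circ\pi$ to get $\wt F_Y$) are exactly the content of that equivalence. The only cosmetic point is that for the uniqueness of the pair one should note that any lifting $\swt\pi'$ of the étale morphism $\pi$ between flat $W_2(k)$-liftings is automatically étale, so the equivalence indeed pins it down.
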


\begin{proof}
This follows from the equivalence of categories between \'etale schemes over $X$ and over~$\wt X$. 
\end{proof}

\begin{thm}[Descending Frobenius liftability] \label{thm:descending-frob-lift}
  Let $\pi \colon Y\to X$ be a morphism of schemes (essentially) of finite type over $k$ and let $(\wt Y, \wt F_{Y})$ be a Frobenius lifting of $Y$.
  \begin{enumerate}[(a)]
    \item Suppose that $\pi$ admits a lifting $\swt \pi\colon \wt Y\to \wt X$, and that one of the following conditions is satisfied:
    \begin{enumerate}[i.]
        \item $\pi^*\colon \cO_X \to R\pi_* \cO_Y$ is a split monomorphism in the derived category,
        \item $\pi$ is finite flat of degree prime to $p$,
        \item $Y$ satisfies condition $S_2$ and $\pi$ is an open immersion such that $X\setminus Y$ has codimension $>1$ in $X$.
    \end{enumerate}
    Then $F_X$ lifts to $\wt X$.  
    \item Suppose that one of the following conditions is satisfied:
      \begin{enumerate}[i.]
        \item $\cO_X\isomto \pi_* \cO_Y$ and $R^1\pi_* \cO_Y = 0$,
        \item $X$ and $Y$ are smooth and $\pi$ is proper and birational,
        \item $Y$ satisfies condition $S_3$ and $\pi$ is an open immersion such that $X\setminus Y$ has codimension $>2$ in $X$.
      \end{enumerate}
    Then there exists a unique pair of a Frobenius lifting $(\wt X, \wt F_X)$ of $X$ and a lifting $\swt \pi \colon \wt Y\to \wt X$ of $\pi$ such that $\wt F_X\circ \swt \pi = \swt \pi\circ \wt F_Y$. 
  \end{enumerate}
\end{thm}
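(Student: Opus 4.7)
The plan revolves around the obstruction theory of Proposition~\ref{prop:deformation_theory_frobenius} (extended via Remark~\ref{remark:cotangent_complex} to possibly singular $X$) and the functoriality statement of Lemma~\ref{lem:frob_functoriality_lifting}(a). For part (a), the starting point is that, because $\swt\pi\colon \wt Y\to \wt X$ exists and $\wt F_Y$ is given, the right-hand diagram of Lemma~\ref{lem:frob_functoriality_lifting}(a) reads
\[
\pi^{*}[1]\circ o^F_{\wt X} \;=\; R\pi_{*}\,o^F_{\wt Y}\circ F_Y^* d\pi \;=\; 0,
\]
since $o^F_{\wt Y}=0$. Under hypothesis~(i) a retraction of $\pi^*$ then kills $o^F_{\wt X}$, producing the desired lift of $F_X$. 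Hypothesis~(ii) reduces to (i): finiteness collapses $R\pi_*$ to $\pi_*$ and $\tfrac{1}{\deg \pi}\mathrm{tr}_{\pi}$ splits $\pi^*$, the denominator being invertible in $k$. Hypothesis~(iii) also reduces to (i), via a Hartogs-type identification for an open immersion into an $S_2$ scheme with small complement, which forces $\cO_X \isomto \pi_*\cO_Y$.

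For part~(b), the plan is first to produce the lifting $\wt X$ and then to invoke part~(a). Under hypothesis~(i), Lemma~\ref{lem:liedtke_satriano} directly yields a flat lifting $\wt X$ (with structure sheaf $\pi_*\cO_{\wt Y}$) together with a unique compatible $\swt\pi$. Case~(ii) reduces to (i): for a proper birational morphism between smooth varieties, $\pi_*\cO_Y=\cO_X$ by Zariski's main theorem and $R^1\pi_*\cO_Y=0$ by a standard argument. Case~(iii) reduces to (i) through local cohomology: the $S_3$ hypothesis on $Y$ together with the codimension bound give both $\pi_*\cO_Y=\cO_X$ and $R^1\pi_*\cO_Y=0$. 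Uniqueness will then follow from the uniqueness clause of Lemma~\ref{lem:liedtke_satriano} combined with the torsor description of Frobenius lifts from Proposition~\ref{prop:deformation_theory_frobenius}(a).

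For part~(c), the plan is to extract a Frobenius splitting from $\wt F_Y$ and transfer it downstairs. Since $F$-liftability forces $F$-splitness via the section $\det\xi$ of Proposition~\ref{prop:frobenius_cotangent_morphism}(c), Lemma~\ref{lem:galois_compatible_fsplit} produces a $G$-invariant splitting $\sigma_Y$ descending to a splitting $\sigma_X$ on $X$, and exhibits $\pi$ as a morphism of $F$-split schemes. Applying the canonical-lift functor of Theorem~\ref{thm:zdanowicz-canonical-lift} yields a compatible morphism $\wt Y(\sigma_Y)\to \wt X(\sigma_X)$; the final step is to identify the given $\wt Y$ with $\wt Y(\sigma_Y)$, so that $\wt F_Y$ can be transported along $\swt\pi$ and pushed forward to a Frobenius lifting $\wt F_X$ on $\wt X(\sigma_X)$.

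The hard part will be this last identification in part~(c): it is a genuinely new ingredient, and I would anticipate needing the uniqueness of Frobenius-liftable liftings (the result advertised as Theorem~\ref{thm:fsplit-liftings} in the introduction) to pin down $\wt Y\isom \wt Y(\sigma_Y)$. By contrast, parts~(a) and~(b) should flow directly out of the obstruction/functoriality machinery once the appropriate splitting or pushforward identification is in hand; the only fiddly aspect there is the depth-theoretic bookkeeping needed to reduce case~(iii) to case~(i) in each.
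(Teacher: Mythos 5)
Your strategy matches the paper's for (a.i), (a.ii) and part (b): functoriality of the obstruction class (Lemma~\ref{lem:frob_functoriality_lifting}(a)) plus the pushforward of liftings from Lemma~\ref{lem:liedtke_satriano}, with (b.iii) reduced to (b.i) by local cohomology exactly as in the paper. However, your reduction of (a.iii) to (a.i) has a genuine gap. The Hartogs/$S_2$ argument does give $\cO_X\isomto\pi_*\cO_Y$, but hypothesis (a.i) requires $\cO_X\to R\pi_*\cO_Y$ to be a \emph{split} monomorphism in the derived category, and for an open immersion whose complement $Z$ has codimension exactly two the higher direct images $R^{i}\pi_*\cO_Y\simeq H^{i+1}_Z(\cO_X)$ ($i\geq 1$) are typically nonzero (e.g.\ $\bb{A}^2\setminus\{0\}\subset\bb{A}^2$), and no retraction $R\pi_*\cO_Y\to\cO_X$ is available, so (a.i) simply does not apply. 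The paper's argument for (a.iii) is different: since $o^F_{\wt Y}=0$, the composite $F_X^*\Omega^1_X\to\cO_X[1]\to R\pi_*\cO_Y[1]$ vanishes, hence $o^F_{\wt X}$ factors through the fibre $K\simeq R\Gamma_Z(\cO_X)[1]$ of $\cO_X[1]\to R\pi_*\cO_Y[1]$, and one then shows $\Hom(F_X^*\Omega^1_X,K)=0$ from the vanishing of local cohomology in degrees $\leq 1$ guaranteed by $S_2$ and the codimension bound. You need this factorization-through-the-cone step; a derived splitting is neither available nor needed.

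In part (c) the final step is also off. After identifying $\wt Y\isom\wt Y(\sigma_Y)$ via Theorem~\ref{thm:fsplit-liftings} (you correctly anticipated this as the key new ingredient), you propose to ``transport'' $\wt F_Y$ along $\swt\pi$ and push it forward; but the $G$-action need not lift to $\wt Y$, and no Frobenius lifting on $\wt X$ commuting with $\swt\pi$ is expected --- the paper explicitly disclaims the relation $\wt F_X\circ\swt\pi=\swt\pi\circ\wt F_Y$ in cases (a) and (c). The correct conclusion is to feed the situation back into part (a.i): $\pi$ is affine, so $R\pi_*\cO_Y=\pi_*\cO_Y$, and linear reductivity gives the Reynolds retraction $\pi_*\cO_Y\to(\pi_*\cO_Y)^G=\cO_X$, so the obstruction to lifting $F_X$ to $\wt X(\sigma_X)$ vanishes. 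Finally, a smaller point in (b.ii): the vanishing $R^1\pi_*\cO_Y=0$ for a proper birational morphism of smooth varieties is not a ``standard argument'' in characteristic $p$; it is the theorem of Chatzistamatiou--R\"ulling that the paper invokes.
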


In fact, conditions (a.ii) and (b.ii) imply (a.i) and (b.i), respectively. We do not expect $\wt F_X\circ \swt \pi = \swt \pi\circ \wt F_Y$ to hold in general in situation (a). 

\begin{remark}
\label{remark:descend_reductive}
In the sequel \cite[Section 2.2]{PartII} we also prove the following result: 
{\it 
\begin{enumerate}[(a)]
\item[(c)] Suppose that $Y$ is normal and that $\pi \colon Y\to X=Y/G$ is a good quotient by an action of a~linearly reductive group $G$ on $Y$. Then there exists a lifting $\swt \pi \colon \wt Y\to \wt X$ of $\pi$ and a lifting $\wt F_X$ of $F_X$ to $\wt X$.  
\end{enumerate}
}
\noindent The approach is to use (a) for a map $\swt \pi$ obtained using averaging technique for Frobenius splittings.
\end{remark}

\begin{proof}
(a) Under condition (i), the right arrow of the right diagram \eqref{eqn:functoriality-dgms-1} is a split injection by assumption. Thus $o_{\wt X}^F = 0$ if $o_{\wt Y}^F=0$. Condition (ii) implies (i), as $R\pi_*\cO_Y=\pi_*\cO_Y$ and $1/\deg(\pi)$ times the trace map yields a splitting. For (iii), we argue as in \cite[Corollary 4.3]{zdanowicz}. Let $K$ be the fiber of the right arrow in the right diagram \eqref{eqn:functoriality-dgms-1}, fitting into an exact triangle
\begin{align}
  K \ra \cO_X[1] \ra R\pi_* \cO_Y[1] \ra K[1] \label{eqn:local_cohomology}.
\end{align}
Since the bottom map in the right diagram \eqref{eqn:functoriality-dgms-1} is zero by assumption, the top map has to factor through $K$.  It is therefore enough to show that $\Hom(F^*_X \Omega^1_X, K)=0$. Note that $K = R\Gamma_Z(\cO_X)[1]$ is the shift by one of the local cohomology complex with supports on $Z=X\setminus Y$ (see \cite{hartshorne_local_cohomology} or \stacksproj{0A39}).  Considering the spectral sequence 
\[
  E^{p,q}_2 = \Hom\left(F^*_X \Omega^1_X,\cH^q(R\Gamma_Z(\cO_X))[p]\right) 
  \quad \Rightarrow \quad
  \Hom\left(F^*_X \Omega^1_X,R\Gamma_Z(\cO_X)[p+q]\right),
\]
for $p+q = 1$, we see that it suffices to show that the local cohomology vanishes up to degree one, which is implied by the Serre's condition $S_2$ and \cite[Proposition 3.3]{hassett_kovacs}.  Analogous reasoning works if $X$ and $Y$ are not smooth.  In this case, as mentioned in Remark~\ref{remark:cotangent_complex}, we substitute the sheaf of K\"ahler differentials with the cotangent complex.

(b)  Under condition (i), Lemma~\ref{lem:liedtke_satriano} provides a lifting $\swt \pi \colon \wt Y\to \wt X$ defined by the assignment $\cO_{\wt X} = \pi_*\cO_{\wt Y}$.  To obtain a Frobenius lifting on $\wt X$ we just take $F_{\wt X} = \pi_*F_{\wt Y}$.  For (ii), we observe that, since $X$ and $Y$ are smooth, \cite[Theorem 1.1]{CR15} implies that $R^i\pi_*\cO_Y = 0$ for $i > 0$, and hence we may use (i) to conclude.  For (iii), we reason similarly as in (a.iii).   More precisely, we consider the long exact sequence of cohomology for \eqref{eqn:local_cohomology} to see that $R^1\pi_*\cO_Y$ is isomorphic to the second local cohomology supported in $Z = X \setminus U$, which vanishes by condition $S_3$ and \cite[Proposition 3.3]{hassett_kovacs}.
\end{proof}

\begin{cor} \label{cor:flift-products}  
  Let $X$ and $Y$ be smooth and proper schemes over $k$.  Then $X\times Y$ is \mbox{$F$-liftable} if and only if $X$ and $Y$ are. 
\end{cor}

\begin{proof}
If $(\wt X,\wt F_X)$ and $(\wt Y,\wt F_{Y})$ are Frobenius liftings of $X$ and $Y$, respectively, then ${(\wt X \times \wt Y}$, ${\wt F_X \times \wt F_Y)}$ is a Frobenius lifting of $X\times Y$.  For the converse, we first use the arguments of \cite[Lemma 1]{BTLM}. The sheaf $\Omega^1_{X \times Y}$ decomposes as a direct sum $\pi_X^*\Omega^1_{X} \oplus \pi_Y^*\Omega^1_{Y}$ and therefore the morphism of de Rham complexes $\Omega^\bullet_{X} \to \pi_{X*}\Omega^\bullet_{X \times Y}$ induced by the differential $d\pi_X \colon \Omega^1_{X} \to \Omega^1_{X \times Y}$ has a natural splitting.  This leads to a splitting $s$ of the morphism of short exact sequences
\[
\xymatrix{
  0 \ar[r] & B^1_X \ar[r]\ar[d]_{d\pi_X} & Z^1_{X} \ar[r]\ar[d]_{d\pi_X} & \Omega^1_{X} \ar[r]\ar[d]_{d\pi_X} & 0 \\ 
  0 \ar[r] & \pi_{X*}B^1_{X \times Y} \ar[r]\ar@/_0.5pc/[u]_s & \pi_{X*}Z^1_{X \times Y} \ar[r]\ar@/_0.5pc/[u]_s & \pi_{X*}\Omega^1_{X \times Y} \ar[r]\ar@/_0.5pc/[u]_s & 0.
}
\]
Consequently, we see that the upper row is split if the lower row is. This completes the proof. 
\end{proof}

\subsubsection*{Toric fibrations}
The goal of this part of the section is to show that split toric fibrations over an $F$-liftable base are $F$-liftable (see Example~\ref{ex:toric-fib-ordinaryav}). We remove the assumption that the fibration is split in Theorem~\ref{thm:frobenius_cy}.

Let $X$ be a normal $k$-scheme, and let $L_1, \ldots, L_n$ be line bundles on $X$. Consider the graded $\cO_X$-algebra 
\begin{equation} \label{eqn:def-r-sheaf} 
  \cR = \bigoplus_{\lambda\in \ZZ^n} \cR_\lambda, \quad
  \cR_\lambda = L_1^{\lambda_1}\otimes\ldots\otimes L_n^{\lambda_n},
\end{equation}
with multiplication given by the tensor product, and set $U=\Spec_X \cR$. The natural map $U\to X$ is a torsor under the split torus $T=\GG_m^n$. Let $Z=X(\Sigma)_k$ be a toric variety under the torus $T$ and let $Y= U\times^T Z$. The projection $\pi\colon Y\to X$ is a split toric fibration with fiber $Z$, and conversely every split toric fibration over $X$ arises via this construction.

\begin{lemma} \label{lemma:flift-torsors}
  In the above situation, let $(\wt X, \wt F_X)$ be a Frobenius lifting of $X$. Then there exists a~Frobenius lifting $(\wt Y, \wt F_Y)$ and a lifting $\swt\pi\colon \wt Y\to \wt X$ of $\pi$ such that $\wt F_X\circ \swt \pi = \swt \pi \circ \wt F_Y$.
\end{lemma}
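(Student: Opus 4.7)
My plan is to construct the Frobenius lifting of $Y$ by first lifting the $T$-torsor $U=\Spec_X \cR$ in a way compatible with the Frobenius, and then forming the associated bundle with the canonical lift of the toric fiber.

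The starting point is that by Example~\ref{ex:toric}, the toric variety $Z = X(\Sigma)_k$ (together with its $T$-action) admits a canonical lift $(\wt Z, \wt F_Z)$ over $W(k)$, with $\wt F_Z$ induced by the multiplication-by-$p$ endomorphism of the lattice $N$. In particular $T = \GG_m^n$ lifts to $\wt T = \GG^n_{m,\wt S}$ with $\wt F_T$ the $p$-th power map on coordinates $\wt t_i$.

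For the torsor $U$, I would choose a Zariski cover $\{V_\alpha\}$ of $X$ trivializing each $L_i$, so that $U|_{V_\alpha}\isom V_\alpha\times T$, and work locally: set $\wt U_\alpha := \wt V_\alpha \times_{\wt S} \wt T$ with $\wt F_{U_\alpha} := \wt F_X|_{\wt V_\alpha} \times_{\wt S} \wt F_T$. The transition isomorphisms between the $U_\alpha$'s on $V_{\alpha\beta}$ are translations by the cocycles $g_{i,\alpha\beta}\in\cO_X^*(V_{\alpha\beta})$ defining $L_i$. The task is then to choose lifts $\wt g_{i,\alpha\beta}\in\cO_{\wt X}^*(\wt V_{\alpha\beta})$ satisfying simultaneously (i) the cocycle condition, so that the $\wt U_\alpha$ glue to a global $\wt T$-torsor $\wt U$, and (ii) the Frobenius-compatibility $\wt F_X^*(\wt g_{i,\alpha\beta}) = \wt g_{i,\alpha\beta}^p$, which is exactly the condition for the local Frobenii $\wt F_{U_\alpha}$ to glue into a global $\wt F_U$.

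The crucial identity enabling this is that $\wt g^p$ is \emph{independent} of the chosen lift $\wt g$ of $g\in\cO_X^*$, since $(1+pa)^p=1$ in $W_2(k)$; this provides a canonical lift of $g^p$ as a $p$-th power. Meanwhile $\wt F_X^*(\wt g)$ is another lift of $g^p$, and the ratio $\wt F_X^*(\wt g)/\wt g^p \in 1+p\cO_X$ shifts by $(1+pa^p)$ under the modification $\wt g\mapsto\wt g(1+pa)$; perfectness of $k$ then, after a suitable refinement of the cover, lets us arrange (ii), while (i) is a separate Čech-type constraint. Once $(\wt U,\wt F_U)$ has been built, we set $\wt Y := \wt U \times^{\wt T}\wt Z$ with $\wt F_Y$ induced by $\wt F_U\times\wt F_Z$, and the compatibility $\wt F_X\circ\swt\pi = \swt\pi\circ\wt F_Y$ holds by construction. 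The main obstacle I expect is the simultaneous resolution of (i) and (ii), whose obstructions live in Čech-cohomological groups built from $\cO_X$ and from $\cO_X/\cO_X^p$; the point is that the two conditions interact via the Frobenius-lift data, and I would handle them in tandem by an obstruction-theoretic computation in the spirit of Proposition~\ref{prop:deformation_theory_frobenius} combined with the freedom to refine the trivializing cover.
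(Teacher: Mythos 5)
Your overall architecture is the same as the paper's: lift the $T$-torsor $U$ in a Frobenius-compatible way and then form $\wt Y=\wt U\times^{\wt T}\wt Z$ with the product Frobenius; the final associated-bundle step and the equivariance of the toric Frobenius on $\wt Z$ are fine. The gap is in the middle, and it is exactly where the real content of the lemma sits. Your conditions (i) and (ii) on the lifted cocycles $\wt g_{i,\alpha\beta}$ are a \v{C}ech unpacking of the statement ``there exist line bundles $\wt L_i$ on $\wt X$ lifting $L_i$ with $\wt F_X^*\wt L_i\isom \wt L_i^{\otimes p}$,'' and the obstructions to solving them are honest coherent cohomology classes: the obstruction to lifting $L_i$ at all lies in $H^2(X,\cO_X)$, and, once a lift $\wt L_i$ is chosen, the discrepancy $\wt F_X^*\wt L_i\otimes\wt L_i^{-p}$ is a class in $H^1(X,\cO_X)$ which, under a change of lift by $\alpha$, moves by $F^*\alpha$ (this is precisely your $(1+pa)\mapsto(1+pa^p)$ computation). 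Neither class is affected by refining the trivializing cover, and perfectness of $k$ does not make the equation $c+F^*\alpha=0$ solvable; what does is the \emph{surjectivity (equivalently bijectivity) of the Frobenius action on $H^1(X,\cO_X)$}, together with its injectivity on $H^2(X,\cO_X)$ to kill the lifting obstruction (since $F^*\,\mathrm{ob}(L_i)=\mathrm{ob}(L_i^{\otimes p})=p\,\mathrm{ob}(L_i)=0$). These bijectivity statements are not formal: they come from $X$ being $F$-split, which is a consequence of $F$-liftability (Proposition~\ref{prop:frobenius_cotangent_morphism}(c) and Lemma~\ref{lem:f-split_is_ordinary}(a)). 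So the ``obstruction-theoretic computation'' you defer is the whole point, and the mechanism you propose for it (cover refinement plus perfectness) would fail.

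The paper short-circuits all of this by quoting Proposition~\ref{prop:deformation_theory_frobenius}(d): since Frobenius acts bijectively on $H^i(X,\cO_X)$ for $i=1,2$, each $L_i$ has a (unique) lift $\wt L_i$ with $\wt F_X^*\wt L_i\isom\wt L_i^{\otimes p}$. It then avoids \v{C}ech gluing altogether by forming the graded $\cO_{\wt X}$-algebra $\wtcR=\bigoplus_\lambda \wt L_1^{\lambda_1}\otimes\cdots\otimes\wt L_n^{\lambda_n}$, setting $\wt U=\Spec_X\wtcR$, and defining $\wt F_{U/X}$ via $\wt F_X^*\wtcR_\lambda\isom\wtcR_\lambda^{\otimes p}=\wtcR_{p\lambda}\hookrightarrow\wtcR$. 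To repair your argument, replace the ``refine the cover'' step by an appeal to Proposition~\ref{prop:deformation_theory_frobenius}(d) (or reprove its $H^1$/$H^2$ statement in your \v{C}ech language using $F$-splitness of $X$); with that input your construction goes through and coincides with the paper's.
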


\begin{proof}
Let $\wt L_i$ be the unique liftings of $L_i$ to $\wt X$ satisfying $\wt F{}_X^* \wt L_i \isom \wt L_i^p$, which exist by Proposition~\ref{prop:deformation_theory_frobenius}(d) and Lemma~\ref{lem:f-split_is_ordinary}(a). We start with the case $Z=\GG_m^n$, so $Y=U$. Consider the graded $\cO_{\wt X}$-algebra 
\[ 
  \wtcR = \bigoplus_{\lambda \in \ZZ^n} \wtcR_\lambda,
  \quad \wtcR_\lambda =  \wt L_1^{\lambda_1}\otimes\ldots\otimes \wt L_n^{\lambda_n},
\]
and set $\wt U = \Spec_X \wtcR$. The natural map $\swt\pi\colon\wt U\to \wt X$ is a $\GG_m^n$-torsor lifting $\pi\colon U\to X$. Moreover, the map
\[ 
  \wt F{}_X^* \wtcR = \bigoplus_{\lambda \in \ZZ^n} \wt F{}_X^* \wtcR_\lambda \isom \bigoplus_{\lambda \in \ZZ^n} (\wtcR_\lambda)^{\otimes p} = \bigoplus_{\lambda \in p\ZZ^n} \wtcR_{\lambda} \hookrightarrow \bigoplus_{\lambda \in \ZZ^n} \wtcR_\lambda = \wtcR
\]
induces a map $\wt F_{U/X}\colon \wt U\to \wt U{}'$, which lifts the relative Frobenius $F_{U/X}\colon U\to U'$, where $\wt U{}'$ is the base change of $\wt U$ along $\wt F_X$ and $U'$ is its reduction modulo $p$.

For the general case, we set 
\[  
  \wt Y=\wt U \times^{\wt T} X(\Sigma)_{W_2(k)} \overset{\swt\pi}{\ra} \wt X, \quad \text{ where }\wt T = \GG^n_{m, W_2(k)}, 
\]
the toric fibration with fiber $X(\Sigma)$ associated to $\wt U$. This is a lifting of $Y$, and the lifting of Frobenius on $\wt U$ extends to $\wt Y$.
\end{proof}

\section{Toric varieties in families}
\label{s:toric-reductions}

In this section, we address the following three questions:
\begin{itemize}
  \item {\bf Generalization:} Given a family $f\colon X\to S$ such that $X_s$ is a toric variety for a~dense set of $s \in S$, can we deduce that the generic fiber is a toric variety? (\S\ref{ss:generalization})
  \item {\bf Specialization:} Given a family $f\colon X\to S$ whose generic fiber is a toric variety and $S = \Spec R$ for a discrete valuation ring $R$, when can we deduce that the special fiber is a toric variety as well? (\S\ref{ss:specialization})
  \item {\bf Global rigidity:} Given a proper nc pair $(X, D)$ over a connected scheme $S$, if one geometric fiber $(X, D)_{\bar s}$ is a toric pair, must $(X, D)$ globally come from a toric fibration? (\S\ref{ss:global-rigidity})
\end{itemize}
The result of \S\ref{ss:generalization} will be used in the subsequent \S\ref{ss:images} to show that Conjecture~\ref{conj:jarekw} follows from (a~special case of) Conjecture~\ref{conj:froblift}.  The results of \S\ref{ss:specialization}--\S\ref{ss:global-rigidity} will be needed in Section~\ref{s:log-mehta-srinivas}.

\subsection{Generalization}
\label{ss:generalization}

We start with a somewhat lengthy proof of the following fact which should be well-known but for which we were unable to find a reference.

\begin{lemma} \label{lem_isom_on_Pic}
  Let $S$ be a geometrically unibranch \cite[6.15.1]{EGAIV_2} noetherian scheme, and let $\pi \colon X\to S$ be a smooth proper morphism whose geometric fibers are connected and satisfy 
  \begin{equation} \label{eqn:vanishing-h1h2} 
    H^1(X_{\bar s}, \cO_{X_{\bar s}})=H^2(X_{\bar s}, \cO_{X_{\bar s}})=0.
  \end{equation}
  Then there exists a finite \'etale surjective morphism $S'\to S$ such that $\Pic_{X'/S'}$ is the constant sheaf associated to a finitely generated group on the big \'etale site of $S'$, where $X'$ is the base change of $X$ to $S'$.
\end{lemma}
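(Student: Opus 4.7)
The plan is to establish that $\Pic_{X/S}$ is étale over $S$ with finitely generated fibers, observe that it is thereby a locally constant étale sheaf on $S$, and finally trivialize the associated monodromy representation by a finite étale cover. The hypothesis that $S$ is geometrically unibranch enters in the last step, via the Galois correspondence between continuous $\piet(S, \bar s)$-sets and finite étale covers of $S$.

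First, standard deformation theory of line bundles shows that $H^1(X_{\bar s}, \cO_{X_{\bar s}}) = 0$ forces $\Pic^0_{X_{\bar s}}$ to vanish (giving relative dimension zero), while $H^2(X_{\bar s}, \cO_{X_{\bar s}}) = 0$ controls the obstructions to deforming line bundles and yields formal smoothness. Together, $\Pic_{X/S}$ is étale and locally of finite presentation over $S$ as an algebraic space, and its fiber at a geometric point $\bar s$ is $\Pic(X_{\bar s}) = \mathrm{NS}(X_{\bar s})$, which is finitely generated. Combining étaleness with Hensel's lemma, over every strict henselization $\Spec \cO^{\mathrm{sh}}_{S,s}$ the scheme $\Pic_{X/S}$ is a disjoint union of sections indexed by its closed fiber, so $\Pic_{X/S}$ is locally constant as an étale sheaf on $S$ with finitely generated stalks. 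Assuming $S$ connected (which we may, after restricting to each component), all stalks are canonically isomorphic to a fixed finitely generated abelian group $G$ once we choose a base point $\bar s$.

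The crux is to show that the resulting monodromy representation $\rho \colon \piet(S, \bar s) \to \Aut(G)$ has finite image. Using that $H^2(X_{\bar s}, \cO_{X_{\bar s}}) = 0$ allows line bundles to deform, one can—after passing to a finite étale cover of $S$, which is harmless as such covers compose—assume that $\pi$ is projective and admits a relative polarization. Its class $[L] \in G$ is fixed by $\rho$, and $\rho$ preserves the intersection pairing on $G$. By the Hodge index theorem, the pairing restricted to the orthogonal complement of $[L]$ in $G \otimes \QQ$ is negative definite, so the image of $\rho$ lies in the finite orthogonal group of a negative definite lattice, hence is finite. The Galois correspondence then yields a finite étale cover $S' \to S$ such that $\rho|_{\piet(S', \bar s)}$ is trivial, and so $\Pic_{X'/S'}$ is the constant sheaf on the big étale site of $S'$ associated to $G$, as desired.

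The main obstacle I anticipate is the finiteness of monodromy. Étaleness of $\Pic_{X/S}$, finite generation of its fibers, and local constancy as an étale sheaf are all relatively formal consequences of the vanishing hypotheses. But finiteness of $\mathrm{Im}\,\rho$ requires leveraging a polarization, and the reduction to the polarized setting for a merely proper $\pi$ is the delicate point; it relies on ampleness deforming across the family, which in turn uses $H^2(\cO) = 0$. A more abstract alternative is to cite a general theorem that the monodromy on the Picard sheaf of a smooth proper family with discrete Picard is quasi-unipotent and here in fact finite, but the Hodge index argument above seems the most self-contained.
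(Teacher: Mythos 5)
There is a genuine gap, and it sits exactly where the geometrically unibranch hypothesis has to do its work. Your step ``\'etaleness $+$ Hensel's lemma $\Rightarrow$ $\Pic_{X/S}$ is a disjoint union of sections over each strict henselization, hence a locally constant sheaf'' is not correct as stated: an \'etale scheme over a strictly henselian local base is a disjoint union of sections \emph{plus} an \'etale piece with empty closed fiber (e.g.\ the generic point of the spectrum of a DVR is \'etale over it). For the Picard scheme this extra piece corresponds to line bundles on nearby fibers that do not extend across the special fiber, and ruling it out is precisely the content of the lemma, not a formal consequence of \'etaleness. This is also where the paper uses the unibranch hypothesis: it shows $\Pic_{X/S}\to S$ satisfies the valuative criterion of properness, and then uses that $S$ is geometrically unibranch to conclude that each connected component is a \emph{finite} \'etale cover of $S$ (a connected \'etale cover of a non-unibranch base can be an infinite, non-finite cover, and the valuative criterion alone does not give properness without quasi-compactness). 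Your proposal instead invokes unibranchness only through ``the Galois correspondence between $\piet(S,\bar s)$-sets and finite \'etale covers,'' but that correspondence holds for any connected (locally noetherian) scheme and needs no unibranch assumption --- a sign that the hypothesis has not actually been used, i.e.\ that the local-constancy step is unproved.

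The second soft spot is the reduction to a relatively polarized family ``after a finite \'etale cover.'' The lemma assumes $\pi$ only proper, and spreading an ample class from one fiber over all of $S$ is essentially equivalent to the finiteness over $S$ of the component of $\Pic_{X/S}$ containing it --- the very thing in question --- so as written the argument is circular (finiteness of the monodromy orbit of an ample class is what you are trying to establish). Granting local constancy and a relative polarization, your Hodge-index argument for finiteness of the monodromy image is a workable and rather pleasant route (one fixes the polarization class, notes the form $L_1\cdot L_2\cdot H^{d-2}$ has signature $(1,\rho-1)$, and handles torsion and the off-diagonal part separately), but the paper does not need it: having shown each component of $\Pic_{X/S}$ is finite \'etale, it passes to one finite \'etale cover $S'$ trivializing the finitely many components through a chosen generating set of $\Pic(X_{\bar s})$, and then proves directly that the resulting map from $\Pic_{X'/S'}$ to the constant sheaf is an isomorphism by checking its kernel is a closed \'etale subscheme with trivial fiber. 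To repair your write-up you would need to (i) prove, not assert, that every connected component of $\Pic_{X/S}$ is finite over $S$ (valuative criterion plus unibranchness, as in the paper), after which local constancy and the finite-monodromy trivialization follow; and (ii) either drop the polarization step or justify it independently.
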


\begin{proof} 
By \cite[Theorem~9.4.8]{Kleiman}, $\Pic_{X/S}$ is representable by the disjoint union of quasi-projective schemes over $S$.  By the deformation theory of line bundles, $H^2(X_{\bar s}, \cO_{X_{\bar s}})$ is the obstruction space and $H^1(X_{\bar s}, \cO_{X_{\bar s}})$ is the tangent space of the deformation functor of a line bundle on $X_{\bar s}$, and hence \eqref{eqn:vanishing-h1h2} shows that line bundles deform uniquely over thickenings of $X_{\bar s}$. Consequently, $\Pic_{X/S}$ is formally \'etale, and hence \'etale, over $S$. Moreover, $\Pic_{X/S}\to S$ satisfies the valuative criterion of properness. Thus if $P$ is a connected component of $\Pic_{X/S}$, then $P\to S$ is a connected \'etale covering of $S$, and since $S$ is geometrically unibranch, $P$ is finite over $S$. We conclude that $\Pic_{X/S}$ is the disjoint union of connected finite \'etale coverings of $S$. 

We assume without loss of generality that $S$ is connected, and pick a geometric point $\bar s$ of $S$. Let $M=\Pic_{X/S}(\bar s) = \Pic(X_{\bar s})$, which is a finitely generated abelian group. Pick a~finite set of generators $p_1, \ldots, p_k \in M$, and for each $i=1, \ldots, k$, let $P_i$ be the connected component of $\Pic_{X/S}$ containing the corresponding geometric point $\bar p_i\to \Pic_{X/S}$. Let $S'\to S$ be a finite \'etale cover after pullback to which each $P_i\to S$ becomes constant. Replacing $S$ by $S'$, we can assume that each $P_i$ maps isomorphically onto $S$, that is, each $p_i\in \Pic_{X/S}(\bar s)$ is the restriction of a (unique) global section $q_i \in \Pic_{X/S}(S)$. We claim that in this case $\Pic_{X/S}$ is actually constant. Let $P = M\times S$ be the constant group scheme over $S$ associated to $M$; we will construct an isomorphism $\Pic_{X/S}\isomto P$. The sections $p_i$ and $q_i$ define surjective morphisms of group schemes over $S$:
\[ 
  \alpha \colon \ZZ^k\times S\ra \Pic_{X/S} \quad\text{and}\quad \beta \colon \ZZ^k\times S \ra P.
\]
Moreover if the $p_i$ satisfy a relation $\sum a_ip_i =0$,  then so do the sections $q_i$, and hence there is a~surjective morphism $\gamma \colon \Pic_{X/S}\to P$. Its kernel $K$ is a closed subscheme of $\Pic_{X/S}$ which is flat, and hence \'etale, over $S$. Moreover, $K_{\bar s} = \{1\}$ by construction, and hence $K\isomto S$. Thus $\gamma$ is an isomorphism.
\end{proof}

The above assertion need not be true when $H^2(X_{\bar s}, \cO_{X_{\bar s}})\neq 0$, for example for non-isotrivial families of K3 surfaces over complete complex curves \cite{BorcherdsK3}.

\begin{prop} \label{prop:split}
  Let $S$ be a noetherian excellent scheme and let $\pi \colon X\to S$ be a smooth projective morphism whose geometric fibers are connected and satisfy \eqref{eqn:vanishing-h1h2}. Let $\cE$ be a~locally free sheaf of rank $r$ on $X$.  Then the set
  \[ 
    \left\{ s\in S\, \middle|\, \cE_{\bar s} \text{ is a direct sum of line bundles on }X_{\bar s} \text{ for every geom.\ pt.\ } \bar s \text{ over }s \right\} \subseteq S
  \] 
  is a constructible subset of $S$. 
\end{prop}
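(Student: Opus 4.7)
The plan is to reduce to checking constructibility of finitely many loci, each cut out as the image of a Quot-type scheme. First I would bound the possible Hilbert polynomials of line-bundle summands: if $\cE_{\bar s}\cong L_1\oplus\cdots\oplus L_r$ is a direct-sum decomposition into line bundles on a geometric fibre, then each $L_i$ is a rank-one subsheaf of $\cE_{\bar s}$ whose slope (with respect to a fixed relatively ample line bundle on $X/S$) lies in the closed interval $[\mu_{\min}(\cE_{\bar s}),\mu_{\max}(\cE_{\bar s})]$. Both of these Harder--Narasimhan extremes are uniformly bounded across the flat family of bundles $\cE_{\bar s}$ on the projective fibres, by standard boundedness results for HN filtrations. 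Grothendieck's boundedness theorem for subsheaves of bounded slope then forces only finitely many Hilbert polynomials $P$ to arise for such summands, so only finitely many ordered tuples $\underline{P}=(P_1,\ldots,P_r)$ with $\sum_i P_i=P_\cE$ need be considered.

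For each such tuple $\underline{P}$, I would form the fibre product
\[
  Q_{\underline{P}}=\operatorname{Quot}^{P_\cE-P_1}(\cE/X/S)\times_S\cdots\times_S\operatorname{Quot}^{P_\cE-P_r}(\cE/X/S),
\]
which is projective over $S$ by Grothendieck's representability, and consider the subset $D_{\underline{P}}\subseteq Q_{\underline{P}}$ parametrising $r$-tuples of quotients $(\cE\twoheadrightarrow Q_i)_{i=1}^r$ such that every $Q_i$ is locally free of rank one and the induced map $\cE\to\bigoplus_i Q_i$ is an isomorphism. Both conditions are open on $Q_{\underline{P}}$ (local freeness of a coherent sheaf flat over the base and being an isomorphism of flat families of coherent sheaves are open conditions), so $D_{\underline{P}}$ is an open subscheme of a projective $S$-scheme, hence of finite type over $S$. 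By Chevalley's theorem, its image $V_{\underline{P}}\subseteq S$ is constructible, and the set in the statement equals the finite union $\bigcup_{\underline{P}} V_{\underline{P}}$, which is therefore constructible.

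The main obstacle I anticipate is the first step: establishing that, globally over $S$, only finitely many Hilbert polynomials can appear among line-bundle direct summands of $\cE_{\bar s}$. This will require combining a uniform bound on $\mu_{\min}(\cE_{\bar s})$ and $\mu_{\max}(\cE_{\bar s})$ as $s$ varies with Grothendieck's classical boundedness for subsheaves of bounded slope. Once this finiteness is in hand, the remainder is formal, relying only on representability of Quot schemes and Chevalley's theorem. An alternative implementation would use Lemma~\ref{lem_isom_on_Pic} to pass to a finite étale cover trivialising $\Pic_{X/S}$ and then rigidify line bundles via an étale-local section of $\pi$ so as to work with specified $L_m$'s on $X$ directly; in this variant the finiteness of $\Pic^\tau$ coming from the hypothesis $H^1(X_{\bar s},\cO_{X_{\bar s}})=0$ enters explicitly when passing from Hilbert polynomials to classes in $M$.
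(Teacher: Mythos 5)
Your proposal is correct, but it takes a genuinely different route from the paper's. The paper exploits the hypothesis \eqref{eqn:vanishing-h1h2} via Lemma~\ref{lem_isom_on_Pic}: after stratifying and passing to a finite \'etale cover it makes $\Pic_{X/S}$ constant, say $M\times S$, bounds the intersection numbers $L\cdot H_i^{d-1}$ of any line-bundle summand against a spanning set of fiberwise ample classes (by twisting $\cE$ and $\cE^\vee$ until they are globally generated on all fibers), concludes that only a finite subset $K\subseteq M$ of classes can occur, and then for each tuple in $K^r$ represents the functor ${\rm Isom}(\bigoplus_i L_i,\cE)$ by an open subscheme of the total space of $\pi_*\cHom(\bigoplus_i L_i,\cE)$ before applying Chevalley. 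You replace the Picard-scheme step by fiberwise boundedness: uniform bounds on $\mu_{\min}$ and $\mu_{\max}$ over the family together with Grothendieck's lemma give finitely many Hilbert polynomials of possible rank-one summands, and the splitting locus becomes the image of an open locus in a product of relative Quot schemes. Your route buys generality: it never uses \eqref{eqn:vanishing-h1h2} or Lemma~\ref{lem_isom_on_Pic}, so it proves constructibility for an arbitrary smooth projective family, at the cost of heavier boundedness input; the paper's route stays inside a constant, finitely generated Picard group (which it needs elsewhere anyway, its target being families with toric-like fibers), and its degree-bounding step via global generation of twists of $\cE$ and $\cE^\vee$ is the exact analogue of your slope bound.

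One small correction to your bookkeeping: with your conventions the Quot factors should parametrize quotients with Hilbert polynomial $P_i$ (so that each $Q_i$ can be a line bundle and $\bigoplus_i Q_i$ has polynomial $P_\cE$), not $P_\cE-P_i$; equivalently keep $P_\cE-P_i$ but impose rank one and local freeness on the kernels rather than the quotients. With that fixed, your openness claims are fine: fiberwise local freeness is open by properness and flatness of the universal quotient, fiberwise surjectivity of $\cE\to\bigoplus_i Q_i$ is open, and the equality $\sum_i P_i=P_\cE$ upgrades fiberwise surjectivity to isomorphism, so Chevalley finishes the argument as you say.
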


\begin{proof}
Stratifying $S$, we may assume that $S$ is connected and regular. By Lemma~\ref{lem_isom_on_Pic}, after replacing $S$ with some finite \'etale cover $S'\to S$, we have $\Pic_{X/S} \isom M\times S$ for a finitely generated group $M$, and hence $\Pic(X_s)=\Pic(X_{\bar s}) = M$ for all $s\in S$. Given $H, D\in M$, there is a well-defined intersection number $H^{d-1}\cdot D\in \ZZ$, the same for all fibers, where $d$ is the relative dimension of $f$. 

We can assume that there exist $H_1, \ldots H_s \in M$ which give ample line bundles on every fiber and which span $M\otimes\bb Q$. If $H$ is one of these, then by Noetherian induction there is a natural number $n_H$ such that $\cE(n_H H)$ is globally generated on all fibers. If $L\in M$ is a~direct summand of $\cE_s$, it follows that $L + n_H H$ is a direct summand of a globally generated sheaf, and hence is effective on $X_s$, so $(L+n_H H)\cdot H^{d-1}\geq 0$ (independent of $s$). Thus $L\cdot H^{d-1} \geq -n_H H^d$. Applying the same argument to $\cE^\vee$, we get a natural number $m_H$ such that $-L \cdot H^{d-1} \geq -m_H H^d$, i.e.\ $L\cdot H^{d-1}\leq m_H H^d$. We conclude that there exists a natural number $B$ such that 
\begin{equation}\label{eqn:ineq} 
  |L\cdot H^{d-1}_i|<B \quad \text{for} \quad  i=1,\ldots, s 
\end{equation}
whenever $L$ is a direct summand of some $\cE_s$. 

Because $H_i$ span $M\otimes \bb Q$, the set $K$ of all $L\in M$ satisfying the inequalities \eqref{eqn:ineq} is finite.  For $v = (L_1, \ldots, L_r) \in K^r$, consider the functor
\[ 
  F_v \colon (\cat{Schemes}/S) \ra \cat{Sets}, \quad
  T\mapsto {\rm Isom}_{X\times_S T}((\oplus_{i=1}^r L_i)_T, \cE_T).
\]
Stratifying $S$ further, we may assume that $\pi_*\cHom((\oplus_{i=1}^r L_i), \cE)$ is locally free and its formation commutes with base change.  This implies that $F_v$ is representable by an open subscheme of the total space of the vector bundle $\pi_*\cHom((\oplus_{i=1}^r L_i), \cE)$, and hence is a scheme of finite type over $S$.  Let $W$ be the disjoint union of these schemes, which is still of finite type as $K^r$ is a finite set. The locus where $\cE$ splits is the set-theoretic image of $W\to S$, which is constructible by Chevalley's theorem.
\end{proof}

We shall now apply this to the study of toric varieties in families. As observed by Jaczewski and developed further by K\k{e}dzierski and Wi\'sniewski, smooth toric varieties admit a generalization of the Euler sequence, and this property in fact characterizes smooth projective toric varieties, at least in characteristic zero.

\begin{defin}[{cf.\ \cite[Definition~2.1]{Jaczewski} and \cite[\S 1.1]{KedzierskiWisniewski}}]
  Let $\pi\colon X\to S$ be a smooth projective morphism of schemes. Suppose that $S$ is affine, and that the coherent sheaf $\cH = R^1 \pi_* \Omega^1_{X/S}$ is locally free. Then the image of ${\rm id}_\cH$ under the natural identification
  \begin{align*}
    \Hom_S(\cH, \cH) &\isom H^0(S, R^1\pi_*(\pi^* \cH^\vee\otimes \Omega^1_{X/S})) \\
    &\isom H^0(S, \cExt^1_{X/S}(\pi^*\cH, \Omega^1_{X/S})) \isom \Ext^1(\pi^* \cH, \Omega^1_{X/S})
  \end{align*}
  gives rise to an extension
  \begin{equation}  \label{eqn:euler-jacz}
    0\ra \Omega^1_{X/S} \ra R_{X/S} \ra \pi^* \cH \ra 0.
  \end{equation}
  The locally free sheaf $R_{X/S}$ is called the \emph{potential sheaf} of $X$ over $S$.
\end{defin}

\begin{thm}[{\cite[Theorem~1.1 and Corollary~2.9]{KedzierskiWisniewski}}] \label{thm:kedz-wisn}
  Let $K$ be an algebraically closed field, and let $X$ be a smooth projective integral scheme over $K$. If $X$ is a toric variety, then the sheaf $R_{X/K}$ splits into the direct sum of line bundles. Conversely, if $K$ has characteristic zero, the potential sheaf $R_{X/K}$ splits into the direct sum of line bundles, and we have the vanishing
  \[ 
    H^1(X, \cO_X) = H^2(X, \cO_X) = 0,
  \]
  then $X$ is a toric variety.
\end{thm}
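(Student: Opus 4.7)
My plan has two directions. For the forward direction, suppose $X = X(\Sigma)$ is a smooth proper toric variety over $K$. I would invoke the standard Euler sequence of a smooth toric variety,
\begin{equation*}
0 \to \Omega^1_X \to \bigoplus_{\rho \in \Sigma(1)} \cO_X(-D_\rho) \to \mathrm{Cl}(X) \otimes_\ZZ \cO_X \to 0,
\end{equation*}
and use the vanishing $H^i(X,\cO_X) = 0$ for $i\geq 1$ on smooth complete toric varieties to conclude that the first Chern class map $\Pic(X) \otimes K \to H^1(X, \Omega^1_X) = \cH$ is an isomorphism, so the quotient is naturally $\cH \otimes \cO_X$. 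A quick verification of extension classes (both represent $\mathrm{id}_\cH$ under the canonical isomorphism $\Ext^1(\cH \otimes \cO_X, \Omega^1_X) \cong \Hom(\cH, \cH)$) identifies this sequence with \eqref{eqn:euler-jacz}, giving the required splitting $R_{X/K} \isom \bigoplus_\rho \cO_X(-D_\rho)$.

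For the converse, assume $\mathrm{char}(K) = 0$, the vanishing $H^1(X,\cO_X) = H^2(X, \cO_X) = 0$, and a splitting $R_{X/K} \isom \bigoplus_{i=1}^r L_i$ with $r = \dim X + \dim \cH$. Dualizing \eqref{eqn:euler-jacz} gives
\begin{equation*}
0 \to \cH^\vee \otimes \cO_X \to \bigoplus_i L_i^\vee \to T_X \to 0,
\end{equation*}
and each composed map $L_i^\vee \to T_X$ is a twisted vector field $v_i$. Symmetrically, each inclusion $L_i \hookrightarrow R_{X/K}$ followed by the surjection to $\cH \otimes \cO_X$ yields a section $s_i \in H^0(X, L_i^\vee \otimes \cH)$, whose zero locus defines a divisor $D_i$; smoothness of $D_i$ comes from local freeness of the cokernel $T_X$. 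The key structural claim is that $D = \sum D_i$ has simple normal crossings and $U = X \setminus D$ is an algebraic torus. For this one checks that the vector fields $v_i$ (i) span $T_X$ on $U$, and (ii) commute, since $\cH^\vee \otimes \cO_X$ sits abelianly inside $\bigoplus L_i^\vee$ as the kernel of the surjection. Integrating in characteristic zero produces a commutative connected algebraic group acting with open orbit on $X$; a standard compactness argument forces this group to be a torus of rank $\dim X$. Finally, one checks the action extends and that the divisors $D_i$ correspond to the torus-invariant prime divisors, recovering the fan $\Sigma$ from the pattern of nonzero $s_i$ on the strata.

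The principal difficulty lies in the converse, specifically in integrating the infinitesimal data (a direct sum decomposition of $R_{X/K}$) into a global torus action. Characteristic zero is essential here: the commuting vector fields $v_i$ are to be exponentiated, which is not possible in positive characteristic, and indeed the present paper's whole thesis is that in characteristic $p$ the analogous statement fails (non-toric $F$-liftable examples would contradict it). A secondary subtlety, once the torus has been found, is the combinatorial verification that the divisors $D_i$ meet in the codimension predicted by the rank of $T_X$ at each point, so that the incidences among them define a bona fide smooth fan; this reduces to a linear-algebra check on fibers of the Euler-type sequence at points of $U$ and along the strata.
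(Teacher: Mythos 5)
This statement is not proved in the paper at all: it is quoted verbatim from K\k{e}dzierski--Wi\'sniewski \cite{KedzierskiWisniewski}, so the only meaningful comparison is with their proof. Your forward direction is essentially the standard argument and is fine: one identifies the toric Euler sequence with \eqref{eqn:euler-jacz} by checking that both extension classes correspond to ${\rm id}_{\cH}$ once one knows $c_1\colon \Pic(X)\otimes_\ZZ K \isomto H^1(X,\Omega^1_X)$; note, though, that in positive characteristic this isomorphism is not a formal consequence of $H^1(X,\cO_X)=H^2(X,\cO_X)=0$ (no exponential sequence or Hodge theory) and must be taken from the explicit toric computation of $H^1(X,\Omega^1_X)$.

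The converse, however, has genuine gaps, and they sit exactly at the hard points. First, your divisors $D_i$ are not well defined: the composite $L_i\hookrightarrow R_{X/K}\to \cH\otimes\cO_X$ is a section of $L_i^\vee\otimes\cH$, a bundle of rank $h^1(X,\Omega^1_X)$, so its zero locus has expected codimension equal to that rank, not $1$; in the toric model it is a divisor only because the section happens to be decomposable ($s_\rho\otimes[D_\rho]$), which you cannot assume for an abstract splitting. Second, the commutativity of the $v_i$ is asserted, not proved: the fact that $\cH^\vee\otimes\cO_X$ is the kernel of $\bigoplus_i L_i^\vee\to T_X$ says nothing about brackets of the images, and brackets of sections of $T_X\otimes L_i^\vee$ are not even defined without trivializing $L_i$; what one needs is a statement about global vector fields in the image of $H^0(X,L_i^\vee)\to H^0(X,T_X)$, whose existence you have not established. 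Third, ``integrating'' to a connected commutative algebraic group with an open orbit and then invoking a ``standard compactness argument'' to get a torus is precisely where Jaczewski's original argument \cite{Jaczewski} was incomplete (which is why \cite{KedzierskiWisniewski} exists), and it is also where the hypotheses $H^1(X,\cO_X)=H^2(X,\cO_X)=0$ must actually be used (in characteristic zero they give $\Pic(X)\otimes K\isom H^1(X,\Omega^1_X)$ and exclude abelian/semi-abelian factors); your sketch never invokes them. The cited proof avoids this route entirely: K\k{e}dzierski--Wi\'sniewski interpret $R_{X/K}$ through the differentials of the Cox ring (characteristic space) and show that the splitting forces the Cox ring to be a polynomial ring, whence $X$ is toric. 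As it stands your converse is a restatement of the (gappy) vector-field strategy with the key steps asserted rather than proved.
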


\begin{cor} \label{cor:toric-generalization}
  Let $S$ be an integral noetherian scheme with generic point $\eta$ of characteristic zero, and let $\pi \colon X\to S$ be a smooth and projective morphism. Suppose that the geometric fiber $X_{\bar s}$ is a~toric variety for a dense set of closed points $s\in S$. Then the geometric generic fiber $X_{\bar \eta}$ is a toric variety.
\end{cor}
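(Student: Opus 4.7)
The plan is to combine Proposition~\ref{prop:split} with the characterization of toric varieties given by Theorem~\ref{thm:kedz-wisn}. First, I would shrink $S$ to a non-empty open subscheme on which: (i) both $\pi_*\Omega^1_{X/S}$ and $R^1\pi_*\Omega^1_{X/S}$ are locally free with formation commuting with arbitrary base change; (ii) the geometric fibers are connected; (iii) $H^1(X_{\bar s}, \cO_{X_{\bar s}}) = H^2(X_{\bar s}, \cO_{X_{\bar s}}) = 0$ for every point $s$ in the open. For (iii), note that toric varieties satisfy these vanishings, so the hypothesized dense set of toric closed points is contained in the locus where $h^i(X_s, \cO_{X_s}) = 0$ for $i=1,2$; this locus is open by upper semicontinuity, and being dense in the irreducible $S$ it contains an open neighborhood of $\eta$.

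On this open, the potential sheaf $\cE \coloneq R_{X/S}$ is well-defined, and the base-change property arranged in (i) ensures that for every geometric point $\bar s$ of $S$ the restriction $\cE|_{X_{\bar s}}$ is canonically isomorphic to the potential sheaf $R_{X_{\bar s}/\bar s}$ of the fiber. Indeed, the extension \eqref{eqn:euler-jacz} is defined by the identity class in $\Hom(\cH, \cH)$, and the latter is functorial in base change once $\cH = R^1\pi_*\Omega^1_{X/S}$ commutes with base change. By the easy direction of Theorem~\ref{thm:kedz-wisn}, for every closed point $s$ of $S$ over which $X_{\bar s}$ is toric, the sheaf $\cE|_{X_{\bar s}}$ splits as a direct sum of line bundles; hence the splitting locus contains a dense subset of closed points of $S$.

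Now the hypotheses of Proposition~\ref{prop:split} are satisfied for $\cE$, so the splitting locus is constructible in $S$. Since $S$ is irreducible and noetherian, any dense constructible subset contains a non-empty open subset, and in particular the generic point $\eta$. Therefore $R_{X_{\bar\eta}/\bar\eta} \cong \cE|_{X_{\bar\eta}}$ splits as a direct sum of line bundles. Because $\eta$ has characteristic zero and the vanishings in (iii) hold for $X_{\bar\eta}$, the converse direction of Theorem~\ref{thm:kedz-wisn} applies and yields that $X_{\bar\eta}$ is a toric variety. The most delicate step in the argument is verifying the base-change property for the potential sheaf, which is really the reason for the preliminary shrinking; the rest is a soft combination of constructibility, semicontinuity, and the Jaczewski--K\k{e}dzierski--Wi\'sniewski criterion.
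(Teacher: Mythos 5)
Your argument is essentially the paper's own proof: shrink $S$ so that the potential sheaf $R_{X/S}$ is defined, use the first part of Theorem~\ref{thm:kedz-wisn} to see it splits on the toric closed fibers, invoke Proposition~\ref{prop:split} to get constructibility of the splitting locus and hence splitting at $\bar\eta$, and conclude with the converse direction of Theorem~\ref{thm:kedz-wisn}. Your extra care about base-change compatibility of $R^1\pi_*\Omega^1_{X/S}$ (so that $(R_{X/S})|_{X_{\bar s}}\isom R_{X_{\bar s}/\bar s}$) and the density-plus-constructibility step only makes explicit what the paper leaves implicit, so the proposal is correct.
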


\begin{proof}
Shrinking $S$, we may assume that $S$ is affine and that $\smash{R^1 \pi_* \Omega^1_{X/S}}$ is locally free, in which case the potential sheaf $R_{X/S}$ is defined. Moreover, since $H^0(Y, \cO_Y)=k$ and $H^i(Y, \cO_Y)=0$ for any $i>0$ and a toric variety $Y$ over a field $k$, we see that the assumption on $H^i(X_{\bar s}, \cO_{X_{\bar s}})$ in Proposition~\ref{prop:split} is satisfied.  We apply this result to the potential sheaf $R_{X/S}$. Since $R_{Y/k}$ splits for a toric variety $Y$ by the first part of Theorem~\ref{thm:kedz-wisn}, we deduce that $R_{X_{\bar\eta}/\bar\eta} = (R_{X/S})_{\bar\eta}$ must be split as well. By the other direction of Theorem~\ref{thm:kedz-wisn}, this implies that $X_{\bar\eta}$ is a toric variety. 
\end{proof}

\subsection{Specialization}
\label{ss:specialization}

Let $R$ be a discrete valuation ring with residue field $k$ and fraction field $K$. We set
\[ 
  S=\Spec R, \quad s = \Spec k, \quad \bar s = \Spec \bar k, \quad \eta = \Spec K, \quad \bar\eta = \Spec\overline K.
\]
Suppose that $X$ is a smooth projective scheme over $S$ whose general fiber $X_\eta$ is a toric variety. It is not true in general that the geometric special fiber $X_{\bar s}$ is a toric variety, as the following basic example shows.

\begin{example}
Let $s_0, s_1, s_2 \in \PP^2(R)$ be three sections over $S$ which give a triple of distinct collinear points in the special fiber $\PP^2(k)$, but which are not collinear in $\PP^2(K)$. Let $X$ be the blow-up of $\PP^2_S$ along the union of these three sections. Then $X$ is a smooth projective surface over $S$, and $X_\eta$ is a~toric variety but $X_{\bar s}$ is not.
\end{example}

The above phenomenon cannot happen if we consider deformations of toric varieties together with their toric boundaries. The goal of this subsection is to show that toric pairs can only degenerate to toric pairs.

\begin{prop} \label{prop:toric-specialization} 
  Let $X$ be a smooth proper scheme over $S$, and let $D\subseteq X$ be a divisor with normal crossings relative to $S$. If $(X_\eta, D_\eta)$ (resp.\ $(X_{\bar\eta}, D_{\bar\eta})$) is a toric pair (see \S\ref{ss:toric-var}), then so is $(X_s, D_s)$ (resp.\ $(X_{\bar s}, D_{\bar s})$).
\end{prop}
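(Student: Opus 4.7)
The plan is to identify $(X, D)$ explicitly with a canonical toric $S$-model, from which the special-fiber statement follows at once. First, I would make some reductions. Since being a toric pair is stable under faithfully flat base change of the ground field (this handles the $(\bar\eta,\bar s)$ version alongside the $(\eta,s)$ version by passing to the completion of $R$ and then to $\bar R$), I may assume that $R$ is strictly henselian with algebraically closed residue field. After a further finite, possibly ramified, base change of $R$, I may assume that the toric pair $(X_\eta, D_\eta)$ is split and trivial, i.e.\ $T_\eta \isom \GG_{m,\eta}^n$ and the open orbit $U_\eta = X_\eta\setminus D_\eta$ is equivariantly isomorphic to $T_\eta$. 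This determines a unique fan $\Sigma \subseteq N_\RR$ (with $N = \Hom(\GG_m, T_\eta)$) together with an isomorphism $(X_\eta, D_\eta) \isom (X(\Sigma)_\eta, D(\Sigma)_\eta)$ and a canonical toric $S$-model $(X(\Sigma)_S, D(\Sigma)_S)$.

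Next, I would use a suitably ample line bundle $\cL$ on $X/S$ to exhibit an explicit $S$-isomorphism. For $\cL$ sufficiently ample we have $R^i\pi_*\cL = 0$ for $i>0$, so $\pi_*\cL$ is a free $R$-module whose formation commutes with base change. By Lemma~\ref{lemma:toric_sections}, the generic-fiber space $H^0(X_\eta, \cL|_{X_\eta})$ has a canonical basis $\{s_\lambda\}_{\lambda \in V}$ of sections with divisors $\sum_i \lambda_i D_\eta^{(i)}$ supported on $D_\eta$. Picking any lift of $s_\lambda$ to a rational section of $\cL$ over $X$ and rescaling by the appropriate power $\pi^{-m_\lambda}$ of the uniformizer, I obtain global sections $\tilde s_\lambda \in H^0(X, \cL)$ whose divisors on $X$ are exactly $\sum_i \lambda_i D^{(i)}$, with no vertical component along $X_s$. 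The collection $\{\tilde s_\lambda\}$ defines an $S$-morphism $\Phi \colon X \to \PP^N_S$ which, on the generic fiber, realizes the standard toric projective embedding of $X(\Sigma)_\eta$, cut out by the binomial monomial relations $\prod_\lambda \tilde s_\lambda^{a_\lambda} = \prod_\lambda \tilde s_\lambda^{b_\lambda}$ whenever $\sum_\lambda a_\lambda \lambda = \sum_\lambda b_\lambda \lambda$.

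Finally, I would conclude using flatness of $X$ over $S$. The scheme-theoretic image $\Phi(X) \subseteq \PP^N_S$ is the flat closure of $X(\Sigma)_\eta \subseteq \PP^N_\eta$; since the defining binomial equations have integer coefficients, this closure coincides with the toric embedding of $X(\Sigma)_S$ over $S$, whose reduction modulo $\pi$ is the toric variety $X(\Sigma)_{k(s)}$. Hence $\Phi_s$ identifies $X_s$ with a closed subscheme of $\PP^N_{k(s)}$ cut out by the toric binomial equations, each $D_s^{(i)}$ being carried to the corresponding toric prime divisor; this exhibits $(X_s, D_s)$ as a toric pair. The main technical point to verify is that the lifts $\tilde s_\lambda$ form an $R$-basis of $\pi_*\cL$, or equivalently (by cohomology and base change) that the restrictions $\tilde s_\lambda|_{X_s}$ are linearly independent in $H^0(X_s, \cL|_{X_s})$; I expect this to be the crux of the argument, and my approach would be to exploit the explicit divisor calculation $\operatorname{div}(\tilde s_\lambda|_{X_s}) = \sum_i \lambda_i D_s^{(i)}$ together with the relative normal crossings assumption on $D$ to rule out any unexpected linear dependency.
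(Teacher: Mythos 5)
Your overall route (rescale the canonical toric basis of $H^0(X_\eta,\cL_\eta)$ by powers of the uniformizer so the resulting sections of $\cL$ on $X$ have purely horizontal divisors, and then identify the family with the constant toric model) is close in spirit to the paper's argument, but the step you yourself flag as "the main technical point" --- that the reductions $\tilde s_\lambda|_{X_s}$ are linearly independent in $H^0(X_s,\cL_s)$ --- is precisely the heart of the proof, and the justification you propose does not work. Knowing that each $\tilde s_\lambda|_{X_s}$ is nonzero with divisor $\sum_i \lambda_i D^{(i)}_s$ cannot by itself rule out a linear dependency: already on $\PP^1$ the sections $x^2$, $xy$, $x^2+xy$ of $\cO(2)$ have pairwise distinct divisors yet are linearly dependent, and on the special fiber you have, at this stage, no toric structure constraining which divisors occur among linear combinations, so the relative nc hypothesis alone gives you nothing here. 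The paper proves exactly this injectivity by an induction on dimension: it first applies the proposition itself to each boundary component $(D_i,(D-D_i)|_{D_i})$ (which is again smooth projective over $S$ with relative nc boundary and toric generic fiber); then, given a dependency over $k$, it restricts to $(D_i)_s$, uses the inductive injectivity there to conclude that the dependent combination lies in the submodule of sections vanishing along $D_i$, and iterates to produce nonzero sections of $\cL(-mD_i)$ for every $m\geq 0$, which is absurd. Without this (or some substitute) inductive mechanism over the boundary strata, your argument has a genuine gap.

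There are also smaller points you would need to repair. Your opening reduction assumes that toricness of $(X_{\bar s},D_{\bar s})$ descends to $(X_s,D_s)$; this is not obvious (the torus over $k(s)$ need not be split, and proving such descent amounts to the global rigidity statement the paper deduces \emph{from} this proposition), which is why the paper argues in the opposite direction: it proves the $\eta\Rightarrow s$ case directly over the given $R$ and obtains the $\bar\eta\Rightarrow\bar s$ case afterwards by a finite extension and normalization. You also define the morphism $\Phi$ before knowing that the $\tilde s_\lambda$ span $\pi_*\cL$ (harmless once the crux is established, but circular as written), and the binomial relations among the $\tilde s_\lambda$ hold only with constants in $R^*$ determined by your scalings, so identifying the flat limit with $X(\Sigma)_s$ needs the further (easy, over an algebraically closed residue field) remark that such a compatible system of units comes from a diagonal rescaling. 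These are patchable; the linear-independence step, as proposed, is not.
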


\begin{proof}
This proof is inspired by \cite[Theorem~I~2.1]{AMRT}. Suppose that we have an isomorphism $g\colon (X, D)_\eta \isomto (X(\Sigma), D(\Sigma))_\eta$ for some fan $\Sigma$. We will show the stronger claim that $(X, D) \isom (X(\Sigma), D(\Sigma))_S$. Every $m\in M$ (the monomial lattice of $\Sigma$) defines a rational function $g^*(m)$ on $X$ defined and invertible outside of $D\cup X_s$. Their valuations $\nu_s(g^*(m)) \in \ZZ$ along the prime divisor $X_s\subseteq X$ define a homomorphism $M\to \ZZ$, i.e.\ an element $\gamma$ of the dual lattice $N$. If $\pi\in R$ is a uniformizer, i.e.\ $\nu_s(\pi)=1$, we define an element $\tau$ of the torus $T(K) = \Hom(M, K^\times)$ defined by $m\mapsto \pi^{-\gamma(m)}$. If we now replace $g$ with the composition $h= \tau\circ g \colon (X, D)_\eta\to (X(\Sigma), D(\Sigma))_\eta$, we obtain another isomorphism as before but now with $\gamma = 0$; in other words, the rational functions $h^* (m)$ ($m\in M$) on $X$ are defined and invertible at the generic point of the special fiber $X_s$. 

We now check that the above condition implies that $h$ extends to a map $(X, D)\to (X(\Sigma), D(\Sigma))_S$. For a ray $\rho\in \Sigma(1)$, let $D(\rho)\subseteq X(\Sigma)$ be the corresponding component of $D(\Sigma)$, and let $D_\rho \subseteq X$ be the component of $D$ containing $h^{-1}(D(\rho)_\eta)$. For a cone $\sigma \in \Sigma$, let $U(\sigma) = X(\Sigma) \setminus \bigcup_{\rho \not\subseteq \sigma} D(\rho)$ be the corresponding affine toric variety; similarly, we define $U_\sigma = X \setminus \bigcup_{\rho \not\subseteq \sigma} D_\rho$, so that $(U_\sigma)_\eta = h^{-1}(U(\sigma)_\eta)$. Note that the $U_\sigma$ cover $X$, as all intersections of components of $D$ are flat over $S$ and the $(U_\sigma)_\eta$ cover $X_\eta$. Further, the condition $\gamma=0$ implies that each $h|_{U_\sigma}\colon (U_\sigma)_\eta\to U(\sigma)$ extends to $U_\sigma$. Indeed, the affine toric variety $U(\sigma)$ represents the functor $T\mapsto \Hom_{\rm monoids}(\sigma^\vee\cap M, \Gamma(T, \cO_T))$, and the map we want to extend is induced by $h^*$ on $M$, so all we want is to extend the functions $h^*(m)$ on $(U_\sigma)_\eta$ ($m\in \sigma^\vee\cap M$) across the divisor $X_s \cap U_\sigma$ and this is ensured by $\gamma=0$. Therefore  $h$ extends to a map $X = \bigcup U_\sigma \to \bigcup U(\sigma)_S = X(\Sigma)_S$ mapping $D$ into $D(\Sigma)_S$ since this holds on $X_\eta$.

Finally, we check that the map $X\to X(\Sigma)$ is an isomorphism. In fact, every map $f\colon X\to Y$ with $X$ and $Y$ smooth and proper over $S$ with connected fibers such that $f_\eta$ is an isomorphism has to be an isomorphism. Indeed, by van der Waerden's purity theorem, the exceptional locus of $f$ is a divisor in $X$ contained in $X_s$, so if non-empty, it equals $X_s$. But in that case, $\dim {\rm im}(f)<\dim X_s$, contradicting the semi-continuity of fiber dimension applied to the proper map ${\rm im}(f)\to S$.  

To deduce the claim about geometric fibers, if $(X_{\bar\eta}, D_{\bar\eta})$ is a toric pair, then there exists a finite extension $\eta'/\eta$ such that $(X_{\eta'}, D_{\eta'})$ is toric. Let $S'\to S$ be the normalization of $S$ in $\eta'$, and let $s'$ be the closed point of $S'$. Applying the above to the base change $X'\to S'$, we deduce that $(X_{s'}, D_{s'})$ is toric, and hence so is $(X_{\bar s}, D_{\bar s})$.
\end{proof}

\subsection{Global rigidity of toric pairs}
\label{ss:global-rigidity}

The following result shows that there are no interesting families of toric pairs. 

\begin{prop} \label{prop:global-rigidity}
  Let $f\colon X\to S$ be a smooth and projective morphism to a non-empty noetherian connected scheme $S$ and let $D\subseteq X$ be a divisor with normal crossings relative to $S$. The following conditions are equivalent:
  \begin{enumerate}[(i)]
    \item The morphism $f$ admits a structure of a toric fibration with toric boundary $D$.
    \item \'Etale-locally on $S$, there exists a smooth fan $\Sigma$ and an $S$-isomorphism 
    \[ 
      (X, D)\isom (X(\Sigma), D(\Sigma))_S.
    \] 
    \item For a single geometric point $\bar s$ of $S$, there exists a smooth fan $\Sigma$ and an isomorphism
    \[ 
      (X, D)_{\bar s} \isom (X(\Sigma), D(\Sigma))_{\bar s}.
    \]
  \end{enumerate}
\end{prop}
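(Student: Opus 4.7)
The implications (i)$\Rightarrow$(ii) and (ii)$\Rightarrow$(iii) are formal: the first unwinds the definition of a toric fibration (with the fan $\Sigma$ forced to be smooth because $X\to S$ is smooth), and the second is base change. The content lies in (iii)$\Rightarrow$(ii) and (ii)$\Rightarrow$(i).

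For (iii)$\Rightarrow$(ii), I would study the isomorphism scheme
\[
  I \;=\; \underline{\mathrm{Isom}}_S\bigl((X(\Sigma), D(\Sigma))_S,\, (X, D)\bigr)
\]
of isomorphisms of nc pairs over $S$. Standard representability for $\mathrm{Isom}$-schemes of smooth projective families shows that $I$ is of finite type over $S$ and is a pseudo-torsor under the $S$-group scheme $G = \underline{\Aut}_S((X(\Sigma), D(\Sigma))_S)$. Smoothness of $I\to S$ would follow from the infinitesimal lifting criterion: the obstruction to extending an isomorphism across a square-zero thickening lives in $\Ext^1(\Omega^1_{X(\Sigma)_t}(\log D(\Sigma)_t), \cO_{X(\Sigma)_t})$ on the appropriate fiber, and this group vanishes because $\Omega^1_{X(\Sigma)}(\log D(\Sigma))$ is free and $H^1(X(\Sigma), \cO)=0$ for a smooth complete toric variety. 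It then remains to show $I\to S$ is surjective. The image is open by smoothness of $I$, and closed by Proposition~\ref{prop:toric-specialization} applied to $1$-parameter specializations, provided one verifies that in a smooth family with relative nc divisor the combinatorial type of $D$ (hence the fan $\Sigma$) is locally constant on the connected base. Since the image contains $\bar s$ and $S$ is connected, $I\to S$ is surjective, and smoothness supplies the required étale-local sections.

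For (ii)$\Rightarrow$(i), I would invoke the structure of the automorphism group of a toric pair, which fits into a short exact sequence of $S$-group schemes
\[
  1 \ra T_S \ra G \ra \underline{\Aut(\Sigma)}_S \ra 1,
\]
where $T$ is the split torus with cocharacter lattice equal to the ambient lattice of $\Sigma$ and $\Aut(\Sigma)$ is the finite group of lattice automorphisms preserving the fan. The class of the $G$-torsor $I$ in $H^1_{\et}(S, G)$ pushes forward to an $\Aut(\Sigma)$-torsor classifying a finite étale cover $S'\to S$; over $S'$ the residual obstruction is a $T$-torsor $U\to S'$, and twisting $X(\Sigma)_{S'}$ by $U$ recovers $(X, D)_{S'}$ equipped with a natural action of the twisted torus $T^{U}$. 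Descending along the étale cover $S'\to S$ then yields a (generally non-split) torus acting on $X$ with toric boundary $D$, producing the required toric fibration structure in the sense of \S\ref{ss:toric-var}.

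The main obstacle will be the closedness step in (iii)$\Rightarrow$(ii): one must verify that upon specialization to a point $s$ whose geometric fiber is forced to be a toric pair by Proposition~\ref{prop:toric-specialization}, the fan obtained is again $\Sigma$ rather than some combinatorial variant. The hypothesis that $D$ has \emph{relative} normal crossings is essential here, ensuring that the stratification of $X$ by intersections of irreducible components of $D$ is flat over $S$ and therefore that the combinatorial incidence data defining the fan is locally constant on the connected base.
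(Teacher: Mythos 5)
Your argument for (iii)$\Rightarrow$(ii) is essentially the paper's: the same $\mathrm{Isom}$-scheme $I$, smoothness of $I\to S$ from the vanishing of the relevant $\Ext^1$/$H^1$ groups (triviality of the log (co)tangent sheaf of a smooth complete toric variety plus $H^1(X(\Sigma),\cO)=0$), openness of the image by smoothness, closedness under specialization by reducing to a DVR (the paper does this via Lemma~\ref{lemma:surjectivity-dvr}) and invoking Proposition~\ref{prop:toric-specialization}, and finally \'etale-local sections from smooth surjectivity. The ``main obstacle'' you flag --- that the special fiber might be toric for a different fan --- is in fact already disposed of by the proof of Proposition~\ref{prop:toric-specialization}, which shows that the family over the DVR is a \emph{constant} toric fibration, so the special fiber carries the same fan $\Sigma$; no separate local-constancy argument for the stratification of $D$ is needed. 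Where you genuinely diverge is (ii)$\Rightarrow$(i): you twist $X(\Sigma)_S$ by the class of $I$ in nonabelian cohomology, using the sequence $1\to T_S\to G\to \Aut(\Sigma)_S\to 1$, and then descend the twisted torus and its action along a finite \'etale cover; the paper instead takes the intrinsic group $T=\Aut^0_S(X,D)$ and applies Lemma~\ref{lemma:autom-xsigma} to identify it \'etale-locally with $\GG_m^n$ acting standardly. Both rest on the same structural lemma about $\Aut_S(X(\Sigma),D(\Sigma))$, but the paper's route is cleaner: the torus and its action on $X$ exist globally from the outset, so one never has to verify descent data for the twisted object, which is the one step your sketch leaves implicit.
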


\begin{proof}
Implications (i)$\Rightarrow$(ii)$\Rightarrow$(iii) are obvious. For (iii)$\Rightarrow$(ii), it is enough to consider $S$ irreducible. Consider the scheme 
\[
  I={\rm Isom}_S((X, D), (X(\Sigma), D(\Sigma))_S),
\]
which is locally of finite type over $S$. If $\bar i$ is a geometric point of $I$ and $\bar s$ is its image in $S$, then $\Omega^1_{X_{\bar s}/\bar s}(\log D) \isom \Omega^1_{X(\Sigma)_{\bar s}}(\log D(\Sigma)_{\bar s})$ is trivial and $H^i(X_{\bar s}, \cO_{X_{\bar s}}) \isom H^i(X(\Sigma)_{\bar s}, \cO_{X(\Sigma)_{\bar s}}) = 0$ for $i>0$. Deformation theory shows that $I\to S$ is formally smooth at $\bar i$. We conclude that $I\to S$ is smooth, and in particular its image is an open subset of $S$.

We shall now prove that the assertion of (iii) holds for \emph{every} geometric point of $S$, or in other words, that $I\to S$ is surjective. By assumption, $I$ is non-empty, and hence the image of $I\to S$ is a non-empty open subset of $S$. It is also dense in $S$ (as $S$ is irreducible), so it only remains to show that it is closed under specialization. By Lemma~\ref{lemma:surjectivity-dvr}, it is enough to consider $S = \Spec V$ for a~discrete valuation ring $V$. In this case, Proposition~\ref{prop:toric-specialization} implies the required assertion. Now since $I\to S$ is smooth and surjective, it admits sections \'etale locally on $S$, which shows (ii).

Finally, assume (ii), and let $T={\rm Aut}^0_S(X, D)$ be the connected component of ${\rm Aut}_S(X, D)$. Then by Lemma~\ref{lemma:autom-xsigma}, \'etale-locally on $S$ there exists an isomorphism $T\isom \GG_m^n$ and an equivariant isomorphism $(X, D)\isom (X(\Sigma), D(\Sigma))$. Thus $(X, D)$ is a toric fibration over~$S$.
\end{proof}

\begin{lemma} \label{lemma:autom-xsigma}
  Let $\Sigma \subseteq \RR^n$ be a smooth complete fan, and let $S$ be a scheme. Then the group scheme $A={\rm Aut}_S((X(\Sigma), D(\Sigma))_S)$ is an extension 
  \[ 
    1\ra \GG_{m, S}^n\ra A\ra {\rm Aut}(\Sigma)_S\ra 1
  \]
  where ${\rm Aut}(\Sigma)$ is the (finite) group of automorphisms of the fan $\Sigma$.
\end{lemma}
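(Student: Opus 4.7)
The plan is to construct both maps, exhibit a splitting of the quotient map, and verify exactness through a relative Lie-algebra computation. First I would define the three relevant maps. The natural $T$-action on $X(\Sigma)$ preserves $D(\Sigma)$ and is defined over $\Spec\ZZ$; base-changing gives a closed immersion $T_S\hookrightarrow A$, injective because $T$ acts faithfully on the open orbit. The tautological action of $\Aut(\Sigma)$ on $(X(\Sigma),D(\Sigma))$ yields a homomorphism from the constant group scheme $\Aut(\Sigma)_S\to A$. The quotient map $A\to\Aut(\Sigma)_S$ I would define on functor points: for a connected $S$-scheme $T$ and $\phi\in A(T)$, $\phi$ permutes the irreducible components of $D(\Sigma)_T$, which are indexed by $\Sigma(1)$; since $\Sigma$ is smooth, the combinatorial incidence relation $D_{\rho_1}\cap\cdots\cap D_{\rho_k}\neq\emptyset$ recovers the cones of $\Sigma$, so the induced permutation of $\Sigma(1)$ would lift uniquely to an automorphism of the fan. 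By construction the composition $\Aut(\Sigma)_S\to A\to\Aut(\Sigma)_S$ is the identity, so the quotient map is surjective with a section.

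To identify $T_S$ with the kernel, I would invoke standard representability of automorphism schemes of projective pairs to view $A$ as an $S$-group scheme of finite type. Its relative Lie algebra along the identity section is $\pi_*\,T_{X/S}(-\log D)$, and for a split toric fibration $\Omega^1_{X/S}(\log D)$ is free of rank $n$ with $\pi_*\cO_X=\cO_S$, so this Lie algebra equals $\cO_S^n$, matching that of $T_S$. The fppf quotient $A/T_S$ would then have vanishing relative Lie algebra and hence be \'etale over $S$. Combining with the splitting constructed above, the composition $\Aut(\Sigma)_S\to A\to A/T_S$ would be a morphism of \'etale $S$-group schemes that is an isomorphism on every geometric fiber, hence a global isomorphism.

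The main technical input I would need is the geometric-fiber statement $\Aut(X(\Sigma)_k,D(\Sigma)_k)=T\rtimes\Aut(\Sigma)$ over an algebraically closed field $k$. Here the Lie-algebra computation above specializes to $\Aut^0=T$, since $\Aut^0$ contains the connected subgroup $T$ of the same dimension $n$. The component group $\pi_0\Aut$ maps to $\Aut(\Sigma)$ with a section coming from the tautological $\Aut(\Sigma)$-action, so the remaining issue is injectivity. For this I would use the classical identification $\Aut_{\rm var}(T)=T\rtimes GL(N)$: any element in the kernel of $\pi_0\Aut\to\Aut(\Sigma)$ restricts on the open torus to the composition of a translation and a lattice automorphism of $T$ which preserves every ray of $\Sigma$, and hence fixes every primitive ray generator; completeness of $\Sigma$ makes these generators span $N_\RR$, forcing the lattice part to be trivial, so the element lies in $T=\Aut^0$ and is trivial in $\pi_0\Aut$.
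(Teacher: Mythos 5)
The main gap is in your construction of the quotient map $A\to\Aut(\Sigma)_S$. You justify the lift of the induced permutation of $\Sigma(1)$ to a fan automorphism purely combinatorially (``the incidence relation recovers the cones''), but a permutation of the rays preserving the set of cones need \emph{not} be induced by any lattice automorphism: for the fan of the Hirzebruch surface $\FF_1$ (rays $e_1,e_2,e_1+e_2,-e_1-e_2$) the incidence structure is a $4$-cycle with dihedral symmetry group of order $8$, while $\Aut(\Sigma)$ has order $2$; the combinatorial rotation $e_1\mapsto e_1+e_2\mapsto e_2\mapsto -e_1-e_2$ is not linear. So ``lifts uniquely'' fails as a combinatorial statement. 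The lift exists only because the permutation comes from an honest automorphism of the pair: one restricts to the open orbit and uses $\Aut_{\mathrm{var}}(T)=T\rtimes GL(n,\ZZ)$ to extract the lattice part (smoothness then gives uniqueness, since the primitive generators of a maximal cone form a basis). You invoke exactly this torus argument, but only in your last paragraph for geometric fibers; logically it must come first, and the map over a connected base is then defined by local constancy of the induced ray permutation. As written, the second paragraph is unjustified.

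There are also two gaps in the descent step, which is where your route genuinely diverges from the paper. First, you appeal to representability of automorphism schemes of \emph{projective} pairs, but $X(\Sigma)$ for a smooth complete fan need not be projective and $S$ is arbitrary; at best one gets an algebraic space, and one must say why the Lie-algebra formalism and the fppf quotient $A/T_S$ still make sense there. Second, ``vanishing relative Lie algebra, hence \'etale'' requires flatness of $A$ (equivalently of $A/T_S$) over $S$, which is not established and is essentially part of what is being proved; without it you only get unramified, and then ``isomorphism on geometric fibers $\Rightarrow$ isomorphism'' is no longer formal (a closed immersion that is bijective on all fibers need not be an isomorphism when the target is not flat over $S$). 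By contrast, the paper's proof is elementary and purely functorial: an automorphism permutes the components $D_\rho$, the kernel of $A\to\Aut(\Sigma(1))$ preserves each chart $(\bb{A}^n_S,\{x_i=0\})$ attached to a maximal cone, and a direct computation of $\Aut_S(\bb{A}^n_S,\{x_i=0\})$ identifies the kernel with $\GG_{m,S}^n$ --- no representability, Lie algebras, or fiberwise classification are needed. Your strategy can likely be repaired (prove the fiber statement first, define the quotient map via the torus restriction, and address flatness/representability), but in its present form these steps do not go through.
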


\begin{proof}
Let $\Sigma(1)$ denote the set of rays (one-dimensional cones) of $\Sigma$. Let $D(\Sigma) = \bigcup_{\rho \in \Sigma(1)} D_{\rho}$ be a decomposition of $D(\Sigma)$ into irreducible components. Then every automorphism of the pair $(X(\Sigma), D(\Sigma))$ over $S$ has to permute the divisors $D_\rho$, which yields a homomorphism ${A\to {\rm Aut}(\Sigma(1))}$. Let $A^0$ denote its kernel and let $\sigma\in \Sigma$ be a top-dimensional cone, corresponding to an open immersion 
\[
  (\bb{A}^n_S, \{x_i=0\}_{i=1, \ldots, n})\hookrightarrow (X(\Sigma), D(\Sigma))_S
\]
whose image is preserved by $A^0$. A direct calculation shows that ${\rm Aut}_S(\bb{A}^n_S, \{x_i=0\}_{i=1, \ldots, n})$ can be naturally identified with $\GG_{m, S}^n$. 
\end{proof}

\begin{remark}
In the proof of Lemma~\ref{lemma:autom-xsigma}, the completeness assumption was only used to find a~top-dimensional cone in $\Sigma$. On the other hand, $\Aut_S(\GG_{m, S})$ (where $\GG_{m, S}$ is treated as an $S$-scheme) might be bigger than the semidirect product of $\GG_{m, S}$ and $\ZZ\times 2\ZZ$ (and non-representable) if $S$ is non-reduced. Smoothness is probably not necessary.
\end{remark}

\begin{lemma} \label{lemma:surjectivity-dvr}
  Let $S$ be a connected noetherian scheme and let $U\subseteq S$ be an open subset. Suppose that for every discrete valuation ring $V$, and for every morphism $h\colon \Spec V\to S$ mapping the generic point into $U$, we have $h(\Spec V)\subseteq U$. Then $U=S$.
\end{lemma}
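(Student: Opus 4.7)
The plan is to deduce $U=S$ by showing that $U$ is simultaneously open and closed in the connected scheme $S$. Since $S$ is noetherian and $U$ is open, $U$ is constructible, so by the standard characterization of closed constructible subsets (see \stacksproj{0542}) it suffices to verify that $U$ is stable under specialization: whenever $s'\in U$ and $s\in\overline{\{s'\}}$ in $S$, we have $s\in U$.

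To establish this specialization-closedness, I would invoke the well-known existence of a discrete valuation ring realizing a given specialization (see \stacksproj{01J8}): for each pair $(s',s)$ as above, there exist a discrete valuation ring $V$ and a morphism $h\colon\Spec V\to S$ whose generic point maps to $s'$ and whose closed point maps to $s$. Since the generic point of $\Spec V$ lies in $U$ by construction, the hypothesis of the lemma forces $h(\Spec V)\subseteq U$, and in particular $s\in U$. Hence $U$ is closed under specialization, and therefore closed in $S$.

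Being both open and closed in the connected scheme $S$, the subset $U$ must equal $\emptyset$ or $S$. In the intended application in Proposition~\ref{prop:global-rigidity}, $U$ is the image of the scheme $I\to S$ from condition~(iii), which is nonempty by assumption; this rules out the first possibility and yields $U=S$. The argument presents essentially no obstacle: it merely packages two standard facts about noetherian schemes (constructibility of opens, and the equivalence of closedness with stability under specialization for constructibles) together with the DVR existence theorem, whose role is to translate the combinatorial notion of specialization into the geometric hypothesis of the lemma.
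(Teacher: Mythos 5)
Your argument is correct, but it takes a more hands-on route than the paper, which disposes of the lemma in one line: the hypothesis is precisely the valuative criterion of properness for the open immersion $U\hookrightarrow S$ (a morphism of finite type between noetherian schemes, so DVRs suffice), hence $U\to S$ is proper and $U$ is closed, and connectedness finishes the proof. What you do is essentially unpack that criterion in this special case: constructibility of the open $U$ in the noetherian space $S$, the fact that a constructible set is closed iff it is stable under specialization, and the existence of a DVR $\Spec V\to S$ realizing any given specialization $s'\leadsto s$ (this last step is where noetherianness is again used, to get a \emph{discrete} valuation ring rather than a general valuation ring — do make sure the tag you cite is the noetherian/DVR version). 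The paper's proof buys brevity by citing a packaged theorem; yours is more self-contained and makes explicit exactly which topological and commutative-algebra inputs are used. One further point in your favour: you correctly note that the conclusion $U=S$ requires $U\neq\emptyset$ (as stated, the lemma is vacuously satisfied by $U=\emptyset$), and you supply the nonemptiness from the application in Proposition~\ref{prop:global-rigidity}; the paper's proof leaves this implicit.
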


\begin{proof}
The valuative criterion of properness shows that $U\to S$ is proper, and hence $U$ is also closed.
\end{proof}

\begin{remark}
Smoothness is probably not necessary for Proposition~\ref{prop:global-rigidity}. Properness seems essential: for example, the pair $(\bb{A}^2, \GG_m\times \{0\} + \{0\}\times \GG_m)$ has non-trivial first-order deformations.  Projectivity seems to be only an artifact of the proof.
\end{remark}

\subsection{Images of toric varieties}
\label{ss:images}

We shall now combine the technique of descending \mbox{$F$-lifta}\-bility (Theorem~\ref{thm:descending-frob-lift}) with the main result of \S\ref{ss:generalization} to show the following.

\begin{thm} \label{thm:c1impliesc2}
  Suppose that Conjecture~\ref{conj:froblift} is true for simply connected (e.g.\ separably rationally connected) varieties, that is, that every smooth projective simply connected $F$-liftable variety over an algebraically closed field of characteristic $p>0$ is a toric variety. Then Conjecture~\ref{conj:jarekw} is true, that is, a smooth projective image of a complete toric variety in characteristic zero is a toric variety.
\end{thm}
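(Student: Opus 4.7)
\emph{Proof proposal.}

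The plan is to reduce modulo $p^2$ for a dense set of primes $p$, transfer $F$-liftability from $Z$ (a toric variety) to $X$ via the functoriality of the Frobenius-lifting obstruction class (Lemma~\ref{lem:frob_functoriality_lifting}), apply the assumed case of Conjecture~\ref{conj:froblift} to conclude that the reductions of $X$ are toric, and finally lift the conclusion back to characteristic zero using Corollary~\ref{cor:toric-generalization}.

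Concretely, spread $Z$, $X$, and $\phi$ to a surjective morphism $\Phi\colon \mathscr Z\to \mathscr X$ over a finitely generated $\ZZ$-subalgebra $R\subseteq k$, with $\mathscr Z\to \Spec R$ a toric fibration and $\mathscr X\to \Spec R$ smooth projective. For closed points $s\in \Spec R$ outside a proper closed subset, the residue field $\kappa(s)$ has characteristic $p>0$ and $\phi_s\colon Z_s\to X_s$ is a surjective morphism from a complete toric variety to a smooth projective variety over $\bar{\kappa(s)}$. Base-changing $\Phi$ to $W_2(\bar{\kappa(s)})$ yields a lift $\tilde\phi_s\colon \tilde Z_s\to \tilde X_s$ of $\phi_s$, and Example~\ref{ex:toric} provides a Frobenius lifting on $\tilde Z_s$.

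To transfer $F$-liftability from $\tilde Z_s$ to $\tilde X_s$, apply Theorem~\ref{thm:descending-frob-lift}(a) to $\tilde\phi_s$. The key hypothesis (a.i) amounts to the derived pullback $\cO_{X_s}\to R\phi_{s*}\cO_{Z_s}$ being a split monomorphism; after passing to the Stein factorization of $\phi$ and shrinking $\Spec R$, this is supplied in characteristic zero by Koll\'ar-type vanishing for surjective morphisms from varieties with rational singularities, and is then inherited at the closed fiber over a generic $s$. Thus $X_s$ is $F$-liftable. Moreover, since complete toric varieties in characteristic zero are simply connected and surjectivity of $\phi$ implies $X$ is rationally connected, the reduction $X_s$ is separably rationally connected---and in particular simply connected---for $p\gg 0$. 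The assumed case of Conjecture~\ref{conj:froblift} then forces $X_s$ to be a toric variety.

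Since this conclusion holds at a dense set of closed points $s\in \Spec R$, Corollary~\ref{cor:toric-generalization}---whose proof combines the constructibility of the splitting locus of the potential sheaf (Proposition~\ref{prop:split}) with the K\k{e}dzierski--Wi\'sniewski strengthening of Jaczewski's characterization (Theorem~\ref{thm:kedz-wisn})---implies that the geometric generic fiber of $\mathscr X\to \Spec R$, namely $X$ itself, is a toric variety. The main obstacle in the plan is the descending $F$-liftability step: one must verify the derived splitting of $\cO_{X_s}\to R\phi_{s*}\cO_{Z_s}$, which requires a careful interplay of Stein factorization, characteristic-zero Koll\'ar-type vanishing, and a generic choice of prime $p$ preserving both this splitting and the simple connectedness of $X_s$.
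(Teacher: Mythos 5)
Your proposal is correct in outline and follows the paper's overall architecture (spread out over a finitely generated $\ZZ$-algebra, descend $F$-liftability modulo $p^2$ from the toric total space to $X_s$, apply the assumed case of Conjecture~\ref{conj:froblift}, conclude with Corollary~\ref{cor:toric-generalization}), but it diverges at the key descent step. The paper first reduces to the case where $\phi$ is \emph{finite}, citing Occhetta--Wi\'sniewski, then uses Miracle Flatness to get $\phi$ finite flat, arranges $\deg\phi$ invertible on the base, and applies Theorem~\ref{thm:descending-frob-lift}(a.ii); simple connectedness of $X_{\bar s}$ comes from surjectivity of the specialization map on $\pi_1$. You instead keep $\phi$ an arbitrary surjection and verify condition (a.i) directly: in characteristic zero the splitting of $\cO_X\to R\phi_*\cO_Z$ is exactly the statement that varieties with rational singularities are derived splinters (Kov\'acs' splitting criterion via Koll\'ar's theorems, or Bhatt's characterization), and it does pass to closed fibers by tor-independent base change after spreading out — so this works, though your attribution ("Koll\'ar-type vanishing") is loose and the detour through the Stein factorization is unnecessary (the intermediate variety may be singular, and the splitting concerns the composite anyway). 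What each route buys: the paper's argument is elementary once the O--W reduction to the finite case is granted (trace divided by the degree), whereas yours avoids that reduction at the cost of importing a nontrivial characteristic-zero theorem and a base-change check; your route to simple connectedness (spreading a very free curve to get SRC, hence $\pi_1^{\et}=1$) is also fine and consistent with the paper's phrasing of the hypothesis.

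Two small points you should make explicit. First, to base change $\Phi$ to $W_2(\overline{\kappa(s)})$ you need a $W_2$-point of $\Spec R$ lifting $s$; the paper secures this by arranging $\Spec R$ smooth over $\Spec\ZZ$, and you should do the same (it is not automatic for an arbitrary finitely generated $\ZZ$-subalgebra). Second, to apply Example~\ref{ex:toric} you want $\mathscr Z$ to be a \emph{constant} toric fibration $X(\Sigma)_R$ (as in the paper's condition on the spreading out), so that $Z_{\swt s}\isom X(\Sigma)_{W_2}$ carries the standard Frobenius lifting; "toric fibration" alone would require an extra argument such as Lemma~\ref{lemma:flift-torsors}. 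Neither issue is a genuine obstruction, but both are needed for the descent step to literally apply.
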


\begin{proof}
  Let $\phi_K\colon Z_K\to X_K$ be a surjective morphism from a complete toric variety $Z_K$ to a~smooth projective variety $X_K$ defined over an algebraically closed field $K$ of characteristic zero. Reasoning as in \cite{occhetta_wisniewski}, we can assume that $\phi_K$ is finite, in which case it is also flat by ``Miracle Flatness'' \cite[15.4.2]{EGAIV_3}, because $X_K$ is smooth and $Z_K$ is Cohen--Macaulay. There exists a finitely generated subring $R \subseteq K$ and a finite flat surjective map $\phi\colon Z\to X$ of schemes over $S=\Spec R$, satisfying the following properties:
  \begin{enumerate}[(1)]
    \item $\phi_K$ is the base change of $\phi$ to $K$,
    \item $S$ is smooth over $\Spec \ZZ$, 
    \item $Z$ is a proper constant toric fibration over $S$ (see \S\ref{ss:toric-var}),
    \item $X$ is smooth and projective over $S$,
    \item $d=\deg(\phi_K)$ is invertible on $S$.
  \end{enumerate}
  If $\bar s\colon \Spec k\to S$ is a geometric point of $S$, then $\phi_{\bar s}\colon Z_{\bar s}\to X_{\bar s}$ is a finite surjective map from a toric variety $Z_{\bar s}$ to a smooth projective variety $X_{\bar s}$ over $\bar s$, of degree invertible in $k$. Let $\swt s\colon \Spec W_2(k)\to S$ be a lifting of $\bar s$ mod $p^2$ (such a lifting exists thanks to condition (2)). Then $Z_{\swt s}$ is a constant toric fibration over $\swt s$, and hence $F_{Z_{\bar s}}$ lifts to $Z_{\swt s}$. By Theorem~\ref{thm:descending-frob-lift}(a) applied to $\phi_{\swt s}\colon Z_{\swt s}\to X_{\swt s}$, we get that $F_{X_{\bar s}}$ lifts to $X_{\swt s}$. Moreover, since $X_K$ is simply connected, so is $X_{\bar s}$, as the specialization map $\pi_1(X_K)\to \pi_1(X_{\bar s})$ is surjective \cite[Exp.~X, Corollaire~2.3]{SGA1}. The assumed case of Conjecture~\ref{conj:froblift} implies that $X_{\bar s}$ is a toric variety. Thus $X_K$ is toric, by Corollary~\ref{cor:toric-generalization}.
\end{proof}

\begin{remark}
If we believe in Conjecture~\ref{conj:froblift} in its full strength, then the connection between Conjecture~\ref{conj:jarekw} and a special case of Conjecture~\ref{conj:froblift} suggests that in characteristic zero, images of toric fibrations over abelian varieties should be also of this type, up to a finite \'etale cover. This would be a common generalization of Conjecture~\ref{conj:jarekw} and the results of \cite{Debarre,HwangMok01,DemaillyHwangPeternell} on the images of abelian varieties. 

The first obvious obstacle in deducing such a statement from Conjecture~\ref{conj:froblift} is the ordinary reduction conjecture for abelian varieties. But even assuming that, the method of proof of Theorem~\ref{thm:c1impliesc2} does not apply in this case, as the assumptions of Theorem~\ref{thm:descending-frob-lift}(a) may no longer be satisfied. More precisely, if $A\to S$ is an abelian scheme over a base $S$ which is of finite type over $\ZZ$, if $\bar s=\Spec k\to S$ is a closed geometric point, and $\swt s=\Spec W_2(k)\to S$ is a lifting of $\bar s$, then to apply the argument from the proof of Theorem~\ref{thm:c1impliesc2} we need not only $A_{\bar s}$ to be ordinary, but also $A_{\swt s}$ to be its Serre--Tate canonical lifting. We do not know whether one should expect such $\swt s$ to exist, even if $A$ is an elliptic curve over $S$.

Using Remark~\ref{remark:descend_reductive} we can show the following.  
\begin{quote} {\it
  Let $K$ be an algebraically closed field of characteristic zero and let $Z$ be a smooth projective variety over $K$ whose Albanese morphism $Z\to A$ is a toric fibration. Let $G$ be a finite group acting of $Z$. Suppose that $X=Z/G$ is smooth, and that the abelian variety $A$ satisfies the ordinary reduction conjecture. Assume that Conjecture~\ref{conj:froblift} is valid. Then $X$ admits a finite \'etale cover by a variety whose Albanese morphism is a~Zariski-locally trivial fibration with toric fibers.
} \end{quote}
Since the proof is long and technical, we refrained from including it in this article. 

Using Galois closures and Albanese mappings it is not difficult to show that if ${f \colon A \to X}$ is a finite morphism from an abelian variety $A$ to a smooth variety $X$ defined over an algebraically closed field $k$ of characteristic zero, then $X$ is a quotient of some (possibly different) abelian variety by a finite group. Therefore, the above result partially recovers the classification of smooth images of abelian varieties under finite morphisms mentioned above contingent upon the validity of Conjecture~\ref{conj:froblift}. 
\end{remark}


\section{\texorpdfstring{Structure of $F$-liftable nc pairs with trivial canonical bundle}{Structure of F-liftable nc pairs with trivial canonical bundle}}
\label{s:log-mehta-srinivas}

\subsection{Statement of the main result and some preliminaries}
\label{ss:log-mehta-srinivas-statement}

In this section, we provide a logarithmic generalization of \cite[Theorem 2]{MehtaSrinivas}, settling a special case of Conjecture~\ref{conj:froblift}.  More precisely, we prove the following theorem which characterizes projective $F$-liftable nc pairs $(X,D)$ with $\omega_{X}(D)$ numerically trivial. 

\begin{thm} \label{thm:frobenius_cy}
  Let $(X, D)$ be a projective nc pair over an algebraically closed field $k$ of positive characteristic. The following conditions are equivalent:
  \begin{enumerate}[(i)]
    \item $(X, D)$ is $F$-liftable and $\omega_{X}(D)$ is numerically trivial,
    \item $X$ is $F$-split and $\Omega^1_X(\log D)$ becomes trivial on a finite \'etale cover of $X$,
    \item $X$ admits a finite \'etale cover $f \colon Y \to X$ whose Albanese map $a\colon Y\to A$ is a toric fibration over an ordinary abelian variety with toric boundary $f^{-1}(D)$.
  \end{enumerate}
\end{thm}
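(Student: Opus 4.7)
My plan is to prove the cyclic chain of implications (i)$\Rightarrow$(ii)$\Rightarrow$(iii)$\Rightarrow$(i), with the bulk of the work concentrated in the middle step, which is a logarithmic analogue of the main theorem of \cite{MehtaSrinivas}.

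For (i)$\Rightarrow$(ii): By Variant~\ref{var:log_variant_xi}, the Frobenius lifting of $(X,D)$ produces an injective morphism $\xi\colon F_X^*\Omega^1_X(\log D)\to \Omega^1_X(\log D)$ whose determinant is a nonzero global section of $\omega_X(D)^{\otimes(1-p)}$. Numerical triviality of $\omega_X(D)$ forces any nonzero section of a numerically trivial line bundle to be nowhere-vanishing, so $\det\xi$ is nowhere-vanishing and hence $\xi$ is an isomorphism. A theorem of Lange--Stuhler then produces a finite \'etale cover on which $\Omega^1_X(\log D)$ becomes trivial; $F$-splitness is immediate from Proposition~\ref{prop:frobenius_cotangent_morphism}(c).

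For (iii)$\Rightarrow$(i): Lift $A$ to its Serre--Tate canonical lift $\wt A$ (Example~\ref{ex:ordinaryav}), and apply Lemma~\ref{lemma:flift-torsors} to the torsor structure defining $Y\to A$ to obtain a Frobenius lifting $(\wt Y,\wt F_Y)$ compatible with a lift of the toric boundary $D_Y$. Triviality of $\omega_A$ and the relation $\omega_Y(D_Y)\isom a^*\omega_A$ for toric fibrations gives numerical triviality of $\omega_X(D)$ after pushing down. To descend the Frobenius lifting from $Y$ to $X$, we combine Lemma~\ref{lem:galois_compatible_fsplit} (producing a compatible $F$-splitting $\sigma_X$) with Theorem~\ref{thm:fsplit-liftings}, which identifies $\wt Y$ with the canonical lift $\wt Y(\sigma_Y)$ and thus makes the Galois descent canonical; the induced $\wt X(\sigma_X)$ carries the desired compatible Frobenius lifting.

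For (ii)$\Rightarrow$(iii): After replacing $X$ by a finite \'etale cover $Y$ (restoring $X$ at the end via a Galois closure argument), assume $\Omega^1_Y(\log D_Y)$ is trivial. Then $Y$ is $F$-split by Lemma~\ref{lem:f-split_is_ordinary}(d) and $A=\Alb(Y)$ is ordinary by Lemma~\ref{lem:f-split_is_ordinary}(c). The pullback $a^*\Omega^1_A\hookrightarrow\Omega^1_Y(\log D_Y)$ has locally free cokernel (both terms are free), so Lemma~\ref{lemma:nc-pair-over-s-and-over-y} shows that $(Y,D_Y)$ is an nc pair relative to $A$, i.e.\ the Albanese map $a$ is smooth and $D_Y$ meets the fibers transversely. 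Each fiber $(Y_\alpha,D_{Y,\alpha})$ has trivial $\Omega^1(\log D)$ and trivial Albanese ($H^1$ of the fiber vanishes because $H^1(A,\cO_A)\isomto H^1(Y,\cO_Y)$). To conclude each fiber is a toric pair, we reduce to the core case $H^1(Y,\cO_Y)=0$: pass to a finite ground field, lift $(Y,D_Y,\wt F_Y)$ all the way to $W(k)$, and use the resulting polarized endomorphism on a characteristic-zero model, invoking \cite{nakayama-zhang,greb_kebekus_peternell} to verify the hypotheses of Winkelmann's theorem; since $\Alb$ is trivial the only available semi-abelian group is a torus, forcing the lift (and hence $Y$) to be toric. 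Once fibers are toric pairs, Proposition~\ref{prop:global-rigidity} upgrades $a\colon Y\to A$ to a bona fide toric fibration with toric boundary $D_Y$.

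The hardest step is the $H^1=0$ case inside the proof of (ii)$\Rightarrow$(iii): here, in contrast with \cite{MehtaSrinivas}, Yau's theorem on Ricci-flat metrics is unavailable in the logarithmic setting, so the passage to characteristic zero must be routed through Frobenius liftability (giving a polarized endomorphism in characteristic zero) and then through the Nakayama--Zhang and Greb--Kebekus--Peternell analysis of varieties with polarized endomorphisms, before Winkelmann's theorem becomes applicable. Keeping the boundary divisor and Frobenius lifting compatibly controlled throughout the reduction to a finite base field, the spreading out, and the lifting is the chief technical burden, and the place where the argument departs most substantially from its predecessor.
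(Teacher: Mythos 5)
Your implication (i)$\Rightarrow$(ii) matches the paper. The other two steps each have a genuine gap. In (iii)$\Rightarrow$(i), your descent of the Frobenius lifting from $Y$ to $X$ via Lemma~\ref{lem:galois_compatible_fsplit} and Theorem~\ref{thm:fsplit-liftings} requires the Galois group $G$ of $Y\to X$ to be linearly reductive, i.e.\ of order prime to $p$, and there is no such restriction in the hypothesis; indeed Remark~\ref{rmk:partial-converse} gives a free $\ZZ/p\ZZ$-quotient of $\PP^1\times C$ ($C$ an ordinary elliptic curve) whose total space is $F$-liftable but whose quotient is not, so plain $F$-liftability simply does not descend along arbitrary \'etale quotients and no canonical-lift argument can fix this. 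The paper's mechanism is different and uses the boundary in an essential way: after passing to a cover on which $\Omega^1_Y(\log f^{-1}D)$ is trivial, Frobenius liftings of the pair $(Y,f^{-1}D)$ correspond (Variant~\ref{var:log_deformation_theory_frobenius}) to splittings of $0\to B^1_Y\to Z^1_Y(\log f^{-1}D)\to \Omega^1_{Y'}(\log f^{-1}D')\to 0$, and since $H^0(Y,B^1_Y)=0$ this splitting is \emph{unique}, hence $G$-invariant for any finite $G$, hence descends to $X$ and produces a Frobenius lifting of $(X,D)$ (compatibility with $D$ comes for free, which your route would also have to address separately).

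In (ii)$\Rightarrow$(iii), the assertion that each Albanese fiber has vanishing $H^1$ (hence trivial Albanese) ``because $H^1(A,\cO_A)\isomto H^1(Y,\cO_Y)$'' is unjustified: only the injection $H^1(A,\cO_A)\hookrightarrow H^1(Y,\cO_Y)$ is known (\cite[Lemma 1.3]{MehtaSrinivas}), and even an isomorphism would only bound $H^0(A,R^1a_*\cO_Y)$ by $H^2(A,\cO_A)$ via Leray, not force $R^1a_*\cO_Y=0$. This is exactly the point the paper cannot shortcut: it proves instead the dichotomy of Lemma~\ref{lem:logMS_toric_fibers} (either the fiber pair is toric, or some \'etale cover of the fiber has nonzero Albanese), chooses from the start a cover $Y$ with $\dim\Alb Y$ maximal, and in the bad case extends a degree-$p^m$ cover of a fiber to a cover of $Y$ (Lemmas~\ref{lem:logMS17} and \ref{lem:logMS19}) and derives a contradiction with maximality via the Leray spectral sequence and the \'etale triviality of $R^1\pi_*\QQ_\ell$ (Lemma~\ref{lem:MS_remark}); none of this appears in your outline. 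You have also merged the two branches of Lemma~\ref{lem:logMS_toric_fibers}: when some cover has $H^1=0$, Winkelmann needs no input from \cite{nakayama-zhang} or \cite{greb_kebekus_peternell} — one lifts the pair with Frobenius to $W(k)$, applies Winkelmann to the generic fiber, and descends toricness by Proposition~\ref{prop:toric-specialization}, then kills the cover by an Euler-characteristic argument — whereas the polarized-endomorphism results are used only in the complementary branch (all covers have $H^1\neq 0$), over a finite field, to show $\piet$ is virtually abelian and then manufacture a cover with nonzero Albanese via crystalline cohomology and Artin--Schreier theory.
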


\begin{proof}
We start by showing (i)$\Rightarrow$(ii). If $X$ is $F$-liftable, it is $F$-split, by Proposition~\ref{prop:frobenius_cotangent_morphism}(c). Using Variant~\ref{var:log_variant_xi}, we observe that there exists an injective morphism 
\[
  \xi \colon F^*\Omega^1_{X}(\log D) \ra \Omega^1_{X}(\log D).
\]  
The determinant of $\xi$ gives rise to a non-zero section of the $(p-1)$-st power of the numerically trivial bundle $\omega_X(D)$, and hence is an isomorphism.  Therefore, we see that $F^*\Omega^1_{X}(\log D) \isom \Omega^1_{X}(\log D)$ and thus by \cite[Satz 1.4]{Lange_Stuhler} the bundle $\Omega^1_{X}(\log D)$ becomes trivial on a finite \'etale cover $\pi \colon Y \to X$.

Now we show (iii)$\Rightarrow$(i).  Replacing $A$ with a finite \'etale cover and $Y$ with its base change, we may assume that $Y\to A$ is a split toric fibration.  By Lemma~\ref{lemma:flift-torsors}, there exists a~natural Frobenius lifting of $a\colon Y\to A$ over the Serre--Tate canonical lifting $(\wt A, \wt F_A)$ of $A$, and consequently $Y$ is $F$-liftable. Moreover, this Frobenius lifting is compatible with the toric boundary $f^{-1} D$ of $Y\to A$. Further, since $Y\to A$ is a split toric fibration with toric boundary $f^{-1}(D)$, the bundle $\smash{\Omega^1_{Y/A}(\log f^{-1} D)}$ is trivial, and hence $\omega_Y(f^{-1}(D))$ is trivial as well. Thus $(Y, f^{-1} D)$ satisfies (i), and hence by what we already proved it also satisfies (ii). Replacing $Y$ by a further finite \'etale cover, we can thus assume that $\Omega^1_Y(\log f^{-1} D)$ is trivial and that $Y\to X$ is Galois under a finite group $G$.

To finish the argument, we argue as in  \cite[Proof of Theorem~2, (i)$\Rightarrow$(ii)]{MehtaSrinivas}. By Variant~\ref{var:log_deformation_theory_frobenius}, Frobenius liftings of $(Y, f^{-1} D)$ correspond to splittings of the short exact sequence  
\[ 
  0 \ra B^1_{Y} \ra Z^1_{Y}(\log f^{-1} D) \ra \Omega^1_{Y'}(\log f^{-1} D') \ra 0,
\]
where $Y'$ is the Frobenius twist of $Y$. Since $\Omega^1_{Y}(\log f^{-1} D)$ is trivial and $H^0(Y, B^1_{Y})=0$ (as $Y$ is $F$-split), the above extension admits a unique splitting. In particular, this splitting is $G$-invariant, and hence descends to a splitting of the corresponding sequence on $X$. Thus $(X, D)$ is Frobenius liftable. Since $f^*\omega_X(D) = \omega_Y(f^{-1} D) \isom \cO_Y$, we see that $\omega_X(D)$ is numerically trivial.  

Finally, (ii)$\Rightarrow$(iii) follows from Theorem~\ref{thm:ordinary_trivial_cotangent}, whose proof occupies the subsequent subsection.
\end{proof}

The proof of the implication (ii)$\Rightarrow$(iii) in the case $D=0$ in \cite{MehtaSrinivas} relied on lifting to characteristic zero and using  Yau's work on the Calabi conjecture. Instead, we use the following two characteristic zero results. 

\begin{thm}[{\cite[Corollary 1]{Winkelmann}}]\label{thm:winkelmann}
  Let $(X,D)$ be a projective nc pair defined over $\bb{C}$.  Then the following conditions are equivalent:
  \begin{enumerate}[(i)]
    \item the log cotangent bundle $\Omega^1_X(\log D)$ is trivial,
    \item there exists a semi-abelian group variety $T$ (an extension of an abelian variety by a~torus) acting on $X$ with an open dense orbit $X \setminus D$.
  \end{enumerate}
  If $\dim \Alb X = 0$, then $(X,D)$ is a toric pair.
\end{thm}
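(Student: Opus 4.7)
The plan is to prove the two implications separately, leaning on Hodge theory of the complement for the nontrivial direction. For (ii)$\Rightarrow$(i), the infinitesimal action of $\mathfrak{t} = \mathrm{Lie}(T)$ produces a morphism of locally free sheaves $\alpha\colon\mathfrak{t}\otimes_{\bb{C}}\cO_X \ra T_X(-\log D)$, and on the open orbit $U = X\setminus D$ this is an isomorphism (the $T$-action is transitive there, with finite stabilizer). Any smooth equivariant compactification of a semi-abelian variety is \'etale-locally of the form $(\text{toric})\times(\text{abelian})$, where the log tangent sheaf is classically trivial, so $\alpha$ is a global isomorphism and (i) follows by dualising.

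For (i)$\Rightarrow$(ii), set $n = \dim X$, $V = H^0(X, \Omega^1_X(\log D))$, and $W = H^0(X, T_X(-\log D))$; both are $n$-dimensional, $V$ generates $\Omega^1_X(\log D)$ fiberwise, and $W$ carries the Lie bracket of vector fields. The crucial point is that $W$ is abelian. For this I would invoke Deligne's theorem that on a smooth complex projective $X$ with nc divisor $D$, the natural map $V \hookrightarrow H^1(U, \bb{C})$ is injective and represents every class by a $d$-closed log form. Given $\omega\in V$ and $v_i, v_j\in W$, the contraction $\omega(v_j)\in H^0(X,\cO_X) = \bb{C}$ is a constant, so the Cartan formula
\[
  0 = d\omega(v_i, v_j) = v_i(\omega(v_j)) - v_j(\omega(v_i)) - \omega([v_i, v_j])
\]
collapses to $\omega([v_i, v_j]) = 0$ for all $\omega\in V$, and hence $[v_i, v_j] = 0$.

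With commutativity in hand, the frame $v_1, \ldots, v_n$ integrates to a faithful action on $X$ by the connected commutative algebraic group $G = \Aut^0(X, D)$, whose Lie algebra is $W$; a dimension count shows $\dim G = n$ and that $U$ is a single $G$-orbit with finite stabilizer, so $U \isom G/H$ inherits the structure of a connected commutative algebraic group. By Chevalley's structure theorem, $U$ is an extension of its Albanese $A = \Alb X$ by a commutative linear group $L \isom \bb{G}_m^a \times \bb{G}_a^b$. The main obstacle I expect is showing $b = 0$: any $\bb{G}_a$-subgroup acting on the projective $X$ has a fixed point by Borel's fixed-point theorem, and one has to leverage the global triviality of $T_X(-\log D)$ and an analysis of the orbit closure through such a fixed point in $D$ to rule this out. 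Granting this, $U$ is semi-abelian, giving (ii); and the final assertion follows because $\dim\Alb X = 0$ then forces $U$ to be a torus, so $(X, D)$ is a toric pair.
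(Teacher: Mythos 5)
First, note that the paper itself does not prove this statement: it is quoted verbatim from Winkelmann (\cite[Corollary 1]{Winkelmann}) and used as a black box in the proof of Theorem~\ref{thm:frobenius_cy}, so there is no internal proof to compare against. Your sketch does follow what is essentially Winkelmann's line of attack — Deligne's closedness of global logarithmic forms gives, via the Cartan formula, that $W=H^0(X,T_X(-\log D))$ is an abelian Lie algebra; one integrates to a connected commutative algebraic group $G\subseteq\Aut^0(X,D)$ of dimension $n$ acting with open orbit $U=X\setminus D$, and applies Chevalley's theorem to $U\isom G/H$. Up to that point the argument is sound (the single-orbit claim follows since every point of the connected set $U$ has an open orbit, and the identification of ${\rm Lie}\,\Aut^0(X,D)$ with $W$ is standard for an nc divisor).

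The genuine gap is exactly where you write ``granting this'': the exclusion of $\GG_a$-factors is not an afterthought but the entire content of the theorem, and Borel's fixed-point theorem by itself cannot yield a contradiction — additive groups do act on projective compactifications of their orbits with fixed points on the boundary (e.g.\ $\GG_a^2$ on $\bb{P}^2$ with $D$ a line), and what fails there is precisely the triviality of $\Omega^1_X(\log D)$, which your sketch never brings back into play at this step. The missing argument has to convert triviality into a pole-order statement: the dual frame $V=H^0(X,\Omega^1_X(\log D))$ restricts on $U\isom G/H$ to translation-invariant $1$-forms, and an invariant form dual to a $\GG_a$-direction is $dt$, which on any equivariant completion of a $\GG_a$-orbit acquires a pole of order $\geq 2$ rather than a logarithmic pole; making this precise requires analyzing the closure of a general $\GG_a$-orbit and checking it meets $D$ in a point where the nc/log structure is violated (this is where Winkelmann's actual work lies). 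Without that step the conclusion ``$U$ is semi-abelian'' — and hence both (ii) and the toric statement when $\dim\Alb X=0$ — is unproved. A secondary, smaller gap: in (ii)$\Rightarrow$(i) you assert that any smooth equivariant compactification of a semi-abelian variety is \'etale-locally (toric)$\times$(abelian) and conclude $\alpha$ is an isomorphism along $D$; this local model is true but is itself a structure theorem that needs a reference or proof, not a one-line appeal.
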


We say that a group $G$ is \emph{virtually abelian} if it contains a finitely generated abelian subgroup of finite index.
\begin{thm}[{\cite[Theorem 10.1]{greb_kebekus_peternell}, cf.\ \cite{nakayama-zhang}}]\label{thm:greb_kebekus_peternell}
  Let $X$ be a normal projective variety over $\bb{C}$ which admits a polarized endomorphism of degree greater than one.  Then the topological fundamental group $\pi_1(X(\bb{C}))$ is virtually abelian.  The same holds for $\pi^{\et}_1(X)$ since it is the profinite completion of $\pi_1(X(\bb{C}))$.
\end{thm}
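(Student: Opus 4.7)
The plan is to exploit the polarized endomorphism $f \colon X \to X$ of degree $d > 1$ in order to force $X$ into a tightly controlled geometric class, and then assemble $\pi_1(X(\bb{C}))$ from the fundamental groups of the resulting pieces. Functoriality already imposes a strong constraint: $f$ induces an endomorphism of $\pi_1(X(\bb{C}))$ whose image has finite index, and iterating $f$ produces an infinite descending chain of finite-index subgroups, which severely restricts the possible group-theoretic behaviour.

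First, I would reduce to the smooth projective case via an equivariant resolution of singularities and then study the Albanese map $a \colon X \to A = \Alb(X)$. The endomorphism $f$ descends to a polarized endomorphism of $A$, and $\pi_1(A)$ is free abelian of rank $2\dim A$. Next, using the ramification formula $f^*K_X \equiv K_X + R$ with $R$ effective, together with $f^*H \sim dH$ for an ample $H$ and $d > 1$, standard positivity arguments (as in \cite{nakayama-zhang}) force $\kappa(X) \leq 0$. I would then attempt to run an $f$-equivariant minimal model program and split into two cases according to the outcome.

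In the first case, $K_X \equiv 0$, and I would invoke a singular Beauville--Bogomolov type decomposition to obtain a finite \'etale cover of $X$ splitting as a product of an abelian variety together with simply-connected Calabi--Yau and irreducible symplectic factors; each such factor has virtually abelian $\pi_1$ (finite for the simply-connected factors, free abelian for the abelian variety). In the second case, $X$ admits a Mori fibration whose general fibres are rationally connected and hence have finite $\pi_1$, so that $\pi_1(X(\bb{C}))$ sits in an extension whose kernel is finite and whose quotient is virtually abelian by induction on dimension, which is again virtually abelian.

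The principal obstacle is making the MMP $f$-equivariant in the singular setting and establishing the Beauville--Bogomolov decomposition for klt Calabi--Yau varieties; these form the technical heart of \cite{greb_kebekus_peternell} and rely on substantial analytic and birational-geometric input. A further subtlety is controlling how the fundamental group transforms under divisorial contractions and flips, which requires purity-type results for $\pi_1$ on normal varieties with mild singularities and a careful analysis of exceptional loci to ensure no unwanted factors creep into the final assembly of $\pi_1(X(\bb{C}))$.
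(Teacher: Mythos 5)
A preliminary remark: the paper does not prove this statement at all. It is imported verbatim from \cite[Theorem 10.1]{greb_kebekus_peternell} (cf.\ \cite{nakayama-zhang}) and used as a black box in the proof of the toric-fibers lemma, so there is no internal proof to compare against; the only question is whether your sketch would stand as an independent proof, and it would not.

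The first reduction is already a genuine gap: a polarized endomorphism $f\colon X\to X$ of degree $>1$ is not an automorphism, and there is no ``equivariant resolution'' with respect to such a self-map. In general $f$ does not lift to any resolution $\wt X\to X$ (preimages of singular points can be smooth points, the exceptional locus is not preserved), and even if one passes to a birational model the hypothesis is destroyed, because being polarized is not a birational notion: the pullback of an ample class to a resolution is only big and nef, and the degree--eigenvalue relations that drive all known arguments depend on ampleness. So you cannot assume $X$ smooth, and everything downstream (the Albanese discussion, the case division) is built on that false reduction. The rest of your architecture rests on an $f$-equivariant MMP for singular $X$ and on a Beauville--Bogomolov decomposition for klt varieties with numerically trivial canonical class; both are major theorems that you do not supply (the equivariant MMP for polarized endomorphisms is due to Meng--Zhang and in fact postdates \cite{greb_kebekus_peternell}), and your Mori-fibration induction additionally presupposes, without proof, that $f$ descends to the base together with a polarization. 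This is also not how the cited proof goes: \cite{greb_kebekus_peternell}, building on the structure theory of \cite{nakayama-zhang} (after replacing $f$ by an iterate, the non-uniruled case is identified with a quasi-\'etale quotient of an abelian variety, and the uniruled case is handled through an equivariant rationally connected fibration over such a quotient, rationally connected fibers contributing only finite fundamental group), avoids both the equivariant MMP and the decomposition theorem. Your closing observation about $\pi^{\et}_1(X)$ being the profinite completion is fine, but it is the only step of the sketch that is complete.
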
 

\begin{remark} \label{rmk:false-theorems}
Since \cite{MehtaSrinivas} was motivated by the corresponding result in characteristic zero, we must warn the reader that the following statements are false.

\begin{quote}
  {\bf ``Theorem'' A }(characteristic zero analogue of Theorem~\ref{thm:frobenius_cy}){\bf.} \emph{Let $(X, D)$ be a~projective nc pair over an algebraically closed field $k$ of characteristic zero. The following conditions are equivalent:
  \begin{enumerate}[(i)] 
    \item $\Omega^1_X(\log D)$ becomes trivial on a finite \'etale cover of $X$,
    \item $X$ admits a finite \'etale cover $f \colon Y \to X$ whose Albanese map $a\colon Y\to A$ is a~toric fibration over an ordinary abelian variety with toric boundary $f^{-1}(D)$.
  \end{enumerate}}  

  \vspace{0.3cm}  
  \noindent {\bf ``Theorem'' B.} \emph{The assertion of Winkelmann's theorem (Theorem~\ref{thm:winkelmann}) holds over algebraically closed fields of positive characteristic.} 

  \vspace{0.3cm}
  \noindent {\bf ``Theorem'' C.} \emph{Let $X$ be a smooth projective scheme over an open subset $S \subseteq \Spec \cO_K$ where $\cO_K$ is the ring of integers in a number field $K$. Suppose that for infinitely many closed points $s\in S$, the reduction $X_{\bar s}$ has a finite \'etale cover whose Albanese map is a toric fibration. Then the same holds for $X_{\overline K}$.}
  \vspace{0.3cm}
\end{quote}

In ``Theorem'' A, we have (i)$\Rightarrow$(ii) by Theorem~\ref{thm:winkelmann}, but (ii) does \emph{not} imply (i). A basic counterexample is as follows. Let $C$ be an elliptic curve, let $L$ be a line bundle of non-zero degree on $C$, and let $E=\cO_C\oplus L$. Let $X=\PP_C(E)$ and let $D$ be the sum of the two sections $C\to X$ corresponding to the projections $E\to \cO_C$ and $E\to L$. Then condition (ii) is satisfied with $Y=X$. Moreover, every finite \'etale cover of $X$ is of the same type, so if (ii)$\Rightarrow$(i) were to hold, we could find such an $X$ with $\Omega^1_X(\log D)$ trivial. So suppose that $\Omega^1_X(\log D)$ is trivial. Again by Theorem~\ref{thm:winkelmann}, the open subset $U=X\setminus D$ (which is the $\GG_m$-torsor corresponding to $L$) admits a group structure making $U\to C$ a~group homomorphism. But this is only possible if $\deg L = 0$ by the ``Barsotti--Weil formula'' 
\[ 
  \Pic^0(C) \isom \Ext^1(C, \GG_m) \quad (\text{see \cite[VII.16, Th\'eor\`eme 6]{SerreGACC}}).
\]
Note that in the two extreme cases $A=Y$ and $A=0$, the implication (ii)$\Rightarrow$(i) does indeed hold. This construction also provides a counterexample to ``Theorem'' B (i)$\Rightarrow$(ii). 

For a counterexample to ``Theorem'' C, we take an elliptic curve $C$ over $\ZZ[1/N]$ for some $N$, and the non-split extension
\[ 
  0\ra \cO_C \ra E\ra \cO_C\ra 0.
\]
We again set $X=\PP_C(E)$. If $p$ is a prime of ordinary reduction of $E$ and $k=\overline{\FF}_p$, then $X_k$ satisfies the required property by Remark~\ref{rmk:partial-converse} and 
\cite[Section 3]{PartII}. On the other hand, the above extension does not become split over any finite \'etale cover of $C_{\overline\QQ}$, and consequently no finite \'etale cover of $X_{\overline\QQ}$ is a toric fibration over an abelian variety. 
\end{remark}

\subsection{\texorpdfstring{$F$-split nc pairs with trivial cotangent bundle}{F-split nc pairs with trivial cotangent bundle}}

In this section, we prove the following result generalizing \cite[Theorem 1]{MehtaSrinivas} and yielding the implication (ii)$\Rightarrow$(iii) needed above.

\begin{thm}[{Log version of \cite[Theorem 1]{MehtaSrinivas}}]\label{thm:ordinary_trivial_cotangent}
Let $(X,D)$ be a projective nc pair over an algebraically closed field $k$ such that $\Omega^1_{X}(\log D)$ is trivial and $X$ is $F$-split.  Then after an \'etale covering $X$ admits a structure of a toric fibration over an ordinary abelian variety.
\end{thm}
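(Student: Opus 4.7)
The plan is to upgrade $F$-splitness to full $F$-liftability of $(X,D)$ over $W(k)$, then descend to a finite-field base and lift together with the Frobenius to characteristic zero, obtaining a complex nc pair with a polarized endomorphism. There Winkelmann's theorem and the Greb--Kebekus--Peternell result will pin down the structure, after which I descend back to characteristic $p$ using the specialization and global rigidity results of Section~\ref{s:toric-reductions}.

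The first step is to promote $F$-splitness to $F$-liftability of $(X,D)$ at all orders. Because $\Omega^1_X(\log D)$ is trivial and $X$ is $F$-split, Lemma~\ref{lem:f-split_is_ordinary}(b) gives $H^i(X, B^1_X)=0$ for all $i\geq 0$, so every obstruction group $\Ext^1(\Omega^1_X(\log D), B^1_X)$ from Variant~\ref{var:log_deformation_theory_frobenius} vanishes. Iterated lifting produces a compatible Frobenius lifting $(\wt X, \wt D, \wt F_X)$ of $(X,D)$ over $W(k)$. Using the bijectivity of Frobenius on $H^i(X,\cO_X)$ (Lemma~\ref{lem:f-split_is_ordinary}(a)) and Proposition~\ref{prop:deformation_theory_frobenius}(d), any ample line bundle $L$ on $X$ lifts uniquely to an ample $\wt L$ on $\wt X$ satisfying $\wt F_X^*\wt L\cong \wt L^{\otimes p}$, so $\wt F_X$ is automatically a polarized endomorphism.

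Next I spread $(X,D,L)$ out over a finitely generated $\FF_p$-subalgebra of $k$ and specialize to a closed point, obtaining a projective nc pair $(X_0, D_0)$ over $k_0 = \overline{\FF}_q$ with trivial log cotangent bundle, still $F$-split. It suffices to handle $(X_0,D_0)$, because any toric-fibration structure on an \'etale cover spreads back to $(X,D)$ via Propositions~\ref{prop:toric-specialization} and~\ref{prop:global-rigidity}. Applying the first step to $(X_0, D_0)$ and choosing an embedding $W(k_0) \hookrightarrow \CC$, I obtain a smooth projective complex nc pair $(X_\CC, D_\CC)$ with $\Omega^1_{X_\CC}(\log D_\CC)$ trivial and carrying a polarized endomorphism $\wt F_\CC$. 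Winkelmann's Theorem~\ref{thm:winkelmann} then supplies a semi-abelian group $G$ acting on $X_\CC$ whose open dense orbit is $X_\CC \setminus D_\CC$, while Theorem~\ref{thm:greb_kebekus_peternell} forces $\pi_1^{\et}(X_\CC)$ to be virtually abelian. Combining these two inputs, after a finite \'etale Galois cover I expect $G$ to split as $T \times A$ for a torus $T$ and an abelian variety $A$, realizing the Albanese map of the cover as a split toric fibration with toric boundary $D_\CC$.

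Finally I descend back to positive characteristic: the \'etale cover of $X_\CC$ spreads over $W(k_0)$ and reduces to a finite \'etale cover $Y \to X_0$; the Albanese scheme spreads to an abelian $W(k_0)$-scheme whose special fiber $A_0$ is ordinary by Lemma~\ref{lem:f-split_is_ordinary}(c); Proposition~\ref{prop:toric-specialization} guarantees that the special fiber of $Y \to A_0$ is a toric pair; and Proposition~\ref{prop:global-rigidity} upgrades this fiberwise information to a global toric fibration structure on $Y\to A_0$ with toric boundary the preimage of $D$. The main obstacle I anticipate is the characteristic zero step: extracting the direct product decomposition $G \cong T \times A$ from the interaction between the Winkelmann semi-abelian action and the virtually abelian fundamental group coming from the polarized endomorphism --- concretely, one must use the polarized endomorphism to rule out any non-split extension class defining $G$. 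Everything outside this step is a reasonably mechanical application of the deformation-theoretic, specialization, and global-rigidity machinery already developed in the paper.
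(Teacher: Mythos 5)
Your overall architecture---lift $(X,D)$ with Frobenius to $W(k)$, go to characteristic zero, invoke Winkelmann and Greb--Kebekus--Peternell, then come back---is in the same spirit as the paper, and your first step (vanishing of $\Ext^1(\Omega^1_X(\log D),B^1_X)$ via Lemma~\ref{lem:f-split_is_ordinary}, plus Proposition~\ref{prop:deformation_theory_frobenius}(d) for line bundles) is exactly Lemma~\ref{lem:log_trivial_lifting_frobenius}. But the step you yourself flag as the ``main obstacle'' is a genuine gap, not a routine one: for a semi-abelian variety $G$, the extension $1\to T\to G\to A\to 1$ is classified by classes in $\Hom(X^*(T),\hat A)$, and a non-torsion class does not die on any finite \'etale cover (isogenies of $A$ only multiply it), so ``after a finite \'etale Galois cover $G\cong T\times A$'' is simply false for general $G$; you would need a real argument extracting splitting from the polarized endomorphism, and none is given. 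Remark~\ref{rmk:false-theorems} in the paper (the $\PP_C(\cO_C\oplus L)$ and non-split extension examples) is precisely a warning about this kind of failure. The paper never proves such a splitting: Winkelmann's theorem is used only through its last clause, in the case where some \'etale cover has $H^1(\cO)=0$, where it directly says the characteristic-zero lift is a toric \emph{pair}; the structure over the abelian part is never produced in characteristic zero at all (see Lemma~\ref{lem:logMS_toric_fibers}).

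The transfers between characteristics also do not work as you state them. Your reduction ``it suffices to handle $(X_0,D_0)$, because any toric-fibration structure on an \'etale cover spreads back to $(X,D)$ via Propositions~\ref{prop:toric-specialization} and~\ref{prop:global-rigidity}'' is unjustified: Proposition~\ref{prop:toric-specialization} transfers toricness from the generic to the special fiber, not the reverse, and Proposition~\ref{prop:global-rigidity} requires a geometric fiber of the given family to be an absolute toric pair, not a toric fibration over an abelian variety, so neither recovers the structure on $X$ over $k$ from the closed fiber. In the descent from $X_{\bb{C}}$ to $X_0$, finite \'etale covers of the characteristic-zero fiber need not reduce to \'etale covers of the special fiber (only the prime-to-$p$ part of $\pi_1$ specializes isomorphically), and you also need to identify the reduction of $\Alb(X_{\bb{C}})$ with $\Alb(X_0)$; moreover over $k_0=\overline\FF_q$ the lifted Frobenius is only $\sigma$-linear, so to get an honest polarized endomorphism on the complex fiber one must work over an actual finite field and pass to a linear iterate, as in Remark~\ref{remark:log_trivial_lifting_frobenius_finite_field}. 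The paper's proof is organized precisely to avoid all of these transfers: finite fields are used only to prove, via the polarized endomorphism, Theorem~\ref{thm:greb_kebekus_peternell} and a crystalline/Artin--Schreier computation, that some \'etale cover has nonzero Albanese (a statement that does spread out correctly), and the global toric fibration is then assembled entirely in characteristic $p$ by the maximality induction on $\dim\Alb$ of \'etale covers, using Lemmas~\ref{lem:logMS17}--\ref{lem:MS_remark}, the Leray spectral sequence argument, and Proposition~\ref{prop:global-rigidity}. To repair your proposal you would either have to prove the semi-abelian splitting from the polarized endomorphism and solve the specialization-of-covers problem, or restructure along the paper's fibration-theoretic induction.
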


Even though the proof closely follows the ideas of Mehta and Srinivas, there are many important details that need to be figured out in the logarithmic setting.  For this reason, we precede the proof with a sequence of lemmas generalizing their results. 

\begin{lemma}[{Log version of \cite[Lemma 1.2]{MehtaSrinivas}}]\label{lem:logMS12}
  Let $(X,D)$ be an nc pair satisfying the hypotheses of Theorem~\ref{thm:ordinary_trivial_cotangent}, and let $\pi \colon Y \to X$ be a finite \'etale covering.  Then $(Y,\pi^{-1}D)$ also satisfies the hypotheses of Theorem~\ref{thm:ordinary_trivial_cotangent}.
\end{lemma}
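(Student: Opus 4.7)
The plan is to verify each of the three hypotheses of Theorem~\ref{thm:ordinary_trivial_cotangent} for the pair $(Y,\pi^{-1}D)$ by invoking standard properties of \'etale morphisms together with results already established in the paper.

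First I would check that $(Y,\pi^{-1}D)$ is a projective nc pair. Projectivity of $Y$ follows from projectivity of $X$ together with finiteness of $\pi$. For the nc condition, note that \'etale morphisms pull back nc divisors to nc divisors: working \'etale-locally on $X$ there exists a map $h\colon X\to\bb{A}^n_k$ realizing $D$ as $h^*\{x_1\cdots x_n=0\}$, and composing with $\pi$ exhibits $\pi^{-1}D$ as the pullback of the same toric boundary by $h\circ\pi$. Alternatively, one can use Lemma~\ref{lemma:nc-pair-over-s-and-over-y} combined with flatness of $\pi$.

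Next I would handle the triviality of $\Omega^1_Y(\log\pi^{-1}D)$. Since $\pi$ is \'etale, the relative log differentials $\Omega^1_{Y/X}(\log\pi^{-1}D)$ vanish, and consequently one has a canonical isomorphism
\[
  \Omega^1_Y(\log\pi^{-1}D)\isom \pi^*\Omega^1_X(\log D).
\]
As $\Omega^1_X(\log D)$ is trivial by hypothesis, the same holds for its pullback to $Y$.

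Finally, the $F$-splitness of $Y$ is immediate from Lemma~\ref{lem:f-split_is_ordinary}(d), which asserts that any \'etale scheme over an $F$-split scheme is itself $F$-split. No substantial obstacle is expected — each of the three verifications is a direct application of a tool already developed earlier in the paper, which is in keeping with the analogous non-logarithmic statement \cite[Lemma~1.2]{MehtaSrinivas}.
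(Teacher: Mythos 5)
Your argument is correct and follows essentially the same route as the paper: the key points are the isomorphism $\Omega^1_Y(\log\pi^{-1}D)\isom\pi^*\Omega^1_X(\log D)$ coming from \'etaleness of $\pi$, and $F$-splitness of $Y$ via Lemma~\ref{lem:f-split_is_ordinary}(d). Your extra verification that $(Y,\pi^{-1}D)$ is a projective nc pair is a harmless explicitation of what the paper leaves implicit.
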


\begin{proof}
Since $\pi$ is \'etale, we see that $\Omega^1_{Y}(\log \pi^{-1}D)$ is isomorphic to $\pi^*\Omega^1_{X}(\log D)$ and is therefore trivial.  To prove that $Y$ is $F$-split, we use Lemma~\ref{lem:f-split_is_ordinary}(d).
\end{proof}

\begin{lemma}[{Log version of \cite[Lemma 1.4]{MehtaSrinivas}}]\label{lem:logMS14}
  Let $(X,D)$ be an nc pair satisfying the hypotheses of Theorem~\ref{thm:ordinary_trivial_cotangent}.  Then the Albanese map $f \colon X \to \Alb X$ is smooth, surjective, and has connected fibers. Moreover, $(X, D)$ is an nc pair over $\Alb X$ and $\Omega^1_{X/\Alb X}(\log D)$ is trivial.
\end{lemma}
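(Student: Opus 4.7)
The plan is to exhibit $f^*\Omega^1_A \hookrightarrow \Omega^1_X(\log D)$ as the inclusion of a trivial subbundle of rank $d = \dim A$, and to read off all the claims from this single observation. Both bundles are trivial (the source because $A$ is abelian, the target by hypothesis), and any $\cO_X$-linear morphism of trivial vector bundles on the connected proper $X$ is determined by its action on global sections. Here that $k$-linear map is the composition
\[
  H^0(A, \Omega^1_A) \isomto H^0(X, \Omega^1_X) \hookrightarrow H^0(X, \Omega^1_X(\log D))
\]
of the Albanese isomorphism and the natural injection of regular into logarithmic $1$-forms, and is therefore injective. Consequently $f^*\Omega^1_A$ sits inside $\Omega^1_X(\log D)$ as a trivial rank $d$ direct summand, with trivial cokernel of rank $n-d$.

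By Lemma~\ref{lemma:nc-pair-over-s-and-over-y} applied with $S = \Spec k$ and $Y = A$, this identifies $(X, D)$ as an nc pair over $A$ with trivial $\Omega^1_{X/A}(\log D)$; in particular $f$ is smooth. Smoothness makes $f$ flat, hence universally open, and properness makes its image closed, so $f(X)$ is a nonempty clopen subscheme of the connected $A$, giving $f(X) = A$.

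For connectedness of the fibers I would invoke the Stein factorization $f = g \circ h$, where $h \colon X \to X'$ has geometrically connected fibers and $g \colon X' \to A$ is finite. Smoothness and properness of $f$ make $f_*\cO_X$ locally free (by constancy of $h^0$ in smooth proper families), and the geometric fibers of $g$ over $\bar a$ are of the form $\Spec H^0(X_{\bar a}, \cO_{X_{\bar a}})$, which is a product of copies of $k(\bar a)$ since the connected components of the smooth $X_{\bar a}$ are geometrically reduced. Hence $g$ is finite \'etale, and after a choice of base points becomes an isogeny of abelian varieties. The universal property of the Albanese applied to $h$ yields $\psi \colon A \to X'$ with $\psi \circ f = h$; then surjectivity of $f$ forces $g \circ \psi = \mathrm{id}_A$, so $\psi$ embeds $A$ as a nonempty clopen subscheme of the connected $X'$, hence is an isomorphism. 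This gives $X' = A$ and connected fibers for $f$.

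The main obstacle is the \'etaleness of $g$, which depends on controlling the geometry of the fibers of $f$ (constancy of the number of geometric connected components, plus reducedness of the fiber algebras); once this is secured, the remainder of the argument is a fairly mechanical combination of Lemma~\ref{lemma:nc-pair-over-s-and-over-y} and the universal property of the Albanese.
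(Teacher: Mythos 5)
Your argument follows the paper's proof almost step for step: the same injection $f^*\Omega^1_{\Alb X} \hookrightarrow \Omega^1_X(\log D)$ of trivial bundles, detected on global sections because a map of trivial bundles on a proper connected variety is a constant matrix; the same use of Lemma~\ref{lemma:nc-pair-over-s-and-over-y} to obtain smoothness, the nc structure over $\Alb X$, and triviality of $\Omega^1_{X/\Alb X}(\log D)$; and the same Stein-factorization-plus-Albanese-universal-property argument for connectedness of fibers (the paper gets \'etaleness of the finite part of the Stein factorization directly from SGA1, Exp.\ X / EGA III 7.8.10, where you sketch a proof of that standard fact -- your sketch is fine over the reduced base $\Alb X$).

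The one point you must repair is the appeal to ``the Albanese isomorphism'' $H^0(\Alb X, \Omega^1_{\Alb X}) \isomto H^0(X, \Omega^1_X)$. In characteristic $p$ this map is in general \emph{not} an isomorphism (there are smooth projective surfaces, going back to Igusa, with $h^0(X,\Omega^1_X) > \dim \Alb X$), and even its injectivity is not a formal consequence of the universal property of the Albanese: it is a nontrivial classical result, which is precisely what the paper invokes as \cite[Lemma 1.3]{MehtaSrinivas}. Since your argument only uses injectivity of the composite $H^0(\Alb X,\Omega^1_{\Alb X}) \to H^0(X,\Omega^1_X(\log D))$, the proof goes through verbatim once you replace the isomorphism claim by a citation of that injectivity statement; as written, however, the key step rests on an assertion that is false in positive characteristic, so make the substitution explicit.
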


\begin{proof}
First, we observe that the logarithmic differential $f^*\Omega^1_{\Alb X} \to \Omega^1_{X}(\log D)$ decomposes as
\[
  f^*\Omega^1_{\Alb X} \xrightarrow{df} \Omega^1_{X}  \hookrightarrow \Omega^1_{X}(\log D).
\]
Since $H^0(\Alb X,\Omega^1_{\Alb X}) = H^0(X,f^*\Omega^1_{\Alb X})$, by \cite[Lemma 1.3]{MehtaSrinivas} we see that the induced morphism $H^0(X,f^*\Omega^1_{\Alb X}) \to H^0(X,\Omega^1_{X})$ is injective and hence 
\begin{align}
  H^0(X,f^*\Omega^1_{\Alb X}) \ra H^0(X,\Omega^1_{X}(\log D)) \label{eq:logMS14_item1}
\end{align} 
is injective as well.  As $f^*\Omega^1_{\Alb X}$ and $\Omega^1_{X}(\log D)$ are trivial, the map ${f^*\Omega^1_{\Alb X} \to \Omega^1_{X}(\log D)}$ is uniquely determined by \eqref{eq:logMS14_item1} and is therefore injective.  We conclude that $f$ is separable and dominant and that there exists a~short exact sequence of trivial bundles
\[
  0 \ra f^*\Omega^1_{\Alb X} \ra \Omega^1_{X}(\log D) \ra \Omega^1_{X/\Alb X}(\log D) \ra 0.
\]
Consequently, the sheaf of relative log differentials $\Omega^1_{X/\Alb X}(\log D)$ is locally free and therefore $f \colon (X,D) \to \Alb X$ is an nc pair over $\Alb X$ by Lemma~\ref{lemma:nc-pair-over-s-and-over-y}.  

To see that the fibers of $f$ are connected, that is, we have $f_*\cO_X = \cO_{\Alb X}$, we consider the Stein factorization $X \to Z \to \Alb X$.  Since $X \to \Alb X$ is smooth, the morphism $Z \to \Alb X$ is \'etale (by \cite[Exp.\ X, Prop.\ ~1.2]{SGA1} or \cite[7.8.10 (i)]{EGAIII_II}) and therefore $Z$ is an abelian variety.  By the universal property of the Albanese morphism, $Z \to \Alb X$ is an isomorphism, which finishes the proof.
\end{proof}

\begin{lemma}\label{lem:log_trivial_lifting_frobenius}
  Let $(X,D)$ be an nc pair satisfying the hypothesis of Theorem~\ref{thm:ordinary_trivial_cotangent}.  Then there exists a projective lifting $(\cX,\cD)$ of $(X, D)$ over $W(k)$ together with a lifting of the Frobenius morphism $F_{\cX} \colon (\cX,\cD) \to (\cX,\cD)$.  Moreover, for every line bundle $L$ on $X$ there exists a line bundle $\cL$ on $\cX$ such that $L \isom \cL_{|X}$ and $F_{\cX}^*\cL \isom \cL^{\otimes p}$.
\end{lemma}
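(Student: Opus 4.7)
The plan is to construct the lifting inductively over $W_n(k)$ for each $n$ using the logarithmic deformation theory of Frobenius (Variant~\ref{var:log_deformation_theory_frobenius}), and then algebraize using Grothendieck's formal existence theorem. First, I would observe that the obstruction to lifting a Frobenius-compatible triple $(X_n, D_n, F_{X_n})$ over $W_n(k)$ to one over $W_{n+1}(k)$ lies in $\Ext^1(\Omega^1_X(\log D), B^1_X)$. Since $\Omega^1_X(\log D)$ is free of rank $d = \dim X$, this $\Ext$ group is isomorphic to $H^1(X, B^1_X)^{\oplus d}$. By Lemma~\ref{lem:f-split_is_ordinary}(b), the $F$-splitness of $X$ forces $H^1(X, B^1_X) = 0$, so every obstruction vanishes. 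Starting from $(X, D, F_X)$ over $k$, this inductive procedure produces a compatible system $(X_n, D_n, F_{X_n})$ of Frobenius liftings over $W_n(k)$ for every $n \geq 1$, assembling into a formal $W(k)$-scheme $\mathfrak{X}$ together with a formal relative normal crossings divisor $\mathfrak{D}$ and a formal Frobenius lift $F_{\mathfrak{X}}$ compatible with $\mathfrak{D}$.

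Next, I would lift line bundles using Proposition~\ref{prop:deformation_theory_frobenius}(d). The $F$-splitness of $X$ ensures, via Lemma~\ref{lem:f-split_is_ordinary}(a), that $F_X^*$ acts bijectively on every $H^i(X, \cO_X)$. Since on the closed fiber the identification $F_X^* L \isom L^{\otimes p}$ is automatic for every line bundle $L$, iterating (d) produces a compatible system $L_n \in \Pic X_n$ with $F_{X_n}^* L_n \isom L_n^{\otimes p}$, and hence a formal line bundle $\mathfrak{L}$ on $\mathfrak{X}$ satisfying $F_{\mathfrak{X}}^* \mathfrak{L} \isom \mathfrak{L}^{\otimes p}$. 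Applying the construction to an ample line bundle on $X$ produces a formal invertible sheaf on $\mathfrak{X}$ whose restriction to $X$ is ample; by the standard criterion for formal ampleness on a proper formal $W(k)$-scheme, this makes $\mathfrak{X}$ algebraizable. Grothendieck's formal existence theorem then yields a projective $W(k)$-scheme $\cX$ algebraizing $\mathfrak{X}$, together with algebraizations $\cD \subset \cX$, $F_{\cX} \colon \cX \to \cX$, and $\cL \in \Pic \cX$ of $\mathfrak{D}$, $F_{\mathfrak{X}}$, and each $\mathfrak{L}$ coming from the above procedure.

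The only remaining verifications---that $\cD$ has relative normal crossings over $W(k)$ and that $F_{\cX}$ is compatible with $\cD$ in the sense $F_{\cX}^* \cD = p\cD$---are local and reduce immediately to the corresponding statements for each finite-level reduction $(X_n, D_n, F_{X_n})$, which hold by construction. I do not anticipate serious technical obstacles: the only non-formal input is the vanishing $H^1(X, B^1_X) = 0$ coming from $F$-splitness, which is exactly what the obstruction calculus demands, and both the algebraization of $\mathfrak{X}$ and of the morphism $F_{\mathfrak{X}}$ and sheaves $\mathfrak{L}$ follow from formal GAGA once a lift of an ample line bundle on $X$ has been exhibited.
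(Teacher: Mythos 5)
Your proposal is correct and follows essentially the same route as the paper: the obstruction to lifting $(X_n,D_n,F_{X_n})$ lies in $\Ext^1(\Omega^1_X(\log D),B^1_X)\isom H^1(X,B^1_X)^{\oplus \dim X}$, which vanishes by $F$-splitness, line bundles are lifted compatibly with Frobenius via Proposition~\ref{prop:deformation_theory_frobenius}(d) using the bijectivity of Frobenius on $H^i(X,\cO_X)$, and the formal data are algebraized with Grothendieck's existence theorem after lifting an ample line bundle. The only difference is that you spell out the (routine) algebraization of $\mathfrak{D}$, $F_{\mathfrak{X}}$ and the compatibility $F_{\cX}^*\cD=p\cD$, which the paper leaves implicit.
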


\begin{proof}
We apply Variant~\ref{var:log_deformation_theory_frobenius} to see that the obstruction classes to lifting the nc pair $(X,D)$ together with the Frobenius morphism over consecutive Witt rings $W_n(k)$ lie in 
\[
  \Ext^1(\Omega^1_{X}(\log D),B^1_X).
\]  
Since $\Omega^1_{X}(\log D)$ is trivial and $X$ is $F$-split, we see by Lemma~\ref{lem:f-split_is_ordinary} that the latter group satisfies
\[
  \Ext^1(\Omega^1_{X}(\log D),B^1_X) = \Ext^1(\cO_X^{\oplus n},B^1_{X}) = H^1(X,B^1_X)^{\oplus n} =  0
\]
and therefore $(X,D)$ deforms to a formal nc pair $\{(X_n,D_n)\}_{n\geq 1}$ over the formal spectrum of $W(k)$ together with a compatible lifting of the Frobenius morphism 
\[
  \{F_{X_n} \colon (X_n,D_n) \to (X_n,D_n)\}_{n\geq 1}.
\]
Since $X$ is $F$-split, by Lemma~\ref{lem:f-split_is_ordinary} we see that the Frobenius action on $H^i(X,\cO_X)$ is bijective and hence by Proposition~\ref{prop:deformation_theory_frobenius}(d) we see that every line bundle $L$ on $X$ admits a formal lifting $\{L_n \in \Pic(X_n)\}_{n \geq 1}$ such that $F_{X_n}^*L_n \isom L_n^{\otimes p}$.  

To finish the proof we need to show that the given inductive systems are algebralizable.  For this purpose, since every ample line bundle deforms to the formal nc pair $\{(X_n,D_n)\}_{n\geq 1}$, we may just apply Grothendieck's algebraization theorem (see \cite[Section 3.4]{EGAIII_I} or \stacksproj{089A}).
\end{proof}

\begin{remark} \label{remark:log_trivial_lifting_frobenius_finite_field}
The lifting of the Frobenius morphism we exhibit above is not a $W(k)$-linear endomorphism, it is only Frobenius-linear.  However, if $X$ is defined over a finite field $k = \FF_{p^e}$, then the $e^{\rm th}$ iterate of $F_{\cX}$ is in fact a polarized $W(k)$-endomorphism of $\cX$.
\end{remark}

The following lemma is the essential part of our argument.  It differs substantially from its counterpart in \cite{MehtaSrinivas}, and is based on the two theorems stated in \S\ref{ss:log-mehta-srinivas-statement}. 

\begin{lemma} \label{lem:logMS_toric_fibers}
  Let $(X,D)$ be an nc pair satisfying the hypotheses of Theorem~\ref{thm:ordinary_trivial_cotangent}.  Then either $(X,D)$ is a toric pair (see \S\ref{ss:toric-var}) or there exists a finite \'etale covering $Y \to X$ such that $\Alb Y \neq 0$. 
\end{lemma}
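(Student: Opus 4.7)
The plan is to establish the contrapositive: assuming that $\Alb Y = 0$ for every finite \'etale cover $Y \to X$, I will show that $(X, D)$ itself is a toric pair. The starting observation is that under the $F$-splitness hypothesis, the Albanese variety $\Alb X$ is an ordinary abelian variety by Lemma~\ref{lem:f-split_is_ordinary}(c); in particular, $\Pic^0_X$ is reduced and $\dim \Alb X = h^1(X, \cO_X)$. The hypothesis $\Alb X = 0$ therefore yields the vanishing $H^1(X, \cO_X) = 0$, which will power the lifting arguments to come.

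By Lemma~\ref{lem:log_trivial_lifting_frobenius}, the pair $(X, D)$ admits a projective lifting $(\cX, \cD)$ over $W(k)$; set $K = \mathrm{Frac}\,W(k)$. I claim that the generic fiber $(\cX_K, \cD_K)$ satisfies both $\Alb \cX_K = 0$ and $\Omega^1_{\cX_K}(\log \cD_K) \isom \cO_{\cX_K}^{\dim X}$. The Albanese vanishing follows from upper semicontinuity of coherent cohomology in the flat proper family $\cX/W(k)$: $h^1(\cX_K, \cO_{\cX_K}) \le h^1(X, \cO_X) = 0$, which in characteristic zero forces $\Alb \cX_K = 0$. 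For the log cotangent bundle, I propagate the trivialization level by level. Writing $\cX_n = \cX \otimes W_n(k)$, the obstruction to extending a global section of $\Omega^1_{\cX_n}(\log \cD_n)$ from $\cX_n$ to $\cX_{n+1}$ lies in $H^1(X, \Omega^1_X(\log D)) = H^1(X, \cO_X)^{\dim X} = 0$. Lifting compatibly a basis of $H^0(X, \Omega^1_X(\log D))$ and invoking formal GAGA (applicable because $\cX$ is projective over the complete local ring $W(k)$) produces a map $\cO_\cX^{\dim X} \to \Omega^1_\cX(\log \cD)$ that restricts to an isomorphism on the special fiber $X$; Nakayama's lemma then upgrades this to a global isomorphism on all of $\cX$, and restriction to $\cX_K$ finishes the claim.

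Fixing an embedding $\bar K \hookrightarrow \bb{C}$, Winkelmann's theorem (Theorem~\ref{thm:winkelmann}) applied to the smooth complex projective nc pair $(\cX_\bb{C}, \cD_\bb{C})$---which has trivial log cotangent bundle and trivial Albanese---implies that $(\cX_\bb{C}, \cD_\bb{C})$ is a toric pair. Since being a toric pair is a geometric property stable under base change between algebraically closed fields of characteristic zero, the geometric generic fiber $(\cX_{\bar K}, \cD_{\bar K})$ is a toric pair as well. Applying Proposition~\ref{prop:toric-specialization} to the smooth projective family $(\cX, \cD)$ over the discrete valuation ring $W(k)$ transports this structure across $\Spec W(k)$ to the special fiber, proving that $(X, D) = (\cX_k, \cD_k)$ is itself a toric pair. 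The main technical hurdle is the propagation of the triviality of $\Omega^1(\log D)$ from $X$ to the entire $W(k)$-family: the vanishing $H^1(X, \cO_X) = 0$ kills the deformation-theoretic obstruction to lifting sections, but turning the resulting formal trivialization into a genuine global trivialization on $\cX$ requires the combination of formal GAGA with Nakayama's lemma.
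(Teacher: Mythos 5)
Your argument breaks down at the very first step, and unfortunately at exactly the point where the real difficulty of the lemma lies. You claim that $F$-splitness forces $\Pic^0_X$ to be reduced, so that $\Alb X=0$ implies $H^1(X,\cO_X)=0$. What Lemma~\ref{lem:f-split_is_ordinary} actually gives is that Frobenius acts \emph{bijectively} on $H^1(X,\cO_X)$; this does not rule out a non-reduced Picard scheme of multiplicative type. Concretely, $\mu_p\subseteq\Pic^0_X$ contributes to $H^1(X,\cO_X)$ a line on which the ($p$-linear) Frobenius acts bijectively, so bijectivity of $F$ on $H^1(\cO_X)$ is compatible with $\dim\Alb X < h^1(X,\cO_X)$. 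An explicit counterexample to your inference (F-split, $\Alb X=0$, yet $H^1(X,\cO_X)\neq 0$) is given by the ``singular'' (ordinary) Enriques surfaces in characteristic $2$, which have $\Pic^\tau\isom\mu_2$; they do not satisfy the triviality hypothesis on $\Omega^1_X(\log D)$, but your derivation of $H^1(X,\cO_X)=0$ never uses that hypothesis, so it is unjustified as written. Note also that you only use $\Alb X=0$ for $X$ itself, whereas your contrapositive hypothesis gives triviality of the Albanese of \emph{every} \'etale cover; even with that extra strength one cannot easily conclude $H^1(X,\cO_X)=0$, because bijectivity of $F$ on a nonzero $H^1(\cO_X)$ only produces (via Artin--Schreier) a tower of $\ZZ/p$-covers, and nothing a priori prevents all of them from having $b_1=0$.

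This missing implication is precisely what the paper's proof has to work for: when $H^1(Y,\cO_Y)\neq 0$ persists on all \'etale covers, the paper spreads the pair out to a finite field, lifts it together with Frobenius over $W(k)$ (Lemma~\ref{lem:log_trivial_lifting_frobenius}), so that an iterate of the lifted Frobenius becomes a polarized endomorphism in characteristic zero, invokes Theorem~\ref{thm:greb_kebekus_peternell} to see that $\piet(X)$ is virtually abelian, and then uses a crystalline/Artin--Schreier argument to extract a cover with nonzero Albanese. Your contrapositive reformulation silently assumes this dichotomy away. The remainder of your argument (lifting over $W(k)$, killing the obstructions to trivializing $\Omega^1(\log D)$ using $H^1(\cO)=0$, applying Winkelmann's Theorem~\ref{thm:winkelmann} in characteristic zero, and specializing back via Proposition~\ref{prop:toric-specialization}) is essentially the paper's treatment of the case where some cover has $H^1(\cO)=0$, and is fine as far as it goes; but without a correct substitute for the step ``no cover has nonzero Albanese $\Rightarrow$ $H^1(X,\cO_X)=0$,'' the proof is incomplete.
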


\begin{proof} 
First, we assume that there exists an \'etale covering $Y \to X$ such that $\smash{H^1(Y,\cO_{Y}) = 0}$.  In this case, we claim that $(X,D)$ is in fact a toric pair.  In order to see this, we use Lemma~\ref{lem:log_trivial_lifting_frobenius} to obtain a $W(k)$-lifting $(\cY,\cE)$ of $(Y,E)$, where $E$ is the preimage of~$D$.  The induced  deformation of the log cotangent bundle $\Omega^1_{Y}(\log E)$ is trivial because the tangent space of its deformation functor is isomorphic to 
\[
  H^1\left(Y,\cEnd(\Omega^1_{Y}(\log E))\right) = H^1(Y,\cO_{Y})^{\oplus n^2} = 0.
\]
This implies that the log cotangent bundle of the generic fiber $\cY_{\eta}$ is trivial.  By  semicontinuity of cohomology we also see that $H^1(\cY_{\bar{\eta}},\cO_{\cY_{\bar{\eta}}}) = 0$ and therefore $(\cY_{\bar{\eta}},\cE_{\bar{\eta}})$ is a toric pair by Theorem~\ref{thm:winkelmann}.  By Proposition~\ref{prop:toric-specialization} this means that the special fiber $(Y,E)$ is a toric pair as well.  To finish the proof of the claim, we show that $Y \to X$ is an isomorphism.  For this purpose, we observe using the Hirzebruch--Riemann--Roch theorem (see \cite[Corollary 15.2.1]{fulton_intersection}) that 
\[
  1 = \chi(Y, \cO_{Y}) = d \cdot \chi(X, \cO_X),
\]
where $d$ is the degree of the finite map $Y \to X$.  This clearly implies that $d = 1$ and hence $Y \to X$ is an isomorphism.

Now we proceed to the second case, where $H^1(Y,\cO_{Y}) \neq 0$ for every \'etale covering $Y \to X$.  We follow the strategy of \cite[Lemma 1.6]{MehtaSrinivas} substituting the application of the Calabi conjecture with Theorem~\ref{thm:greb_kebekus_peternell} describing the algebraic fundamental groups of varieties admitting a polarized endomorphism.  We claim that there exists an \'etale covering $Y \to X$ such that $\Alb Y \neq 0$.  

For the proof, we first use the spreading out technique to reduce to the case of nc pairs defined over finite fields.  Let $(\cX,\cD)$ be an nc pair over a spectrum of a finitely generated local $\FF_p$-algebra $R$ such that the geometric generic fiber is isomorphic to $(X,D)$ and the residue field is finite with $q = p^e$ elements.  Spreading out the trivialization of the log cotangent bundle and the Frobenius splitting (which can interpreted as a morphism of vector bundles on the Frobenius twist of $X$), we may assume that $(\cX, \cD)_{\overline{\FF}_q}$ satisfies the  assumptions of Theorem~\ref{thm:ordinary_trivial_cotangent}.  Assume now that there exists a finite \'etale covering 
\[
  \pi_{\FF_q} \colon \cY_{\overline{\FF}_q} \ra \cX_{\overline{\FF}_q}
\]
of the geometric special fiber such that $\Alb \cY_{\overline{\FF}_q} \neq 0$.  By \cite[Arcata, IV, Proposition~2.2]{SGA4.5} (or \stacksproj{0BQC}) we see that such a covering extends to a covering of $\cX \times_{\Spec R} {\Spec R^{\rm sh}}$, where $R^{\rm sh}$ is the strict henselization of $R$.  Since $R^{\rm sh}$ is the colimit of \'etale extensions of $R$ we observe that, possibly after taking an \'etale covering of $\Spec R$, the morphism $\pi_{\FF_q}$ arises as a special fiber of a covering $\pi \colon \cY \to \cX$. The geometric generic fiber of $\pi$ yields an \'etale covering of $Y \to X$ which satisfies $\dim \Alb Y = H^1_{\et}(Y,\QQ_l) \neq 0$ by smooth base change for \'etale cohomology (we take $\ell \neq p$).  This finishes the reduction step.

For $(X,D)$ defined over a finite field $k = \FF_q$, we reason in two steps which we describe briefly.  First, we lift the pair $(X,D)$ to characteristic zero and apply Theorem~\ref{thm:greb_kebekus_peternell} to prove that $\pi_1^\et(X)$ is virtually abelian.  Consequently, using the approach described in \cite[Lemma 1.6]{MehtaSrinivas}, we prove that for an \'etale cover $Y \to X$ the crystalline cohomology group $H^1_{\rm crys}(Y/K)$ over the field $K = {\rm Frac}\,W(k)$ is non-zero, and hence $\dim \Alb Y = \dim_K H^1_{\rm crys}(Y/K)$ is non-zero as well.

Now, we present the details of the first step.  Using Lemma~\ref{lem:log_trivial_lifting_frobenius} we construct a $W(k)$-lifting $((\cX,\cD),F_{\cX})$ of $((X,D),F_X)$.  We set $\eta$ to be the generic point of $W(k)$.  Then, applying Remark~\ref{remark:log_trivial_lifting_frobenius_finite_field}, we observe that $F_X^e \colon X \to X$ is in fact a~$k$-linear endomorphism for some $e>0$, and therefore the geometric generic fiber $F^e_{\cX_{\bar{\eta}}}$ of the lifting of $F_X^e$ is a~polarized endomorphism of $\cX_{\bar{\eta}}$.  By Theorem~\ref{thm:greb_kebekus_peternell} the geometric fundamental group $\piet(\cX_{\bar{\eta}})$ is virtually abelian.  Using the surjectivity of the specialization morphism $\piet(\cX_{\bar{\eta}}) \to \piet(X)$ we see that the same holds for $\piet(X)$ and therefore after taking an \'etale covering of $X$ we may assume $\piet(X)$ is abelian.  

We proceed to the second step.  We assume that $\piet(X)$ is abelian and follow \cite[Lemma 1.6]{MehtaSrinivas} closely.  First, by \cite[Chapitre II, Th\'eor\`eme 5.2]{illusie_de_rham_witt}, to prove that $\dim \Alb(X) = \dim_K H^1_{\rm crys}(X/K)$ is non-zero it suffices to show that $H^1_{\et}(X,\ZZ_p)$ is non-torsion.  We have
\[
  H^1_{\et}(X,\ZZ_p) \simeq \Hom_{\ZZ}(\piet(X)^{\wedge p},\ZZ_p),
\] 
where $\piet(X)^{\wedge p} = \smash{\varprojlim_n} \piet(X) \otimes_{\ZZ} \ZZ/p^n$.  The $\ZZ_p$-module $\piet(X)^{\wedge p}$ is finitely generated and therefore it is torsion if and only of it is finite.  If $\piet(X)^{\wedge p}$ was finite, then there would exist an \'etale covering $X' \to X$ such that $H^1_{\et}(X',\FF_p) = 0$.  This gives a contradiction with $H^1(X',\cO_{X'}) \neq 0$.  Indeed, using the Artin--Schreier sequence of \'etale sheaves
\[
  0\ra \FF_p \ra \cO_X \stackrel{\scriptscriptstyle 1-F}\ra \cO_X \ra 0
\] 
we see that $H^1(X',\FF_p)$ is the $\FF_p$-vector space of elements in $H^1(X',\cO_{X'})$ fixed by the Frobenius morphism.  This is non-zero because Frobenius is bijective on $H^1(X',\cO_{X'})$ for an $F$-split scheme by Lemma~\ref{lem:f-split_is_ordinary}(a).
\end{proof}

\begin{lemma}[{Log version of \cite[Lemma 1.7]{MehtaSrinivas}}]\label{lem:logMS17}
  Let $(X,D)$ be an nc pair satisfying the hypothesis of Theorem~\ref{thm:ordinary_trivial_cotangent}, and let $Y \to X$ be a Galois \'etale cover with $\Alb Y \neq 0$.  Then there exists an intermediate Galois \'etale cover $Z \to X$ of degree $p^m$, for some $m \geq 0$, such that $Y \to Z$ induces an isogeny on Albanese varieties, in particular $\Alb Z \neq 0$.
\end{lemma}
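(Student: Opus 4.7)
My plan is to analyze the affine action of the Galois group $G := \mathrm{Gal}(Y/X)$ on $A := \Alb Y$ and to take $Z := Y/N$, where $N \lhd G$ is the kernel of the induced homomorphism $\rho \colon G \to \Aut(A,0)$ defined below. The key point will be that the hypothesis on the triviality of $\Omega^1_X(\log D)$ forces $\rho(g) = \mathrm{id}$ for every element $g$ of order prime to $p$, so that $G/N$ is a $p$-group.

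First I fix an Albanese morphism $a \colon Y \to A$. For each $g \in G$, the universal property applied to $a \circ g$ yields a unique decomposition $a \circ g = \rho(g) \circ a + t_g$ with $\rho(g) \in \Aut(A,0)$ and $t_g \in A$, defining the homomorphism $\rho$. Let $N := \ker \rho$. Since $N$ acts on $A$ by translations, its induced action on $A^\vee = \Pic^0 Y$ is trivial, and hence the pullback $\Pic^0(Y/N) \to \Pic^0 Y$ is an isogeny onto the identity component of $(\Pic^0 Y)^N = \Pic^0 Y$. This gives $\dim \Alb(Y/N) = \dim \Alb Y$, so the induced map $\Alb Y \to \Alb(Y/N)$ is an isogeny. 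It therefore suffices to show that $[G:N]$ is a power of $p$.

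The heart of the proof is to verify that every element of $G$ of order coprime to $p$ lies in $N$. By the projection formula, the triviality of $\Omega^1_X(\log D)$ gives $\pi_*\Omega^1_Y(\log E) \isom \Omega^1_X(\log D) \otimes \pi_*\cO_Y$, and taking global sections one finds that $G$ acts trivially on $H^0(Y, \Omega^1_Y(\log E)) \isom k^n$, hence also on the $G$-equivariant subspace $H^0(A, \Omega^1_A) = \mathrm{Lie}(A)^\vee$ included via $a^*$. On the other hand, for any translation-invariant form $\eta \in H^0(A, \Omega^1_A)$ the decomposition $a \circ g = \rho(g) \circ a + t_g$ gives $g^*(a^*\eta) = a^*((d\rho(g))^*\eta)$, so the triviality of the $G$-action forces $d\rho(g)_0 = \mathrm{id}_{\mathrm{Lie}(A)}$ for every $g \in G$. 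Now let $g$ be of order $n$ coprime to $p$; then $\langle \rho(g) \rangle$ is a finite group of order coprime to the characteristic acting on the smooth variety $A$, so the fixed subscheme $A^{\rho(g)}$ is smooth, with tangent space at the origin equal to $\ker(\mathrm{id} - d\rho(g)_0) = \mathrm{Lie}(A)$. Thus $\dim A^{\rho(g)} = \dim A$, forcing $A^{\rho(g)} = A$ and $\rho(g) = \mathrm{id}$. Hence $g \in N$, and it follows that $G/N$ is a $p$-group.

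The main technical obstacle, in my view, is the smoothness of the fixed locus $A^{\rho(g)}$; this should follow from the classical fact that the fixed scheme of a finite group of order prime to the characteristic acting on a smooth variety over an algebraically closed field is itself smooth (reducing to the cyclic case and linearizing). The remaining arguments amount to careful bookkeeping of $G$-equivariant structures on the (log) cotangent sheaves and their compatibility with the affine $G$-action on the Albanese variety.
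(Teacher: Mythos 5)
Your proposal is correct and takes essentially the same route as the paper: the paper's proof is just the argument of \cite[Lemma 1.7]{MehtaSrinivas} run with $\Omega^1_Y(\log E)$ in place of $\Omega^1_Y$, and that is exactly what you reconstruct (the affine Galois action on $\Alb Y$, triviality of the $G$-action on the globally trivial log $1$-forms forcing $d\rho(g)_0=\mathrm{id}$, the linearization/fixed-locus argument showing prime-to-$p$ automorphisms with trivial differential are the identity, and $Z=Y/\ker\rho$ with the translation action giving the isogeny on Albanese varieties). The standard facts you quote without proof --- injectivity of $a^*$ on global $1$-forms (the input from \cite[Lemma 1.3]{MehtaSrinivas} also used in Lemma~\ref{lem:logMS14}) and the behaviour of $\Pic^0$ under finite \'etale quotients --- are exactly the background the paper itself relies on.
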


\begin{proof}
We apply the argument given in the proof of \cite[Lemma 1.7]{MehtaSrinivas} to the logarithmic cotangent bundle instead of cotangent bundle.
\end{proof}

\begin{lemma}[{Log version of \cite[Lemma 1.8]{MehtaSrinivas}}]\label{lem:logMS18}
  Let $(X,D)$ be an nc pair satisfying the hypothesis of Theorem~\ref{thm:ordinary_trivial_cotangent}, and let $\pi \colon X \to \Alb X$ be the Albanese mapping.  Then 
  \begin{enumerate}[(a)]
    \item all geometric fibers of $\pi$ are $F$-split, 
    \item for each $i \geq 0$, $R^i\pi_* \cO_X$ is a locally free $\cO_{\Alb X}$-module which becomes free on a finite \'etale cover of $\Alb X$, 
    \item $\Alb X$ is an $F$-split abelian variety.
  \end{enumerate}
\end{lemma}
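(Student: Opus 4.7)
We treat (a), (b), and (c) in turn.

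For (a), we shall show that $\pi\colon X \to \Alb X$ is relatively $F$-split in the sense of Proposition~\ref{prop:fsplit-and-tracefsplit}, i.e., that the trace map $\mathrm{Tr}_{X/\Alb X}\colon F_{X/\Alb X*}\omega_{X/\Alb X}\to \omega_{X'/\Alb X}$ admits a splitting. Once this is done, Lemma~\ref{lem:fsplit-basechange} applied along each geometric point of $\Alb X$ immediately yields $F$-splittings on the geometric fibers. To construct the relative splitting, recall from Lemma~\ref{lem:logMS14} that $\Omega^1_{X/\Alb X}(\log D)$ is trivial, whence $\omega_{X/\Alb X}(D)\cong \cO_X$; combined with the triviality of $\omega_{\Alb X}$ this gives canonical isomorphisms $\omega_X \cong \omega_{X/\Alb X}\cong \cO_X(-D)$. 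Using Grothendieck duality (Remark~\ref{rem:grothendieck_duality}), the factorization $F_X = W_{X/\Alb X}\circ F_{X/\Alb X}$, and the flat base change identity $\omega_{X'/\Alb X}\cong W_{X/\Alb X}^*\omega_{X/\Alb X}$, the splitting of $\mathrm{Tr}_{X/k}$ coming from the assumed $F$-splitness of $X$ translates into the desired splitting of $\mathrm{Tr}_{X/\Alb X}$.

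For (b), by part (a) each geometric fiber $X_{\bar t}$ is $F$-split and by Lemma~\ref{lem:logMS14} the bundle $\Omega^1_{X_{\bar t}}(\log D_{\bar t})$ is trivial, so $(X_{\bar t}, D_{\bar t})$ satisfies the hypothesis of Theorem~\ref{thm:ordinary_trivial_cotangent}; hence Lemma~\ref{lem:log_trivial_lifting_frobenius} furnishes a Frobenius lift of each $X_{\bar t}$ to $W(k)$, and Proposition~\ref{prop:frobenius_cotangent_morphism}(b) gives Hodge-to-de-Rham degeneration at $E_1$ on each fiber. Since Betti numbers are constant in the smooth proper family and Hodge numbers are upper semicontinuous, each $h^{i,j}(X_{\bar t})$---in particular $h^i(X_{\bar t}, \cO_{X_{\bar t}})$---is constant, so $R^i\pi_*\cO_X$ is locally free by Grauert's theorem. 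Finally, the relative Frobenius and flat base change produce a natural map $F_{\Alb X}^* R^i\pi_*\cO_X \to R^i\pi_*\cO_X$ that restricts on each fiber to the Frobenius action on $H^i(X_{\bar t}, \cO_{X_{\bar t}})$, which is bijective by Lemma~\ref{lem:f-split_is_ordinary}(a); hence the map is an isomorphism, and \cite[Satz 1.4]{Lange_Stuhler}, applied as in the proof of implication (i)$\Rightarrow$(ii) of Theorem~\ref{thm:frobenius_cy}, trivializes $R^i\pi_*\cO_X$ on a finite \'etale cover of $\Alb X$.

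For (c), the identities $\pi_*\cO_X = \cO_{\Alb X}$ (Lemma~\ref{lem:logMS14}) and $\pi\circ F_X = F_{\Alb X}\circ\pi$ give $\pi_* F_{X*}\cO_X = F_{\Alb X *}\cO_{\Alb X}$, and pushing the given $F$-splitting $\sigma_X$ forward produces a splitting of the Frobenius $\cO_{\Alb X}\to F_{\Alb X *}\cO_{\Alb X}$. Thus $\Alb X$ is $F$-split; since it is an abelian variety this is equivalent to being ordinary (and the ordinarity also follows directly from Lemma~\ref{lem:f-split_is_ordinary}(c)).

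The main obstacle is the construction of the relative $F$-splitting in (a). While the Grothendieck duality formalism is standard, the bookkeeping needed to relate the absolute trace $\mathrm{Tr}_{X/k}$ to the relative trace $\mathrm{Tr}_{X/\Alb X}$---and in particular to control the Frobenius twist $X' = X\times_{\Alb X, F_{\Alb X}}\Alb X$---is what requires care; the simultaneous triviality of $\omega_{\Alb X}$ and $\omega_{X/\Alb X}(D)$ is what makes the comparison close up.
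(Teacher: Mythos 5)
Your part (c) is fine and agrees with the paper, which simply invokes Lemma~\ref{lem:f-split_is_ordinary}(c). The problems are in (a) and (b).

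For (a), the paper does not prove relative $F$-splitness of the Albanese map by hand: it quotes \cite[Theorem 1.2]{albanese_f_split}. Your plan to deduce it from the absolute splitting by ``Grothendieck duality bookkeeping'' omits exactly the non-formal point: $F$-splitness of the total space does not in general produce a splitting of $\cO_{X'}\to F_{X/A*}\cO_X$ over a base, and the factorization $F_X=W_{X/A}\circ F_{X/A}$ alone gives nothing. What makes it work here is the specific geometry, and it must be used explicitly. Writing $W\colon X'\to X$ for the projection (a base change of $F_A$), duality for the finite flat $W$ turns $\sigma\colon F_{X*}\cO_X=W_*F_{X/A*}\cO_X\to\cO_X$ into an $\cO_{X'}$-linear map $\tilde\sigma\colon F_{X/A*}\cO_X\to\omega_{X'/X}$ with $\mathrm{tr}_W(\tilde\sigma(1))=\sigma(1)=1$. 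Now $\omega_{X'/X}\cong\pi'^*\omega_{F_A}\cong\cO_{X'}$, where $\omega_{F_A}$ is the relative dualizing sheaf of $F_A\colon A\to A$ (this is where triviality of $\omega_{A}$ enters), and $H^0(X',\omega_{X'/X})\cong H^0(A,\cO_A)=k$ because $\pi_*\cO_X=\cO_A$ (Lemma~\ref{lem:logMS14}). Hence $\tilde\sigma(1)$, having nonzero trace, is a nonzero multiple of a nowhere vanishing section, and only this yields the relative splitting. As written, your ``translation'' is an assertion rather than an argument; note also that the triviality of $\omega_{X/\Alb X}(D)$, which you flag as the decisive input, plays no role in this step --- the inputs are $\omega_{\Alb X}\cong\cO_{\Alb X}$ and connectedness of the fibers.

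For (b), your route to local freeness is the step that actually fails in characteristic $p$. Hodge--de Rham degeneration on a fiber gives $\sum_{i+j=n}h^{i,j}(X_{\bar t})=h^n_{\mathrm{dR}}(X_{\bar t})$, not a comparison with the $\ell$-adic Betti number: one only has $h^n_{\mathrm{dR}}\geq b_n$, with equality requiring torsion-freeness of the relevant crystalline cohomology, which you do not establish. So ``Betti numbers constant $+$ Hodge numbers semicontinuous'' does not yield constancy of $h^i(X_{\bar t},\cO_{X_{\bar t}})$, and Grauert cannot be applied. The paper avoids semicontinuity altogether: by flat base change $F_A^*R^i\pi_*\cO_X\cong R^i\pi'_*\cO_{X'}$, and since the fibers of $\pi'$ are $F$-split the maps induced by $0\to\cO_{X'}\to F_{X/A*}\cO_X\to B^1_{X/A}\to 0$ identify this with $R^i\pi_*\cO_X$; then \cite[Lemma 1.4]{mehta_nori} gives local freeness of a coherent sheaf isomorphic to its Frobenius pullback, and \cite[Satz 1.4]{Lange_Stuhler} gives \'etale trivializability. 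Your closing step (the isomorphism $F_A^*R^i\pi_*\cO_X\cong R^i\pi_*\cO_X$ checked fiberwise, plus Lange--Stuhler) presupposes local freeness and cohomology-and-base-change, so it cannot substitute for the missing argument; if you reorganize (b) along the paper's lines, the fiberwise bijectivity from Lemma~\ref{lem:f-split_is_ordinary}(a) becomes unnecessary.
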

\begin{proof}
This follows by the same proof with the caveat that the Cartier isomorphism and Grothendieck duality need to be replaced with their logarithmic versions. For the convenience of the reader, we present a slightly simplified argument below.

For the proof of (a) it is sufficient to show that $\pi$ is relatively $F$-split, which follows from \cite[Theorem 1.2]{albanese_f_split}.  To get (b), we reason as in \cite[Lemma 1.8]{MehtaSrinivas}.  More precisely, let $A = \Alb X$, let $X'$ be the base change of $X$ along $F_A$, and let $\pi' \colon X' \to A$ denote the induced projection.  By flat base change we obtain an isomorphism ${F_A^*R^i\pi_*\cO_X \isomto R^i\pi'_*\cO_{X'}}$.  Since the fibers of $\pi'$ are $F$-split we observe that the natural homomorphisms ${R^i\pi'_*\cO_{X'} \to R^i\pi'_*F_{X/A*}\cO_{X}}$ induced from the long exact sequence of $R^i\pi'_*$ for
\[
  0 \ra \cO_{X'} \ra F_{X/A*}\cO_X \ra B^1_{X/A} \ra 0
\] 
are isomorphisms and therefore $F_A^*R^i\pi_*\cO_X \isom R^i\pi_*\cO_{X}$.  By \cite[Lemma 1.4]{mehta_nori} we see that $R^i\pi_*\cO_{X}$ is locally free, and \cite[Satz 1.4]{Lange_Stuhler} implies that it is \'etale trivializable. Part (c) follows from Lemma~\ref{lem:f-split_is_ordinary}(c). 
\end{proof}

\begin{lemma}[{Log version of \cite[Lemma 1.9]{MehtaSrinivas}}]\label{lem:logMS19}
  Let $(X,D)$ be an nc pair satisfying the hypothesis of Theorem~\ref{thm:ordinary_trivial_cotangent}, and let $\pi \colon X \to \Alb X$ be the Albanese mapping.  Let $s \in \Alb X$ be a closed point.  Then every finite \'etale covering of $X_s$ of degree $p^m$, for any $m\geq 0$, is induced by a covering of $X$.
\end{lemma}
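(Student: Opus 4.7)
The proof is a logarithmic analogue of \cite[Lemma 1.9]{MehtaSrinivas} and closely follows its strategy. By a standard filtration argument --- one dominates the given cover by a $p$-group Galois cover, then takes a central series of the Galois group with successive $\ZZ/p$-quotients, iteratively replacing $X$ by a finite étale cover which by Lemma~\ref{lem:logMS12} still satisfies the hypotheses of Theorem~\ref{thm:ordinary_trivial_cotangent} --- the problem reduces to the case $m=1$: given a connected $\ZZ/p$-étale cover $\tilde X_s \to X_s$, find a finite étale cover of $X$ inducing it over some point lying above $s$.

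By the Artin--Schreier sequence, such a cover is classified by a Frobenius-fixed class $\alpha_s \in H^1(X_s, \cO_{X_s})$. By Lemma~\ref{lem:logMS18}(b), the sheaf $\cE := R^1\pi_*\cO_X$ on $A := \Alb X$ is locally free and étale-trivialisable, and the Frobenius-linear isomorphism $\phi \colon \cE \to \cE$ corresponding to $F_A^*\cE \isomto \cE$ restricts fibrewise to the absolute Frobenius on $H^1(X_t, \cO_{X_t})$. Consequently, $\cE^{\phi = 1}$ is a locally constant $\FF_p$-sheaf on $A_{\et}$, identified by proper base change with $R^1\pi_*\FF_p$ and having stalk $H^1_{\et}(X_t, \FF_p)$ at each geometric point $\bar t$ of $A$.

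Choose an isogeny $g \colon A' \to A$ of ordinary abelian varieties that trivialises $R^1\pi_*\FF_p$ and through which $s$ lifts to some $s' \in A'$, and form $X' := X \times_A A' \to X$, a finite étale cover with $X'_{s'} = X_s$. Via the trivialisation, the class $\alpha_s$ corresponds to a global section $\alpha' \in H^0(A', R^1\pi'_*\FF_p)$. The five-term Leray exact sequence
\[
0 \to H^1_{\et}(A', \FF_p) \to H^1_{\et}(X', \FF_p) \to H^0(A', R^1\pi'_*\FF_p) \xrightarrow{d_2} H^2_{\et}(A', \FF_p)
\]
identifies the obstruction to lifting $\alpha'$ to a class in $H^1_{\et}(X', \FF_p)$ as the Leray differential $d_2(\alpha') \in H^2_{\et}(A', \FF_p)$.

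This Leray obstruction is the principal technical point of the proof, and we kill it by pulling back along the Verschiebung isogeny $V \colon A'^{(p)} \to A'$, which is étale of degree $p^{\dim A'}$ because $A'$ is ordinary by Lemma~\ref{lem:logMS18}(c). Using the factorisation $[p]_{A'} = V \circ F_{A'/k}$ and the fact that the relative Frobenius $F_{A'/k}$ is a universal homeomorphism (hence $F_{A'/k}^*$ is an isomorphism on étale cohomology), we obtain $V^* = (F_{A'/k}^*)^{-1} \circ [p]^*$, which vanishes on $H^i_{\et}(A', \FF_p)$ for all $i \geq 1$ since multiplication by $p$ is zero on $\FF_p$-coefficients. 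Consequently, on the further finite étale cover $X'' := X' \times_{A'} A'^{(p)} \to X$, the pullback of $d_2(\alpha')$ vanishes, so $\alpha'$ lifts to a class $\alpha'' \in H^1_{\et}(X'', \FF_p)$ whose associated $\FF_p$-étale cover of $X''$ restricts at any point $s'' \in A'^{(p)}$ lying over $s$ to the original cover $\tilde X_s \to X_s$, completing the argument.
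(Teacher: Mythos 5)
Your core $m=1$ mechanism is correct and is, as far as one can tell, precisely the argument of \cite[Lemma~1.9]{MehtaSrinivas} that the paper's one-line proof defers to (the paper only remarks that, granted Lemma~\ref{lem:logMS18}(b), that proof applies unchanged): Artin--Schreier identifies $\ZZ/p$-covers of the fibre with Frobenius-fixed classes, the unit-root structure from Lemma~\ref{lem:logMS18}(b) makes $R^1\pi_*\FF_p\isom (R^1\pi_*\cO_X)^{F=1}$ an $\FF_p$-local system of rank $h^1(X_s,\cO_{X_s})$, an isogeny trivialises it, and the Leray obstruction dies after pulling back along the \'etale Verschiebung of the ordinary base. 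One point to tighten: ``multiplication by $p$ is zero on $\FF_p$-coefficients'' does not by itself give $[p]^*=0$ on $H^2_{\et}(A',\FF_p)$; for $i=1$ use $H^1_{\et}(A',\FF_p)=\Hom(\piet(A'),\FF_p)$ and $[p]_*=p$ on $\piet(A')$, and for $i=2$ use that for ordinary $A'$ one has $H^i_{\et}(A',\FF_p)\isom H^i(A',\cO_{A'})^{F=1}$ (Artin--Schreier plus bijectivity of $F$ on $H^{i-1}(A',\cO_{A'})$), on which $[p]^*$ acts by $p^i$.

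The genuine gap is the reduction to $m=1$. A connected finite \'etale cover of degree $p^m$ is in general \emph{not} dominated by a Galois cover whose group is a $p$-group: its Galois closure is only a transitive subgroup of $S_{p^m}$ and need not have $p$-power order (already for $p=3$, a non-normal index-$3$ subgroup of a quotient $S_3$ of $\piet(X_s)$ admits no dominating $3$-group cover, since $S_3$ has no nontrivial $3$-group quotient). So your filtration argument proves the lemma only for covers whose Galois closure is a $p$-group --- in particular for Galois covers of degree $p^m$, which is all the application in the proof of Theorem~\ref{thm:ordinary_trivial_cotangent} needs (there the cover is produced by Lemma~\ref{lem:logMS17} and is Galois), but not the statement as written. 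Separately, the induction is under-specified: after the first step the cover you must extend next lives on a fibre of $W\to A'^{(p)}$, an \'etale-isogeny base change of the original Albanese fibration, and not on a fibre of the Albanese map of $W$, whose fibres may be strictly smaller because $\Alb W$ can grow; so ``replace $X$ by a finite \'etale cover satisfying the hypotheses of Theorem~\ref{thm:ordinary_trivial_cotangent}'' and re-invoking the lemma for its Albanese map does not suffice. You should instead run the $m=1$ argument for an arbitrary fibration over an ordinary abelian variety enjoying the conclusion of Lemma~\ref{lem:logMS18}(b), and check that this property persists for $W\to A'^{(p)}$ (the relative $F$-splitting is stable under the base change $A'^{(p)}\to A$ and ascends along the finite \'etale map $W\to X\times_A A'^{(p)}$, so the argument of Lemma~\ref{lem:logMS18}(b) reapplies).
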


\begin{proof}
Given Lemma~\ref{lem:logMS18}(b) the proof of \cite[Lemma 1.9]{MehtaSrinivas} applies without change.  
\end{proof}

\begin{lemma}[Generalization of {\cite[Remark below Lemma~1.10]{MehtaSrinivas}}]\label{lem:MS_remark}
  Let $Y$ be a smooth projective variety and let $\pi \colon X \to Y$ be a proper smooth morphism whose geometric fibers are $F$-split.  Then $R^1\pi_*\QQ_{\ell}$ becomes trivial on a finite \'etale cover of $Y$. 
\end{lemma}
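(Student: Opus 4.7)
My plan is to reduce the assertion to a question about the geometric monodromy of the Tate module of the relative Albanese, and then exploit the ordinarity implied by the $F$-split hypothesis on the fibers.

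First I would construct the relative Albanese as an abelian scheme $p \colon A \to Y$. The argument of Lemma~\ref{lem:logMS18}(b), which uses only relative $F$-splitness of $\pi$, shows that $R^i\pi_*\cO_X$ is locally free on $Y$ and étale-locally trivial; combined with the representability of the relative Picard functor this produces an abelian scheme $\Pic^0_{X/Y}$, whose dual is the relative Albanese $A$. By Lemma~\ref{lem:f-split_is_ordinary}(c), each geometric fiber of $A$ is an ordinary abelian variety. The universal property of the Albanese identifies, fiberwise, $H^1(A_{\bar y},\QQ_\ell)$ with $H^1(X_{\bar y},\QQ_\ell)$, so $R^1\pi_*\QQ_\ell \isom R^1 p_*\QQ_\ell$. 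Via the Kummer sequence for $A$, this sheaf is (up to Tate twist) dual to the rational Tate module $V_\ell A$, so it suffices to show that $V_\ell A$ becomes trivial on a finite étale cover of $Y$.

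Next I would use the ordinarity of the fibers of $A$ together with the étale-local triviality of $R^1 p_*\cO_A$ to control the $p$-divisible group $A[p^\infty]$. Passing to a finite étale cover of $Y$, the connected-étale sequence
\[
  0 \ra A[p^\infty]^\circ \ra A[p^\infty] \ra A[p^\infty]^{\et} \ra 0
\]
splits and both its pieces become constant: the étale part is a lisse $\ZZ_p$-sheaf that is trivialized by a finite étale cover, and the multiplicative part is pinned down by the Cartier dual together with the Hodge sheaf. A Serre--Tate type rigidity argument in families then identifies $A$, after a further finite étale cover, with a constant abelian scheme $A_0 \times_k Y$ up to isogeny. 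Granting this, $V_\ell A$ is isomorphic to the pullback of the constant sheaf $V_\ell A_0$ and therefore trivial.

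The main obstacle is the Serre--Tate rigidity step. In the Mehta--Srinivas setting ($Y = \Alb X$) this was easier because the base was itself an abelian variety and the entire family could be understood in terms of its group structure. In the present generality, two natural routes are available. One is to use Lemma~\ref{lem:log_trivial_lifting_frobenius} to lift the family together with the Frobenius to characteristic zero, apply smooth base change for étale cohomology to transport the question to characteristic zero, and then invoke the rigidity of polarized variations of Hodge structure over a smooth proper base. The other is to adapt the inductive $p^m$-cover construction of Lemma~\ref{lem:logMS19} directly to the $\ell^n$-torsion of $A$, combined with the surjectivity of the specialization map on étale fundamental groups to reduce the $\ell$-adic monodromy question to one on the generic characteristic-zero fiber.
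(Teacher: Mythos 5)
Your reduction — pass to the relative Albanese, note via Lemma~\ref{lem:f-split_is_ordinary}(c) that its fibres are ordinary, and identify $R^1\pi_*\QQ_{\ell}$ with the corresponding sheaf of the abelian scheme — is indeed how the argument begins (the paper itself just invokes the Mehta--Srinivas remark verbatim once ordinarity of the Albanese varieties of $F$-split fibres is known). But the crux, namely why an abelian scheme with ordinary fibres over the projective base $Y$ has \emph{finite} monodromy on $R^1$, is exactly what your proposal leaves open, and neither of the two routes you offer closes it. Already the intermediate assertion that the étale part of $A[p^\infty]$ is ``a lisse $\ZZ_p$-sheaf that is trivialized by a finite étale cover'' is unjustified: a lisse $\ZZ_p$-sheaf is a continuous representation $\pi_1(Y)\to GL_g(\ZZ_p)$ whose image is compact but in general infinite, so this statement presupposes the very isotriviality you are trying to establish.

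Concerning the two proposed repairs: Lemma~\ref{lem:log_trivial_lifting_frobenius} is proved under the hypotheses of Theorem~\ref{thm:ordinary_trivial_cotangent} (trivial $\Omega^1_X(\log D)$ and an $F$-split total space), none of which are available in Lemma~\ref{lem:MS_remark}, whose only hypothesis is $F$-splitness of the geometric fibres; and even granting a characteristic-zero model, weight-one polarized variations of Hodge structure over a smooth projective base are \emph{not} rigid in the sense you need — Kodaira fibrations and quaternionic Shimura curves give non-isotrivial abelian schemes over smooth projective curves — so properness of the base alone never yields finite monodromy, and the constraint coming from ordinarity evaporates once you lift to characteristic zero. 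The second route misapplies Lemma~\ref{lem:logMS19}: it too is stated only under the hypotheses of Theorem~\ref{thm:ordinary_trivial_cotangent}, it concerns covers of degree $p^m$ and hence says nothing about $\ell$-adic monodromy, and in the generality of Lemma~\ref{lem:MS_remark} there is no characteristic-zero fibre to specialize from. What actually closes the gap in the Mehta--Srinivas argument is a characteristic-$p$ input absent from your proposal: in essence, an abelian scheme with ordinary fibres over a complete base is isotrivial, because ordinarity forces the classifying map to the moduli space of polarized abelian varieties to avoid the vanishing locus of the Hasse invariant — a section of an ample line bundle on the minimal compactification — so the map from the projective $Y$ is constant; isotriviality then produces the finite étale cover trivializing $R^1\pi_*\QQ_{\ell}$.
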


\begin{proof}[Remark about the proof]
Once we know that the Albanese varieties of smooth projective \mbox{$F$-split} varieties are ordinary (see Lemma~\ref{lem:f-split_is_ordinary}), the proof given in {\cite[Remark below Lemma~1.10]{MehtaSrinivas}} can be repeated word for word.
\end{proof}

Equipped with the above we proceed to:

\begin{proof}[Proof of Theorem~\ref{thm:ordinary_trivial_cotangent}]
Let $(X,D)$ be a nc pair satisfying the hypotheses of Theorem~\ref{thm:ordinary_trivial_cotangent}.  We want to prove that there exists an \'etale covering admitting a structure of a toric  fibration over an ordinary abelian variety with a toric divisor given by the preimage of $D$.  First, following \cite[Proof of Theorem 1.1]{DemaillyHwangPeternell} we consider an \'etale covering $Y \to X$ such that $\dim \Alb Y$ is maximal (by Lemma~\ref{lem:logMS14} we know that $\dim \Alb Y \leq \dim Y = \dim X$).  Using Lemma~\ref{lem:logMS12} we see that for $E$ defined as the preimage of $D$ the nc pair $(Y,E)$ also satisfies the hypotheses of Theorem~\ref{thm:ordinary_trivial_cotangent}.  

We claim that the Albanese morphism of $Y$ is a toric fibration with toric boundary $E$. Indeed, using Lemma~\ref{lem:logMS14} and Lemma~\ref{lem:logMS18} we see that the fibers of $Y \to \Alb Y$ satisfy the assumptions of Lemma~\ref{lem:logMS_toric_fibers} and therefore they are either toric, and hence the proof is finished by Proposition~\ref{prop:global-rigidity}, or for some fiber $Y_s$ there exists an  \'etale covering $g_s \colon \bar{Y}_s \to Y_s$ such that $\Alb \bar{Y}_s \neq 0$.  In the latter case, using Lemma~\ref{lem:logMS17}, we may assume that $\deg g_s = p^m$, for some $m \geq 0$, and therefore by Lemma~\ref{lem:logMS19} the morphism $g_s$ is induced by a covering $\bar{Y} \to Y$.  We consider the composition $\pi \colon \bar{Y} \to \Alb Y$ of the covering and the Albanese morphism of~$Y$.  By Lemma~\ref{lem:logMS12} the fibers of $\pi$ are $F$-split and hence we may apply Lemma~\ref{lem:MS_remark} to see that $R^1\pi_*\QQ_{\ell}$ is non-zero and \'etale trivializable.  This means that after an \'etale covering $\eta \colon A \to \Alb Y$ we have an isomorphism
\[
  \eta^*R^1\pi_*\QQ_{\ell} \simeq \QQ_{\ell}^{2d}
\]
for some $d > 0$. Let $\widehat{Y} = \bar{Y} \times_{\Alb Y} A$ be the covering of $\bar{Y}$ induced by $\eta$, and let $\widehat{\pi} \colon \widehat{Y} \to A$ be the projection in the cartesian diagram
\[
\xymatrix{
  \widehat{Y} \ar[d]_{\widehat{\pi}} \ar[rr] & & \bar{Y} \ar[d]^{\pi} \\
  A \ar[rr]_{\eta} & & \Alb Y.
}
\]  
We claim that $\dim \Alb \widehat{Y} > \dim \Alb Y$.  Indeed, as in \cite[Proof of Theorem 1]{MehtaSrinivas} we consider the Leray spectral sequence $H^i_{\et}(A,R^j\widehat{\pi}_*\QQ_{\ell}) \Rightarrow H^{i+j}_{\et}(\widehat{Y},\QQ_{\ell})$ to obtain the exact sequence
\[
  0 \ra H^1_{\et}(A,\QQ_{\ell}) \ra H^1_{\et}(\widehat{Y},\QQ_{\ell}) \ra H^0(A,R^1\widehat{\pi}_*\QQ_{\ell}) \ra H^2_{\et}(A,\QQ_{\ell}) \ra H^2_{\et}(\widehat{Y},\QQ_{\ell}).
\]
Since $\widehat{\pi} \colon \widehat{Y} \to A$ admits a multi-section the morphism $H^2_{\et}(A,\QQ_{\ell}) \ra H^2_{\et}(\widehat{Y},\QQ_{\ell})$ is injective, and therefore we have
\begin{align*}
  \dim \Alb \widehat{Y} = \frac 1 2 \dim_{\QQ_{\ell}} H^1(\widehat{Y},\QQ_{\ell}) & = \frac 1 2 \left(\dim_{\QQ_{\ell}} H^1(A,\QQ_{\ell}) + \dim_{\QQ_{\ell}} H^0(A,R^1\widehat{\pi}_*\QQ_{\ell})\right) \\ 
  & = \dim A + \frac 1 2 \dim_{\QQ_{\ell}} H^0(A,\eta^*R^1\pi_*\QQ_{\ell}) \\
  & = \dim A + \frac 1 2 \dim_{\QQ_{\ell}} H^0(A,\QQ_{\ell}^{2d}) \\
  & = \dim \Alb Y + d > \dim \Alb Y.
\end{align*}
This gives a contradiction with the choice of $Y$ and hence finishes the proof. 
\end{proof}


\section{Homogeneous spaces}
\label{s:homogeneous-spaces}

In \cite[\S 4]{BTLM}, the authors study Frobenius liftability of rational homogeneous spaces. In many cases, they are able to show that such a variety $X$ is not $F$-liftable because Bott vanishing 
\[ 
    H^j(X, \Omega^i_X \otimes L) = 0 \quad (j> 0, \quad L\text{ ample})
\]
does not hold. As in this paper (although using a different argument), they reduce the question to the case of Picard number one. But even under this assumption, finding $i, j$, and $L$ as above for which $H^i(X, \Omega^j_X\otimes L)\neq 0$ is a difficult task. To this end, the authors use involved results of D.~Snow \cite{snow-hermitian,snow-grassmann} on the cohomology of flag varieties of Hermitian symmetric type in characteristic zero. They ask (see \cite[Remark~2]{BTLM}) whether the only $F$-liftable rational homogeneous spaces are products of projective spaces. As these are precisely the toric ones, this is a special case of our Conjecture~\ref{conj:froblift}, one which we are actually able to settle. 

Our proof in the Picard number one case does not rely on the classification of homogeneous spaces or Bott vanishing. In fact, we only need to assume that $X$ is Fano and that the tangent bundle $T_X$ is nef (note that the Campana--Peternell conjecture (\cite{campana-peternell}, \cite[V, Conjecture 3.10]{kollar96}) predicts that these two conditions should in fact imply that $X$ is a rational homogeneous space). The main ingredients of our proof are Mori's characterization of the projective space in terms of rational curves (Theorem~\ref{thm:mori}) and a careful analysis of the restrictions of the sheaf of $\xi$-invariant forms (\S\ref{subs:frob-cover}) to some special families of rational curves (Proposition~\ref{prop:homog-rank-one}). For the general case, we unfortunately have to look at the classification of homogeneous spaces, but only to check for which vertices of which Dynkin diagrams the corresponding homogeneous space of Picard rank one is a projective space (Lemma~\ref{lemma:pr-homogeneous}). A result of Lauritzen and Mehta \cite{LauritzenMehta} allows us to exclude the possibility of non-reduced stabilizers. We believe that our ideas could be useful in tackling the Picard rank one case of Conjecture~\ref{conj:froblift} with the assumption that $T_X$ is nef dropped. 

In this section we work over an algebraically closed field $k$ of positive characteristic.

\subsection{Families of rational curves} 
\label{subs:rational_curves} 

We start by recalling basic definitions pertaining to rational curves. The main reference for this subsection is \cite[II \S 2--3]{kollar96}. In what follows we assume that $X$ is a smooth projective $k$-scheme.

\begin{defin} \label{def:free-curve}
  Let $\phi \colon \PP^1 \to X$ be a non-constant morphism. We say that $\phi$ is \emph{free} if $\phi^*T_X$ is nef, and \emph{very free} if $\phi^*T_X$ is ample.
\end{defin}

Given a rational curve $C \subseteq X$ we shall say that it is free (resp.\ very free) if the normalization $\phi \colon \PP^1 \to C \to X$ is free (resp.\ very free). Further, for a free $\phi \colon \PP^1 \to X$ we can write 
\[ 
  \phi^* T_X \isom \cO_{\PP^1}^{\oplus r} \oplus \bigoplus_{i=1}^{n-r} \cO_{\PP^1}(a_i), 
  \quad
  a_i > 0.
\] 
With that, $\phi$ is very free if and only if $r=0$.

Fix an ample divisor $H$ on $X$. If $X$ is Fano, we always take $H = -K_X$. For an integer $d$, we denote by $\mathrm{Hom}_{d}(\PP^1, X) \subseteq {\rm Hilb}(\PP^1 \times X)$ the scheme parametrizing morphisms $\phi \colon \PP^1 \to X$ such that $\deg \phi^* H = d$. We denote by $\mathrm{Hom}^{\rm free}_{d}(\PP^1, X) \subseteq \mathrm{Hom}_{d}(\PP^1, X)$ the subscheme parametrizing free $\phi \colon \PP^1 \to X$ which are generically injective. We drop the subscript $d$ whenever we do not wish to specify the degree, so $\mathrm{Hom}(\PP^1, X) = \bigsqcup_d \mathrm{Hom}_{d}(\PP^1, X)$. By \cite[II, Corollary 3.5.4]{kollar96}, the natural morphism 
\[
  \PP^1 \times \Hom^{\rm free}_{d}(\PP^1,X) \ra X
\]
is smooth, and so $\Hom^{\rm free}_{d}(\PP^1,X)$ is smooth as well. 

Let $\mathrm{RatCurves}^{\rm free}_d(X)$ be the quotient $\Hom^{\rm free}_{d}(\PP^1,X) / \Aut(\PP^1)$, which exists by \cite[II, Comment 2.7]{kollar96} as $\Hom^{\rm free}_{d}(\PP^1,X)$ is a smooth open subscheme of $\mathrm{Hom}_d(\PP^1,X)$, invariant under the $\Aut(\PP^1)$-action (see \cite[II, Corollary 3.5.4]{kollar96}). We denote by $\mathrm{Univ}_d(X)$ the universal $\PP^1$-bundle over $\mathrm{RatCurves}^{\rm free}_d(X)$ so that we have a diagram
\[
  \xymatrix{
    \mathrm{Univ}_d(X) \ar[r]^-{\phi} \ar[d]_{\pi} & X \\
    \mathrm{RatCurves}^{\rm free}_d(X) & 
  }
\]
It is constructed as a quotient of $\PP^1 \times \Hom^{\rm free}_d(\PP^1, X)$ by $\Aut(\PP^1)$.  We have a factorization 
\[
  \PP^1 \times \Hom^{\rm free}_{d}(\PP^1,X) \ra \mathrm{Univ}_d(X) \overset{\phi}\longrightarrow X
\]
where the left arrow is smooth and surjective (cf.\ \cite[II, Theorem 2.15]{kollar96}), and so $\phi$ is smooth. In particular, both $\mathrm{Univ}_d(X)$ and $\mathrm{RatCurves}^{\rm free}_d(X)$ are smooth \stacksproj{02K5}.

\begin{defin}
  Let $\phi \colon \PP^1 \to X$ be a generically injective morphism which is free and such that  $d = \deg \phi^*H$ is minimal among all $\phi' \in \mathrm{Hom}^{\rm free}(\PP^1, X)$. We call such $\phi$ a \emph{minimal free rational curve}.
\end{defin}

The study of deformations of rational curves of minimal degree plays a vital role in the theory of rationally connected varieties by means of the Mori theory (and \emph{varieties of minimal rational tangents}, see e.g.\ \cite{kebekus02,hwang-mok04}). The picture becomes slightly simpler if we assume that $T_X$ is nef. As we have already noted, conjecturally this condition should be equivalent to $X$ being a homogeneous space. When $T_X$ is nef, we denote $\mathrm{RatCurves}^{\rm free}_d(X)$ by $\mathrm{RatCurves}_d(X)$, as all rational curves are free.

\begin{lemma} [{\cite[II, Proposition 2.14.1]{kollar96}}] \label{lem:deformations_of_curves_tx_nef} 
  With notation as above, suppose that $T_X$ is nef and  that minimal free rational curves are of degree $d$. Then $\mathrm{RatCurves}_d(X)$ is proper.
\end{lemma}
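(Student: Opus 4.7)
The plan is to verify the valuative criterion of properness for $\mathrm{RatCurves}_d(X)$ by working inside the Kontsevich moduli space $\overline{M}_{0,0}(X, d)$ of genus-zero stable maps to $X$ of $H$-degree $d$, which is a projective scheme over $k$. There is a natural morphism from $\mathrm{RatCurves}_d(X)$ to the coarse moduli space of $\overline{M}_{0,0}(X, d)$, and it is enough to show that its image is closed.

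So let $R$ be a discrete valuation ring with fraction field $K$, and let $\phi_K\colon \PP^1_K \to X_K$ be a generically injective free rational curve of degree $d$. By projectivity of $\overline{M}_{0,0}(X, d)$, after a finite base change of $R$ one may extend $\phi_K$ to a stable map over $R$ whose special fibre is $f_0 \colon C \to X$, where $C = C_1 \cup \cdots \cup C_r$ is a connected tree of $\PP^1$'s with no contracted component and $\sum_i \deg (f_0|_{C_i})^* H = d$. The goal is then to show that $r=1$ and that $f_0$ is generically injective, which will produce the required $R$-point of $\mathrm{RatCurves}_d(X)$.

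The key step proceeds as follows. For each $i$, let $g_i\colon \PP^1 \to X$ be the normalization of the reduced image of $f_0|_{C_i}$; then $g_i$ is generically injective and $f_0|_{C_i}$ factors as $g_i\circ \alpha_i$ with $\alpha_i\colon \PP^1\to\PP^1$ of some degree $e_i\geq 1$, so that $\deg(f_0|_{C_i})^*H = e_i\deg g_i^*H$. Since $T_X$ is nef, $g_i^* T_X$ is a direct sum of line bundles of non-negative degree on $\PP^1$, hence $g_i$ is a free generically injective rational curve in $X$. The minimality hypothesis on $d$ then gives $\deg g_i^*H \geq d$, and hence $\deg(f_0|_{C_i})^*H \geq d$ for each $i$. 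Summing yields $d = \sum_i\deg(f_0|_{C_i})^*H \geq rd$, which forces $r=1$ and $e_1=1$, completing the argument.

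The main technical obstacle I anticipate is the clean identification of $\mathrm{RatCurves}_d(X)$ with an open subscheme of the coarse moduli space of $\overline{M}_{0,0}(X, d)$: one has to check that a generically injective free map $\phi\colon \PP^1\to X$ has trivial stabilizer in $\Aut(\PP^1)$, so that the quotient $\mathrm{Hom}^{\rm free}_d(\PP^1,X)/\Aut(\PP^1)$ embeds as the corresponding locus in the Kontsevich space. This is routine for generically injective morphisms and remains valid in positive characteristic, so no essential new difficulty arises.
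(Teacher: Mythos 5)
Your argument is essentially correct, but it takes a different route from the paper. The paper's proof is a two-line reduction: since $T_X$ is nef, every rational curve is free, so the quotient definition of $\mathrm{RatCurves}_d(X)$ coincides with Koll\'ar's definition (by [Koll\'ar, II, Theorem 2.15]), and properness is then exactly the cited [II, Proposition 2.14.1]. You instead re-prove the content of that citation: you run stable reduction in $\overline{M}_{0,0}(X,d)$ and use the fact that each non-contracted component of a limit, after passing to the normalization of its reduced image, is generically injective and (by nefness of $T_X$) free, hence of $H$-degree at least $d$; multiplicativity of degree under the covering map $\alpha_i$ then forces an irreducible, generically injective, degree-$d$ limit. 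This is the same breaking argument Koll\'ar carries out with Chow-theoretic limits of $1$-cycles, so what your version buys is a self-contained proof that makes explicit where nefness and minimality enter, at the cost of invoking machinery the paper avoids.

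Two points deserve more care than you give them. First, the limit stable map need not a priori have ``no contracted component''; rather, your degree count shows there is exactly one non-contracted component, and then stability (a contracted leaf of a genus-zero tree with no marked points has only one special point) rules out contracted components, so the domain is irreducible \emph{a posteriori}. Second, in positive characteristic you should not lean on projectivity of $\overline{M}_{0,0}(X,d)$; what you actually need is stable reduction for maps (properness of the moduli of genus-zero stable maps), which does hold in characteristic $p$, together with the identification of $\mathrm{RatCurves}_d(X)$ with the automorphism-free open locus of generically injective maps with irreducible domain -- here ``generically injective'' should be read as ``birational onto the image'' so that inseparable multiple covers (which can have infinitesimal automorphisms) are excluded; note that such covers are in any case eliminated by your inequality $e_i\deg g_i^*H\geq d$. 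With these caveats addressed, your proof is a valid, more hands-on alternative to the paper's citation.
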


\begin{proof}
By \cite[II, Theorem 2.15]{kollar96}, the above definition of $\mathrm{RatCurves}_d(X)$ coincides with \cite[II, Definition -- Proposition 2.11]{kollar96} under the assumption that $T_X$ is nef, and hence $\mathrm{RatCurves}_d(X)$ is proper by \cite[II, Proposition 2.14.1]{kollar96}.
\end{proof}

Finally, let us recall the celebrated result of Mori, which we present here in a slightly different from than that of {\cite[V, Theorem 3.2]{kollar96}} -- for the proof of Proposition \ref{prop:homog-rank-one} we need a variant for minimal free rational curves (cf.\ \cite[Families of curves, Theorem]{occhetta_wisniewski}). 

\begin{thm} \label{thm:mori}
  Let $X$ be a smooth projective Fano variety of dimension $n$ defined over an algebraically closed field $k$ and let $x \in X$ be a general point. Suppose that every rational curve through $x$ is free, and that every minimal free rational curve through $x$ is very free. Then $X \isom \PP^n$.
\end{thm}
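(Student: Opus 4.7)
The strategy is to show that, under the hypotheses, every rational curve $C$ through $x$ satisfies $-K_X\cdot C \geq n+1$, and then to invoke a Mori-style characterization of projective space in the form \cite[V, Theorem 3.2]{kollar96} (cf.\ the variant noted in \cite[Families of curves, Theorem]{occhetta_wisniewski}).

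The first step is a splitting-type analysis of a minimal free rational curve through $x$. Since $X$ is Fano it carries free rational curves, and those of the globally minimal free degree $d$ cover a dense open subset of $X$ (the dominant image of the smooth morphism $\mathrm{Univ}_d(X)\to X$); thus for a general $x$ we may fix a minimal free rational curve $\phi_0\colon\PP^1\to X$ with $\phi_0(0)=x$, which is very free by assumption. Next I would argue that $\phi_0$ is generically injective: a nontrivial factorization $\phi_0=\psi\circ p$ with $\deg p=e>1$ would, via the identification $\phi_0^*T_X\cong p^*\psi^*T_X$ and the very-freeness of $\phi_0$, force every summand of $\psi^*T_X$ to have positive degree, so $\psi$ would be a free rational curve through $x$ of $-K_X$-degree $d/e<d$, contradicting the minimality of $d$. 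Writing $\phi_0^*T_X\cong\bigoplus_{i=1}^{n}\cO(a_i)$ with $a_1\geq\cdots\geq a_n$, the nonzero differential injects $T_{\PP^1}\cong\cO(2)$ into $\phi_0^*T_X$, so $a_1\geq 2$; very-freeness gives $a_i\geq 1$ for all $i$; summing yields $d=\sum_i a_i\geq n+1$.

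This bound propagates to any rational curve $C$ through $x$: its normalization $\phi$ is free by the first hypothesis, hence a free rational curve on $X$, so $-K_X\cdot C=\deg\phi^*(-K_X)\geq d\geq n+1$. At this point the conclusion $X\cong\PP^n$ follows from the characterization of projective space by the condition that every rational curve through a general point has $-K_X$-degree at least $n+1$.

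I expect the last step to be the main obstacle, as its standard proof performs bend-and-break on a minimal dominating family of rational curves through $x$. The precise role of our freeness hypothesis is to kill any such degeneration: a reducible limit of minimal free curves through $x$ would contain a rational component through $x$, which is again free by hypothesis, of strictly smaller $-K_X$-degree than $d$, contradicting minimality. This rigidity of the family together with the dimension count $\dim\mathrm{RatCurves}_d(X)_x\geq n-1$ forces the evaluation morphism from the universal family at $x$ to identify $X$ with $\PP^n$.
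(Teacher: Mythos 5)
Your first half is essentially the paper's argument: the splitting-type computation for a minimal free curve through a general $x$ (very free by hypothesis, generically injective by definition, hence with nonzero differential) giving $\deg\phi_0^*(-K_X)=\sum a_i\geq n+1$ is exactly what the paper extracts from \cite[V, Lemma 3.7.2]{kollar96}, and the deduction that every rational curve through $x$ has $-K_X$-degree at least $d\geq n+1$ is correct. The genuine gap is in your final step. The characterization you invoke --- ``every rational curve through a general point has $-K_X$-degree $\geq n+1$ implies $X\isom\PP^n$'' --- is the Cho--Miyaoka--Shepherd-Barron theorem, which is a characteristic-zero statement; neither \cite[V, Theorem 3.2]{kollar96} nor the Occhetta--Wi\'sniewski variant gives it over fields of positive characteristic, and the theorem here is needed precisely in characteristic $p$ (it is applied to $F$-liftable Fano threefolds and homogeneous spaces). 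This is exactly why the authors state a variant with the very-freeness hypothesis and prove it instead of citing such a characterization directly.

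Your sketched repair does not close this gap, for two reasons. First, you never bound the minimal free degree from above: the paper uses bend-and-break (\cite[V, Theorem 1.6.1]{kollar96}) to produce a rational curve through $x$ of degree at most $n+1$, which is free by hypothesis, so that $d=n+1$ exactly. Only then is the splitting type of a minimal curve forced to be $\cO(2)\oplus\cO(1)^{n-1}$, the family of minimal curves through $x$ has dimension exactly $n-1$, and the universal family at $x$ has dimension $n$, so that the evaluation morphism can plausibly ``identify $X$ with $\PP^n$''; with only $d\geq n+1$ your concluding dimension count forces nothing. Second, in characteristic $p$ the crux of Mori's argument is separability of the evaluation/tangent map, and that is exactly what very freeness of the minimal curves through $x$ (together with $d=n+1$) supplies; your endgame uses only freeness (to exclude breaking) plus a dimension bound, which is insufficient in positive characteristic. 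Once one knows that the minimal free curves through $x$ are very free of degree exactly $n+1$, one is literally in the situation of Mori's proof, and that reduction --- not an appeal to a degree-only characterization --- is what the paper means by ``the rest follows \cite[V, Theorem 3.2]{kollar96} word for word.''
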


Every rational curve through $x$ is free provided that $k$ is of characteristic zero and $x$ is very general \cite[II, Theorem 3.11]{kollar96}, or $X$ is $F$-liftable and $x \in X$ is general (Lemma~\ref{lem:main_lemma_of_magic_covers}(a)). Note that there exists a minimal free rational curve through a general point $x$ by \cite[II, Corollary 3.5.4.2]{kollar96}.

\begin{proof} 
A very free rational curve must be of degree at least $n+1$ (see \cite[V, Lemma~3.7.2]{kollar96}). Moreover, there exists a rational curve of degree at most $n+1$ through $x$ by bend-and-break (see \cite[V, Theorem 1.6.1]{kollar96}), hence minimal free rational curves are of degree $n+1$. Since they are very free by assumption, the rest of the proof follows {\cite[V, Theorem 3.2]{kollar96}} word for word.   
\end{proof}

\subsection{\texorpdfstring{The sheaf of $\xi$-invariant forms and rational curves}{The sheaf of xi-invariant forms and rational curves}}
\label{subs:frob-cover}

In this subsection we study positivity conditions imposed on the tangent bundle by the existence of a Frobenius lifting. This is the main component of the proof of Theorem~\ref{thm:homog} in the Picard rank one case.

Let us fix a Frobenius lifting $(\wt{X}, \wt{F}_X)$ of a smooth $k$-scheme $X$ and consider the induced morphism  $\xi \colon F^* \Omega^1_X \to \Omega^1_X$. Recall that it is injective, and hence generically an isomorphism, by Proposition~\ref{prop:frobenius_cotangent_morphism}. Let $U \subseteq X$ to be the maximal open subset where $\xi$ is an isomorphism.

The following simple lemma allows for the study of families of rational curves by using the sheaf of $\xi$-invariant forms denoted by $(\Omega^1_X)^{\xi}$.

\begin{lemma} \label{lem:main_lemma_of_magic_covers} 
  Let $\phi \colon \PP^1 \to X$ be a non-constant morphism such that $\phi(\PP^1)\cap U \neq \emptyset$. Then,
  \begin{enumerate}[(a)]
    \item $\phi$ is free (cf.\ {\cite[Proposition 5]{xin16}}), 
    \item  $\phi^*(\Omega^1_X)^{\xi}$ contains a locally constant $\bb{F}_p$-subsheaf of rank $r = h^0(\PP^1, \phi^*\Omega^1_X)$.
  \end{enumerate}
\end{lemma}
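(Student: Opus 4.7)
My plan is to pull back $\xi$ along $\phi$ and analyze the resulting Frobenius-linear morphism using Grothendieck's splitting theorem on $\PP^1$. Using the canonical identification $F_X \circ \phi = \phi \circ F_{\PP^1}$, one obtains an $\cO_{\PP^1}$-linear morphism $\phi^*\xi \colon F^*\phi^*\Omega^1_X \to \phi^*\Omega^1_X$. The hypothesis $\phi(\PP^1) \cap U \neq \emptyset$ ensures that $\phi^*\xi$ is an isomorphism on a non-empty open subset of $\PP^1$, and hence is injective as a morphism between locally free sheaves.

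For (a), decompose $\phi^*\Omega^1_X \isom \bigoplus_{i=1}^n \cO(a_i)$ with $a_1 \leq \ldots \leq a_n$. If $a_n > 0$, then $p a_n > a_j$ for every $j$, so $\Hom(\cO(p a_n), \cO(a_j)) = H^0(\PP^1, \cO(a_j - p a_n)) = 0$ for every $j$, and the $\cO(p a_n)$-summand of $F^*\phi^*\Omega^1_X$ is killed by $\phi^*\xi$, contradicting injectivity. Hence every $a_i \leq 0$, and $\phi^*T_X \isom \bigoplus \cO(-a_i)$ is nef.

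For (b), let $H = H^0(\PP^1, \phi^*\Omega^1_X)$; by (a), $\dim_k H = r$ equals the number of trivial summands. The canonical inclusion $H \otimes_k \cO \hookrightarrow \phi^*\Omega^1_X$ realizes this trivial part as an intrinsic subsheaf $\cO^r \subseteq \phi^*\Omega^1_X$, and it is preserved by $\phi^*\xi$ (viewed as an additive Frobenius-linear endomorphism) because $\phi^*\xi$ carries global sections to global sections and satisfies $\phi^*\xi(f \omega) = f^p \phi^*\xi(\omega)$. The induced $\cO$-linear morphism $F^*\cO^r \to \cO^r$ is then given by a matrix $A \in M_r(k)$. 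Choosing a splitting $\phi^*\Omega^1_X \isom \cO^r \oplus N$ with $N$ having strictly negative summands, the map $\phi^*\xi$ has block upper triangular form, so $\det \phi^*\xi = \det A \cdot \det D$ for some $D \colon F^*N \to N$. Since $\det \phi^*\xi$ is nonzero on $\phi^{-1}(U)$, we conclude $\det A \neq 0$, whence $A \in GL_r(k)$.

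Passing to global sections gives a Frobenius-linear bijection $T \colon H \to H$, $v \mapsto A v^{(p)}$. By the standard description of fixed points of a Frobenius-linear bijection on a finite-dimensional $k$-vector space recalled in \S\ref{ss:consequence_f_liftability}, the fixed points of $T$ form an $\FF_p$-vector space of dimension $r$. Each such fixed $v$ is a $\phi^*\xi$-invariant global section of $\phi^*\Omega^1_X$, giving an injection of the constant $\FF_p$-sheaf of rank $r$ into $\phi^*(\Omega^1_X)^{\xi}$, which is the required locally constant subsheaf. I foresee no conceptual obstacle; the delicate points — intrinsic definition of the trivial subsheaf and the constancy plus invertibility of $A$ — both reduce to the elementary facts $H^0(\PP^1, \cO) = k$ and the generic non-degeneracy of $\phi^*\xi$.
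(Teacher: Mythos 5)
Your proof is correct and takes essentially the same route as the paper: injectivity of $\phi^*\xi$ (since it is an isomorphism over $\phi^{-1}(U)$) plus a degree count on $\PP^1$ for (a), and for (b) the fixed points of the induced Frobenius-linear automorphism of $H^0(\PP^1,\phi^*\Omega^1_X)$, which yield a constant $\FF_p$-subsheaf of rank $r$ inside $\phi^*(\Omega^1_X)^{\xi}$. The only difference is in how bijectivity on global sections is justified: the paper uses that an injective $p$-linear endomorphism of a finite-dimensional vector space over a perfect field is bijective, whereas you verify invertibility of the matrix $A$ via the canonically defined trivial subsheaf and the block-triangular determinant factorization --- an equivalent argument.
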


\begin{proof}
 Since $\phi^*\Omega^1_X$ is locally free and $\phi^*(\xi) \colon F^*\phi^*\Omega^1_X \to \phi^*\Omega^1_X$ is generically an isomorphism, $\phi^*(\xi)$ must be an injection. 

Write $\phi^*\Omega^1_X = \bigoplus_{i=1}^n \cO_{\PP^1}(a_i)$, where $a_1 \geq a_2 \geq \cdots \geq a_n$. Since $\phi^*(\xi)$ is an injection, it induces a~non-zero morphism $\cO_{\PP^1}(pa_1) \to \cO_{\PP^1}(a_j)$ for some $1 \leq j \leq n$. This is only possible if $a_1 \leq 0$, and so $\phi^*T_X$ is nef, concluding (a). 

The morphism $\phi^*(\xi)$ induces a Frobenius-linear automorphism of the space of global sections $H^0(\PP^1, \phi^*\Omega^1_X)$, and so by taking $\phi^*(\xi)$-fixed points
\[
\cG_{\phi} = H^0(\PP^1, \phi^*\Omega^1_X)^{\phi^*(\xi)} \otimes_{\FF_p} (\FF_p)_{\PP^1}
\]
we get a constant $\bb{F}_p$-subsheaf of $\phi^*(\Omega^1_X)^{\xi}$ of rank $r$, where $(\FF_p)_{\PP^1}$ is the constant sheaf with value $\FF_p$ on $\PP^1$. 
\end{proof}

As mentioned before, for any free rational curve $\phi \colon \PP^1 \to X$ we can write $\phi^* \Omega^1_X = \bigoplus_{i=1}^{n-r} \cO_{\PP^1}(a_i) \oplus \cO_{\PP^1}^{\oplus r}$, where $a_i < 0$. In general, the value of $r$ is upper-semicontinuous under deformations of $\phi$. Here, we show that $r$ is invariant under deformations provided that $X$ is $F$-liftable and $\phi(\PP^1)$ intersects $U$.

\begin{prop} \label{prop:invariance_of_r} 
  With notation as above, let $\phi_i\colon \PP^1 \to X$ for $i \in \{1,2\}$ be two rational curves intersecting $U \subseteq X$ and lying in the same irreducible component $\cM \subseteq \mathrm{Hom}(\PP^1, X)$. Then $h^0(\PP^1, \phi_1^*\Omega^1_X) = h^0(\PP^1, \phi_2^*\Omega^1_X)$.
\end{prop}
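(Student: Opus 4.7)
The plan is to show that the function
\[
  r \colon \cM \to \ZZ_{\geq 0}, \quad \phi \mapsto h^0(\PP^1, \phi^*\Omega^1_X)
\]
is constant on the non-empty open subset
\[
  \cM^\circ := \left\{\phi \in \cM \colon \phi(\PP^1) \cap U \ne \emptyset\right\} \subseteq \cM,
\]
which contains both $\phi_1$ and $\phi_2$ by hypothesis. The subset $\cM^\circ$ is open (it equals the image under the flat projection $p_\cM \colon \PP^1 \times \cM \to \cM$ of the open set $\mathrm{ev}^{-1}(U) \subseteq \PP^1 \times \cM$, where $\mathrm{ev}$ denotes the universal morphism), and since $\cM$ is irreducible, $\cM^\circ$ is dense and connected. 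Standard semicontinuity of coherent cohomology already shows that $r$ is upper semicontinuous on $\cM$, so the crux of the argument is to establish the matching lower bound on $\cM^\circ$.

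For $\phi \in \cM^\circ$ the pull-back $\phi^*\xi$ is generically an isomorphism on $\phi^*\Omega^1_X$ and hence injective; it therefore restricts to an injective Frobenius-linear self-map of the finite-dimensional $k$-vector space $H^0(\PP^1, \phi^*\Omega^1_X) \isom k^{r(\phi)}$, which is automatically bijective with $\FF_p$-fixed locus of $\FF_p$-rank exactly $r(\phi)$. Combining this with Lemma~\ref{lem:main_lemma_of_magic_covers}(b) identifies $p^{r(\phi)}$ with the number of $X$-morphisms $\sigma \colon \PP^1 \to W$ lifting $\phi$ through the étale $X$-scheme $\pi_W \colon W := (T^*X)^\xi \to X$ from \S\ref{subs:frob-cover}.

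Accordingly, I form the $\cM$-scheme
\[
  \cN := \Hom_X(\PP^1 \times \cM,\, W)_{\mathrm{ev}}
\]
parametrizing lifts of $\mathrm{ev}$ through $\pi_W$, equivalently the $\cM$-scheme of sections of $Y := W \times_{X, \mathrm{ev}} (\PP^1 \times \cM) \to \PP^1 \times \cM$. This is representable by a separated $\cM$-scheme locally of finite type, using that $\PP^1 \times \cM / \cM$ is proper and that $W \hookrightarrow T^*X$ is affine over $X$. The key observation is that $\cN \to \cM$ is étale at every point $s \in \cN$ mapping into $\cM^\circ$: since $\pi_W$ is étale, every first-order deformation of the pair $(\phi, \sigma)$ is uniquely determined by that of $\phi$, so the natural map $T_s\cN \to T_\phi \cM$ on tangent spaces is bijective.

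Finally, an étale separated morphism of finite type has lower semicontinuous fiber cardinality on $k$-points: each of the $p^{r(\phi_0)}$ points $s_0^i \in \cN_{\phi_0}$ has a Zariski neighborhood $V_i \subseteq \cN$ mapping as an open immersion onto a Zariski open of $\cM$ (an étale morphism at a $k$-point with trivial residue field extension is Zariski-locally an open immersion), and the clopenness of the diagonal of $\cN \times_\cM \cN$ (étale plus separated) forces the resulting sections $V_i|_\phi$ to remain pairwise distinct on a common Zariski neighborhood of $\phi_0$. Hence $r(\phi) \geq r(\phi_0)$ nearby, and combined with upper semicontinuity, $r$ is locally constant on $\cM^\circ$ and therefore constant by connectedness; in particular $r(\phi_1) = r(\phi_2)$. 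The main delicate point I anticipate is making the representability and étaleness of $\cN$ precise, since $\pi_W$ is only quasi-finite étale (not finite) along $X \setminus U$, so one cannot directly invoke standard results for finite étale covers and must instead assemble $\cN$ from its natural affine charts with some care.
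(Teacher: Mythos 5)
Your overall strategy (replace the paper's constructible étale sheaf by the $\cM$-scheme $\cN$ of lifts of the universal map through the étale cover $(T^*X)^{\xi}\to X$, and compare fiber counts) is viable, but the step you use to extract lower semicontinuity is justified by a false statement. It is not true that an étale morphism is Zariski-locally an open immersion near a $k$-point with trivial residue field extension: for $p\neq 2$ the map $\GG_m\to\bb{A}^1$, $x\mapsto x^2$, is étale, yet no Zariski neighborhood of the point $x=1$ maps injectively, since every cofinite open of $\GG_m$ contains pairs $\{a,-a\}$. Étale morphisms are local isomorphisms only in the étale topology, so your mechanism of finding disjoint Zariski-local sections $V_i$ through the $p^{r(\phi_0)}$ points of $\cN_{\phi_0}$ and concluding that nearby fibers of $\cM$ have at least as many points does not work as written; this is exactly the point where the Zariski topology fails to separate the fiber points, and it is the crux of the proposition. (A smaller issue: bijectivity of $T_s\cN\to T_\phi\cM$ at $k$-points does not by itself give étaleness — closed immersions cut out by ideals inside the square of the maximal ideal have the same property; what is true, and what you should say, is that $\cN\to\cM$ is formally étale because sections of the étale map $Y\to\PP^1\times\cM$ extend uniquely across arbitrary square-zero thickenings, not just first-order ones.)

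The conclusion you want is nevertheless correct, but it requires the henselian local picture rather than Zariski opens. Base-change $\cN\to\cM$ to $\Spec\cO^{h}_{\cM,\phi_0}$: each point of $\cN_{\phi_0}$ gives a section over the henselization (étaleness), any two such sections are disjoint because their equalizer is open and closed (the diagonal of $\cN/\cM$ is an open and closed immersion, $\cN\to\cM$ being étale and separated) and the henselian spectrum is connected; restricting these $p^{r(\phi_0)}$ disjoint sections to a point lying over the generic point $\eta$ of $\cM$, and noting that the geometric generic curve also meets $U$ so that the fixed-point count there is exactly $p^{r(\bar\eta)}$, you get $r(\bar\eta)\geq r(\phi_0)$; coherent upper semicontinuity gives the reverse inequality, so $r$ equals the generic value at every closed point of $\cM^\circ$, which proves the statement. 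This is, in substance, what the paper does in sheaf language: it pushes the constructible étale sheaf of sections of $(T^*X)^{\xi}\to X$ forward along the proper map $\PP^1\times\cM\to\cM$ and proves injectivity of cospecialization using the proper base change theorem together with separatedness of the cover (Lemma~\ref{lem:pushforward_of_etale_sheaf}), then combines this with coherent semicontinuity exactly as you do. In your packaging the properness of $\PP^1\times\cM\to\cM$ is absorbed into the construction of $\cN$, but the specialization/henselization step cannot be avoided, and your proof is incomplete until it is supplied.
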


\begin{proof}
We have the following diagram
\[
  \xymatrix{
    \cM \times \PP^1 \ar[r]^-{\phi} \ar[d]_{\pi} & X \\
    \cM &
  }
\]
Replacing $\cM$ by an open subset, we can assume that $\phi(\{m\} \times \PP^1)$ intersects $U$ for every closed point $m \in \cM$. Pick any closed point $m \in \cM$. Then, as in Lemma~\ref{lem:main_lemma_of_magic_covers} we have
\[
  h^0(\PP^1, \phi^*\Omega^1_X|_{\pi^{-1}(m)}) = \dim_{\FF_p} H^0(\PP^1, \phi^*\Omega^1_X|_{\pi^{-1}(m)})^{\xi} = \dim_{\FF_p}\, (\pi_*\phi^*(\Omega^1_X)^{\xi})_m,
\]
which is lower semi-continuous with respect to $m$ by Lemma~\ref{lem:pushforward_of_etale_sheaf}. The last equality holds by the proper base change theorem. On the other hand, $h^0(\PP^1, \phi^*\Omega^1_X|_{\pi^{-1}(m)})$ is upper semi-continuous with respect to $m$ by the semi-continuity theorem (\cite[III, Theorem 12.8]{hartshorne77}). Hence, it is constant over~$\cM$.
\end{proof}

We needed the following corollary of the proper base change theorem in the above proof.

\begin{lemma}\label{lem:pushforward_of_etale_sheaf} 
  Let $f \colon X \to S$ be a proper morphism of schemes, let $g \colon U \to X$ be a separated \'etale morphism of finite type, and let $\cF$ be the \'etale sheaf of sections of $g$. Then the function $\phi \colon S \to \ZZ$ defined as
  \[
    \phi(s) = |(f_*\cF)_{\bar s}|
  \]
  for any geometric point $\bar s$ over $s$ is lower semi-continuous. 
\end{lemma}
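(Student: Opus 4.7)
The idea is to realize $\phi$ as the fiber-cardinality function of an étale $S$-scheme and then invoke the standard lower semi-continuity of fiber counts for such morphisms.

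First, I would introduce the sections functor $V \colon (\mathrm{Sch}/S)^{\rm op} \to \mathrm{Set}$ defined by $V(T) = \Hom_{X_T}(X_T, U_T)$, where $U_T = U \times_X X_T$. To show $V$ is representable, I would factor $g$ via Zariski's Main Theorem as $U \hookrightarrow \bar U \xrightarrow{\bar g} X$, with $\bar g$ finite and the first map an open immersion. Since $\bar g$ is finite and $f$ is proper, $\bar U \to S$ is proper, so the Hilbert functor $\mathrm{Hilb}_{\bar U/S}$ is representable by a disjoint union of projective $S$-schemes. Sections of $\bar g$ correspond to closed subschemes of $\bar U$ mapping isomorphically onto $X$, a locally closed condition on $\mathrm{Hilb}_{\bar U/S}$, and the locus where such a section lands inside the open subscheme $U$ is further open. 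Thus $V$ is representable by a separated $S$-scheme of finite presentation. Since $g$ is étale, $\Omega^1_{U/X}=0$, so the standard obstruction/torsor groups for extending a section along a square-zero thickening $T\hookrightarrow T'$ with ideal $\cI$ are $H^0(X_T, \sigma_T^*\Omega^1_{U/X}\otimes\cI)=0$, showing $V\to S$ is formally étale, and hence étale.

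Second, I would identify $(f_*\cF)_{\bar s}$ with $V_{\bar s}$. At the level of sections on the small étale site, $(f_*\cF)(T) = \cF(X\times_S T) = V(T)$, so $f_*\cF$ is the étale sheaf on $S$ represented by $V$. Its stalk at $\bar s$ equals $V(\cO^{\rm sh}_{S,\bar s})$, and because $V\to S$ is étale this coincides with $V(k(\bar s)) = V_{\bar s}$. In particular $\phi(s) = |V_{\bar s}|$.

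Finally, for any étale morphism $V\to S$ locally of finite type, the function $s\mapsto |V_{\bar s}|$ is lower semi-continuous: given $n$ distinct geometric points of $V_{\bar s}$, each extends through an étale neighborhood of $\bar s$ in $V$, yielding $n$ disjoint sections of $V_{S'}\to S'$ over some étale neighborhood $S'\to S$ of $\bar s$; the (open) image of $S'$ in $S$ is then a neighborhood of $\bar s$ on which $\phi\ge n$. The main technical obstacle in the argument is the representability of $V$ --- once the sections scheme exists, its étaleness follows from a one-line deformation computation, and the concluding semi-continuity is a formal consequence of the openness of étale morphisms. Properness of $f$ is essential for invoking the Hilbert scheme, while separation and étaleness of $g$ are essential for sections to be clopen immersions and for the vanishing $\Omega^1_{U/X}=0$.
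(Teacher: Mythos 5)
There is a genuine gap at the representability step, and it matters because the lemma assumes only that $f$ is proper. You factor $g$ through a finite morphism $\bar g\colon \bar U\to X$ and then claim that, since $\bar U\to S$ is proper, $\mathrm{Hilb}_{\bar U/S}$ is representable by a disjoint union of projective $S$-schemes. Grothendieck's representability theorem for Hilbert functors requires a (quasi-)projective morphism; for a merely proper morphism one only gets an algebraic space by Artin's theorem, not a scheme, and certainly not projective pieces. Worse, even granting a Hilbert space/scheme, the sections functor does not map into it in the stated generality: the graph of a section $\sigma_T\colon X_T\to U_T$ is isomorphic to $X_T$ over $T$, and $\mathrm{Hilb}$ parametrizes subschemes \emph{flat} over the base, while $f$ is not assumed flat. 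So your sections functor $V$ need not embed into $\mathrm{Hilb}_{\bar U/S}$ at all unless you add flatness (and projectivity) of $f$ — hypotheses that hold in the paper's application (a $\PP^1$-bundle over a quasi-projective base) but not in the lemma as stated. The remaining pieces of your plan (formal \'etaleness of $V/S$ from \'etaleness of $U/X$, the stalk identification over the strict henselization, and lower semicontinuity of fiber counts for separated \'etale morphisms) are fine, but they all sit on top of the unproved representability claim.

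For contrast, the paper avoids representability entirely and needs no flatness or projectivity: $\cF$ is a constructible sheaf of sets (it is the sheaf of sections of a separated \'etale morphism of finite type), hence $f_*\cF$ is constructible by the finiteness theorem for proper pushforward, so $\phi$ is a constructible function; it then suffices to show $\phi(s)\leq\phi(\mu)$ when $s$ is a specialization of $\mu$, i.e.\ that the cospecialization map $(f_*\cF)_{\bar s}\to(f_*\cF)_{\bar\mu}$ is injective. Identifying the stalk with sections of $U\to X$ over $X\times_S S_{(\bar s)}$, two such sections agreeing after restriction to $X\times_S S_{(\bar\mu)}$ must coincide because $U\to X$ is separated and $S_{(\bar\mu)}\to S_{(\bar s)}$ has dense image. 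If you want to keep your \'etale-scheme picture, you would either have to restrict the lemma to $f$ projective and flat, or replace the Hilbert scheme by Artin's algebraic-space machinery together with a representability result for the sections (Weil restriction) functor — a much heavier route than the statement requires.
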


\begin{proof}
By \cite[Expos\'e IX, Corollaire 2.7.1]{SGA4III} or \stacksproj{03S8}, $\cF$ is constructible, and hence $f_*\cF$ is constructible as well \cite[Expos\'e XIV]{SGA4III}. Therefore $\phi$ is a constructible function on $S$, and hence to prove the assertion it is enough to show that if $s,\mu\in S$ are two points such that $s$ lies in the closure of $\mu$, then $\phi(s)\leq \phi(\mu)$. 

Let $\bar s$ be a geometric point lying over $s$ and let $\overline\mu$ be a geometric point of the localization $S_{(\bar s)}$ lying over $\mu$. Passing to stalks, we obtain the cospecialization map 
\[
  c_{\overline{\mu}\leadsto\bar{s}}\colon (f_*\cF)_{\bar s} \ra (f_*\cF)_{\overline{\mu}}.
\] 
It remains to show that $c_{\overline{\mu}\leadsto\bar s}$ is injective. The proper base change theorem implies that $(f_*\cF)_{\bar s}$ is the set of sections of $U \to X$ over $X \times_{S} S_{(\bar s)}$. Take any two such sections ${u_1, u_2 \colon X \times_{S} S_{(\bar s)} \to U}$, and suppose they are equal after restricting to $X \times_{S} S_{(\overline{\mu})} \to U$. Then $u_1=u_2$, as $U \to X$ is separated and $S_{(\overline{\mu})}  \to S_{(\bar s)}$ has dense image. 
\end{proof}

\subsection{The Picard rank one case}
In this subsection we prove Theorem~\ref{thm:homog} in the Picard rank one case (Proposition \ref{prop:homog-rank-one}).  Before proceeding with the proof we need the following result.

\begin{lemma} \label{lem:no_global_forms} 
  Let $X$ be a smooth projective $F$-liftable Fano variety over $k$. Then  $X$ is simply connected and $H^0(X, \Omega^1_X)=0$.
\end{lemma}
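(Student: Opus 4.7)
I plan to handle the two conclusions separately. For the simple connectedness, Bott vanishing (Theorem~\ref{thm:bott-vanishing}) with $i = n := \dim X$ and the ample bundle $L = \omega_X^{-1}$ gives $H^j(X, \cO_X) = H^j(X, \Omega^n_X \otimes \omega_X^{-1}) = 0$ for all $j > 0$, and hence $\chi(X,\cO_X) = 1$. Any connected finite \'etale cover $\pi \colon Y \to X$ of degree $d$ is again $F$-liftable by Lemma~\ref{lemma:ascending-frob-lift}, and it is Fano because $-K_Y = \pi^*(-K_X)$ is ample, so the same computation yields $\chi(Y,\cO_Y) = 1$. Since flatness gives $\chi(Y,\cO_Y) = d \cdot \chi(X,\cO_X) = d$, we must have $d = 1$, and therefore $\piet(X) = 1$.

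For the vanishing $H^0(X, \Omega^1_X) = 0$, my strategy is to lift $X$ to characteristic zero and compare with classical Hodge theory there. By Corollary~\ref{cor:fliftable-fano-rigid}, $H^i(X, T_X) = 0$ for $i > 0$, so the deformation functor of $X$ is formally smooth. Starting from the $W_2(k)$-lift furnished by $F$-liftability and inductively extending (each obstruction lies in $H^2(X, T_X) = 0$), then algebraizing by means of the ample line bundle $-K_X$, which lifts uniquely and compatibly to each $W_n(k)$-level since $H^1(X,\cO_X) = H^2(X,\cO_X) = 0$, I obtain a smooth projective lift $\cX/W(k)$. Its generic fiber $\cX_K$, where $K := \mathrm{Frac}\, W(k)$, is a smooth projective Fano variety in characteristic zero, for which Kodaira vanishing and Hodge symmetry yield $H^0(\cX_K, \Omega^1_{\cX_K}) = H^1(\cX_K, \cO_{\cX_K}) = 0$, and hence $H^1_{\mathrm{dR}}(\cX_K/K) = 0$.

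To transfer this vanishing back to $X$, I use the Hodge decomposition coming from Proposition~\ref{prop:frobenius_cotangent_morphism}(b): $H^1_{\mathrm{dR}}(X) \cong H^0(X, \Omega^1_X) \oplus H^1(X, \cO_X)$, which by the first paragraph reduces to $H^1_{\mathrm{dR}}(X) \cong H^0(X, \Omega^1_X)$. Since $\cX$ is a smooth proper lift, crystalline base change gives $H^1_{\mathrm{cris}}(X/W(k)) \otimes_{W(k)} K \cong H^1_{\mathrm{dR}}(\cX_K/K) = 0$, so $H^1_{\mathrm{cris}}(X/W(k))$ is $W(k)$-torsion.

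The main obstacle I foresee is to rule out a nonzero contribution of this torsion to $H^1_{\mathrm{dR}}(X) \cong H^1_{\mathrm{cris}}(X/W(k)) \otimes_{W(k)} k$ via the universal-coefficient sequence (which could also pick up a $p$-torsion part of $H^2_{\mathrm{cris}}(X/W(k))$). To handle this I plan to invoke that $F$-liftable smooth proper varieties are Bloch--Kato ordinary (the corollary right after Proposition~\ref{prop:frobenius_cotangent_morphism}), together with the Illusie--Raynaud theorem that the crystalline cohomology of a Bloch--Kato ordinary smooth projective variety is $W(k)$-torsion-free. This forces $H^1_{\mathrm{cris}}(X/W(k)) = 0$, so $H^1_{\mathrm{dR}}(X) = 0$ and therefore $H^0(X, \Omega^1_X) = 0$.
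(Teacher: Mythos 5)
Your first paragraph is essentially the paper's own argument (\'etale covers are $F$-liftable, Kodaira/Bott vanishing forces $\chi(Y,\cO_Y)=\chi(X,\cO_X)=1$, hence degree one), and it is correct; note only that the multiplicativity $\chi(Y,\cO_Y)=d\cdot\chi(X,\cO_X)$ is a Riemann--Roch statement, as in the paper, not a consequence of flatness alone. The second half, however, has a genuine gap at its final step. The lifting to $W(k)$ is fine (the paper itself observes that $F$-liftable Fanos lift to characteristic zero), and indeed $H^1_{\mathrm{cris}}(X/W(k))\otimes K=0$; moreover $H^1_{\mathrm{cris}}$ is automatically torsion-free (universal coefficients in degree $0$), so it vanishes. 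But what you then need is $\mathrm{Tor}_1^{W(k)}(H^2_{\mathrm{cris}}(X/W(k)),k)=0$, and the statement you invoke --- that Bloch--Kato ordinarity of a smooth projective variety forces its crystalline cohomology to be $W(k)$-torsion-free --- is not a theorem of Illusie--Raynaud and is in fact false. A counterexample lives inside this very paper: take $X=(E_1\times E_2)/(\ZZ/2\ZZ)$ in characteristic $2$, with $E_1,E_2$ ordinary elliptic curves and $\ZZ/2\ZZ$ acting by translation by a nonzero $2$-torsion point on $E_1$ and by inversion on $E_2$. The action is free, so $X$ is a Galois \'etale quotient of an ordinary abelian surface, hence $F$-liftable by Example~\ref{ex:ordinaryav-etale} and Bloch--Kato ordinary by the corollary following Proposition~\ref{prop:frobenius_cotangent_morphism}. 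Yet in characteristic $2$ inversion acts trivially on $H^0(\Omega^1_{E_2})$ and $H^1(\cO_{E_2})$, so $h^0(\Omega^1_X)=h^1(\cO_X)=2$, while $b_1(X)=2$; the decomposition of $H^1_{\mathrm{dR}}(X/k)$ provided by Proposition~\ref{prop:frobenius_cotangent_morphism}(b) gives $\dim H^1_{\mathrm{dR}}(X/k)=4$, whereas $H^1_{\mathrm{cris}}(X/W(k))$ is torsion-free of rank $2$, so universal coefficients forces nonzero $p$-torsion in $H^2_{\mathrm{cris}}(X/W(k))$.

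The gap is not cosmetic, because in your setup the torsion of $H^2_{\mathrm{cris}}$ is exactly where a putative global $1$-form hides: since $H^1(X,\cO_X)=0$ and the Hodge--de Rham decomposition holds by $F$-liftability, your argument shows precisely that $h^0(X,\Omega^1_X)=\dim_k\mathrm{Tor}_1^{W(k)}(H^2_{\mathrm{cris}}(X/W(k)),k)$, so the ordinarity citation carries the entire content of the lemma. Killing this torsion amounts to excluding $p$-torsion line bundles (divisorial torsion) together with exotic torsion, and the first of these is exactly what the paper does, entirely in characteristic $p$: global $1$-forms are closed and the Cartier operator identifies $H^0(X,Z^1_X)$ with $H^0(X,\Omega^1_X)$, the van der Geer--Katsura result then gives $H^0(X,\Omega^1_X)\isom\Pic(X)[p]\otimes_{\ZZ}k$, and Kodaira vanishing shows every numerically trivial line bundle has a section, hence is trivial, so $\Pic(X)$ is torsion-free. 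To repair your route you would have to prove torsion-freeness of $H^2_{\mathrm{cris}}$ by some such direct argument, at which point you have essentially reproduced the paper's proof; as written, the characteristic-zero detour does not close.
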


\begin{proof}
In order to prove that $X$ is simply connected, let us consider an \'etale cover ${f \colon Y \to X}$ of degree $d$. By the Hirzebruch--Riemann--Roch theorem (see \cite[Corollary 15.2.1]{fulton_intersection}), we have $\chi(Y, \cO_Y) = d\cdot \chi(X, \cO_X)$. Since $X$ is $F$-liftable, so is $Y$ (Lemma~\ref{lemma:ascending-frob-lift}), and hence Kodaira vanishing holds on both $X$ and $Y$ (see Theorem~\ref{thm:bott-vanishing}). Thus $\chi(Y, \cO_Y) = \chi(X, \cO_X) = 1$, which shows that $d=1$ and $X$ is simply connected.

Now, we show that $H^0(X, \Omega^1_X)=0$. Since $X$ is $F$-liftable, all of its global one-forms are closed by Proposition~\ref{prop:frobenius_cotangent_morphism}, and
\[
  Z^1_X \isom B^1_X \oplus \Omega^1_X,
\]
where $Z^1_X = \ker (d \colon F_* \Omega^1_X \to F_* \Omega_X^2)$ and $B^1_X = F_*\cO_X / \cO_X$. In particular, the Cartier operator induces an isomorphism $H^0(X, Z^1_X) \isom H^0(X,\Omega^1_X)$. Therefore, the assumptions of \cite[Proposition 4.3]{geer-katsura03} are satisfied and we obtain
\[
  H^0(X,\Omega^1_X) \isom \Pic(X)[p] \otimes_{\ZZ} k.
\]
We claim that $\Pic(X)$ is torsion free. Indeed, if $L$ is a numerically trivial line bundle on $X$, then by Kodaira vanishing we have
\[ 
  H^i(X,L) = H^i(X, \omega_X\otimes \omega_X^{-1} \otimes L) = 0 \quad \text{ for }i>0,
\]
and therefore $h^0(X,L) = \chi(X,L) = \chi(X, \cO_X) = h^0(X, \cO_X) = 1$.
\end{proof}

\begin{prop} \label{prop:homog-rank-one}
  Let $X$ be a smooth projective Fano variety of dimension $n$ defined over an algebraically closed field of positive characteristic. Suppose that $T_X$ is nef, $\rho(X)=1$, and $X$ is $F$-liftable. Then $X \isom \PP^n$.
\end{prop}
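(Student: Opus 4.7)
The plan is to apply Mori's characterization (Theorem~\ref{thm:mori}). Since $T_X$ is nef, every rational curve on $X$ is free, so it suffices to prove that every minimal free rational curve through a general point $x \in U$ is very free. I argue by contradiction: assume $\phi_0 \colon \PP^1 \to X$ is a minimal free rational curve through $x$ such that $\phi_0^* \Omega^1_X \isom \cO_{\PP^1}^{\oplus r} \oplus \bigoplus_{i=1}^{n-r} \cO_{\PP^1}(a_i)$ with $a_i < 0$ and $r > 0$.

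Let $M$ be the irreducible component of $\mathrm{RatCurves}_d(X)$ containing $\phi_0$. By Lemma~\ref{lem:deformations_of_curves_tx_nef}, $M$ is proper, and by Proposition~\ref{prop:invariance_of_r} the integer $r$ is constant on $M$. With $p \colon V \to M$ the universal $\PP^1$-bundle and $\mathrm{ev} \colon V \to X$ the evaluation, cohomology and base change yield a locally free sheaf $\cE \coloneqq p_* \mathrm{ev}^* \Omega^1_X$ of rank $r$, and the counit $p^* \cE \hookrightarrow \mathrm{ev}^* \Omega^1_X$ is the fiberwise inclusion of the trivial summand, hence a subbundle. The map $\xi \colon F_X^* \Omega^1_X \to \Omega^1_X$ induces, via flat base change along the smooth $p$, a Frobenius-linear morphism $\tilde\xi \colon F_M^* \cE \to \cE$ which fiberwise is the bijective action of $\phi_m^* \xi$ on $H^0(\PP^1, \phi_m^* \Omega^1_X)$ from the proof of Lemma~\ref{lem:main_lemma_of_magic_covers}; thus $\tilde\xi$ is an isomorphism, and by \cite[Satz 1.4]{Lange_Stuhler} there exists a connected finite \'etale cover $\tau \colon \widetilde M \to M$ trivializing $\cE|_{\widetilde M} \isom \cO_{\widetilde M}^{\oplus r}$.

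Base-changing along $\tau$ yields $\widetilde V \coloneqq V \times_M \widetilde M$ with projections $\tilde p \colon \widetilde V \to \widetilde M$ and $\widetilde{\mathrm{ev}} \colon \widetilde V \to X$. The subbundle $\tilde p^*(\cE|_{\widetilde M}) \isom \cO_{\widetilde V}^{\oplus r} \hookrightarrow \widetilde{\mathrm{ev}}^* \Omega^1_X$ contributes $k^r$ to $H^0(\widetilde V, \widetilde{\mathrm{ev}}^* \Omega^1_X)$ via $\tilde p_* \cO_{\widetilde V} = \cO_{\widetilde M}$. To produce a contradiction, I compute this global section group a second way using the projection formula. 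The morphism $\mathrm{ev}$ is proper (as $V$ is projective) and surjective (since $X$ is Fano with $\rho(X) = 1$, so minimal free rational curves cover $X$); by Stein factorization $V \to Z \to X$ with $Z \to X$ finite \'etale (as $\mathrm{ev}$ is smooth), and $Z = X$ by simple-connectedness of $X$ (Lemma~\ref{lem:no_global_forms}) together with connectedness of $V$. Applying the same Stein argument to the connected $\widetilde V$ shows $\widetilde{\mathrm{ev}}$ has connected fibers, so $\widetilde{\mathrm{ev}}_* \cO_{\widetilde V} = \cO_X$, and the projection formula combined with Lemma~\ref{lem:no_global_forms} gives
\[
  H^0\bigl(\widetilde V, \widetilde{\mathrm{ev}}^* \Omega^1_X\bigr) = H^0(X, \Omega^1_X) = 0,
\]
contradicting the $k^r$ found above.

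The most delicate step will be verifying that $\widetilde{\mathrm{ev}} \colon \widetilde V \to X$ has connected fibers; this Stein-factorization argument depends essentially on the simple-connectedness of $X$ guaranteed by Lemma~\ref{lem:no_global_forms} (which itself requires both the $F$-liftability and the Fano assumption) and on choosing $\widetilde M$ to be a connected component of the \'etale cover produced by Lange--Stuhler. The construction of $\tilde\xi$ via flat base change along $p$, and hence the trivialization of $\cE$, is comparatively routine.
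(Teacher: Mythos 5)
Your overall strategy is the same as the paper's (reduce to Mori's theorem, then derive a contradiction with $H^0(X,\Omega^1_X)=0$ from Lemma~\ref{lem:no_global_forms}), and the step you single out as delicate — connectedness of the fibers of $\widetilde{\mathrm{ev}}$ via Stein factorization and simple connectedness — is in fact fine. The genuine gap is earlier: you treat every member of the minimal family as if its image meets the open set $U$ where $\xi$ is an isomorphism. Lemma~\ref{lem:main_lemma_of_magic_covers} and Proposition~\ref{prop:invariance_of_r} are stated (and proved) only for curves intersecting $U$, while $X\setminus U$ is a \emph{nonempty} divisor here, since $\det(\xi)$ is a nonzero section of $\omega_X^{1-p}$ and $X$ is Fano; nothing prevents members of the family from lying entirely inside $\mathrm{Supp}\,\Delta_{\wt F_X}$. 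Over such points $h^0(\PP^1,\phi_m^*\Omega^1_X)$ can jump, so $\cE=p_*\mathrm{ev}^*\Omega^1_X$ need not be locally free nor commute with base change on all of the proper $M$; and even where it is defined, the fiberwise action of $\xi$ on global sections need not be bijective (injectivity of $\phi_m^*\xi$ uses that the curve meets $U$). Hence your $\tilde\xi\colon F_M^*\cE\to\cE$ is not known to be an isomorphism, and Lange--Stuhler cannot be invoked, which is the pivot of your whole construction. Restricting to the good locus $M^\circ$ does not rescue the argument as written, because your final computation of $H^0(\widetilde V,\widetilde{\mathrm{ev}}^*\Omega^1_X)$ via the projection formula needs properness of $\widetilde{\mathrm{ev}}$ over $X$, which is lost after removing the bad curves.

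This is exactly the difficulty the paper's proof is built around: it works with the \'etale $\FF_p$-sheaf of $\xi$-invariant forms only over $\mathrm{Univ}^\circ=\pi^{-1}(\cM^\circ)$, proves that $\mathrm{Univ}\setminus\mathrm{Univ}^\circ$ has codimension at least two in $\mathrm{Univ}$ — this is where $\rho(X)=1$ and the connectedness of the fibers of the evaluation map are actually used — and then extends the rank-$r$ local system across by Zariski--Nagata purity before pushing it forward to $X$ to contradict $H^0(X,\Omega^1_X)=0$. To make your vector-bundle variant work you would need an analogous step: show the locus of curves disjoint from $U$ has codimension $\geq 2$ in $M$ and that $\cE$ together with the isomorphism $\tilde\xi$ extends across it (or otherwise replace the bundle-theoretic trivialization by the $\FF_p$-local-system argument of the paper). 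As it stands, the assertion ``thus $\tilde\xi$ is an isomorphism'' is unjustified and the proof does not go through.
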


The idea of the proof is the following.  If $X \not \isom \PP^n$, then by Mori theory there exists a~rational curve $C \subseteq X$ such that $T_X|_C$ is not very ample. In particular, $h^0(C, \Omega^1_X|_C) > 0$.  Using the global lifting of Frobenius we show that the sections of $\Omega^1_X|_{C_t}$, where $C_t$ are deformations of $C$, glue to global sections of $\Omega^1_X$. This contradicts the fact that $X$ is Fano (see Lemma \ref{lem:no_global_forms}). 

\begin{proof}
Choose a general point $x \in X$. If every minimal free rational curve $\rho \colon \PP^1 \to X$ passing through $x$ is very free, then $X \isom \PP^n$ by Theorem~\ref{thm:mori}. Suppose by contradiction that there exists such a non-very-free minimal $\rho$ of degree $d$. Then $h^0(\PP^1, \rho^*\Omega^1_X) = r \geq 1$.

Let $\cM$ be the irreducible component of $\mathrm{RatCurves}_d(X)$ containing $\rho$ (see \S\ref{subs:rational_curves} for the notation). By Lemma~\ref{lem:deformations_of_curves_tx_nef}, we know that $\mathscr{M}$ is proper and there exist morphisms
\[
  \xymatrix{
    \mathrm{Univ} \ar[d]_{\pi} \ar[r]^{\phi} &X\\
    \cM & 
  }
\]
with $\pi \colon \mathrm{Univ} \to \cM$ being a $\PP^1$-fibration, and $\phi$ being smooth. Since $X$ is simply connected (see Lemma~\ref{lem:no_global_forms}), we get that $\phi$ has connected fibers. Indeed, let $\mathrm{Univ} \to Y \to X$ be the Stein factorization of $\phi$. Since $\phi$ is smooth and proper, the morphism $Y \to X$ is finite \'etale (see \cite[Exp.\ X, Prop.\ 1.2]{SGA1} or \cite[7.8.10 (i)]{EGAIII_II}). 
and so it must be an isomorphism.

We fix a Frobenius lifting $(\wt{X}, \wt{F}_X)$ of $X$ and consider the induced morphism  ${\xi \colon F^* \Omega^1_X \to \Omega^1_X}$. Let $(\Omega^1_X)^{\xi}$ be the sheaf of $\xi$-invariant forms and let $U \subseteq X$ be the maximal open subset for which $\xi|_U \colon F^* \Omega^1_U \to \Omega^1_U$ is an isomorphism. 

Set $\cM^\circ =  \pi(\phi^{-1}(U))$ to be the locus of all $m \colon \PP^1 \to X$ in $\cM$ whose image intersects $U$. It is open as $\pi$ is smooth. Take $\mathrm{Univ}^\circ = \pi^{-1}(\cM^\circ)$ and let $j\colon \mathrm{Univ}^\circ\to\mathrm{Univ}$ denote the inclusion. We define the following subsheaf of $\phi^{*}(\Omega^1_X)^{\xi}|_{\mathrm{Univ}^\circ}$
\[
  \mathscr{G} = j^* \pi^{*} \pi_* \phi^{*} (\Omega^1_X)^{\xi}|_{\mathrm{Univ}^\circ}.
\]
Given $m \colon \PP^1 \to X$ in $\cM^\circ$, the proper base change theorem implies that $\smash{\mathscr{G}|_{\pi^{-1}(m)}}$ is isomorphic to the constant $\FF_p$-sheaf with value $\smash{H^0(\PP^1, m^*(\Omega^1_X)^{\xi})}$. Lemma~\ref{lem:main_lemma_of_magic_covers}(b) and Proposition~\ref{prop:invariance_of_r} show that $\mathscr{G}$ is a locally constant $\FF_p$-sheaf of rank $r$. Indeed, $(\pi_*\phi^*\Omega^1_X)|_{\mathscr{M}^\circ}$ is a~locally free sheaf of rank $r$ by \cite[III, Corollary 12.9]{hartshorne77} and
\[
  \mathscr{G} = \pi^{*}(\pi_* \phi^* \Omega^1_X|_{\mathscr{M}^\circ})^{\pi_* \phi^*\xi}.
\]

We claim that $\mathrm{codim}\, \mathrm{Univ} \, \backslash\, \mathrm{Univ}^\circ \geq 2$. In order to prove this it is enough to show that for an irreducible divisor $D \subseteq X \backslash U$, the codimension of $\phi^{-1}(D) \backslash \mathrm{Univ}^\circ$ in $\mathrm{Univ}$ is at least two. Since $\phi$ is smooth and has connected fibers, $\phi^{-1}(D)$ is irreducible, and hence it is enough to show that $\phi^{-1}(D) \cap \mathrm{Univ}^\circ \neq \emptyset$. This follows from the fact that $\rho(X)=1$. Indeed, for a general closed point $m \in \cM$, the curve $\phi(\pi^{-1}(m))$ intersects $D$, and so $\pi^{-1}(m) \subseteq \mathrm{Univ}^\circ$ intersects $\phi^{-1}(D)$.

By Zariski--Nagata purity, the sheaf $j_*\cG$ is a locally constant $\FF_p$-sheaf of rank $r$. Since $\mathrm{Univ}$ is normal and the complement of $\mathrm{Univ}^\circ$ is of codimension at least two, we have $j_* j^* \phi^*\Omega^1_X \isom \phi^*\Omega^1_X$. In particular,
\[
  j_*\cG \subseteq j_* j^* \phi^*(\Omega^1_X)^{\xi} = (j_* j^* \phi^*\Omega^1_X)^{\phi^* \xi}  \isom \phi^{*}(\Omega^1_X)^{\xi}. 
\] 
Let $T$ be a fiber of $\phi \colon \mathrm{Univ} \to X$. Since $j_*\cG|_T$ is a locally constant subsheaf of the constant sheaf $\phi^*(\Omega^1_X)^{\xi}|_T$, we see that $j_*\cG|_T$ is constant. In particular, the proper base change theorem implies that $\phi_* j_*\cG$ is a  nonzero locally constant subsheaf of $(\Omega^1_X)^{\xi}$. As $X$  is simply connected, $\phi_* j_* \cG$ is constant, and hence $H^0(X, (\Omega^1_X)^{\xi}) \neq 0$, contradicting Lemma~\ref{lem:no_global_forms}.
\end{proof}

\subsection{The general case}

We recall some facts about homogeneous spaces over an algebraically closed field $k$. A projective variety $X$ over $k$ is called \emph{homogeneous} if its automorphism group $\Aut(X)$ acts transitively on $X$. Note that since $X$ is projective, $\Aut(X)$ is actually (the set of $k$-points of) a group scheme locally of finite type over $k$, and in particular its connected component $\Aut^0(X)$ is a group scheme of finite type acting transitively on $X$. 

The decomposition theorem of Borel and Remmert \cite{BorelRemmert} (see \cite[\S 5--6]{SanchoDeSalas} for its extension to positive characteristic) states that $X$ decomposes into a product
\begin{equation} \label{eqn:borel-remmert-decomposition}
  X\isom A\times G_1/P_1\times \ldots \times G_r/P_r,
\end{equation}
where $A$ is an abelian variety, $G_i$ are simple linear algebraic groups of adjoint type, and $P_i\subseteq G_i$ are parabolic subgroup schemes (i.e., each $P_i$ contains a Borel subgroup of $G_i$).

Let $G$ be a simple linear algebraic group over $k$. A choice of a Borel subgroup $B\subseteq G$ and a~maximal torus $T\subseteq B$ gives a set $D$ of \emph{simple roots} of $G$, which is the set of nodes of the Dynkin diagram of $G$. Following the conventions of \cite[p.\ 58]{humphreys}, we number them $1, \ldots, n$ where $n$ is the rank of $G$. Reduced parabolic subgroups of $G$ containing $B$ are in an~inclusion-preserving bijection with subsets of $D$. We can thus represent (possibly ambiguously) rational homogeneous spaces with reduced stabilizers by marked Dynkin diagrams, i.e.\ Dynkin diagrams with a chosen set of nodes. If $\alpha\in D$ is a simple root, we denote by $P(\alpha)$ the maximal parabolic subgroup corresponding to $D\setminus\{\alpha\}$.  

\begin{lemma} \label{lemma:pr-homogeneous}
  Let $G$ be as above, and let $\alpha\in D = \{1, \ldots, n\}$ be a simple root. Suppose that $G/P(\alpha)\isom \PP^r_k$ for some $r>0$. Then one of the following holds:
  \begin{enumerate}[(i)]
    \item $r=n$, the group $G$ is of type $A_{n}$, and $\alpha=1$ or $n$,
    \item $r=2n-1\geq 3$, the group $G$ is of type $C_n$, and $\alpha=1$.
  \end{enumerate}
\end{lemma}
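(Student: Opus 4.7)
The plan is to translate the hypothesis into a numerical condition on the root datum of $G$ and then to verify that condition case-by-case. If $X = G/P(\alpha) \isom \PP^r$ with $r > 0$, then $X$ is a Fano variety of dimension $r$ and Picard rank one, and $-K_X = (r+1) L$ where $L$ is the ample generator of $\Pic(X) \isom \ZZ$. In other words, the Fano index of $G/P(\alpha)$ equals its dimension plus one:
\[
  i\bigl(G/P(\alpha)\bigr) \;=\; \dim G/P(\alpha) + 1.
\]
The strategy is now to read off both sides from the root datum and to solve this equation.

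Writing $m_\alpha(\beta)$ for the $\alpha$-coefficient of a positive root $\beta$, the standard formula
\[
  -K_{G/P(\alpha)} \;=\; \sum_{\beta \in \Phi^+,\, m_\alpha(\beta) > 0} \beta
\]
gives
\[
  \dim G/P(\alpha) \;=\; \big|\{\beta \in \Phi^+ : m_\alpha(\beta) > 0\}\big|,\qquad
  i\bigl(G/P(\alpha)\bigr) \;=\; \sum_{\beta \in \Phi^+,\, m_\alpha(\beta) > 0} \beta(\alpha^\vee),
\]
both of which depend only on the pair $(G, \alpha)$ and are insensitive to the characteristic of $k$. It therefore suffices to go through every simple Dynkin type ($A_n$, $B_n$, $C_n$, $D_n$, $E_6$, $E_7$, $E_8$, $F_4$, $G_2$) and every simple root $\alpha$ in it, and to check for which $(G, \alpha)$ the equation above holds.

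Having reduced to this combinatorial check, one simply enumerates positive roots in each type and performs routine arithmetic; the only solutions are the three families in (i) and (ii) (these data are equivalent to the well-known tabulation of indices of rational homogeneous spaces of Picard number one). The converse direction in the listed cases is geometric: $\mathrm{SL}_{n+1}/P(\alpha_1)$ (and dually $\mathrm{SL}_{n+1}/P(\alpha_n)$) parametrizes lines (resp.\ hyperplanes) in the standard representation $k^{n+1}$ and so is $\PP^n$, while $\mathrm{Sp}_{2n}/P(\alpha_1)$ parametrizes lines in the defining symplectic representation $k^{2n}$, and since $\mathrm{Sp}_{2n}$ acts transitively on nonzero vectors (by the symplectic basis extension theorem), the quotient is $\PP^{2n-1}$.

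The main obstacle is simply the tedium of the case-by-case enumeration; no step is conceptually difficult. A minor subtlety to watch is the isomorphism of root systems $B_2 \isom C_2$: the equation $i = \dim + 1$ for $B_n/P(\alpha_n)$ holds only at $n = 2$, and $B_2/P(\alpha_2) \isom C_2/P(\alpha_1) \isom \PP^3$ is then subsumed into case (ii) through this identification, provided one fixes a labeling convention such as reserving $B_n$ for $n \geq 3$.
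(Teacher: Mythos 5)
Your argument is correct in outline but takes a genuinely different route from the paper. The paper deduces the lemma from Demazure's theorem on automorphisms of $G/P$: since $\Aut^0(\PP^r_k)\isom PGL_{r+1}(k)$, the classification of the exceptional pairs $(G,P)$ for which $G\to\Aut^0(G/P)$ fails to be an isomorphism leaves only $G=PSp_r(k)$ acting on the space of lines (your case (ii)), and otherwise forces $G\isom PGL_{n+1}(k)$, so that $G/P(\alpha)$ is a Grassmannian ${\rm Gr}(\alpha,n+1)$ and the dimension count $\alpha(n+1-\alpha)=n$ pins down $\alpha=1,n$. Your route instead extracts the necessary numerical condition $i(G/P(\alpha))=\dim G/P(\alpha)+1$ from the root datum (both sides being characteristic-independent, as you note) and then classifies the solutions type by type. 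This is sound and more self-contained in the sense that it avoids invoking Demazure's automorphism theorem, but it shifts the entire burden onto the case-by-case enumeration, which you assert ("routine arithmetic", "well-known tabulation") rather than carry out; that tabulation is indeed classical, but as written your proof is incomplete at exactly the step where all the work lies, whereas the paper's citation of Demazure does that work in one stroke. You should also be slightly more careful with low-rank coincidences: besides $B_2\isom C_2$ (which you handle), the equation also has the solution $D_3/P(\alpha_3)$, which is $\PP^3$ via $D_3\isom A_3$; with the standard convention that $D_n$ is reserved for $n\geq 4$ this is harmless, but it should be said. Finally, the converse verifications ($\mathrm{SL}_{n+1}$ and $\mathrm{Sp}_{2n}$ acting on lines) are not needed for the lemma as stated, though they do no harm.
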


In other words, the Dynkin diagram and the node $\alpha$ are as shown below.

\begin{center}
  \includegraphics[width=0.3\textwidth]{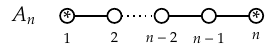} 
  \hspace{1cm}
  \includegraphics[width=0.3\textwidth]{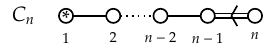} 
\end{center}

\begin{proof}
Demazure \cite{Demazure} showed that for most pairs $(G, P)$ of a simple group $G$ of adjoint type and a reduced parabolic subgroup $P$, the natural morphism
\[ 
  G \ra \Aut^0(G/P)
\]
is an isomorphism, and classified the exceptions. The only exception with $\Aut^0(G/P)\isom PGL_{r+1}(k)$ is (see \emph{op.cit.}, case (a) on p.\ 181) $G=PSp_{r}(k)$ and $P$ the stabilizer of a line, in which case $G/P\isom \PP^r_k$ ($r$ has to be odd in this case). This is our case (ii). If (ii) does not hold, then $r=n$ and $G\isom PGL_{n+1}(k)$, and hence $G/P(\alpha)\isom {\rm Gr}(\alpha, n+1)$. So $\dim G/P(\alpha) = \alpha(n+1-\alpha) = n$, and hence $\alpha=1$ or $\alpha=n$.
\end{proof}

Below,  we denote by $F_{1, n}$ the incidence variety parametrizing partial flags $W_1\subseteq W_{n}\subseteq k^{n+1}$ where $\dim W_1 = 1$ and $\dim W_{n} = n$. It is a hypersurface of degree $(1, 1)$ in $\PP^n_k\times \PP^n_k$, and as a homogeneous space it corresponds to the Dynkin diagram of type $A_{n}$ with all nodes except for the two endpoints $1, n$ marked. 

\begin{lemma} \label{lemma:pr-homogeneous-max}
  Let $G$ be a simple algebraic group over $k$ and let $P\subseteq G$ be a reduced parabolic subgroup. Suppose that for every maximal reduced parabolic $Q\subseteq G$ containing $P$, the homogeneous variety $G/Q$ is isomorphic to  $\PP^r_k$ for some $r$. Then one of the following conditions holds:
  \begin{enumerate}[(i)]
    \item $G/P\isom \PP^n_k$ for some $n$,
    \item $G/P\isom F_{1, n}$ for some $n$.
  \end{enumerate}
\end{lemma}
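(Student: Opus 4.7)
The plan is to combine the bijection between reduced parabolic subgroups of $G$ containing a fixed Borel and subsets of the set $D$ of simple roots with the preceding Lemma~\ref{lemma:pr-homogeneous}, reducing the problem to a short finite case analysis. I will fix a Borel $B\subseteq P$ together with a maximal torus $T\subseteq B$, which produces the Dynkin diagram $D$ of $G$ and an inclusion-preserving bijection $I\mapsto P_I$ between subsets of $D$ and reduced parabolic subgroups of $G$ containing $B$; here $P_\emptyset = B$ and $P_D = G$, and $P_I$ is characterized by its Levi having simple roots $I$. Writing $P = P_I$, the maximal proper parabolic subgroups of $G$ containing $P_I$ are exactly $\{P(\alpha) : \alpha \in D\setminus I\}$, so the hypothesis amounts to saying that $G/P(\alpha)$ is a projective space for every $\alpha \in D\setminus I$.

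The crucial step is then to apply Lemma~\ref{lemma:pr-homogeneous} to each $\alpha \in D\setminus I$. Each such $\alpha$ forces $G$ to be of type $A_n$ with $\alpha \in \{1, n\}$, or of type $C_n$ (with $n\geq 2$) and $\alpha = 1$; since the type of $G$ is fixed, this severely restricts $D\setminus I$. If $G$ is not of type $A$ or $C$ then $D\setminus I = \emptyset$, so $P = G$ and $G/P$ is a point. If $G$ is of type $C_n$ and $D\setminus I$ is nonempty, then $D\setminus I = \{1\}$, giving $P = P(1)$ and $G/P \isom \PP^{2n-1}$. If $G$ is of type $A_n$, then $D\setminus I \subseteq \{1,n\}$; when this set has at most one element, $P$ is $G$, $P(1)$, or $P(n)$, so $G/P$ is a point or $\PP^n$, settling conclusion (i) in all of these cases.

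The only remaining case is $G$ of type $A_n$ with $n\geq 2$ and $D\setminus I = \{1, n\}$, where $P = P(1)\cap P(n)$. For this I will identify $G/P$ with $F_{1,n}$: realizing $G = PGL_{n+1}(k)$ with its standard Borel, $P(1)$ is the stabilizer of the line $ke_1\subseteq k^{n+1}$, while $P(n)$ is the stabilizer of the hyperplane $ke_1 \oplus \cdots \oplus ke_n$, so $P(1)\cap P(n)$ is the stabilizer of the incident flag $(ke_1, ke_1\oplus \cdots \oplus ke_n)$. Since $G$ acts transitively on such flags, this yields the required isomorphism $G/P \isom F_{1,n}$, which is conclusion (ii).

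The whole argument is essentially bookkeeping built on top of Lemma~\ref{lemma:pr-homogeneous}, so I do not foresee a serious obstacle. The only point deserving genuine care is the explicit identification in the final case, and it follows immediately from the standard description of $P(1)$ and $P(n)$ as stabilizers of a line and of a hyperplane in $\PP^n$.
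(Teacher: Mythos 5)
Your argument is correct and is exactly the paper's intended proof: the paper simply cites Lemma~\ref{lemma:pr-homogeneous} together with the classification of reduced parabolics by subsets of simple roots, and your write-up makes that bookkeeping (maximal parabolics over $P=P_I$ are the $P(\alpha)$ with $\alpha\in D\setminus I$, so Lemma~\ref{lemma:pr-homogeneous} forces $D\setminus I\subseteq\{1,n\}$ in type $A$ or $\subseteq\{1\}$ in type $C$) explicit, including the correct identification of $P(1)\cap P(n)$ with the stabilizer of an incident line–hyperplane flag giving $F_{1,n}$.
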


\begin{proof}
This follows from Lemma~\ref{lemma:pr-homogeneous} and the classification of rational homogeneous spaces with reduced stabilizers.
\end{proof}

\begin{lemma} \label{lemma:incidence-variety}
  The incidence variety $F_{1, n}$ is not $F$-liftable for $n>1$.
\end{lemma}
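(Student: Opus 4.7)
I would argue by contradiction: assume $F_{1,n}$ is $F$-liftable for some $n\geq 2$ with lift $(\wt X, \wt F_X)$. The two projections $\pi_1, \pi_2 \colon F_{1,n} \to \PP^n$ are smooth $\PP^{n-1}$-bundles, so condition (b.i) of Theorem~\ref{thm:descending-frob-lift} applies to each. Descending along $\pi_i$ yields Frobenius lifts $(\wt \PP^n, \wt F_i)$ and compatible lifts $\swt \pi_i \colon \wt X \to \wt \PP^n$ for $i = 1, 2$. The combined map $\iota = \swt \pi_1 \times \swt \pi_2$ is a closed embedding lifting $F_{1,n} \hookrightarrow \PP^n \times \PP^n$ as a bidegree $(1,1)$ hypersurface, and since it intertwines $\wt F_X$ with $\wt F_1 \times \wt F_2$, the product Frobenius lift on $\wt \PP^n \times \wt \PP^n$ preserves $\iota(\wt X)$.

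The heart of the plan is to translate this preservation into an impossible polynomial identity. After a linear change of coordinates on each factor, assume $\iota(\wt X)$ is cut out by $\wt f = \sum_i x_i y_i$ and write $\wt F_1^*(x_i) = x_i^p + p\, G_i(x)$, $\wt F_2^*(y_i) = y_i^p + p\, H_i(y)$. The condition $(\wt F_1 \times \wt F_2)^* \wt f = \wt f \cdot \wt g$ with $\wt g \equiv (\sum x_i y_i)^{p-1} \pmod{p}$ becomes, modulo $p^2$ and then dividing by $p$, the identity in $\FF_p[x,y]$
\[
  \sum_i \bigl( G_i(x) y_i^p + x_i^p H_i(y) \bigr) \;=\; Q(x_0 y_0, \ldots, x_n y_n) + \Bigl(\sum_i x_i y_i\Bigr) h(x,y),
\]
with $Q(u) = \frac{1}{p}\bigl((\sum u_l)^p - \sum u_l^p\bigr) \bmod p$ the Witt polynomial and $h \in \FF_p[x,y]$ of bidegree $(p-1,p-1)$. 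Isolating coefficients of diagonal monomials $x^\alpha y^\alpha$ with $|\alpha|=p$ and $\alpha \neq p\, e_l$, the left-hand side vanishes (since every summand has $x_k^p$ or $y_k^p$ as a factor), so the diagonal part of the identity reduces to the inclusion
\[
  Q(u) \in \Bigl(\textstyle\sum_l u_l\Bigr) \cdot \FF_p[u]_{p-1} + \sum_l \FF_p \cdot u_l^p
\]
in $\FF_p[u_0, \ldots, u_n]$. Specializing $u_0 = -\sum_{l\geq 1} u_l$ kills the first summand, and a brief calculation (using $(\sum u_l)^p = \sum u_l^p$ in $\FF_p[u]$) identifies $Q|_{u_0 = -\sum u_l}$, modulo the span of the $u_l^p$, with the Witt polynomial $R(u_1, \ldots, u_n)$ in $n$ variables. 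For $n\geq 2$, the monomial $u_1\, u_2^{p-1}$ appears in $R$ with coefficient $1 \bmod p$, so $R$ is not an $\FF_p$-linear combination of pure $p$-th powers --- this is the contradiction.

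The main obstacle I anticipate is the coordinate bookkeeping needed to normalize $\wt f$ while retaining a workable expansion of $\wt F_1 \times \wt F_2$, since a linear change of coordinates does not in general preserve the shape $x_i^p + p G_i$ of a Frobenius lift. The cleanest remedy is to phrase the key identity intrinsically, as a statement about the class of $Q$ in $H^0(\cO(p,p)) / \wt f \cdot H^0(\cO(p-1,p-1))$, which is manifestly independent of the chosen trivialization. Note that the argument degenerates precisely for $n = 1$, where $R$ vanishes identically, consistent with $F_{1,1} \simeq \PP^1$ being toric and hence $F$-liftable.
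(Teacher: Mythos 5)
Your argument is correct, but it takes a genuinely different route from the paper. The paper's primary justification is the Bott-vanishing computation of \cite[\S 4.2]{BTLM}; its alternative proof passes to the affine cone: using $H^1(X,\cO_X(a,b))=0$ for $a,b\geq 0$ and Proposition~\ref{prop:flift-cones-plus}, $F$-liftability of $F_{1,n}$ descends to the quadric cone $\Spec k[x_0,\dots,x_n,y_0,\dots,y_n]/(\sum x_iy_i)$, contradicting the non-liftability of that singularity from \cite[Theorem 4.15]{zdanowicz}. You instead descend the Frobenius lifting along the two $\PP^{n-1}$-bundle projections via Theorem~\ref{thm:descending-frob-lift}(b.i), obtain a product Frobenius lifting of $\PP^n_{W_2(k)}\times\PP^n_{W_2(k)}$ that preserves the lifted $(1,1)$-divisor, and extract the Witt-polynomial obstruction by hand; in effect you re-prove the relevant special case of Zdanowicz's theorem instead of quoting it, which makes your proof self-contained and, in projective coordinates, closely parallel to the cone argument (what the citation buys the paper is brevity; what your computation buys is independence from that external result). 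Two small repairs: in your displayed inclusion the coefficients of $h$ and of the $u_l^p$ lie in $k$, not $\FF_p$ — harmless, since your specialization $u_0=-\sum_{l\geq 1}u_l$ and the monomial $u_1u_2^{p-1}$ argument work over any field of characteristic $p$ (up to an irrelevant sign in identifying the specialization with the $n$-variable Witt polynomial); and the coordinate-normalization worry you raise is in fact vacuous, because a Frobenius lifting is $\sigma$-semilinear over $W_2(k)$, so conjugation by a $W_2(k)$-linear change of coordinates preserves the shape $x_i\mapsto x_i^p+pG_i$, and normalizing $\wt f=\sum x_iy_i$ costs nothing (your intrinsic reformulation modulo $\wt f\cdot H^0(\cO(p-1,p-1))$ is an equally valid fix).
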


\begin{proof}
This is shown in {\cite[\S 4.2]{BTLM}} by using Bott vanishing. We give an alternative, more geometric proof in \cite[Section 2]{PartII}.
\end{proof}

\begin{thm}[\cite{LauritzenMehta}] \label{thm:reduced-stab}
  Let $G$ be a simple algebraic group over $k$, let $P\subseteq G$ be a parabolic subgroup scheme, let $X=G/P$, and let $Y=G/P_{\rm red}$. If $X$ is Frobenius split, then $X\isom Y$ as varieties.
\end{thm}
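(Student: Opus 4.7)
The $G$-equivariant natural map $\pi\colon Y = G/P_{\rm red} \to X = G/P$ is finite, flat and purely inseparable, of degree $[P:P_{\rm red}]$ which is a power of $p$; in fact it is a torsor under the infinitesimal group scheme $P/P_{\rm red}$. The plan is to assume $\pi$ is not an isomorphism and derive a contradiction from the existence of a Frobenius splitting on $X$. By filtering $P_{\rm red} = P_0 \subsetneq P_1 \subsetneq \ldots \subsetneq P_n = P$ so that each successive quotient $P_i/P_{i-1}$ is a height-one infinitesimal subgroup scheme of $P/P_{i-1}$, the map $\pi$ factors through intermediate homogeneous spaces $G/P_i$. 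I would first argue that each $G/P_i$ inherits the property of being Frobenius split (using that $G/P_i \to G/P$ is a fibration with connected fibers, so that the splitting can be pushed and pulled along this map; concretely, a $B$-canonical splitting of $G/P$ lifts to one on $G/P_{\rm red}$ and descends to every $G/P_i$). This reduces the problem to the height-one case, where $P/P_{\rm red}$ is a height-one infinitesimal group scheme and $\pi$ is a height-one purely inseparable cover.

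In the height-one case, I would invoke the Jacobson--Ekedahl correspondence between height-one purely inseparable covers of $X$ and height-one foliations, i.e., subsheaves $\cL \subseteq T_X$ closed under the Lie bracket and $p$-th powers. The cover $\pi\colon Y \to X$ corresponds to such a $\cL$, which is moreover $G$-invariant by equivariance; by homogeneity it is determined by the fiber at $eP$, a $P$-stable restricted subalgebra $\mathfrak{l} \subseteq T_X|_{eP} = \mathfrak{g}/\mathfrak{p}$. The relative canonical bundle admits the standard description
\[
  \omega_{Y/X} \isom \pi^*\bigl(\det \cL \bigr)^{\otimes (p-1)},
\]
so that $\pi$ is an isomorphism if and only if the line bundle $\det\cL$ is trivial, equivalently if $\cL = 0$.

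The key step is then to translate the Frobenius splitting of $X$ into an obstruction forcing $\det\cL = 0$. Dualising and composing with the splitting $\sigma$ of $F_X^\#\colon \cO_X \to F_{X*}\cO_X$, one obtains a nonzero global section of the line bundle $(\det\cL)^{\vee}\otimes \omega_X^{1-p}$ (restricted appropriately), or equivalently, the $B$-canonical Frobenius splitting of $X$ must be compatible with the foliation $\cL$ in the sense of Brion--Kumar \cite[\S 1.2]{BrionKumar}. Using the explicit description of $B$-canonical splittings on $G/P_{\rm red}$ via the bottom line bundle $\omega_{G/P_{\rm red}}^{-1}$ together with Borel--Weil--Bott in positive characteristic, one computes that any such compatible $G$-stable $p$-Lie subalgebra $\mathfrak{l}\subseteq \mathfrak{g}/\mathfrak{p}$ must be trivial. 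Hence $\cL = 0$ and $\pi$ is an isomorphism.

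The main obstacle is the final representation-theoretic/cohomological step: controlling $G$-stable height-one foliations on $G/P$ and showing that none can coexist with a Frobenius splitting of $X$. This is where the classification of weights of $\mathfrak{g}/\mathfrak{p}$ as a $P_{\rm red}$-module must be used; the analogous claim fails for general purely inseparable covers of general $F$-split varieties, so it is crucial that one exploits both the $G$-equivariance of $\cL$ and the very specific structure of $B$-canonical Frobenius splittings on flag varieties. The passage from the global statement about $X$ being $F$-split to the pointwise representation-theoretic constraint on $\mathfrak{l}$ is, to my mind, the heart of the argument.
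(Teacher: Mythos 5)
The paper does not prove this statement at all: it is quoted as an external result of Lauritzen--Mehta \cite{LauritzenMehta}, so there is no internal argument to compare yours against, and your sketch has to stand on its own. It does not, for two reasons. The first gap is the reduction to height one. You need every intermediate quotient $G/P_i$ (for $P_{\rm red}\subseteq P_i\subseteq P$) to be Frobenius split, and you justify this by saying the splitting can be ``pushed and pulled'' along the fibration with connected fibers. But the maps $G/P_{\rm red}\to G/P_i\to G/P$ are finite purely inseparable of degree $>1$, so $\pi_*\cO\neq\cO$ and the standard descent lemma (\cite[Lemma 1.1.8]{BrionKumar}, which requires $f_*\cO_Y=\cO_X$) does not apply; splittings do not pull back along such maps either. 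In fact, if a splitting of the always-split variety $G/P_{\rm red}$ descended to every intermediate quotient by the mechanism you describe, it would descend to $G/P=G/P_n$ itself with no hypothesis, i.e.\ every $G/P$ would be Frobenius split --- contradicting Lauritzen's varieties of unseparated flags, which even violate Kodaira vanishing. So this step needs a different argument (presumably exploiting compatibility with the \emph{given} splitting of $G/P$), which you do not supply.

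The second and more serious gap is that the heart of the proof --- that no nonzero $G$-invariant $p$-closed foliation of the relevant equivariant type can coexist with a Frobenius splitting of the quotient --- is asserted (``one computes that\dots'') rather than proved, as you yourself acknowledge in the last paragraph. Moreover, you silently upgrade the hypothesis ``$X$ admits \emph{some} Frobenius splitting'' to ``the $B$-canonical splitting of $X$ is compatible with $\cL$''; neither the existence of a canonical splitting on these non-reduced quotients nor the compatibility with the foliation is justified, and this is precisely where the representation-theoretic content lives. There are also smaller slips: the foliation attached to $\pi\colon Y\to X$ naturally lives on $Y=G/P_{\rm red}$, with fiber the $P_{\rm red}$-stable restricted subalgebra $\mathfrak{p}/\mathfrak{p}_{\rm red}\subseteq\mathfrak{g}/\mathfrak{p}_{\rm red}=T_Y|_{eP_{\rm red}}$, not on $X$; triviality of $\det\cL$ does not imply $\cL=0$; and $\pi$ is a torsor under $P/P_{\rm red}$ only if $P_{\rm red}$ is normal in $P$. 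As it stands, this is a plausible plan in the spirit of the Ekedahl--Jacobson correspondence, but not a proof of the theorem.
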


We are ready to show the main theorem of this section.

\begin{thm} \label{thm:homogeneous-spaces}
  Let $X$ be a projective variety over $k$ whose automorphism group acts transitively on $X(k)$. Then the following are equivalent:
  \begin{enumerate}[(i)]
    \item $X$ is $F$-liftable,
    \item $X\isom A\times \PP^{n_1}_k \times \ldots \times \PP^{n_r}_k$ for some $n_1, \ldots, n_r$, where $A$ is an ordinary abelian variety,
    \item the Albanese variety $A={\rm Alb}(X)$ is ordinary and the fibers of the Albanese map ${a_X\colon X\to A}$ are toric varieties.
  \end{enumerate}
\end{thm}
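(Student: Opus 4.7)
The approach is to combine the Borel--Remmert decomposition of a projective homogeneous space with the Picard rank one case already established in Proposition~\ref{prop:homog-rank-one}. Recall from~\eqref{eqn:borel-remmert-decomposition} that since $X$ is projective and homogeneous, we have a decomposition
\[
  X \isom A \times G_1/P_1 \times \cdots \times G_r/P_r,
\]
where $A$ is an abelian variety, each $G_i$ is a simple linear algebraic group of adjoint type, and each $P_i \subseteq G_i$ is a parabolic subgroup scheme.

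The three easy directions are handled quickly. For (ii)$\Rightarrow$(i), one combines Examples~\ref{ex:toric} and~\ref{ex:ordinaryav} with Corollary~\ref{cor:flift-products}. For (ii)$\Rightarrow$(iii), the Albanese of $A \times \prod \PP^{n_i}_k$ is $A$ and its fibers are $\prod \PP^{n_i}_k$, which is toric. For (iii)$\Rightarrow$(i), the Borel--Remmert decomposition forces $\mathrm{Alb}(X) = A$, with Albanese map equal to the projection onto $A$; the fibers are $\prod G_i/P_i$, and if these are toric while $A$ is ordinary, $F$-liftability follows once more from Examples~\ref{ex:toric}, \ref{ex:ordinaryav}, and Corollary~\ref{cor:flift-products}.

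The substantive implication is (i)$\Rightarrow$(ii). By Corollary~\ref{cor:flift-products} each factor in the Borel--Remmert decomposition is $F$-liftable, so $A$ is ordinary by Example~\ref{ex:ordinaryav}. It remains to show that each $Y = G_i/P_i$ is a projective space; fix $G = G_i$ and $P = P_i$. Since $Y$ is $F$-liftable it is Frobenius split by Proposition~\ref{prop:frobenius_cotangent_morphism}(c), and Theorem~\ref{thm:reduced-stab} of Lauritzen--Mehta then gives $Y \isom G/P_{\rm red}$, so we may assume $P$ is reduced.

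For every reduced parabolic $Q$ with $P \subseteq Q \subseteq G$, the projection $\pi \colon G/P \to G/Q$ is smooth and proper, with fibers isomorphic to the rational homogeneous space $Q/P$. By Kempf vanishing, $\cO_{G/Q} \isomto \pi_*\cO_{G/P}$ and $R^j\pi_*\cO_{G/P} = 0$ for $j > 0$, so Theorem~\ref{thm:descending-frob-lift}(b.i) yields that $G/Q$ is $F$-liftable. Taking $Q$ to be a maximal reduced parabolic containing $P$, the quotient $G/Q$ has Picard rank one, is Fano, and its tangent bundle is globally generated (hence nef) by transitivity of the $G$-action. Proposition~\ref{prop:homog-rank-one} then yields $G/Q \isom \PP^{r_Q}_k$ for every such maximal $Q$. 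Lemma~\ref{lemma:pr-homogeneous-max} now forces $Y \isom \PP^n_k$ or $Y \isom F_{1,n}$ for some $n$, and since the incidence variety $F_{1,n}$ is not $F$-liftable by Lemma~\ref{lemma:incidence-variety}, we conclude $Y \isom \PP^n_k$. The main non-trivial ingredient is the Picard rank one result of Proposition~\ref{prop:homog-rank-one}; everything else is a clean cohomological reduction via descent of $F$-liftability along the smooth fibrations $G/P \to G/Q$.
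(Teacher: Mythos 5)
Your argument is correct and follows essentially the same route as the paper: Borel--Remmert decomposition plus Corollary~\ref{cor:flift-products} and Example~\ref{ex:ordinaryav} for the easy implications, then for (i)$\Rightarrow$(ii) the reduction to reduced stabilizers via Theorem~\ref{thm:reduced-stab}, descent of the Frobenius lifting along $G/P\to G/Q$ using Theorem~\ref{thm:descending-frob-lift}(b) and the vanishing of higher cohomology of $\cO_{Q/P}$, Proposition~\ref{prop:homog-rank-one} for the Picard rank one quotients, and finally Lemma~\ref{lemma:pr-homogeneous-max} together with Lemma~\ref{lemma:incidence-variety} to exclude $F_{1,n}$. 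No gaps; this matches the paper's proof.
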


\begin{proof}
Consider the decomposition \eqref{eqn:borel-remmert-decomposition}. By Corollary~\ref{cor:flift-products}, $X$ is $F$-liftable if and only if $A$ and $G_1/P_1, \ldots, G_r/P_r$ are.  Moreover, $A$ is $F$-liftable if and only if it is ordinary, by Example~\ref{ex:ordinaryav}. The Albanese map of $X$ is simply the projection to the first factor of the decomposition. Since the only homogeneous toric varieties are products of projective spaces, we see that (ii) and (iii) are equivalent and imply (i).

It remains to prove that (i) implies (ii). For this, we can assume that $X=G/P$ where $G$ is a~simple linear algebraic group of adjoint type and $P\subseteq G$ is a parabolic subgroup scheme. Since $X$ is Frobenius split (Proposition~\ref{prop:frobenius_cotangent_morphism}(c)), Theorem~\ref{thm:reduced-stab} implies that $X\isom G/P_{\rm red}$, so we can assume that $P$ is reduced.

If $Q\subseteq G$ is a reduced maximal parabolic subgroup containing $P$, then $Z=G/Q$ inherits~a Frobenius lifting from $X$ via the map $X=G/P\to G/Q=Z$ by Theorem~\ref{thm:descending-frob-lift}(b), as the fibers are isomorphic to the rational homogeneous space $Q/P$ and $H^i(Q/P, \cO_{Q/P})=0$ for $i>0$. Now $Z$ is Fano, $\Pic Z\isom \ZZ$, and $T_Z$ is nef. By Proposition~\ref{prop:homog-rank-one}, $Z\isom \PP^n_k$ for some $n$. By Lemma~\ref{lemma:pr-homogeneous-max}, this implies that $X\isom \PP^n_k$ or $X$ is isomorphic to the incidence variety $F_{1, n}$. But $F_{1, n}$ is not $F$-liftable (Lemma~\ref{lemma:incidence-variety}), so $X\isom \PP^n_k$.
\end{proof}

\begin{remark}
By Theorem~\ref{thm:homogeneous-spaces} and the proof of Theorem~\ref{thm:c1impliesc2},  we get that Conjecture~\ref{conj:jarekw} holds if the target variety is a homogeneous space. This observation is not very interesting because this fact almost follows from \cite{occhetta_wisniewski}. What is missing is a direct proof that the incidence variety $F_{1, n}$ is not an image of a toric variety. 
\end{remark}


\bibliographystyle{amsalpha} 
\bibliography{bib.bib}

\renewcommand{\refname}{\rule{2cm}{0.4pt}}   
\renewcommand{\addcontentsline}[3]{}


\end{document}